\documentclass{article}

\usepackage[T1]{fontenc}
\usepackage[utf8]{inputenc}
\usepackage[english]{babel}
\usepackage{lmodern}
\usepackage[margin=3cm]{geometry}

\usepackage[backend=biber,style=alphabetic,sorting=nty,doi=false,isbn=false,url=false,minalphanames=2,maxbibnames=4,maxcitenames=4,giveninits=true,date=year]{biblatex}
\AtEveryBibitem{\clearlist{language}}

\usepackage{empheq}

\usepackage{authblk}
\usepackage[hidelinks, colorlinks=false]{hyperref}
\usepackage[shortlabels]{enumitem}
\usepackage{alltt,bigints,bbm,slashed,mathtools,amssymb,amsmath,amsfonts,amsthm} 
\usepackage[nottoc]{tocbibind}
\usepackage{graphicx}
\usepackage[font={small,it}]{caption}
\usepackage{subcaption}
\usepackage{aliascnt}
\usepackage{cases}
\usepackage{comment}	
\usepackage[capitalize,nameinlink]{cleveref}
\usepackage{tikz}
\usetikzlibrary{tikzmark, patterns}
\usepackage{pgfplots}
\pgfplotsset{compat=1.18}
\usepackage{slashed}

\crefname{figure}{Fig.}{Fig.}
\Crefname{figure}{Fig.}{Fig.}
\crefname{subfigure}{Fig.}{Fig.}
\Crefname{subfigure}{Fig.}{Fig.}

\usepackage{xargs}   
\usepackage[colorinlistoftodos,prependcaption,textsize=tiny]{todonotes}
\newcommandx{\typo}[2][1=]{\todo[linecolor=red,backgroundcolor=red!25,bordercolor=red,#1]{#2}}
\newcommandx{\change}[2][1=]{\todo[linecolor=blue,backgroundcolor=blue!25,bordercolor=blue,#1]{#2}}
\newcommandx{\answer}[1]{\todo[linecolor=pink,backgroundcolor=pink!25,bordercolor=pink]{#1}}
\newcommandx{\unsure}[2][1=]{\todo[linecolor=green,backgroundcolor=green!25,bordercolor=green,#1]{#2}}
\newcommandx{\improve}[2][1=]{\todo[linecolor=violet,backgroundcolor=violet!25,bordercolor=violet,#1]{#2}}
\newcommandx{\thiswillnotshow}[2][1=]{\todo[disable,#1]{#2}}

\usepackage{tocloft}
\crefformat{equation}{(#2#1#3)}
\numberwithin{equation}{section}

\theoremstyle{definition}

\newtheorem*{theorem*}{Theorem}
\newtheorem*{conjecture*}{Conjecture}

\newtheorem{definition}{Definition}[section]

\theoremstyle{plain}
\newtheorem{theorem}{Theorem}[section]
\newtheorem{corollary}[theorem]{Corollary}
\newtheorem{lemma}[theorem]{Lemma}

\newtheorem{prop}[theorem]{Proposition}
\newtheorem{conjecture}[theorem]{Conjecture}
\newtheorem{cor}[theorem]{Corollary}
\newtheorem*{claim}{Claim}

\theoremstyle{remark}
\newtheorem{remark}{Remark}[section]
\newtheorem{notation}{Notation}[section]

\crefname{lemma}{Lemma}{Lemmas}
\crefname{prop}{Proposition}{Proposition}
\crefname{conjecture}{Conjecture}{Conjecture}
\crefname{cor}{Corollary}{Corollary}
\crefname{remark}{Remark}{Remark}
\crefname{ass}{Assumption}{Assumptions}
\crefname{defi}{Definition}{Definition}
\crefname{equation}{}{}
\crefname{enumi}{}{}
\crefname{appendix}{}{}

\newcommand{\Mhat}{\hat{\mathcal{M}}}
\newcommand{\Shat}{\hat{\Sigma}}

\newcommand{\dd}{\mathop{}\!\mathrm{d}}

\graphicspath{{figures}}

\newcommand{\Vt}{\mathcal{V}_{\mathrm{top}}}
\newcommand{\Vb}{\mathcal{V}_{\mathrm{b}}}
\newcommand{\Vtop}{\mathcal{V}_{\mathrm{top}}}
\newcommand{\M}{\mathcal{M}}
\newcommand{\Ab}{X_{\infty,\infty}}
\newcommand{\A}{\mathcal{A}}
\newcommand{\Hb}{{X}_{2,2}}
\newcommand{\Hc}{{X}_{\infty,2}}
\newcommand{\Hs}{{X}_{\mathrm{s}}}
\newcommand{\Diffb}{\mathrm{Diff}_{\mathrm{b}}}

\newenvironment{nalign}{
	\begin{equation}
		\begin{aligned}
		}{
		\end{aligned}
	\end{equation}
	\ignorespacesafterend
}

\newcommand{\N}{\mathbb{N}}

\newcommand{\R}{\mathbb{R}}

\newcommand{\T}{\mathbb{T}}

\newcommand{\abs}[1]{\left\lvert #1\right\rvert}

\newcommand{\norm}[1]{\left\lVert #1\right\rVert}

\newcommand{\floor}[1]{\lfloor #1 \rfloor}

\newcommand{\BB}{\mathbb{B}}

\newcommand{\D}{\mathcal{D}}

\DeclareMathOperator{\supp}{supp}

\newcommand{\blue}[1]{{\color{blue}{#1}}}



\DeclareFontFamily{U}{matha}{\hyphenchar\font45}
\DeclareFontShape{U}{matha}{m}{n}{
      <5> <6> <7> <8> <9> <10> gen * matha
      <10.95> matha10 <12> <14.4> <17.28> <20.74> <24.88> matha12
      }{}
\DeclareSymbolFont{matha}{U}{matha}{m}{n}
\DeclareFontSubstitution{U}{matha}{m}{n}

\DeclareFontFamily{U}{mathx}{\hyphenchar\font45}
\DeclareFontShape{U}{mathx}{m}{n}{
      <5> <6> <7> <8> <9> <10>
      <10.95> <12> <14.4> <17.28> <20.74> <24.88>
      mathx10
      }{}
\DeclareSymbolFont{mathx}{U}{mathx}{m}{n}
\DeclareFontSubstitution{U}{mathx}{m}{n}

\DeclareMathDelimiter{\vvvert}{0}{matha}{"7E}{mathx}{"17}

\usepackage{csquotes}
\usepackage{mathrsfs}

\title{Scattering and stability for ODE-type blow-up surfaces for focusing nonlinear wave equations}
\author[1]{Istvan Kadar}
\affil[1]{\small \textit{Department of Mathematics, ETH Zurich, R\"amistrasse 101, 8006 Zurich, Switzerland}}
\author[2,3]{Warren Li}
\affil[2]{\small \textit{Department of Mathematics, Stanford University, 450 Jane Stanford Way Building 380, \protect\\ Stanford, CA 94305, USA}}
\affil[3]{\small \textit{Trinity College, University of Cambridge, Cambridge CB2 1TQ, United Kingdom}}

\newcommand{\tstar}{\mathbf{t}}
\newcommand{\Mt}{\mathcal{M}_{\tstar_1}}
\newcommand{\J}{\mathbb{J}}
\DeclareMathOperator{\Jac}{\mathcal{J}}
\begin{document}

\maketitle
	
\begin{abstract}
    We study the focusing power nonlinear wave equation with any power, in Minkowski space of any spacetime dimension. We present a complete understanding of the local stability and scattering theory (both in high regularity spaces) for solutions exhibiting \emph{ODE type blow-up} on spacelike hypersurfaces, with the blow-up at each point modelled by the explicit solution $\phi_{\mathrm{model}} = c_p t^{-\alpha_p}$.

    Given a sufficiently regular spacelike hypersurface $\Sigma_f$, together with auxiliary scattering data $\psi$, we construct the unique corresponding solution to the nonlinear wave equation that (locally) forms an ODE type singularity on $\Sigma_f$ attaining $\psi$ as scattering data. Conversely, we show that such ODE type singularities are (locally) stable to suitably regular perturbations away from the singularity, and that the blow-up surface and scattering data remain regular, in a continuously dependent manner, following such perturbations.
\end{abstract}

\setcounter{tocdepth}{2}
\tableofcontents

\section{Introduction} \label{sec:intro}

In this paper, we consider the focusing nonlinear wave equation
\begin{equation}\label{in:eq:main}
    P[\phi]\coloneqq\Box\phi+|\phi|^{p-1}\phi=(-\partial_t^2+\Delta_x)\phi+ |\phi|^{p-1} \phi=0,
\end{equation}
for any exponent $p > 1$ in Minkowski space $(\R^{n+1}, \eta)$ of any spatial dimension $n \geq 1$. It is well known that \cref{in:eq:main} may form singularities in finite time given by the explicit homogeneous solution
\begin{equation} \label{in:eq:model}
    \phi_{\mathrm{model}} = c_p t^{- \alpha_p}, \quad \text{ where } \alpha_p = \frac{2}{p-1}, \; c_p^{p-1} = \frac{2(p+1)}{(p-1)^2}.
\end{equation}
Such a solution is often denoted to have \emph{ODE type} blow up at the hyperplane $\{ t = 0\}$. Via a Poincar\'e transformation, it is straightforward to show the existence of a solution that forms a singularity on any spacelike hyperplane, either forwards or backwards in time. That is, for $|\mathbf{v}| < 1$ and $T \in \R$, we can construct solutions of the form
\[
    \phi = c_p (1 - |\mathbf{v}|^2)^{\frac{1}{p-1}} \cdot (T \pm t - \mathbf{v} \cdot x)^{-\alpha_p}.
\]

In this article, we achieve two related goals. Firstly, we show that we can elevate the existence statement to show the existence of a solution to \eqref{in:eq:main} that blows up on any given smooth \emph{spacelike hypersurface}; in fact we characterize all such solutions via \emph{auxiliary scattering data}. Secondly, we show that such ODE type blow up solutions are stable to perturbations of its induced Cauchy data on spacelike hypersurfaces away from the singularity; in particular, after such perturbations the blow up surfaces and auxiliary scattering data remain regular and close to their initial values.

To state our theorems, it will be helpful to introduce the quantities $\beta_p, \gamma_p$ and $\kappa_p$, as
\begin{equation} \label{in:eq:betadelta}
    \beta_p = \frac{2p}{p-1}, \quad \gamma_p = \beta_p (\beta_p - 1) = \frac{2p(p+1)}{(p-1)^2}, \quad \kappa_p = \frac{\beta_p + \alpha_p}{2} = \frac{p+1}{p-1}.
\end{equation}
We also use the notation $\BB_r(x)$ to denote the ball in Euclidean space $\R^n$ of radius $r > 0$ and center $x$, and $\BB_r = \BB_r(0)$. Then the first of our main results is as follows.

\begin{theorem}[Rough version of \cref{in:thm:main1_precise}]\label{in:thm:main}
    Let $s\geq s_0\coloneqq30 \kappa_p +\frac{n+1}{2}+5$ and $f\in H^{s}(\BB_3)$ $\psi\in H^{s-\floor{2\kappa_p}}(\BB_3)$ with $\abs{\partial f}<1/10$ and $f(0)=0$. Write $\tstar=t-f(x)$.
    Then there exist $\tstar_1 \in (0, 1)$, depending on $\norm{f}_{H^{s_0}(\BB_3)}$ and $\norm{\psi}_{H^{s_0-\floor{2\kappa_p}}(\BB_3)}$, and a singular solution $\phi\in C^{s-\frac{n+1}{2}-12\kappa_p}(\mathcal{M}_{\tstar_1})$ to \eqref{in:eq:main} in the domain $\mathcal{M}_{\tstar_1} = \{|x|/2 < 1 - t: \tstar\in(0,\tstar_1)\}$ attaining $(\psi,f)$ as ``data'' at $\tstar = 0$, as defined in \cref{geo:def:scattering_sol}.
		
     In particular $\phi$ forms an ODE type singularity at the hypersurface $\Sigma_f=\{t=f(x)\}$ as 
    \begin{equation}\label{in:eq:phi_scat_rough}
    	\phi = (1 - |\partial f|^2)^{\frac{1}{p-1}} \cdot c_p \tstar^{-\alpha_p} + o(\tstar^{- \alpha_p}), \qquad \text{ as } \tstar \to 0.
    \end{equation}
    See \cref{fig:backward} for a visual representation.
\end{theorem}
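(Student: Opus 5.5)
We would prove \cref{in:thm:main} by constructing an approximate solution to high order in $\tstar$, and then solving for the remainder backwards from the singular surface with weighted energy estimates, using a Fuchsian-type scheme. The key is to pass to the rescaled unknown. Set $\phi = \tstar^{-\alpha_p}(1-|\partial f|^2)^{\frac{1}{p-1}}(c_p + u)$ in the coordinates $(\tstar, x)$, with $\tstar = t - f(x)$. The equation \eqref{in:eq:main} then becomes a quasi-Fuchsian equation in $\tstar$ whose leading part is
\[
 (1-|\partial f|^2)\bigl(\tstar\partial_\tstar\bigr)^2 u + \text{l.o.t.} - (\text{linearisation of the ODE}) u = F(u,\partial u),
\]
up to spatial derivatives carrying an extra power of $\tstar$. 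Linearising the ODE $\ddot\phi = \phi^p$ around $c_p t^{-\alpha_p}$ gives indicial roots $-1$ and $\beta_p = \alpha_p + 2\kappa_p - \alpha_p$. More precisely, the modes are $t^{-\alpha_p-1}$ (time translation, absorbed into $f$) and $t^{\beta_p-\alpha_p} = t^{2\kappa_p - \alpha_p+\alpha_p}$ relative to the background, i.e. $u \sim \tstar^{2\kappa_p}$. This explains why the free datum $\psi$ sits at order $\tstar^{2\kappa_p}$ and loses $\floor{2\kappa_p}$ derivatives relative to $f$.

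\textbf{Step 1 (formal expansion).} We iteratively solve for $u = \sum_{k} u_k(x)\tstar^{k} + \psi(x)\tstar^{2\kappa_p}(1+\dots)$, with logarithmic terms $\tstar^{2\kappa_p}\log\tstar$ appearing if $2\kappa_p\in\N$. Each coefficient $u_k$ is determined algebraically from the lower-order ones, since the indicial operator $k(k-1)+\dots$ is invertible for $k\neq 2\kappa_p$. Each step costs two spatial derivatives of $f$; this bookkeeping, together with Sobolev embedding, accounts for the $s_0 \sim 30\kappa_p + \frac{n+1}{2}$ loss. We truncate at order $N \gtrsim 2\kappa_p + $ a large constant, and obtain an approximate solution $\phi_{\mathrm{app}}$ with $P[\phi_{\mathrm{app}}] = O(\tstar^{N-\alpha_p-2})$ in $H^{s-2N}$.

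\textbf{Step 2 (remainder).} We write $\phi = \phi_{\mathrm{app}} + \tstar^{-\alpha_p}v$ with $v = O(\tstar^{M})$ and $M$ large, and solve the nonlinear equation for $v$ as a limit of solutions $v_\epsilon$ with zero data on $\{\tstar=\epsilon\}$, evolved forward into $\M_{\tstar_1}$. We would prove uniform weighted energy estimates of the form $\sum_{|\alpha|\le K}\| \tstar^{-M}\,(\tstar\partial)^\alpha v\|_{L^2}$ on the leaves $\{\tstar=c\}$. Here the weight $\tstar^{-M}$ with $M$ larger than the indicial roots makes the linearised potential term $\gamma_p\tstar^{-2}$ signed favourably; this is the standard Fuchsian trick. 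Spacelikeness ($|\partial f|<1/10$) gives coercivity of the energy on the leaves, and the domain $|x|/2<1-t$ keeps us within a domain of dependence, so no boundary terms enter. The nonlinearity is handled via the smallness of $v/c_p$ for $\tstar_1$ small, which requires $C^0$ control from Sobolev. Compactness as $\epsilon\to0$, plus uniqueness from the same estimate applied to differences, yields $\phi$. The regularity $C^{s-\frac{n+1}{2}-12\kappa_p}$ follows by commuting with spatial derivatives.

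\textbf{Main obstacle.} We expect Step 2's energy estimate to be the hard part. The weight $M$ must dominate both $\gamma_p$ and the commutator terms of order $\tstar^{-1}$ generated by $f$-dependent coefficients when commuting with $K$ derivatives. As a result, $M$ grows with $K$, which in turn forces $N$ and hence the derivative loss to grow. Closing this hierarchy consistently, presumably with $M\sim K\sim c\kappa_p$, is where the explicit constant $30\kappa_p$ originates. We would first try b-type commutators $\tstar\partial_\tstar$ and $\partial_x$, which commute well with the Fuchsian structure.
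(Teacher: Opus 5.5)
Your architecture matches the paper's. You build an ansatz by iteratively inverting the indicial operator $\partial_\tstar^2-\gamma_p\tstar^{-2}$, with the free datum $\psi$ at $\tstar^{\beta_p}$ (i.e.\ $u\sim\tstar^{2\kappa_p}$) and a $\tstar^{\beta_p}\log\tstar$ term when $2\kappa_p\in\N$. You then obtain the remainder as a limit of problems vanishing near $\tstar=0$, controlled by a $\tstar^{-2q}T$-weighted energy estimate commuted with $\partial_y$ and $\tstar\partial_\tstar$. Your indicial arithmetic is garbled: the roots of $r(r-1)=\gamma_p$ are $\beta_p$ and $1-\beta_p=-\alpha_p-1$, and $\beta_p+\alpha_p=2\kappa_p$. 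The conclusions you draw about the two modes are nonetheless right. The genuine problem is the step you single out as the main obstacle. Your analysis of it is wrong, and the scheme as you describe it would not yield the regularity asserted in the theorem.

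\textbf{The weight never has to dominate the commutator terms.} After dividing by $1-|\partial f|^2$, both $\partial_\tstar^2$ and the potential $\gamma_p\tstar^{-2}$ have $y$-independent coefficients. So $[\partial_y,(1-|\partial f|^2)^{-1}(\Box+p\phi_0^{p-1})]$ contains only $\partial_\tstar$, $\partial_y\partial_\tstar$ and $\partial_y^2$ terms with coefficients built from $f$. It therefore lies in $\tstar^{-1}\Diffb^2$, one power of $\tstar$ better than $P_0\in\tstar^{-2}\Diffb^2$.

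The basic estimate gains two powers, $\|\Vt\psi\|_{\Hb^{k;q+2}}\lesssim\|F\|_{\Hb^{k;q}}$. Hence the commutator is bounded by $\tstar_1$ times the top-order norm being estimated, and is absorbed by shrinking $\tstar_1$. Likewise $\tstar\partial_\tstar$ commutes with $P_0$ up to $-2P_0$, by scaling. The growth you anticipate is what happens if one commutes with $\tstar\partial_y$, which shifts the indicial roots by one per commutation. The paper instead commutes with $\Vb=\{\tstar T,\partial_y,1\}$ and uses $\Vt$ only to measure top-order terms.

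Consequently:
\begin{itemize}
\item the threshold $q_0=10\kappa_p/(1-|\partial f|^2)$ in \cref{en:prop:main} is independent of the number of commutations;
\item the ansatz length $q_0+\floor{2\kappa_p}+3$ is fixed;
\item the derivative loss is the fixed $\approx 12\kappa_p$ appearing in $C^{s-\frac{n+1}{2}-12\kappa_p}$.
\end{itemize}

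If the weight really had to grow like $cK$, the argument would break. Reaching the $K\approx s-12\kappa_p$ commutations needed for that regularity would require an ansatz of length $N\gtrsim cK$. That ansatz itself costs $\gtrsim N$ derivatives of $(f,\psi)$, forcing $s\gtrsim (1+c)K$, so the loss would grow with $s$ and the stated regularity for all $s\ge s_0$ would not follow.

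Two smaller corrections:
\begin{itemize}
\item Each order of the expansion costs one derivative ($\tstar^jH^{s-j}$), not two, since $\Delta_y$ carries two extra powers of $\tstar$ relative to $P_0$.
\item The $30\kappa_p$ in $s_0$ comes from the uniqueness argument, not from the existence hierarchy. There the a priori $O(\tstar^{\beta_p'})$ difference of two solutions must first be upgraded past $q_0$ before the energy estimate applies.
\end{itemize}
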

For simplicity, we have, for now, omitted from \cref{in:eq:phi_scat_rough} the precise way the data $\psi$ is used in determining $\phi$.
In \cref{in:sec:ideas}, we will discuss the relevance of this term for the spatially homogeneous (and thus ODE) analogue of \eqref{in:eq:main}, and refer the reader to \cref{in:eq:expansion} for precise asymptotics.

\begin{figure}[ht]
    \centering
    \begin{tikzpicture}[scale=0.5]
\begin{axis}[
    axis lines=none,
    domain=-3:3,
    samples=100,
    width=15cm,
    height=9cm,
    xmin=-2.5, xmax=2.5,
    ymin=-0.5, ymax=2.5,
    thick
]

    \fill [lightgray, opacity=0.5]
        (-2, -0.3)
        -- plot[domain = -2:-0.4] ({\x}, {(1/50)*(\x-3)*(\x-1)*(\x+1) + 1 - abs(\x)/2})
        -- plot[domain = -0.4:0.4] ({\x}, {(1/50)*(\x-3)*(\x-1)*(\x+1) + 1 - 0.2})
        -- plot[domain = 0.4:2] ({\x}, {(1/50)*(\x-3)*(\x-1)*(\x+1) + 1 - abs(\x)/2})
        -- plot[domain = 2:-2] ({\x}, {(1/50)*(\x-3)*(\x-1)*(\x+1)})
        -- (-4, 5)
        -- cycle;

    \addplot[domain=-2:-0.4] {(1/50)*(x-3)*(x-1)*(x+1) + 1 - abs(x)/2} node[above left, midway] {$\mathscr{C}$};
    \addplot[domain=0.4:2] {(1/50)*(x-3)*(x-1)*(x+1) + 1 - abs(x)/2} node[above right, midway] {$\mathscr{C}$};

    \addplot[black, dotted, very thick] {(1/50)*(x-3)*(x-1)*(x+1)};
    \addplot[black, very thick, domain=-0.4:0.4] {(1/50)*(x-3)*(x-1)*(x+1) + 0.8} node[above, midway] {$\Sigma_{\mathbf{t}_1}$};
    \addplot[black, dashed, domain=-1.6:1.6] {(1/50)*(x-3)*(x-1)*(x+1) + 0.2} node[above, midway] {$\Sigma_{\mathbf{t}}$};

    \draw[->, thick] (axis cs:-2.3,1.15) -- (axis cs:-1.9,1.15) node[right, font=\small] {$x^i$};
    \draw[->, thick] (axis cs:-2.1,1.15) -- (axis cs:-2.1,1.35) node[above, font=\small] {$t$};

    \node at (axis cs:0,-0.15) {$\mathscr{S} = \{ t = f(x) \}$};
\end{axis}
    \end{tikzpicture}
    \qquad
    \begin{tikzpicture}[scale=0.5]
\begin{axis}[
    axis lines=none,
    domain=-3:3,
    samples=100,
    width=15cm,
    height=9cm,
    xmin=-2.5, xmax=2.5,
    ymin=-0.5, ymax=2.5,
    thick
]

    \fill [lightgray, opacity=0.5]
        (-2, -0.3)
        -- plot[domain = -2:-0.4] ({\x}, {1 - abs(\x)/2})
        -- plot[domain = -0.4:0.4] ({\x}, {+ 1 - 0.2})
        -- plot[domain = 0.4:2] ({\x}, {1 - abs(\x)/2})
        -- plot[domain = 2:-2] ({\x}, {0})
        -- cycle;

    \addplot[domain=-2:-0.4] {1 - abs(x)/2} node[above left, midway] {$\mathscr{C}$};
    \addplot[domain=0.4:2] {1 - abs(x)/2} node[above right, midway] {$\mathscr{C}$};

    \addplot[black, dotted, very thick] {0};
    \addplot[black, very thick, domain=-0.4:0.4] {0.8} node[above, midway] {$\Sigma_{\mathbf{t}_1}$};
    \addplot[black, dashed, domain=-1.6:1.6] {0.2} node[above, midway] {$\Sigma_{\mathbf{t}}$};

    \draw[->, thick] (axis cs:-2.3,1.15) -- (axis cs:-1.9,1.15) node[right, font=\small] {$y^i$};
    \draw[->, thick] (axis cs:-2.1,1.15) -- (axis cs:-2.1,1.35) node[above, font=\small] {$\mathbf{t}$};

    \node at (axis cs:0,-0.15) {$\mathscr{S} = \{ \tstar = 0 \}$};
\end{axis}
\end{tikzpicture}
\captionsetup{justification = centering}
\caption{An illustration of \cref{in:thm:main}. We show the existence of a solution to \eqref{in:eq:main} in the shaded domain, with the left picture showing the region in $(t, x)$ coordinates and the right picture showing the region in transformed $\tstar = t - f(x), y^i = x^i$ coordinates. The past boundary of the region is such that the solution $\phi$ exhibits ODE blow-up on $\{ t = f(x) \}$, with suitable scattering data $\psi$.}
\label{fig:backward}
\end{figure}
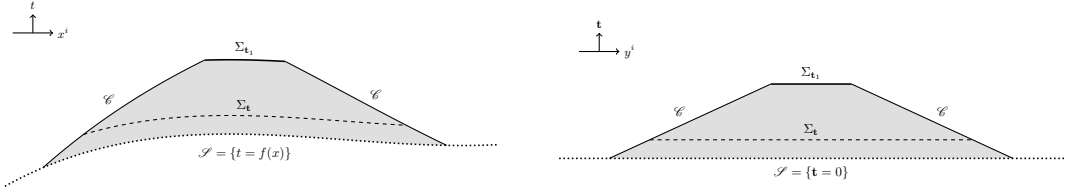

For our second result, concerning the stability of the singularity, we first present the theorem for perturbations of the model blow-up solution \eqref{in:eq:model}, which has induced Cauchy data $\phi = c_p$ and $\partial_t \phi = - \alpha_p c_p$ at $t = 1$. For the more general result see \cref{in:cor:general}.

\begin{theorem}[Rough version of \cref{in:thm:main2_precise}] \label{in:thm:main2}
    There exists $\epsilon_* > 0$ such that, for all $0 < \epsilon < \epsilon_*$, the following holds: suppose $s \geq s_0\coloneqq202 \kappa_p + \frac{n}{2} + 2$, and let $(\phi_0, \phi_1) \in H^{s+2}(\BB_6) \times H^{s+1}(\BB_6)$, such that
    \begin{equation}\label{in:eq:forward_ass}
        \| \phi_0 - c_p \|_{H^{s_0+2}} + \| \phi_1 + \alpha_p c_p \|_{H^{s_0+1}} \leq \epsilon. 
    \end{equation}
    Consider the solution $\phi$ to \eqref{in:eq:main} with initial data $\phi|_{t = 1} = \phi_1$ and $\partial_t \phi|_{t = 1} = \phi_0$. Then there exists functions $f, \psi\in H^{s-202\kappa_p}(\BB_4)$ such that $\phi$ can be extended to the domain $\mathcal{M} = \{ |x| < 3(1 + t),  f(x) < t \leq 1 \}$, and exhibits ODE type blow-up on the spacelike hypersurface $\Sigma = \{ t = f(x) \} \cap \mathcal{M}$, with auxiliary scattering data $\psi$. See \cref{fig:forward} for a visual representation.
\end{theorem}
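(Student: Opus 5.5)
Note that the data assignment in the statement appears swapped; I read it as $\phi|_{t=1}=\phi_0$, $\partial_t\phi|_{t=1}=\phi_1$. The plan is a bootstrap in renormalized, self-similar variables adapted to the blow-up. The blow-up time now varies with $x$, so I avoid fixing the surface in advance. Instead I introduce the unknown $u = \phi^{-1/\alpha_p} = \phi^{-(p-1)/2}$, which vanishes exactly on the singular set. For the model, $u = c_p^{-1/\alpha_p} t$ is smooth and linear, and blow-up of $\phi$ becomes the vanishing of $u$. Substituting $\phi = u^{-\alpha_p}$ into \eqref{in:eq:main} and multiplying by $u^{\alpha_p+2}$ gives a quasilinear equation of the schematic form
\[
u\,\Box u - (\alpha_p+1)\,\eta^{-1}(\partial u,\partial u) = \alpha_p^{-1}.
\]
This is singular only through the factor $u$ in front of the principal part. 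The model solution satisfies $-(\alpha_p+1)(\partial_t u)^2 = \alpha_p^{-1}\cdot(-1)$, which reproduces the constant $c_p$.

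The blow-up surface is then $\Sigma_f = \{u=0\}$. Where $\partial u$ is close to $\partial_t u_{\mathrm{model}}$ this set is a graph $t=f(x)$, and it is spacelike with $|\partial f|$ small. The proof proceeds in the following steps.

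\begin{enumerate}
\item \textbf{Local existence and the bootstrap region.} Solve locally from $t=1$, using finite speed of propagation inside the cone $|x|<3(1+t)$. Then propagate backwards in $t$ in a bootstrap region where $\partial_t u$ stays close to $-c_p^{-1/\alpha_p}$ (time runs backward) and $|\nabla u|$ stays small.
\item \textbf{Energy estimates.} The linearized operator is $u\Box + (\text{first-order terms})$. It is a Fuchsian-type degenerate operator with weight $u$. I would prove weighted energy estimates $\int u^{-2k}|\partial^{\le s}(u-u_{\mathrm{model}})|^2$ in the time function $u$ itself, i.e. a foliation by level sets of $u$. The top-order loss forces the drop from $s$ to $s-202\kappa_p$, with many derivatives sacrificed as $u\to0$. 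The allowed weights are dictated by the indicial roots of the ODE $u u'' - (\alpha_p+1)(u')^2 = \cdots$ linearized at the model. These roots are tied to $\beta_p,\gamma_p,\kappa_p$, which is where the loss of about $2\kappa_p$ derivatives per step arises.
\item \textbf{Closing the bootstrap.} Show that $\partial u$ extends continuously to $\{u=0\}$, which yields $f\in H^{s-202\kappa_p}$ with $|\partial f|<1/10$ for small $\epsilon$.
\item \textbf{Extracting the scattering data.} From the asymptotic expansion of $u$ near $\Sigma_f$ in $\tstar=t-f(x)$, read off $\psi$ as the free coefficient at the resonant power in the expansion of \cref{in:eq:expansion}. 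Matching with \cref{in:thm:main} and its uniqueness statement identifies $\phi$ as the scattering solution with data $(\psi,f)$.
\end{enumerate}

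The main obstacle is the degenerate energy estimate near $u=0$. Its multiplier must absorb the first-order term $-(\alpha_p+1)2\,\eta^{-1}(\partial u,\partial\cdot)$, which has a sign only thanks to the model's profile. Commuting derivatives also produces borderline terms that must be absorbed by choosing the weight exponent large. That choice is tied to $\kappa_p$ and is what costs the derivatives.

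A viable alternative is to recover this result from \cref{in:thm:main}. One would solve the scattering problem from data $(\psi,f)$ near $(0,0)$ and show the resulting map to Cauchy data at $t=1$ is a local diffeomorphism of the relevant Sobolev spaces, via continuous dependence and inversion. That route, however, needs tame estimates in the style of Nash--Moser because of the derivative loss. I would therefore pursue the direct bootstrap first.
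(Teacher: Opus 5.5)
Your renormalised unknown is the paper's: $u=\phi^{-1/\alpha_p}$ is a constant multiple of the $\tau$ of \cref{lem:newtime}, and your equation is equivalent to \eqref{eq:nlw_tau}. Foliating by level sets of $u$, with a weight exponent chosen large in terms of $\kappa_p$, also matches what the paper does for $W=\partial_t\tau$, $V^i=\partial_{x^i}\tau$.

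The gap is Step 3. It is the heart of the matter, and you give no mechanism for it. Energy estimates run backwards towards $\{u=0\}$ cannot give decay of the perturbation, because $\partial u-\partial u_{\mathrm{model}}$ tends to a nonzero limit (the local boost fixed by $\partial f$). What they do give is growth of high norms, $\|\Vtop J\|_{H^K(\Shat_\tau)}\lesssim\epsilon\,\tau^{-\tilde q}$ with $\tilde q=200\kappa_p$. That alone neither keeps $\partial u$ close to the model, as your bootstrap requires, nor produces a limit at $u=0$: already $\tau\partial_\tau J=O(\tau^{-\tilde q})$ is not integrable against $d\tau/\tau$.

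The paper closes the argument with three ingredients that your plan lacks.
(i) It commutes only with $\partial_z$ in $(\tau,z)$ coordinates, which commutes with $\tau\partial_\tau$ and $\tau\partial_z$. Hence $\tilde q$ is independent of the order $K$. A transversal commutator such as $\partial_t$ instead costs a power of $u$ each time, since $[\partial_t,u\Box]=(\partial_tu)\Box$.
(ii) It uses the first-order system of \cref{prop:transport}, $\tau\partial_\tau U^i=W^{-2}\tau\partial_{z^i}W$ and $\tau\partial_\tau\mathring\Omega^2=2\kappa_p\mathring\Omega^2+2\tau\partial_{z^i}V^i$. Its linear part has coefficients $0$ and $2\kappa_p\ge0$, so backward integration is stable.
(iii) It handles the derivative loss in the error $\tau\partial_z(\cdot)$ by Gagliardo--Nirenberg interpolation between $L^\infty$ and the growing $H^K$ norm. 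This gives $\tau\|\partial_zX\|_{L^\infty}\lesssim\|X\|_{L^\infty}^{1-\sigma}\epsilon^{\sigma}\tau^{1-\tilde q\sigma}$ with $\sigma=(K-\frac n2)^{-1}$, which is integrable in $d\tau/\tau$ exactly when $K>\tilde q+\frac n2$.

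This interpolation requirement, not an indicial-root count, is where $s_0\approx200\kappa_p+\frac n2$ comes from. The indicial roots ($0$ and $2\kappa_p+1$ for $u$) only explain why the underlying ODE is stable.

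Step 4 has the same problem in milder form. To apply the uniqueness part of \cref{in:thm:main1_precise}, you must already know that $\phi$ has the expansion of \cref{geo:def:scattering_sol} with remainder in the right space, which is exactly what needs proof. The paper obtains this by integrating the transport equations in $\tau$ repeatedly. Each integration trades one $\partial_z$-derivative for one power of $\tau$, until the weight $\tau^{-\tilde q}$ is removed and the $\tau^{2\kappa_p}$ coefficient $\mathfrak w$ of $\mathring\Omega^2$ is isolated (\cref{back:lem:recovery}). This accounts for the further loss of $\lfloor\tilde q+1\rfloor+\lfloor2\kappa_p\rfloor$ derivatives. Afterwards, \cref{not:lem:comparison} turns $(W,V^i)$ back into a singular solution $\phi$ and yields $(f,\psi)$.
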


Throughout this work, we will refer to \cref{in:thm:main} as the scattering construction and \cref{in:thm:main2} as the stability problem. 
Note that via the time reflection symmetry of the main equation \eqref{in:eq:main}, these constructions can apply equally to a singularity to the future or to the past of the hypersurface on which initial data is prescribed. 

\begin{figure}
    \begin{tikzpicture}[scale=0.5]
\begin{axis}[
    axis lines=none,
    domain=-3:3,
    samples=100,
    width=15cm,
    height=9cm,
    xmin=-2.5, xmax=2.5,
    ymin=-0.5, ymax=2.5,
    thick
]

    \fill [lightgray, opacity=0.5]
        (-2.2, 0.8) -- (2.2, 0.8) 
        .. controls (1.5, 0.5) .. (0.7, 0.03)
        -- plot[domain = 0.7:-0.7] ({\x}, {(1/50)*(\x-3)*(\x-1)*(\x+1)})
        -- (-0.7, 0.03)
        .. controls (-1.5, 0.4) .. (-2.2, 0.8);

    \draw[dotted] (-2.2, 0.8) -- (-0.3, 0.05);
    \draw[dotted] (2.2, 0.8) -- (0.3, 0.05);
    \draw (-2.2, 0.8) .. controls (-1.5, 0.4) .. (-0.7, 0.03) node[below left, midway] {$\mathcal{C}$};
    \draw (+2.2, 0.8) .. controls (1.5, 0.5) .. (0.7, 0.03) node[below right, midway] {$\mathcal{C}$};

    \addplot[black, dotted, very thick] {(1/50)*(x-3)*(x-1)*(x+1)};
    \addplot[black, very thick, domain=-2.2:2.2] {0.8} node[above, midway] {$\{ t = 1 \}$};
    \addplot[black, dashed, domain=-1.0:1.0] {(1/50)*(x-3)*(x-1)*(x+1) + 0.2} node[above, midway] {$\Sigma_{\tau}$};

    \draw[->, thick] (axis cs:-2.3,1.15) -- (axis cs:-1.9,1.15) node[right, font=\small] {$x^i$};
    \draw[->, thick] (axis cs:-2.1,1.15) -- (axis cs:-2.1,1.35) node[above, font=\small] {$t$};

    \node at (axis cs:0,-0.15) {$\mathcal{S}$};
\end{axis}
\end{tikzpicture}
\qquad
\begin{tikzpicture}[scale=0.5]
\begin{axis}[
    axis lines=none,
    domain=-3:3,
    samples=100,
    width=15cm,
    height=9cm,
    xmin=-2.5, xmax=2.5,
    ymin=-0.5, ymax=2.5,
    thick
]

    \fill [lightgray, opacity=0.5]
        (-2.2, 0.8)
        -- plot[domain=-2.2:2.2] ({\x}, {0.8+0.01*(\x-2.2)*(\x+2.2)*(\x-0.5)*(\x+0.1)})
        -- (2.2, 0.8)
        -- (0.7, 0)
        -- (-0.7, 0)
        -- cycle;

    \draw[dotted] (-2.2, 0.8) .. controls (-1.1, 0.4) .. (-0.3, 0);
    \draw[dotted] (2.2, 0.8) .. controls (1.1, 0.42) .. (0.3, 0);
    \draw (-2.2, 0.8) -- (-0.7, 0) node[below left, midway] {$\mathcal{C}$};
    \draw (+2.2, 0.8) -- (0.7, 0.) node[below right, midway] {$\mathcal{C}$};

    \addplot[black, dotted, very thick] {0};
    \addplot[black, very thick, domain=-2.2:2.2] {0.8 + 0.01*(x-2.2)*(x+2.2)*(x-0.5)*(x+0.1)} node[above, midway] {$\{ t = 1 \}$};
    \addplot[black, dashed, domain=-1.0:1.0] {0.2} node[above, midway] {$\Sigma_{\tau}$};

    \draw[->, thick] (axis cs:-2.3,1.15) -- (axis cs:-1.9,1.15) node[right, font=\small] {$z^i$};
    \draw[->, thick] (axis cs:-2.1,1.15) -- (axis cs:-2.1,1.35) node[above, font=\small] {$\tau$};

    \node at (axis cs:0,-0.15) {$\mathcal{S}$};
\end{axis}
\end{tikzpicture}
\captionsetup{justification = centering}
\caption{An illustration of \cref{in:thm:main2}. We show that regular perturbations of the model ODE-blow up data at $\{ t = 1 \}$ only has a perturbative effect on the (past) singular boundary $\mathscr{S}$.}
\label{fig:forward}
\end{figure}
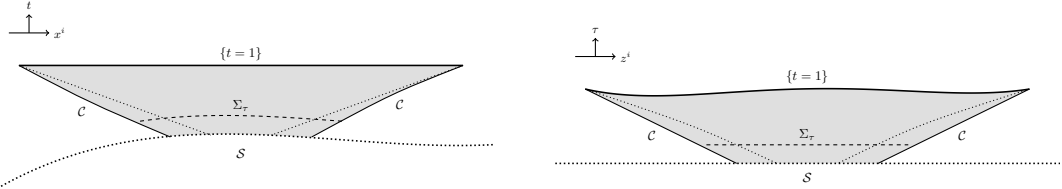

\subsection{Background on focusing semilinear waves}\label{in:sec:background}

The equation \cref{in:eq:main} has a large history, that we cannot review in all detail, so we mainly focus on the studies of ODE type singularity formation, but see \cite{strauss_nonlinear_1990,tao_nonlinear_2006,hillairet_smooth_2010, collot_type_2014,cazenave_solutions_2019,duyckaerts_soliton_2022,donninger_spectral_2023} and references therein for more details.
	
Given smooth Cauchy data for \cref{in:eq:main} that forms a singularity in finite time, either to the future or to the past, one can define $\mathcal{D}$ to be the maximal domain of existence and $\Sigma^{\pm} = \partial^{\pm} \mathcal{D}$ the future and past boundaries.
Let us discuss the geometry of $\Sigma^{\pm}$ in some detail.
In general, $\Sigma^{\pm}$ is only known to be of Lipschitz regularity (\cite{alinhac_blowup_1995}), see \cref{fig:singularity_picture}. For the sake of exposition, we focus on $\Sigma = \Sigma^{+}$ in the below discussion, with the past boundary $\Sigma^{-}$ being analogous.

We denote $\mathscr{S}\subset \Sigma$ to be the \emph{spacelike part} of $\Sigma$, given by points  with local Lipschitz constant strictly smaller\footnote{Provided that $\Sigma$ is more regular, say $C^1$, then this condition is easily seen to be equivalent to the usual requirement, that the normal is timelike.} than 1.
    It is not difficult to see that any two points $p_1,p_2\in\mathscr{S}$ become causally disconnected in finite time before the singularity, therefore in a geometric sense we cannot differentiate whether blow up happens at $p_1$ or $p_2$ first, even if with respect to the usual Minkowski time coordinate we can order these points.
	Therefore, connected components of $\mathscr{S}$ should be regarded as forming their singularities at the same time.
    
    We also introduce the noncharacteristic points in $\Sigma$, following \cite{merle_determination_2003}, given by points $(t_1,x_1)\in\mathscr{R}\subset\Sigma$ with the property that there is a truncated spacelike cone, $\{(t-t_1)=\epsilon\abs{x-x_1},t_0<t<t_1\}$ for some $\epsilon<1$, that is a subset of the domain of existence $\mathcal{D}$.
    A priori, we have the inclusion $\mathscr{S}\subset \mathscr{R}$.
    
    In one dimension, the works \cite{merle_determination_2003,merle_openness_2008,merle_existence_2009,cote_construction_2012} show that $\mathscr{R}$ is $C^1$ regular open subset of $\Sigma$ and characteristic points $\mathscr{R}^{\mathrm{c}}=\Sigma\setminus\mathscr{R}$ form a discrete set. Moreover, it holds that $\mathscr{I}=\mathscr{R}$, i.e.~\emph{all} noncharacteristic points are spacelike.
    In \cite{nouaili_1_2008}, the $C^1$ regularity of $\mathscr{R}$ was upgraded to have $C^{1, \alpha}$ regularity, for some $\alpha > 0$.
    In particular, they show:
    \begin{theorem}[\cite{merle_openness_2008,nouaili_1_2008}]\label{in:thm:MZ}
    	Let $\phi$ be a finite energy solution to \cref{in:eq:main}, i.e.: $(\phi|_{t=0},\partial_t\phi|_{t=0})\in \dot{H}^1\times L^2$ with maximum domain of existence $\D$.
    	Then the boundary of maximal existence $\Sigma^+=\partial^+\D$ is a non-empty Lipschitz curve with characteristic points forming a discrete subset of $\Sigma$ and the non-characteristic part of $\Sigma$ is open and $C^{1,\alpha}$ regular for some $\alpha>0$.
    \end{theorem}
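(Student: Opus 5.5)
This is the Merle--Zaag theory in one space dimension, so the plan follows their strategy in self-similar variables. For each $x_0$ and $T_0 = T(x_0)$, where $\Sigma^+ = \{t = T(x)\}$, we introduce
\[
w_{x_0}(y,s) = (T_0 - t)^{\alpha_p}\phi(t,x), \qquad y = \frac{x - x_0}{T_0 - t}, \quad s = -\log(T_0 - t).
\]
The function $w$ solves a damped wave equation on $|y|<1$ with a Lyapunov functional $E(w)$, coming from the weighted energy of Antonini--Merle. The steps are as follows.

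\emph{Step 1.} Lipschitz regularity and nonemptiness of $\Sigma^+$. For finite energy data, blow-up in finite time follows from Levine-type concavity arguments applied to a localized version of the equation. Finite speed of propagation gives $|T(x)-T(x')| \le |x-x'|$, so $\Sigma^+$ is a $1$-Lipschitz graph.

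\emph{Step 2.} Uniform bounds in similarity variables. Using monotonicity of $E$ and a lower bound on $E$ (otherwise blow-up would occur earlier), we derive $H^1\times L^2$ bounds on $w_{x_0}$ uniform in $s$ and in $x_0$ ranging over compact sets of non-characteristic points. We then classify the $\omega$-limit set: the stationary solutions of finite energy are the one-parameter family $\kappa(d,y) = c_p(1-d^2)^{1/(p-1)}(1+dy)^{-\alpha_p}$, $|d|<1$, together with $0$. Energy dissipation forces convergence to this set, and at non-characteristic points $w_{x_0}$ converges to some $\kappa(d(x_0),\cdot)$.

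\emph{Step 3.} Exponential convergence via modulation. Linearizing around $\kappa(d,\cdot)$ gives one unstable direction, which is generated by time translation, one null direction, generated by Lorentz boosts, and a spectral gap for the rest. Modulating $d = d(s)$ and using the energy structure, we obtain
\[
\|w_{x_0}(s) - \kappa(d(x_0))\| \le Ce^{-\mu s}
\]
for some $\mu>0$, uniformly near a non-characteristic point. Comparing neighbouring self-similar frames then gives $T'(x_0) = d(x_0)$ in the blow-up profile. In particular $|T'| < 1$, so non-characteristic points are spacelike. The exponential rate yields $|d(x)-d(x')| \le C|x-x'|^{\alpha}$, with $\alpha$ tied to $\mu$, which gives the $C^{1,\alpha}$ regularity. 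Openness follows because the cone condition is stable under small perturbations of $x_0$ once the uniform convergence holds.

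\emph{Step 4.} Discreteness of characteristic points. At a characteristic point, $w$ must instead decompose asymptotically into a sum of $k\ge 2$ decoupled solitons $\kappa(d_i(s))$ with $d_i\to\pm1$ and alternating signs. Using this decomposition together with the Lipschitz bound on $T$ and the openness from Step 3, one shows that characteristic points cannot accumulate.

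The main obstacle is Step 4, together with the degenerate null direction in Step 3. The soliton decomposition at characteristic points requires a delicate analysis of the ODE for the interaction of the solitons. Since the statement is quoted from \cite{merle_openness_2008,nouaili_1_2008}, in the paper I would simply invoke those works for Step 4 and for the $C^{1,\alpha}$ upgrade rather than reproduce the argument.
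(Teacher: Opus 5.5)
The paper gives no proof of this theorem. It is imported from \cite{merle_openness_2008,nouaili_1_2008} and the related Merle--Zaag works cited just before it, so the citation is the whole proof. Your Steps 2--4 accurately outline that programme: similarity variables with the Antonini--Merle Lyapunov functional, convergence to the stationary family at non-characteristic points, and modulation in $d$ (eigenvalue $1$ from moving the blow-up time, $0$ from boosts). The same holds for $T'(x_0)=d(x_0)$, the H\"older bound on $d$ from the exponential rate, and the soliton decomposition at characteristic points. Deferring Step 4 and the $C^{1,\alpha}$ upgrade to those references is what the paper itself does.

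The one step that genuinely fails is Step 1. Finite energy does not imply finite-time blow-up: $\phi\equiv 0$ is a global finite-energy solution with $\Sigma^+=\emptyset$. Levine's concavity argument needs a sign condition such as negative energy, which finite energy does not supply. Non-emptiness of $\Sigma^+$ therefore cannot be proved and must be a hypothesis. That is how the statement is meant: the paper's preceding paragraph starts from data ``that forms a singularity in finite time'', and Merle--Zaag always work with a solution that blows up. Once $T(x_0)<\infty$ at a single point, finite speed of propagation gives $T<\infty$ everywhere and $|T(x)-T(x')|\le |x-x'|$, as you say.

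There are also two smaller corrections.
\begin{enumerate}
\item The non-zero stationary solutions are $\pm\kappa(d,\cdot)$, not only $\kappa(d,\cdot)$, since the nonlinearity is odd. You need the minus branch anyway for the alternating signs in Step 4.
\item Openness in Step 3 should not be deduced from convergence ``uniformly near'' $x_0$, since that already presupposes that nearby points are non-characteristic. In Merle--Zaag, openness comes from a trapping statement for the similarity-variable flow. Closeness of $(w_x,\partial_s w_x)$ to $(\pm\kappa(d^*,\cdot),0)$ at one time $s^*$, together with the energy lower bound, forces exponential convergence to some $\pm\kappa(d_\infty,\cdot)$. This is applied at nearby $x$ through continuity of $x\mapsto (w_x,\partial_s w_x)(s^*)$, which uses that $T$ is Lipschitz.
\end{enumerate}
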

    While similar results \cite{merle_n_setup, merle_n_stability} have been shown for perturbations of the model ODE blow-up solution, under a \emph{subconformal} restriction on $p$ and $n$, the phenomenology of $\mathscr{S}$ in higher dimensions is much richer.

	For instance, we briefly note that for \cref{in:eq:main}, many of the singularities illustrated on \cref{fig:singularity_picture} do occur in large spacial dimensions.
	In particular, non-characteristic, non-spacelike singularities exist, where $\Sigma^+$ is the forward light cone of a point locally.
    For example, for  $d\geq 5, p=3$ there exists $c_1(d),c_2(d) > 0$ such that \cref{in:eq:main} has the following self-similar solution, in which $\Sigma^+$ is the forward Minkowskian light cone emanating from $(0, 0) \in \R^{d+1}$, see \cite{glogic_co-dimension_2021}.
    \begin{align} \label{in:eq:self_similar}
        \phi=\frac{1}{t} \cdot \frac{c_1(d)}{1 + c_2(d) |x|^2 t^{-2}}.
	\end{align}
    The (codimension-one) stability of these  for $d = 7$ were also studied in \cite{glogic_co-dimension_2021}, as well as a similar example for the $p=2$ case in \cite{csobo_blowup_2024}.
	These have been suggested to be threshold solutions between the stable ODE blow-up and dispersive regimes \cite{glogic_threshold_2020}.
	We refer to these as \emph{locally naked singularities} -- following \cite{moortel_breakdown_2022} -- due to the presence of a Cauchy horizon\footnote{We remark that while the explicit solution \cref{in:eq:self_similar} is smooth across $\{r=t\}$, perturbed solutions retaining this qualitative picture may be of limited regularity across this hypersurface.}, which implies that a timelike observer can leave the domain of dependence of the initial data, even though the solution remains regular enough to ensure local well-posedness\footnote{Stronger null singularities, across which well-posedness fails are not known to exist for \cref{in:eq:main}.} along the observer’s trajectory. 
	The faster-than-self-similar singularities constructed in \cite{krieger_slow_2009,hillairet_smooth_2010} are also expected to belong to this class.
    
	In \cref{in:thm:main}, we are interested in constructing solutions with arbitrary smooth $\mathscr{S}$ regions.
	In \cite{kichenassamy_blow-up_1993} it was already shown that $\Box\phi=-e^{\phi}$ equation can have arbitrary analytic spacelike hypersurface as a blow up region.
	This was extended to finite  Sobolev regularity and then to the cubic $p=3$ case of \cref{in:eq:main} as well as other nonlinearities, \cite[Chapter 10]{kichenassamy_fuchsian_2007}.
	In this work, Kichenassamy also proves the analogue of \cref{in:thm:main2} using a notion of scattering data and the Nash-Moser theorem.
	In 1 dimension, a general construction in finite regularity for \cref{in:eq:main} was proved by Kilip-Visan \cite{killip_smooth_2011}.
	In higher dimensions, again many cases are known, even for $p\notin\N$ \cite{cazenave_solutions_2019,cazenave_solutions_2020}.

    Some of the above works are in low regularity setting (with initial data in the \emph{energy class} $\dot{H}^{1}\times L^2$), thus limiting the range of $p$ that can be covered for the construction.
    In contrast, we work in a high regularity setting, which is motivated by the corresponding regularity that we can be show for the stability problem in \cref{in:thm:main2}.
	That is, \cref{in:thm:main}  concludes the construction of spacelike singularity formation in arbitrary dimensions and powers $p$, in the class $C^N$ for $N \gg 1$.
    \begin{figure}[htbp]
		\centering
        \begin{tikzpicture}[scale=0.8]
        \begin{axis}[
            axis lines=none,
            domain=-6:6,
            samples=100,
            width=11cm,
            height=7cm,
            xmin=-5.5, xmax=5.5,
            ymin=-3.5, ymax=3.5,
            thick
        ]
        
        \fill [lightgray, opacity=0.5]
                (-5.5, 0.2) .. controls (-4.5, -0.1) .. (-4, -0.6)
                -- (-3, -1.6) -- (-2.3, -0.9)
                .. controls (-1.7, -0.2) and (-1.1, -0.7) .. (-0.5, 0)
                .. controls (0.1, -0.9) and (0.9, -0.5) .. (1.5, -1.6)
                .. controls (2.1, -0.8) and (2.6, -0.4) .. (3.0, -0.6)
                -- (4.2, 0.6)
                .. controls (4.9, 1.3) .. (5.5, 1.3)
                -- (5.5, -3.0)
                -- (-5.5, -3.0) -- cycle;
        
        \draw[red] (axis cs:-5.5, 0.2) .. controls (axis cs:-4.5, -0.1) .. (axis cs:-4, -0.6);
        \draw[blue] (axis cs:-4, -0.6) -- (axis cs:-3, -1.6);
        \draw[blue] (axis cs:-3, -1.6) -- (axis cs:-2.3, -0.9);
        \draw[red] (axis cs:-2.3, -0.9) .. controls (axis cs:-1.7, -0.2) and (axis cs:-1.1, -0.7) .. (axis cs:-0.5, 0);
        \draw[red] (axis cs:-0.5, 0) .. controls (axis cs:0.1, -0.9) and (axis cs:0.9, -0.5) .. (axis cs:1.5, -1.6) node [above right, midway] {$\mathscr{S}$};
        \draw[red] (axis cs:1.5, -1.6) .. controls (axis cs:2.1, -0.8) and (axis cs:2.6, -0.4) .. (axis cs:3, -0.6);
        \draw[blue] (axis cs:3, -0.6) -- (axis cs:4.2, 0.6) node [above left, midway] {$\mathcal{I}$};
        \draw[red] (axis cs:4.2, 0.6) .. controls (axis cs: 4.9, 1.3) .. (axis cs:5.5, 1.3);
        
        \node (q1) at (axis cs:-3, -1.6) [circle, draw, fill=white, inner sep=0.5mm] {};
        \node [below=0.2mm of q1] {$q_1$};
        \node (q2) at (axis cs:1.5, -1.6) [circle, draw, fill=white, inner sep=0.5mm] {};
        \node [below=0.2mm of q2] {$q_2$};
        \node (p) at (axis cs:-0.5, 0) [circle, draw, fill=white, inner sep=0.5mm] {};
        \node [above=0.2mm of p] {$p$};
        \end{axis}
        \end{tikzpicture}
        \captionsetup{justification = centering}
		\caption{Possible a priori structure of a singular surface $\Sigma^+$, with the solution existing in the grey region. In red, we have indicated the spacelike part $\mathscr{S}$; in blue, the null segments. We have also depicted a characteristic point $p$ in $\Sigma^+$, while $q_1$ and $q_2$ are examples of non-characteristic and non-spacelike singularities.}
		\label{fig:singularity_picture}
	\end{figure}
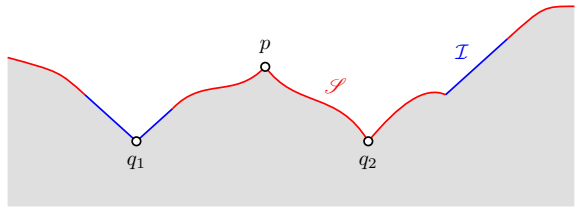

    On the other hand, in Theorem~\ref{in:thm:main2}, we are interested in the \emph{stability} and \emph{regularity} properties of the blow-up surface $\mathscr{S}$ upon perturbing initial data for \eqref{in:eq:main} away from $\Sigma$. We note that stability of a single blow-up point had previously been studied in \cite{Donninger2012, donninger_stable_2012, DonningerSupercritical2016, chatzikaleas_stable_2019}, concluding with a result for all $p > 1, d \in \N$ in \cite{Ostermann_stable_blowup}. However, these works leave open the understanding of the stability properties of the  blow-up curve $\Sigma$, and in particular do not show that the entire curve remains close to the original $\mathscr{S}$. The only previous works showing the later property are \cite{merle_openness_2008, nouaili_1_2008} for $d = 1$, where stability is shown in the $C^{1, \alpha}$ class, at least for perturbations of initial data in the energy class $\dot{H}^1 \times L^2$.

    In \cref{in:thm:main2}, for high regularity perturbations of a sufficiently regular ``background'' singular solution, we show that the blow-up curve remains $C^N$-close to the original blow-up hypersurface $\mathscr{S}$, for any $N \geq 0$, for all spacetime dimensions $d \geq 1$ and any power $p>1$. In particular, for smooth perturbations of the model ODE blow-up solution, the resulting blow-up curve is a smooth perturbation of the constant $t$-hypersurface we started with.

\subsection{Related equations}

We briefly review singularity formation for related nonlinear wave equations. While analogues of ODE blow-up -- and likely similar results to that of \cref{in:thm:main} and \cref{in:thm:main2} -- exist for many semilinear equations such as $\Box \phi = \pm (\partial_t \phi)^2$ or $\Box \phi = F(\phi) |\nabla \phi|_{\eta}^2$, we comment that
spacelike singularity formation is not known to occur for many other well-studied semilinear wave equations, such as the (hyperbolic) Yang--Mills equations or the wave-maps equations\footnote{Under the assumption of spatial homogeneity, the wave maps reduce to the equation describing geodesic motion, there is no ODE blow-up for closed target manifolds, and thus one expects no ``spacelike'' singularities, and similarly for all equations derived from any Hamiltonian system with non-negative energy functional.}.

Indeed, the most well-studied case of spacelike singularity formation for a nonlinear wave equation is for the Einstein equations possibly coupled to suitable matter models, see for instance 
\cite{fournodavlos_stable_2020,fournodavlos_asymptotically_2022,li_scattering_2024,li_kasner_2025}, where the expected behaviour has a much richer phenomenology than we could discuss here, see \cite{luk_singularities_2023}. In particular, even restricted to the spatially homogeneous setting, the associated ODE can produce interesting non-trivial, and in some settings even chaotic, behavior \cite{bkl_chaotic_1971, ringstrom_bianchi_2001}. 

Regarding other types of singularity formation for the aforementioned related equations, there is an abundant literature in both (asymptotically) self-similar and faster than-self-similar singularity formation.
For the wave-map equation from $\R^{d+1}$ to $\mathbb{S}^d$ for $d\geq3$, explicit self-similar solutions \cite{shatah_weak_1988,biernat_generic_2015} are known, which are smooth away from the point of singularity $(0,0)$, in a similar way to \eqref{in:eq:self_similar}.
That is, for this solution, $\Sigma$ is composed of a single non-characteristic point and a smooth null cone along which the solution may be smoothly extended.
The work of \cite{biernat_hyperboloidal_2019} indicate that this geometry for $\Sigma$ is stable, and thus is the ``generic'' form of singularity formation, in comparison to the focusing wave equation studied in the present article. 
Self-similar solutions have also been found for the Euler equation in \cite{merle_smooth_2019}.
Faster than self-similar singularity formation is also proved for many other hyperbolic problems as in \cite{krieger_renormalization_2008,raphael_stable_2009}.
For singularity formation to more general evolutionary equations, we refer the reader to \cite{raphael_quantized_2014,merle_blow_2019-1,kim_rigidity_2024} and references therein.

\subsection{Ideas of the proof}\label{in:sec:ideas}
We illustrate many of the ideas of the proof via the following ODE problem, which corresponds to spatially homogeneous solutions of \eqref{in:eq:main}, with methods that are translatable into the PDE setting:
\begin{equation}\label{ode:eq:ode}
	\partial_t^2\phi=\phi^p.
\end{equation}
Note that the model solution \eqref{in:eq:model} is also a solution to this ODE.

Next, upon considering $\phi = c_p t^{-\alpha_p}+ \psi t^{\beta_p}$ as an ansatz, where $\psi \in \R$, we observe that only the exponent $\beta_p$ defined in \eqref{in:eq:betadelta} yields a possible lower order perturbation. 
    A generic solution 
    that forms a singularity can be written as
	\begin{equation}\label{ode:eq:generic_sol}
		\phi=c_p (t-t_0)^{-\alpha_p}+\psi(t-t_0)^{\beta_p}+\mathcal{O}((t-t_0)^{\beta_p'}),\quad \beta_p'\coloneqq -\alpha_p+(\floor{2\kappa_p+1}+2\kappa_p)/2.
	\end{equation}
    Here, $t_0\in\R$ is the blow-up time, $\psi\in\R$ is the remaining scattering data and $\mathcal{O}$ term has infinite regularity with respect to $t\partial_t$, meaning that for all $N \in \N$, the $(t \partial_t)^N$ derivative of the remaining error term decays at the rate $O((t-t_0)^{\beta_p'})$.
	
    We discuss how to construct solutions that have the form \cref{ode:eq:generic_sol}, in analogy with the scattering construction of \cref{in:thm:main}.
    We fix $t_0=0$, and keep $\psi \in \R$ arbitrary.
    Writing $\phi=\phi_0+\bar{\phi}=c_p t^{-\alpha_p}+\bar{\phi}$, the ODE \eqref{ode:eq:ode} reduces to
	\begin{equation}\label{ode:eq:lin=nonlin}
        (\partial_t^2-\gamma_p t^{-2})\bar{\phi}=\mathcal{N}[\bar{\phi}], \quad \text{ where } \mathcal{N}_{\phi_0}[\bar{\phi}]=\phi^p- c_p^p t^{-p \alpha_p}-\gamma_p t^{-2}\bar{\phi}.
	\end{equation}
    Note that $\mathcal{N}_{\phi_0}$ is smooth in $\bar{\phi}$ for $\abs{\bar{\phi}/\phi_0}<1/2$, and vanishes quadratically: $|\mathcal{N}_{\phi_0}[\bar{\phi}]| \lesssim t^{-2 + \alpha_p} \bar{\phi}^2$.
	
	\paragraph{Ansatz:}
    The first step in our scattering instruction is that, for fixed $\psi$ and $N\in\R$ arbitrary, one can construct approximate solutions $\phi_N\in C^{\infty}((0,1))$ of the form \cref{ode:eq:generic_sol} satisfying
    \begin{equation} \label{ode:eq:approx}
		f_N\coloneqq\partial_t^2 \phi_N-\phi^p_N=\mathcal{O}(t^N).
	\end{equation}
    Roughly speaking, this follows from the fact that one can insert first $\bar{\phi} = \psi t^{ \beta_p}$ into \eqref{ode:eq:lin=nonlin}, which would generate an inhomogeneity $\mathcal{N}[\psi t^{- \beta_p}] = O(t^{\beta_p'})$. But, 
    from the fact that, on functions of the form $t^a$ for $a>\beta_p$, the linearised operator on the left hand side of \cref{ode:eq:lin=nonlin} is invertible, we can repeatedly correct the nonlinear term until we have a $O(t^N)$ error.
	In particular, we note the following computations
	\begin{subequations}\label{ode:eq:invert}
		\begin{align}
			(\partial_t^2-\gamma_pt^{-2})t^{q}=\big(q(q-1)-\gamma_p\big)t^{q-2},\\
			(\partial_t^2-\gamma_pt^{-2})t^{\beta_p}\log t=(2\beta_p-1)t^{\beta_p-2}.
		\end{align}
	\end{subequations}

	\paragraph{Exact solution:}
    Given an approximate solution $\phi_N$ solving \eqref{ode:eq:approx} for $N$ sufficiently large, we obtain an actual solution $\phi$ by an energy estimate.
	We write $\phi=\phi_N+\bar{\phi}$, so that
	\begin{equation}\label{ode:eq:energy_eq}
        (\partial_t^2 - \gamma_p t^{-2}) \bar{\phi} = \Big(f_N + \mathcal{N}_{\phi_N}[\bar{\phi}] - \underbrace{(p \phi_N^{p-1} - \gamma_p t^{-2})}_{\mathcal{O}(t^{-2 + 2 \gamma_p})} \bar{\phi}\Big).
	\end{equation}
    where the nonlinearity $\mathcal{N}_{\phi_N}[\bar{\phi}]$ is defined by $\mathcal{N}_{\phi_N} [\bar{\phi}] = \phi^p - \phi_N^p - p \phi_N^{p-1} \bar{\phi}$ and satisfies a quadratic bound $|\mathcal{N}_{\phi_N}[\bar{\phi}]| \lesssim t^{- 2 + \alpha_p} |\bar{\phi}|^2$.

	We can now construct $\bar{\phi}$ by a limiting argument.
	Truncating $f_N$ by a cutoff function $\chi(t/2^{-n})$, we solve \cref{ode:eq:energy_eq}  with prescribing $\bar{\phi}=0$ for $t \ll 2^{-n}$, and proving uniform estimates such as $\bar\phi=\mathcal{O}(t^N)$, then taking $n \to \infty$ and using a suitable compactness argument.

	To get such a uniform estimate, one may (i) multiply \cref{ode:eq:energy_eq} with $t^{-q}\partial_t\bar{\phi}$ and integrate by parts, then (ii) integrate $\alpha\partial_t (t^{-q-2}\bar{\phi}^2)$ by parts, and thus obtain
	\begin{nalign}\label{ode:eq:energy_est}
        \int_{t_0}^{t_1} t^{-q-1}q(\partial_t\bar{\phi})^2-t^{-q-3}(q+2)\gamma_p\bar\phi^2+\big[t^{-q}(\partial_t\bar{\phi})^2-\gamma_p t^{-q-2}\bar{\phi}^2\big]_{t_0}^{t_1}&=\int_{t_0}^{t_1} t^{-q}\bar{f}_N\partial_t \bar{\phi}, \\
       \alpha\int_{t_0}^{t_1} t^{-q-3}(q+2)\bar\phi^2-2t^{-q-2}\bar{\phi}\partial_t\bar{\phi}+\big[\alpha t^{-q-2}\bar{\phi}^2\big]_{t_0}^{t_1}&=0,
	\end{nalign}
	where $\bar{f}_N$ is the right hand side of \cref{ode:eq:energy_eq}.
    Let us take $q>2\beta_p$ so that $q(q+2)>16\gamma_p$, and we can take pick $\alpha$ satisfying $4\alpha^2<q(q+2)(\alpha-\gamma_p)$.
	Summing the two equations with this choice yields a coercive control over $\bar{\phi}$ on the left hand side.
	Using Cauchy-Schwartz gives the control in terms of $\bar{f}_N$.

	Applying Sobolev embedding, we can also absorb the terms $\bar{\phi}$ dependent terms of $\bar{f}_N$ into the left hand side as long as $t_1$ is sufficiently small.
	Therefore, we obtain uniform boundedness for $\bar{\phi}$, and the existence of a solution follows by compactness.
    
	Finally, let us note, that we can commute \cref{ode:eq:energy_eq} by $t\partial_t$, and since $f_N$ has extra regularity with respect to these vectorfields, so does $\bar\phi$.

    \paragraph{Uniqueness:} 
    Assume there exists two solutions $\phi_1,\phi_2$ with the same blow-up time $t_0=0$, and scattering data $\psi$.
    By construction, the ansatz part of $\phi_1,\phi_2$ must coincide.
    Writing $\bar{\phi}=\phi_1-\phi_2$, we conlcude that $\bar{\phi}=\mathcal{O}(t^N)$ for any $N$.
    Then, we write
    \begin{equation}
    	(\partial_t^2-\gamma_p t^{-2})\bar{\phi}=(\phi_1+\bar{\phi})^p-\phi_1^p-p\phi_1^{p-1}\bar{\phi},
    \end{equation}
    and apply the energy estimate from \cref{ode:eq:energy_est} to obtain
    \begin{equation}
    \norm{t^{-N}\bar{\phi}}_{L^2(0,t_1)}\lesssim \norm{t^{-N+1}\bar{\phi}}_{L^2(0,t_1)}^2.
    \end{equation}
    Taking $t_1$ sufficiently small yields that $\bar{\phi}=0$.
    This completes the sketch of \cref{in:thm:main}.

    \paragraph{Aside, solutions \emph{without} scattering data:}
    We present an alternative way to obtain \emph{non-unique} solutions to \cref{ode:eq:ode} forming an ODE type singularity at $t=0$ for a better comparison to previous works in the literature, such as \cite{cazenave_solutions_2019,cazenave_solutions_2020}.
    We may write the linearisation \cref{ode:eq:lin=nonlin} as
    \begin{equation}\label{in:eq:ode_product}
        t\partial_t t^{2\beta_p-1}t\partial_t t^{-\beta_p}\bar\phi=t^{\beta_p+1}\bar{f},
    \end{equation}
    where $\bar{f}=\mathcal{N}[\bar\phi]$.
    Note that the kernel of the linearised operator on the left is the span of $\{t^{\beta_p}, t^{-\beta_p+1}\}$, where $\beta_p-1>\alpha_p$.
    Solving \cref{in:eq:ode_product} for $\bar{f}=\mathcal{O}(t^{-c-2})$ with $c<\alpha_p$, and looking for a solution $\bar\phi$ with $ t^{-\beta_p+1}$ part vanishing, we obtain
    \begin{equation}
        t\partial_t t^{-\beta_p}\bar\phi=t^{-2\beta_p+1}\int_0^t\frac{\dd \bar{t}}{\bar{t}}\bar{t}^{\beta_p+1}\bar{f}(\bar{t})=\mathcal{O}(t^{-c-\beta_p}).
    \end{equation}
    Posing initial data for $\bar\phi$ at some $t_1$, we obtain that
    \begin{equation}
        \phi=t^{\beta_p} (t^{-\beta_p}\phi)|_{t=t_1}-\int_t^{t_1}\frac{\dd \tilde{t}}{\tilde{t}}\tilde{t}^{-2\beta+1}\int_0^{\tilde{t}}\frac{\dd \bar{t}}{\bar{t}}\bar{t}^{\beta_p+1}\bar{f}(\bar{t}),
    \end{equation}
    where the second part is in $\mathcal{O}(t^{-c})$.
    Using that $\mathcal{N}:\mathcal{O}(t^{-c})\to\mathcal{O}(t^{-c-2+(\alpha_p-c)})$ for $c<\alpha_p$, one can use the above to construct a (non-unique) solution forming a singularity at 0.

	\paragraph{Stability:}
    We next show how to prove stability for the singularity formation of the form \cref{ode:eq:generic_sol} upon applying perturbations away from $t = t_0$, i.e~the homogeneous analogue of \cref{in:thm:main2}. 
	Instead of studying \cref{ode:eq:ode} directly, we introduce the variable $\tau$ given implicitly by $c_p \tau^{-\alpha_p} =\phi$, and the quantity $\mathring{\Omega}^2 = (\partial_t\tau)^2 -1$.
	For such quantities, the expansion \cref{ode:eq:generic_sol} implies
    \begin{subequations}
        \begin{gather} \label{ode:eq:tau_asymp}
            \tau=(t - t_0) -\frac{\psi}{\alpha_pc_p}(t-t_0)^{2\kappa_p+1}+\mathcal{O}((t - t_0)^{2\kappa_p+2}),\\[0.3em] \label{ode:eq:omega_asymp}
        \text{and }\quad\mathring{\Omega}^2 =-\frac{2 \psi(2\kappa_p+1)}{c_p\alpha_p}t^{2\kappa_p}+\mathcal{O}(t^{2\kappa_p+1}).
    \end{gather}
\end{subequations}
	
    A straightforward computation using \cref{ode:eq:ode} yields the equation
	\begin{equation}\label{ode:eq:Omega}
        \tau \frac{\partial}{\partial \tau} \mathring{\Omega}^2 = 2 \kappa_p \mathring{\Omega}^2,
	\end{equation}
    which yields that $\mathring{\Omega}^2$ is in fact an exact multiple of $\tau^{2 \kappa_p}$, say $\hat{\psi} \tau^{2 \kappa_p}$. This implies that
    \[
        \frac{\partial t}{\partial {\tau}} = \sqrt{ 1 + \hat{\psi} \tau^{2 \kappa_p} },
    \]
    which -- upon applying a suitable inverse function theorem -- can be used to recover \eqref{ode:eq:tau_asymp} and \eqref{ode:eq:omega_asymp} for $t_0$ and $\psi$ suitably related to the (perturbed) initial data for $\tau(t)$ away from the singularity.

    \paragraph{Overcoming derivative loss:}
    In the PDE problem studied in \cref{in:thm:main2}, we do not simply study the ODE \eqref{ode:eq:Omega} but instead a PDE that we caricature as
    \[
        \tau \partial_{\tau} \mathring{\Omega}^2 = 2 \kappa_p \mathring{\Omega}^2 + \tau \partial_z J,
    \]
    where one expects $J$, which represents components of the Jacobian matrix of the transformation from $(t, x)$ to $(\tau, x)$ coordinates, to be smooth and bounded. As a result, this additional term is $\mathcal{O}(\tau)$ and is a ``good error term'' from the perspective of the ODE.

    However, the fact that a $\partial_z$ appears in this error term means that one must first ``overcome derivative loss'' and justify why one is allowed to estimate this term as $O(\tau)$. This is done using energy estimates, via equations that we caricature for now as
    \[
        \partial_{\tau}^2 \partial_z J = \kappa_p \tau^{-2} \mathring{\Omega}^2 \partial_z J + \kappa_p J \tau^{-1} \partial_{\tau} \partial_z J + \cdots.
    \]
    In the stability problem, we must integrate such an equation backwards in $\tau$. Indeed, we use a $\tau^{q} \partial_{\tau}$ multipler and integrate in parts to show that, for $q > 0$ chosen sufficiently large, $\tau^q \partial_z J$ is bounded.
    
    Unfortunately, this is not good enough to show that $\tau \partial_z J = O(\tau)$ as we previously desired. One instead closes the argument by showing that one can use similar energy estimates to show that $\tau^q \partial_z^K J$ is bounded, for $q > 0$ chosen independently of the order $K$. Then one may close the argument by using an interpolation argument to estimate $\partial_z J$ in terms of $J$ and $\partial_z^K J$, which is enough to yield $\tau \partial_z J = O(\tau^{1 - \frac{q}{K} -})$, which is also a ``good error term'' from the persepective of the ODE, so long as $K$ is chosen sufficiently large. This roughly completes our sketch of \cref{in:thm:main2}.

\subsection{Precise statements}
Let us state \cref{in:thm:main,in:thm:main2} precisely, explaining the role played by the additional scattering data $\psi$.

\begin{theorem}\label{in:thm:main1_precise}
    Let $s,f,\psi,\tstar,\tstar_1,\Mt$ be as in \cref{in:thm:main} and let $\Sigma_{\tilde{\tstar}}=\{\tstar=\tilde{\tstar}\}\cap\Mt$.
    Then, there exists a \underline{unique} solution of \cref{in:eq:main} obeying the following asymptotics:
    \begin{equation}\label{in:eq:expansion}
        \phi=\phi_0+\tstar^{-\alpha_p}\sum_{1 \leq j\leq\floor{2\kappa_p}}\tstar^jH^{s-j}(\Sigma_{\tstar})+\psi \tstar^{\beta_p}+\mathfrak{e}+\begin{cases}
            \tstar^{\beta_p}\log \tstar H^{s-2\kappa_p}(\Sigma_\tstar)&\text{ if }2\kappa_p\in\N\\
            0& \text{ if } 2\kappa_p\notin\N
        \end{cases}
    \end{equation}
    where $\phi_0=c_p(1-\abs{\partial f}^2)^{\frac{1}{p-1}}\tstar^{-\alpha_p}$ and $\mathfrak{e}\in \tstar^{\beta_p'}L^{\infty}((0,\tstar_1); H^{s-12\kappa_p}(\Sigma_\tstar))$ for some $\beta_p'>\beta_p$.
\end{theorem}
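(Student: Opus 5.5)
We follow the ODE sketch of \cref{in:sec:ideas}, now in the coordinates $(\tstar,y)=(t-f(x),x)$. In these coordinates the wave operator becomes
\[
-(1-\abs{\partial f}^2)\partial_\tstar^2 + 2\partial^i f\,\partial_{y^i}\partial_\tstar + \Delta f\,\partial_\tstar + \Delta_y .
\]
Set $\phi_0=c_p(1-\abs{\partial f}^2)^{\frac1{p-1}}\tstar^{-\alpha_p}$ and $\phi=\phi_0+\bar\phi$. The equation then takes the form
\[
(1-\abs{\partial f}^2)\big(\partial_\tstar^2-\gamma_p\tstar^{-2}\big)\bar\phi=\mathcal{E}+\mathcal{N}_{\phi_0}[\bar\phi]+\mathcal{L}\bar\phi .
\]
Here $\mathcal{E}=\Box\phi_0+\phi_0^p=O(\tstar^{-\alpha_p-1})$ with coefficients in $H^{s-2}$. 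The operator $\mathcal{L}$ collects the mixed and spatial derivative terms, which are lower order with respect to the $\tstar^{-2}$ scaling; each spatial derivative costs a derivative of $f$ or $\psi$ but gains a power of $\tstar$. The proof has three steps: ansatz, energy estimate with a limiting argument, and uniqueness.

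\textbf{Step 1 (ansatz).} We construct $\phi_N=\phi_0+\sum_{j\ge1}\tstar^{-\alpha_p+j}a_j(y)+\psi\tstar^{\beta_p}+\dots$ by iterating the inversion formulas \cref{ode:eq:invert}. At each order $\tstar^{q-2}$ the coefficient $q(q-1)-\gamma_p$ is nonzero unless $q=\beta_p$ or $q=1-\beta_p$. The second exponent never arises, since all powers appearing are $\ge-\alpha_p+1>1-\beta_p$. For $q<\beta_p$ this gives $a_j\in H^{s-j}$, each step losing one spatial derivative because $\mathcal{L}$ is second order in $y$ but raises the power by one. At $q=\beta_p$ the free coefficient is $\psi$. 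If $2\kappa_p=\beta_p+\alpha_p\in\N$, a resonance occurs, and the $\tstar^{\beta_p}\log\tstar$ term with the stated $H^{s-2\kappa_p}$ coefficient absorbs it. Beyond $\beta_p$ we continue with terms $\tstar^{q}(\log\tstar)^k$, now also using derivatives of $\psi$. We stop at a finite order $N$ such that $f_N=P[\phi_N]=O(\tstar^{N})$ in $H^{s-CN}$. Choosing $N\sim 10\kappa_p$ suffices, and the derivative count is compatible with $s_0$.

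\textbf{Step 2 (exact solution).} We write $\phi=\phi_N+\bar\phi$, truncate $f_N$ by $\chi(\tstar/2^{-n})$, and solve the resulting linear-plus-nonlinear wave equation forward from $\tstar\approx 2^{-n}$ with zero data. This is well posed because the slices $\Sigma_\tstar$ are spacelike, as $\abs{\partial f}<1/10$, and the domain $\Mt$ has a spacelike or null lateral boundary $\{|x|/2=1-t\}$, so no boundary data is needed. For uniform bounds we use the PDE version of \cref{ode:eq:energy_est}: the multiplier $\tstar^{-q}\partial_\tstar\bar\phi$ combined with the Hardy-type correction $\alpha\,\partial_\tstar(\tstar^{-q-2}\bar\phi^2)$, together with the Minkowski energy current for the spatial gradient terms. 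Taking $q>2\beta_p$ makes the bulk terms coercive, and the spatial parts have a good sign from the $\tstar^{-q-1}$ weight. The linear error $(p\phi_N^{p-1}-\gamma_p(1-\abs{\partial f}^2)^{-1}\tstar^{-2})\bar\phi=O(\tstar^{-1})\bar\phi$, the mixed derivative terms, and the quadratic $\mathcal{N}$ (bounded by $\tstar^{-2+\alpha_p}\bar\phi^2$ through Sobolev embedding for $\bar\phi=O(\tstar^{N})$ small) are absorbed for $\tstar_1$ small. We then commute with $\partial_y^k$ and $\tstar\partial_\tstar$, for $k\le s-12\kappa_p$, to get higher regularity; the commutators with $f$-dependent coefficients cost derivatives of $f$, which the margin in $s$ pays for. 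We conclude uniform $\tstar^{N'}$-weighted bounds, pass $n\to\infty$ by weak compactness, and obtain $\mathfrak{e}=(\phi_N-\text{explicit terms})+\bar\phi\in\tstar^{\beta_p'}L^\infty H^{s-12\kappa_p}$.

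\textbf{Step 3 (uniqueness).} Take two solutions with expansion \cref{in:eq:expansion}. Plugging the expansion into the equation forces the coefficients $a_j$ and the log term to be determined by $f$, as in Step 1, and $\psi$ is common to both. We expect the main obstacle here: the difference $\bar\phi$ is a priori only $O(\tstar^{\beta_p'})$, not $O(\tstar^N)$. We therefore need the energy estimate with the weight $q$ close to $2\beta_p'$, which requires coercivity to hold for all $q>2\beta_p-1$, i.e.\ just above the kernel element $\tstar^{\beta_p}$. This seems plausible since the condition $4\alpha^2<q(q+2)(\alpha-\gamma_p)$ is only a sufficient one, and a sharper Hardy inequality $\int\tstar^{-q-3}\bar\phi^2\le \frac{4}{(q+2)^2}\int\tstar^{-q-1}(\partial_\tstar\bar\phi)^2$ gives coercivity once $q$ exceeds roughly $2\beta_p-1$. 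If that fails, we bootstrap the decay of $\bar\phi$ by repeatedly applying the ODE representation \cref{in:eq:ode_product} along $\tstar$, treating the spatial terms as sources, until $\bar\phi=O(\tstar^N)$. After that the argument concludes as in the ODE sketch: $\norm{\tstar^{-N}\bar\phi}\lesssim\tstar_1^{c}\norm{\tstar^{-N}\bar\phi}$, hence $\bar\phi=0$.
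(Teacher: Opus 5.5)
Your proposal follows essentially the paper's route: an ansatz built by iteratively inverting $\partial_\tstar^2-\gamma_p\tstar^{-2}$ (with the $\tstar^{\beta_p}\log\tstar$ resonance when $2\kappa_p\in\N$), a weighted energy estimate with multiplier $\tstar^{-q}T$ plus a lower-order $\Phi^2$ correction and commutation by $\partial_y$ and $\tstar\partial_\tstar$, and a truncation-plus-compactness limit. For uniqueness the paper uses exactly your fallback: it repeatedly inverts the leading operator on the difference, whose a priori $o(\tstar^{\beta_p})$ decay excludes the kernel, until the decay exceeds the energy threshold, and then applies the energy estimate; your sharp-Hardy route is not carried out in the paper, and if you pursue it, note that the fiberwise computation gives coercivity precisely for $q(q+2)>4\gamma_p$, i.e.\ $q>2\beta_p-2$, while finiteness of the weighted energy of the difference forces $q<2\beta_p'-2$, so you must work in that window rather than with $q$ near $2\beta_p'$.
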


\begin{remark}[Existence interval]
    Although the model ODE solution \cref{in:eq:model} exists in the region $\Mt$ with any value of $\tstar_1 > 0$ (i.e.~it exists globally to the future), it is easy to argue that one expects a degeneration $\tstar_1 \to 0$ for large values of $\psi$ and $|\partial f|$. 
    Even in the spatially homogeneous setting with $f \equiv 0$ and $\psi \equiv \psi_{\mathrm{const}} \neq 0$, one may use the ODE \eqref{ode:eq:ode} to find that the time of existence scales as $\tstar_1 \sim \psi_{\mathrm{const}}^{-\frac{p-1}{p+1}}$.
\end{remark}

\begin{remark}[Regularity]
    The number of derivatives in \cref{in:thm:main1_precise} is far from sharp and we expect that already within the framework of the present article a smaller value can be achieved.
    Furthermore, note that the constructed solution in \cref{in:eq:expansion} has $s-12\kappa_p$ regularity, whereas we require $s>30\kappa_p+\frac{n+1}{2}+5$.
    The discrepancy is due to the way we prove uniqueness.
\end{remark}

\begin{remark}[Solutions without scattering data]
    Notice that in \cref{in:thm:main1_precise}, we need to use $s\sim (p-1)^{-1}$ number of derivatives as $p\to1$.
    Following an approach similar to \cite{cazenave_solutions_2019}, it is possible to construct solutions with bounded number of derivatives even in the $p\to1$ limit, using a compactness argument as done around \cref{in:eq:ode_product}.\footnote{In particular, the energy estimates of \cref{back:sec:energy} imply a bound $\norm{t^{\alpha}\Vb^k\mathring{\phi}}_{L^2}\lesssim \norm{t^{\alpha}\mathring{\phi}}_{L^2}$ for $\alpha>\alpha_p-1/2$ and $\mathring{\phi}=\phi-\phi_0$. We can combine this with an invertibility statement of $(\partial_t^2-p\phi_0^{p-1})$ in $t^{\alpha}L^2$ for $\alpha\in(\alpha_p-1/2,\beta_p-1/2)$ to improve the estimate to $\norm{t^{\alpha}\Vb^k\mathring{\phi}}_{L^2}\lesssim \norm{t^{\alpha-1}\mathring{\phi}}_{L^2}$.
    Existence follows by compactness.}
    However, these solutions are \emph{not} characterised by scattering data $\psi$, and thus are not unique as shown by \cref{in:thm:main1_precise}.
    We believe that for a scattering theory and corresponding uniqueness
    the $s\to\infty$ degeneration for $p\to1$ is necessary.
\end{remark}

\begin{theorem}\label{in:thm:main2_precise}
    Let $s,\epsilon_*$ be as in \cref{in:thm:main2}.
    Then, there exist $f\in H^{s-200\kappa_p}(\BB_4)$ and $\psi\in H^{s-202\kappa_p}(\BB_4)$, such that the unique solution $\phi$ to \cref{in:eq:main} with initial data $\phi|_{t=1}=\phi_0$ and $\partial_t\phi|_{t=1}=\phi_1$ extends to $\M = \{ |x| < 3(1 + t),  f(x) < t \leq 1 \}$, such that, for $\tstar = t - f(x)$, $\phi$ satisfies an asymptotic expansion of the following form:
    \begin{equation}\label{in:eq:expansion2}
        \phi=\phi_0+\tstar^{-\alpha_p}\sum_{j\leq\floor{2\kappa_p}}\tstar^jH^{s-200\kappa_p-j}(\Sigma_{\tstar})+\psi \tstar^{\beta_p}+\mathfrak{e}+\begin{cases}
            \tstar^{\beta_p}\log \tstar H^{s-2\kappa_p}(\Sigma_\tstar)&\text{ if }2\kappa_p\in\N,\\
            0&\text{ if } 2\kappa_p \notin\N,
        \end{cases}
    \end{equation}
    where $\phi_0=c_p(1-\abs{\partial f}^2)^{\frac{1}{p-1}}(1-f)^{-\alpha_p}$ and $\mathfrak{e}\in \tstar^{\beta_p'}L^{\infty}((0,\tstar_1); H^{s-202\kappa_p-1}(\Sigma_\tstar))$ for some $\beta_p'>\beta_p$.
    
    For $(\phi_0^{(i)},\phi_1^{(i)})$ with $i\in\{1,2\}$ satisfying \cref{in:eq:forward_ass} it furthermore holds that
    \begin{equation}\label{in:eq:sing_Cauchy}
        \norm{f^{(1)}-f^{(2)},\psi^{(1)}-\psi^{(2)}}_{H^{s -2 - 200\kappa_p} \times H^{s-2-202\kappa_p}}\lesssim\norm{\phi_0^{(1)}-\phi_0^{(2)},\phi_1^{(1)}-\phi_1^{(2)}}_{H^{s+2}\times H^{s+1}}.
    \end{equation}
\end{theorem}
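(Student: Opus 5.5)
The plan is to follow the ODE sketch of \cref{in:sec:ideas}, lifted to the PDE in adapted coordinates. We introduce the renormalised time function $\tau$ by $\phi = c_p\tau^{-\alpha_p}$. This is well defined near the singularity because $\phi>0$ and is close to the model. We then work in coordinates $(\tau,z)$ with $z=x$, or $z$ transported along a suitable vector field. The unknowns are the Jacobian components $J$ of the map $(\tau,z)\mapsto(t,x)$, together with the renormalised lapse $\mathring\Omega^2 = |\partial\tau|_\eta^2$-type quantity, normalised so that it equals $(\partial_t\tau)^2-1$ in the homogeneous case. Rewriting \cref{in:eq:main} in these variables gives a transport equation
\[
\tau\partial_\tau\mathring\Omega^2 = 2\kappa_p\mathring\Omega^2 + \tau\,\partial_z J\cdot(\ldots),
\]
together with wave-type equations for $J$. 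Their principal singular terms carry weights $\tau^{-2}\mathring\Omega^2$ and $\tau^{-1}\partial_\tau$.

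\textbf{Step 1: bootstrap on $\tau\in(0,\tau_0]$.} At $t=1$ the data \cref{in:eq:forward_ass} give $\tau$ close to $t$ and $\mathring\Omega^2$ of size $\epsilon$. The bootstrap assumptions are:
\begin{itemize}
\item the low-order pointwise bounds $|\partial_z^{\le k}J|\lesssim\epsilon$ and $|\tau^{-2\kappa_p}\mathring\Omega^2|\lesssim\epsilon$;
\item the top-order weighted energies $\tau^{q}\|\partial_z^{K}(J,\tau\partial_\tau J)\|_{L^2}\lesssim\epsilon$, with $q$ independent of $K$.
\end{itemize}
The top-order energies are closed by the backward energy estimate with multiplier $\tau^{q}\partial_\tau$, choosing $q$ large enough to dominate the $\tau^{-2}$ and $\tau^{-1}$ coefficients, exactly as in the ODE estimate \cref{ode:eq:energy_est}. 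The low-order bounds are then recovered from three ingredients:
\begin{itemize}
\item Gagliardo--Nirenberg interpolation between $\|J\|$ and $\tau^{-q}$-growing $\partial_z^K J$, giving $\tau\partial_zJ=O(\tau^{1-q/K-})$, which is integrable against $d\tau/\tau$;
\item integration of the transport equation for $\mathring\Omega^2$, i.e.\ integrating $\partial_\tau(\tau^{-2\kappa_p}\mathring\Omega^2)$;
\item Sobolev embedding, which accounts for the $n/2$ in $s_0$.
\end{itemize}
The interpolation and the repeated losses explain the large fixed losses of order $200\kappa_p$ in the regularity.

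\textbf{Step 2: limits as $\tau\to0$.} The bounds from Step 1 show that $\tau^{-2\kappa_p}\mathring\Omega^2\to\hat\psi(z)$ and $t(\tau,z)\to f(z)$ in $H^{s-C\kappa_p}$. We obtain $f$ from integrating $\partial_\tau t=\sqrt{1+\mathring\Omega^2+\ldots}$, and $|\partial f|<1$ follows from smallness. This shows that $\phi$ extends to $\{f(x)<t\le1\}$ and blows up on $\Sigma=\{t=f\}$. The domain $\mathcal{M}$ is handled by finite speed of propagation; its lateral boundary is spacelike or null.

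\textbf{Step 3: identify the expansion.} We invert $\tau\mapsto\tstar=t-f$ as in \cref{ode:eq:tau_asymp}, which expresses $\phi$ in $\tstar$ up to $O(\tstar^{\beta_p'})$ errors. The coefficients of $\tstar^{-\alpha_p+j}$ are determined by $f$ through the ansatz of \cref{in:thm:main1_precise}; this includes the $\log$ term when $2\kappa_p\in\N$. The free coefficient of $\tstar^{\beta_p}$ defines $\psi$, which is $\hat\psi$ corrected by $f$-dependent terms. Alternatively, and more cleanly, we check that $\phi$ satisfies the hypotheses of the uniqueness part of \cref{in:thm:main1_precise} with data $(f,\psi)$, and read off \cref{in:eq:expansion2}.

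\textbf{Step 4: Lipschitz dependence.} For \cref{in:eq:sing_Cauchy} we run the same energy estimates for the difference of two solutions in the common $(\tau,z)$ gauge. The difference equations are linear with coefficients controlled by Step 1, so we lose two further derivatives. Passing to the limit $\tau\to0$ then gives the difference bounds for $f$ and $\psi$.

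The main obstacle is Step 1: closing the derivative loss. The energy weight $\tau^q$ must be uniform in $K$. Moreover, the commutators $[\partial_z^K,\cdot]$ with the singular coefficients must not worsen the weight, since each costs $\tau^{-1}$ unless it is paired with a factor $\mathring\Omega^2=O(\tau^{2\kappa_p})$ or $J$. If direct commutation fails, the first fallback is to commute with $\tau\partial_z$ instead, and accept the resulting losses.
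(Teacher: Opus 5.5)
Your architecture matches the paper's: $\phi=c_p\tau^{-\alpha_p}$, coordinates $(\tau,z)$, and wave equations for the Jacobian components $J=(W,V^i)$ commuted with $\partial_z^\alpha$ only. The backward energy estimate has weight $\tau^{-2\tilde q}$ with $\tilde q\geq 200\kappa_p$ independent of $K$. Gagliardo--Nirenberg interpolation gives $\tau\partial_z J=O(\tau^{1-\tilde q\sigma})$, $\sigma=(K-\frac{n}{2})^{-1}$, inside the transport equations, and a difference estimate gives \cref{in:eq:sing_Cauchy}.

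\textbf{The gap.} One ingredient is wrong, and it breaks both your bootstrap and your extraction of $\psi$: in the PDE setting $\mathring\Omega^2$ is \emph{not} $O(\tau^{2\kappa_p})$. The transport equation \cref{eq:transport_omega} reads $\tau\partial_\tau\mathring\Omega^2=2\kappa_p\mathring\Omega^2+2\tau\partial_{z^i}V^i$. Here $V^i\to\mathfrak V^i(z)$ with $\partial_i\mathfrak V^i\neq0$ in general; for instance $\mathfrak V^i=\partial_i f(1-|\partial f|^2)^{-1/2}$, whose divergence is a mean-curvature-type quantity of $\{t=f\}$. Hence $\mathring\Omega^2=\frac{2}{1-2\kappa_p}\tau\,\partial_i\mathfrak V^i+o(\tau)$.

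Equivalently, integrating $\partial_\tau(\tau^{-2\kappa_p}\mathring\Omega^2)=2\tau^{-2\kappa_p}\partial_{z^i}V^i$ backwards gives a term growing like $\tau^{1-2\kappa_p}$, since $2\kappa_p>2$. Three consequences follow:
\begin{enumerate}
\item your bootstrap bound $|\tau^{-2\kappa_p}\mathring\Omega^2|\lesssim\epsilon$ can never be recovered;
\item the limit $\hat\psi=\lim_{\tau\to0}\tau^{-2\kappa_p}\mathring\Omega^2$ in Step 2 does not exist;
\item your treatment of commutator terms relies on the same false decay.
\end{enumerate}
Compare \cref{not:def:scattering_VW}, where $\mathring\Omega^2=\tau\Hs^{s-1;2\kappa_p}+\tau^{2\kappa_p}\mathfrak w+\dots$. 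The exact identity $\mathring\Omega^2=\hat\psi\tau^{2\kappa_p}$ holds only for the ODE.

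\textbf{What the paper does instead.} It bootstraps only $|\vec J_0|\leq\epsilon^{1/2}\tau^{1/2}$, where $\vec J_0$ collects $\mathring\Omega^2$, $\tau\partial J$ and $\tau^2\partial^2 J$. This is all the energy estimate needs:
\begin{enumerate}
\item by \cref{lem:commutation}, the top-order commutator terms have the form $\mathbf B(\partial_z^{|\alpha|}\vec J,\vec J_0)$ and only need to be small;
\item the genuinely $O(1)$ linear terms $\mathbf L(\partial_z^\alpha\vec J)$, which come from $\partial_z^\alpha\mathring\Omega^2$, are absorbed by taking $\tilde q\geq200\kappa_p$.
\end{enumerate}
This weaker bound is recoverable from the transport system, which gives $\mathring\Omega^2=O(\tau)$.

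The scattering datum is then extracted as in \cref{back:lem:recovery}. One integrates the transport equations iteratively to peel off the terms $\tau^jH^{\cdot-j}$, $1\leq j\leq\floor{2\kappa_p}$, which are determined by $\mathfrak f$, plus a $\tau^{2\kappa_p}\log\tau$ term if $2\kappa_p\in\N$. Then $\mathfrak w$ is defined as the $\tau^{2\kappa_p}$ coefficient of what remains. Each integration costs a derivative, which is the source of the extra $2\kappa_p$ loss from $f\in H^{s-200\kappa_p}$ to $\psi\in H^{s-202\kappa_p}$. Only after this does \cref{not:lem:comparison}(b) turn it into \cref{in:eq:expansion2}.

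\textbf{Two further problems.}
\begin{enumerate}
\item Your ``cleaner'' alternative in Step 3 is circular. The uniqueness in \cref{in:thm:main1_precise} is among solutions already known to have the expansion \cref{geo:eq:sol_form} with a $\tstar^{\beta_p'}$ remainder, and that expansion is exactly what has to be proved.
\item Your fallback of commuting with $\tau\partial_z$ would defeat the interpolation. It only yields $\partial_z^K J=O(\tau^{-\tilde q-K})$, hence $\tau\partial_z J=O(\tau^{-\tilde q/K})$, which is not integrable against $d\tau/\tau$. Pure $\partial_z$ commutation works because $\partial_z$ commutes with $\tau\partial_\tau$ and $\tau\partial_z$.
\end{enumerate}
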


\begin{remark}[Spectral theory]
    We note that the work \cite{Ostermann_stable_blowup}, which is the culmination of a series of works \cite{Donninger2012, donninger_stable_2012, chatzikaleas_stable_2019},  also establishes the stability of the ODE blow-up as in \cref{in:thm:main2}, or more precisely the stability of a single blow-up point on the surface. 
    The approach of \cite{Ostermann_stable_blowup} is to study a (non self-adjoint) spectral problem in suitable self-similar coordinates near the blow-up point and to establish a spectral gap in order to ascertain stability, which can be down at a much smaller regularity assumption of $s > \frac{n}{2}$.\footnote{A similar spectral problem also appears for the study of \emph{naked singularities} as described in \cref{in:sec:background}, but we expect that in that case the spectrum there has physical consequences, such as the \emph{limited} regularity of the Cauchy horizon.}
    In our approach, we are not required to study a spectral problem, at least beyond understanding the stability of fixed points of ODEs.
\end{remark}

Finally, since \cref{in:thm:main1_precise,in:thm:main2_precise} can be translated to purely local statements, we obtain the stability of any spacelike singularity of ODE blow-up type, with asymptotics as in \cref{in:thm:main}.
See \cref{fig:cor1.6} for an illustration.

\begin{cor}\label{in:cor:general}
	Let $s\geq s_0 \coloneqq 232\kappa_p+10+n$, and $f\in H^s(\BB_2),\psi\in H^{s-\floor{2\kappa_p}}(\BB_2)$ with
	\begin{equation}\label{in:eq:cor_assumptions}
		\abs{\partial f}<1/10,\qquad \norm{f}_{H^s},\norm{\psi}_{H^{s-\floor{2\kappa_p}}}\leq1,
	\end{equation}
	and let $\phi,\tstar,\M_{\tstar_1}$ be as in \cref{in:thm:main}.
	Then there exists a constant $\epsilon>0$, depending on $\phi$,  such that the following holds:
    Let $\phi_0, \phi_1$ be functions on $\Sigma_{\tstar_1/2}=\{\tstar=\tstar_1/2\}$ satisfying
	\begin{equation}\label{in:eq:gen_pert}
		\norm{(\phi_0-\phi)|_{\Sigma_{\tstar_1/2}}}_{H^{s-30\kappa_p+1}}+\norm{(\phi_1-\partial_t\phi)|_{\Sigma_{\tstar_1/2}}}_{H^{s-30\kappa_p}}\leq\epsilon_{s},
	\end{equation}
	with $\epsilon_s<\epsilon$. 
    Then there exists $\bar{f},\bar{\psi}\in H^{s-232\kappa_2}(\BB_2)$ and a solution, $\bar{\phi}$, of \cref{in:eq:main} with initial data $(\phi_0,\phi_1)$ in $\Mhat=\{\abs{x}\leq 5(t+1/3)\}\cap \{\tstar\leq\tstar_1/2\}\cap\{t>\bar{f}\}$ with $\bar{\phi}$ exhibiting ODE blow up towards $\{t=\bar{f}\}$ with auxiliary scattering data $\bar\psi$ which moreover satisfy
    \begin{equation}\label{in:eq:close}
        \norm{f-\bar{f},\psi-\bar\psi}_{H^{s-232\kappa_p}}\lesssim\epsilon_s.
    \end{equation}
\end{cor}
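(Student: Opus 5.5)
We reduce \cref{in:cor:general} to \cref{in:thm:main1_precise} and \cref{in:thm:main2_precise}. These theorems are stated for the model solution and are purely local, so the main work is to transport the model stability statement to a neighbourhood of a general spacelike ODE blow-up via localisation, Poincaré transformations and scaling.

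\textbf{Step 1 (background solution).} Apply \cref{in:thm:main1_precise} to $(f,\psi)$, using \cref{in:eq:cor_assumptions}, to obtain $\phi$ on $\M_{\tstar_1}$ together with the expansion \cref{in:eq:expansion}. This expansion shows $\phi$ is close to the model near $\Sigma_f$. Fix a point $x_0 \in \BB_2$ and let $\Lambda$ be the Lorentz boost taking the normal to $\Sigma_f$ at $(f(x_0),x_0)$ to $\partial_t$. Compose it with the time translation moving that point to the origin, and with the scaling $\phi \mapsto \lambda^{\alpha_p}\phi(\lambda\,\cdot)$.

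In the new frame, the expansion gives the following for a small $\lambda$. On the slice $\{t=1\}$, restricted to $\BB_6$, the rescaled $\phi$ agrees with the model data $(c_p,-\alpha_p c_p)$ up to an error $O(\lambda^{\min(1,\beta_p+\alpha_p)})$ in $H^{s-30\kappa_p}$-type norms. The $\tstar^{j-\alpha_p}$ terms with $j\geq 1$ and the curvature of $\Sigma_f$ each contribute a positive power of $\lambda$. Here we use $\abs{\partial f}<1/10$ to keep the boost uniformly controlled, and we use $s_0$ large to absorb the derivative losses in \cref{in:eq:expansion}.

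\textbf{Step 2 (transfer the perturbation).} Let $\bar\phi$ be the solution with data $(\phi_0,\phi_1)$ on $\Sigma_{\tstar_1/2}$. By local well-posedness and Cauchy stability on the region between $\Sigma_{\tstar_1/2}$ and the rescaled slice, where $\phi$ is smooth and bounded, \cref{in:eq:gen_pert} gives $\norm{\bar\phi-\phi}\lesssim C(\phi,\lambda)\epsilon_s$ in $H^{s-30\kappa_p+1}\times H^{s-30\kappa_p}$ on that slice. We first fix $\lambda$ small so that the Step~1 error is below $\epsilon_*/2$, and then choose $\epsilon$ so that $C(\phi,\lambda)\epsilon<\epsilon_*/2$. \cref{in:thm:main2_precise} then applies to $\bar\phi$ with $s' = s-30\kappa_p-2$.

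The required regularity is consistent: the theorem needs $s'\geq 202\kappa_p+\frac n2+2$, which follows from $s\geq 232\kappa_p+10+n$. The theorem yields local $(\bar f_{x_0},\bar\psi_{x_0})$ in $H^{s'-202\kappa_p}\supset H^{s-232\kappa_p}$. Applying the Lipschitz estimate \cref{in:eq:sing_Cauchy} to $\bar\phi$ and $\phi$, the latter being unperturbed with data $(f,\psi)$ transformed to this frame, gives the local version of \cref{in:eq:close}.

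\textbf{Step 3 (patching).} Cover $\BB_2$ by finitely many such neighbourhoods. On overlaps, the local blow-up surfaces coincide, since each is the boundary of the maximal development of the same solution $\bar\phi$. The local scattering data also coincide, since $\psi$ is the coefficient of $\tstar^{\beta_p}$ in \cref{in:eq:expansion2}, which the uniqueness part of \cref{in:thm:main1_precise} determines intrinsically. Undoing the transformations therefore gives global $\bar f,\bar\psi$ on $\BB_2$, and $\Mhat$ is covered by the union of the local domains.

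\textbf{Main obstacle.} The hardest step is Step 1 together with the compatibility issue in Step 3. Under the boost and rescaling, $\psi$ and the lower-order coefficients in \cref{in:eq:expansion} transform with factors of the form $(1-\abs{\partial f}^2)^{\cdot}$ and Jacobians. One must check that the transformed $\psi$ is again the scattering data in the sense of \cref{geo:def:scattering_sol}, so that the difference estimates can be pulled back with only bounded loss of regularity. I would handle this by working directly in the $(\tstar,y)$ coordinates, where both theorems are formulated after a coordinate change, and by tracking each term of the expansion explicitly.
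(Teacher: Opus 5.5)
Your proposal follows essentially the paper's route: Cauchy stability away from the singularity; at each singular point a boost and translation that flatten $\Sigma_f$; a rescaling as in \cref{def:rescale} that makes the solution $O(\lambda)$-close to $c_p t^{-\alpha_p}$; \cref{in:thm:main2_precise} together with \cref{in:eq:sing_Cauchy} applied to $\bar\phi$ and $\phi$; and patching by uniqueness. The only structural difference is that the paper inserts an intermediate slice $\Sigma_\delta$ (via \cref{gen:lemma:local} and \cref{gen:lem:Cauchy}), whereas you go straight to the rescaled slice $\{t=1\}$.

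One small correction: the inclusion $H^{s'-202\kappa_p}\supset H^{s-232\kappa_p}$ points the wrong way for what you need, so your derivative count lands a few derivatives below the stated $H^{s-232\kappa_p}$. The paper's own proof has the same shortfall, since it only records an $H^{s_0-232\kappa_p}$ bound. This is an imprecision in the statement's indices rather than a flaw in the method.
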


We also obtain the smoothness of \emph{all} spacelike singularities (with a regular enough singularity formation) that start from smooth initial data:
\begin{cor}\label{in:cor:smoothness}
	Let $s_0=202\kappa_p+10+\frac{n}{2}$ and $f\in H^{s_0-200\kappa_p}(\BB_2)$ with $f(0)=0$ and $\abs{\partial f}<1/10$.
	Let $\phi$ be a solution of \cref{in:eq:main} in $\M = \{ |x| < 3(1 + t),  f(x) < t \leq 1 \}$ with an expansion as in \cref{in:eq:expansion}.
	Assume furthermore that on a spacelike hypersurface $\Sigma\subset \M$ it holds that $(\phi,\partial_t\phi) \in H^{s+1}(\Sigma)\times H^{s}(\Sigma)$.
	Then the following improved regularity holds: $f\in H^{s-200\kappa_p}(\BB_4)$.
\end{cor}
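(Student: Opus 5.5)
The plan is to combine the stability theorem \cref{in:thm:main2_precise}, in its localized form \cref{in:cor:general}, with a persistence-of-regularity argument. The key observation is that the surface $\{t=f\}$ produced from data of regularity $H^{s+1}\times H^s$ has regularity roughly $H^{s-200\kappa_p}$, and it must agree with the given blow-up surface.

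\textbf{Step 1: reduce to the model and apply stability.} Fix a point $x_0$ of the blow-up surface. The expansion \cref{in:eq:expansion} holds with $f$ of regularity at least $H^{s_0-200\kappa_p}$. Choose a spacelike slice $\Sigma_{\tstar_*}=\{\tstar=\tstar_*\}$ with $\tstar_*$ small. Apply a Poincar\'e transformation and the scaling symmetry $\phi\mapsto\lambda^{\alpha_p}\phi(\lambda t,\lambda x)$ to the neighbourhood of $(f(x_0),x_0)$. The expansion shows that the rescaled induced data on a rescaled slice are $\epsilon$-close to the model data $(c_p,-\alpha_p c_p)$ in the low norm $H^{s_0+2}\times H^{s_0+1}$ once $\tstar_*$ is small. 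This uses only the given asymptotics with error $\mathfrak{e}=O(\tstar^{\beta_p'})$, and the small-norm threshold involves only $s_0$.

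\textbf{Step 2: propagate high regularity to the slice.} Since $(\phi,\partial_t\phi)\in H^{s+1}\times H^s$ on $\Sigma$, local well-posedness and persistence of regularity away from the singularity show that the data on the (rescaled) slice near $\tstar_*$ lie in $H^{s+1}\times H^s$. The slice must lie in the domain of dependence of $\Sigma$ within $\M$. The high norm may be large; this is harmless, because \cref{in:thm:main2_precise} requires smallness only in the $s_0$ norm.

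\textbf{Step 3: conclude by uniqueness.} \Cref{in:thm:main2_precise} then yields a blow-up function $\tilde f\in H^{s-200\kappa_p}$ (minus the small loss from the Poincar\'e/scaling change of variables and the graph reparametrization) such that the solution blows up exactly on $\{t=\tilde f\}$. By uniqueness of the Cauchy development, this solution coincides with $\phi$. Since the blow-up locus is intrinsic (it is where $|\phi|\to\infty$), we get $\tilde f=f$ locally. Covering $\BB_4$ by finitely many such neighbourhoods, undoing the transformations (which preserve Sobolev regularity of graphs), and patching gives $f\in H^{s-200\kappa_p}(\BB_4)$.

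\textbf{Main obstacle.} I expect the hardest part to be Step 1. The data must be close to the model data in a norm where the Poincar\'e-transformed slice is exactly $\{t=1\}$, and $\Sigma_{\tstar_*}$ is only a graph over $f$, not a flat hyperplane. I would handle this by approximating $f$ near $x_0$ by its tangent plane: after boosting so that $\partial f(x_0)=0$ and scaling by $\lambda\sim\tstar_*$, the rescaled $f$ becomes $C^1$-small, and so does the curvature of the slice. I would then solve the equation from $\Sigma_{\tstar_*}$ to a nearby flat slice $\{t=1\}$ in rescaled variables using the stability of the model over short times. A secondary obstacle is to check that the derivative losses from these steps stay within the $200\kappa_p$ budget.
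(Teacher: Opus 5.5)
Your outline matches the paper's proof: localize at a point $p$ of the blow-up surface, boost and translate so that $f(p)=\partial f(p)=0$, rescale by $\delta$, invoke \cref{gen:lem:Cauchy} (Cauchy stability to a flat slice, then \cref{in:thm:main2_precise}), and identify the new blow-up surface with $\{t=f\}$ by uniqueness. The gap is in Step~1. There you claim that the given expansion alone makes the rescaled data $\epsilon$-close to $(c_p,-\alpha_p c_p)$ in $H^{s_0+2}\times H^{s_0+1}$, a norm with roughly $202\kappa_p+\frac n2$ derivatives. At the assumed regularity the expansion controls only about $s_0-200\kappa_p=2\kappa_p+10+\frac n2$ derivatives. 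After rescaling, the $k$-th $\bar x$-derivatives of the leading term $c_p(1-|\partial f|^2)^{\frac{1}{p-1}}\tstar^{-\alpha_p}$ contain $\delta^{k-1}(\partial^k f)(\delta\bar x)$ and $\delta^{k}(\partial^{k+1}f)(\delta\bar x)$, which are not even in $L^2$ for large $k$. The solution itself is smooth on a flat slice, but the expansion writes it as a sum of individually rough pieces and cannot bound its high derivatives. For the same reason, the curved slice $\Sigma_{\tstar_*}$ is only as regular as $f$ and is not a usable Cauchy surface for \cref{gen:lem:Cauchy}; work directly on a flat slice $\{t=c\delta\}$ near $p$.

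So the high-order smallness has to come from the $H^{s+1}\times H^s$ data on $\Sigma$, and this is where your remark ``the high norm may be large; this is harmless'' breaks down. Persistence of regularity tells you the high norm on each slice is finite, but not how fast it degenerates as the slice approaches the singularity, so interpolating against the low-order closeness gives nothing. The missing ingredient is a quantitative, scaling-consistent propagation of regularity down to $\{t=f\}$:
\begin{enumerate}
\item Compare $\phi$ with a smooth reference profile $\phi_0$, built for instance from a regularized time function $\zeta$ instead of $\tstar=t-f$.
\item Prove weighted energy estimates toward the singularity giving $\zeta^{\alpha_p-\frac12-\frac{\alpha}{2}+|\beta|}\partial^\beta(\phi-\phi_0)\in L^2$ for all $|\beta|\le s$. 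After $\delta$-rescaling, this bounds every derivative up to order $s$ by the same power $\delta^{\frac{\alpha}{2}-\frac n2}$.
\item Interpolate with the $O(\delta^\alpha)$ low-order closeness to get smallness in the norm \cref{gen:lem:Cauchy} needs, provided $s$ is large compared with that norm.
\end{enumerate}
This is carried out in Steps~1--4 of the proof of \cref{in:cor:alpha}. The paper's short proof of the present corollary just points back to Step~2 of the proof of \cref{gen:lemma:local}, which was run with $f\in H^s$ for large $s$. It is therefore terse at exactly this point, and it is the argument of \cref{in:cor:alpha}, not the rescaling, that actually closes the gap.
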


In fact, it suffices to assume that $f\in C^{1,\alpha}(\BB_2)$ for some $\alpha>0$, and that $\phi$ is a solution of \cref{in:eq:main} with an expansion as in \cref{in:eq:phi_scat_rough}.

\begin{cor}\label{in:cor:alpha}
	Let $f\in C^{1,\alpha}(\BB_4)$ for some $\alpha>0$ with $f(0)=\partial f(0)=0$ as well as $\abs{\partial f}<1/10$, and set $\tstar=t-f(x)$.
	Let's set $\tilde{\phi}_0=c_p\tstar^{-\alpha_p}(1-\abs{\partial f}^2)^{\frac{1}{p-1}}$ and $\phi$ a solution to \cref{in:eq:main} in $\M=\{\abs{x}\leq(3(1+t)), f(x)\leq t\leq1\}$ satisfying
    \begin{equation} \label{eq:weak_asymptotics}
		\phi=\tilde{\phi}_0+\tstar^{-\alpha_p+\alpha}L^\infty(\M). 
	\end{equation}
    Suppose, for some $\frac{1}{2} \leq \tilde{t} \leq 1$, that upon restricting $\phi$ to the hypersurface $\Sigma = \{ t = \tilde{t} \} \cap \mathcal{M}$, one has $\phi|_{\Sigma} \in H^{s+1}, \partial_t \phi |_{\Sigma} \in H^s$ with $s\geq (200\kappa_p+5+\frac{n}{2})\left(1+\frac{2n}{\alpha}\right)$, then $f\in H^{s-200\kappa_p}(\BB_4)$.
\end{cor}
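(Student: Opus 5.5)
The plan is to bootstrap from the weak asymptotics \eqref{eq:weak_asymptotics} to the setting of \cref{in:thm:main2_precise}, by perturbing around a model solution localized near each point of the singular surface. Fix a point $x_0\in\BB_4$ (by translation and a Lorentz boost we may take $x_0=0$, $f(0)=\partial f(0)=0$). In the rescaled coordinates $(t,x)\mapsto(\lambda^{-1}t,\lambda^{-1}x)$ the equation is invariant under $\phi\mapsto\lambda^{\alpha_p}\phi(\lambda\,\cdot)$, the surface becomes $\{t=f_\lambda(x)\}$ with $f_\lambda(x)=\lambda^{-1}f(\lambda x)$, and the $C^{1,\alpha}$ hypothesis gives $\abs{f_\lambda}+\abs{\partial f_\lambda}\lesssim\lambda^{\alpha}$ on $\BB_6$. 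Meanwhile \eqref{eq:weak_asymptotics} rescales to $\phi_\lambda=c_p\tstar_\lambda^{-\alpha_p}(1-\abs{\partial f_\lambda}^2)^{\frac1{p-1}}+O(\lambda^{\alpha}\tstar_\lambda^{-\alpha_p+\alpha})$. So at rescaled time $t=1$, $\phi_\lambda$ is $O(\lambda^\alpha)$-close to $c_p$ in $L^\infty$, and one expects the same for $\partial_t\phi_\lambda$ and $\alpha_pc_p$.

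\textbf{Step 1: $L^\infty$ closeness of Cauchy data.} The $L^\infty$ bound on $\phi$ does not directly control derivatives. I would apply interior estimates for the wave equation (local well-posedness in a small cylinder of size comparable to $\tstar$) to gain $\partial\phi$ bounds of size $\lambda^\alpha$ at rescaled time $1$, using that $\phi_\lambda$ is bounded there and solves a semilinear equation with smooth nonlinearity. Concretely, writing $\phi_\lambda=\tilde\phi_{0,\lambda}+w$ with $\tilde\phi_{0,\lambda}$ an approximate solution (its error involves second derivatives of $f_\lambda$, which are only $C^{\alpha-1}$), some care is needed. I would first mollify $f$ at scale $\lambda$, which costs only $O(\lambda^{\alpha})$ in the rescaled $C^1$ norm and gives bounded higher derivatives.

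\textbf{Step 2: upgrade to $H^{s_0+2}$ smallness by interpolation.} This is where the hypothesis on $s$ enters. By propagation of regularity, $\phi|_{\{t=\tilde t\}}\in H^{s+1}$ gives $\phi_\lambda$ at rescaled time $1$ with $H^{s+1}$ norm bounded by $\lambda^{-C}$, by finite speed of propagation and Sobolev well-posedness away from the singularity. The scaling loses at most $\lambda^{s-n/2}$-type powers, which is really $\lambda^{+}$ for high derivatives, so these are in fact bounded. Interpolating the $O(\lambda^{\alpha})$ low-norm smallness against the bounded $H^{s+1}$ norm on $\BB_6$ yields $\norm{\phi_\lambda-c_p}_{H^{s_0+2}}\lesssim\lambda^{\alpha\theta}$, where $\theta=1-\frac{s_0+2+n/2}{s+1}$. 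The condition $s\ge s_0(1+\frac{2n}\alpha)$ is designed so that this exponent is positive with room to spare, accounting also for the $L^\infty\to L^2$ conversion on $\BB_6$. Choosing $\lambda$ small makes the data satisfy \eqref{in:eq:forward_ass} with $\epsilon<\epsilon_*$.

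\textbf{Step 3: conclude via uniqueness of the blow-up surface.} \cref{in:thm:main2_precise} then produces $f'_\lambda\in H^{s-200\kappa_p}(\BB_4)$ such that the solution with these data blows up on $\{t=f'_\lambda\}$. By uniqueness of the Cauchy development, the boundary of maximal existence of $\phi_\lambda$ coincides locally with $\{t=f'_\lambda\}$, and also with $\{t=f_\lambda\}$, since $\phi$ blows up precisely there by \eqref{eq:weak_asymptotics}. Hence $f_\lambda=f'_\lambda$ near $0$, and undoing the scaling shows $f\in H^{s-200\kappa_p}$ near $x_0$. A covering argument over $x_0\in\BB_4$ finishes the proof, which amounts to reducing to \cref{in:cor:smoothness}.

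I expect Step 1–2 to be the main obstacle: deriving derivative smallness near the singularity from a mere $L^\infty$ expansion with a $C^{1,\alpha}$ surface, and making the interpolation exponents match the stated threshold on $s$.
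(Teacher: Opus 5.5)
There is a genuine gap, and it sits where you anticipated: the high-order bound in Step 2. Your interpolation needs a high Sobolev norm of $\phi_\lambda$ near rescaled time $1$ that is controlled essentially uniformly in $\lambda$. The justification you give does not provide this.

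Propagation of regularity from $\{t=\tilde t\}$ is qualitative. It gives finite norms on each slice away from the singularity, but no rate as the slice approaches it. Any quantitative version must cross every scale $\lambda\lesssim\tstar\lesssim1$, and at each scale a derivative naturally costs a factor $\tstar^{-1}$ (already $\partial_t^k\phi_{\mathrm{model}}\sim t^{-\alpha_p-k}$). So the exponent in your ``$\lambda^{-C}$'' must grow with the number of derivatives. The factor $\lambda^{\alpha_p+k-n/2}$ produced by rescaling then merely compensates it, with no gain. Whether the rescaled norm is bounded, or how fast it grows, depends on lower-order exponents you have no way to determine.

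A Gronwall-type commutator argument does not rescue this. Commuting derivatives through $|\phi|^{p-1}\phi$ requires $L^\infty$ bounds on $\partial\phi$, whereas the weak asymptotics \eqref{eq:weak_asymptotics} control only $\phi$ itself.

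Step 1 has a related flaw. Wave equations have no interior regularity, so an $L^\infty$ bound for a solution in a cylinder does not bound its derivatives; $\epsilon\sin(k(t-x^1))$ is a counterexample. You could bypass this by interpolating in spacetime, but only once the missing high-order bound is available. Also, mollifying $f$ at the single scale $\lambda$ gives a good approximate solution only where $\tstar\sim\lambda$. It is not adequate along the whole range of scales that any propagation argument must traverse.

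The paper fills this gap with a construction adapted to every scale simultaneously.
\begin{enumerate}
\item It builds a regularized time function $\zeta$ by solving an elliptic Dirichlet problem in $\{t\geq f\}$ with $\zeta=0$ on $\{t=f\}$. $C^{1,\alpha}$ boundary regularity, Hopf's lemma and interior elliptic estimates give $\zeta\sim\tstar$, $\mathrm{d}\zeta$ timelike near the singularity, and $\zeta^{k-1-\alpha}\partial^k\zeta\in L^\infty$ for all $k\geq2$.
\item The ansatz $\phi_0=c_p\zeta^{-\alpha_p}|\eta^{-1}(\mathrm{d}\zeta,\mathrm{d}\zeta)|^{\frac{1}{p-1}}$ satisfies $\partial^\beta P[\phi_0]=O(\zeta^{-\alpha_p-2+\alpha-|\beta|})$ and $\mathring{\phi}=\phi-\phi_0\in\zeta^{-\alpha_p+\alpha}L^\infty$.
\item Weighted energy estimates for $\mathring{\phi}$ (current $\zeta^q\T[\mathring{\phi}](\cdot,T)$, induction on $|\beta|$, nonlinearity handled through the $L^\infty$ bound) give $\zeta^{\alpha_p-\frac12-\frac\alpha2+|\beta|}\partial^\beta\mathring{\phi}\in L^2(\M)$ for $|\beta|\leq s$.
\end{enumerate}

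After rescaling to a region where $\zeta\sim\delta$, this bounds the high norm of $\mathring{\phi}_\delta$ only by $\delta^{\frac\alpha2-\frac n2}$. Localizing the spacetime estimate to a $\delta$-ball loses the volume factor, so the high norm is \emph{not} bounded. Beating this loss with the $\delta^\alpha$ low-norm smallness forces a choice of $k'$ with $\frac{nk}{2(k+k')}\leq\frac\alpha4$. That is the origin of the factor $1+\frac{2n}{\alpha}$ in the hypothesis on $s$. Your explanation of the threshold presupposes a bounded high norm and would not produce this condition, which is another sign the key estimate is missing.

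Your Step 3 is fine and matches the paper's concluding step: identifying the blow-up surfaces by uniqueness of the development, applying \cref{in:thm:main2_precise}, and covering $\BB_4$.
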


Hence, using \cref{in:thm:MZ}, we may conclude that the singular surface in $\R^{1+1}$  is smooth up to a discrete set of points:
\begin{cor}
	Let $d=1$ and $\phi$ be a smooth finite energy solution to \cref{in:eq:main}, i.e.: $(\phi|_{t=0},\partial_t\phi|_{t=0})\in \dot{H}^1\times L^2\cap C^\infty\times C^\infty$.
	Then the non-characteristic part of the singular boundary is smooth.
\end{cor}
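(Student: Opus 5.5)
The plan is to combine \cref{in:thm:MZ} with \cref{in:cor:alpha}, applied locally around each non-characteristic point. Let $\phi$ be a smooth finite-energy solution on $\R^{1+1}$ with maximal domain $\D$ and future boundary $\Sigma^+$; the past boundary is handled by time reflection. By \cref{in:thm:MZ}, the non-characteristic set $\mathscr{R}\subset\Sigma^+$ is open and $C^{1,\alpha}$ regular, so near any $q\in\mathscr{R}$ we may write $\Sigma^+=\{t=f(x)\}$ with $f\in C^{1,\alpha}$. In one dimension all non-characteristic points are spacelike, so $\abs{\partial f}<1$ at $q$.

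\textbf{Reduction to the setting of \cref{in:cor:alpha}.} Apply a Lorentz boost so that $\partial f(q)=0$. Then translate, reflect time, and use the scaling $\phi\mapsto\lambda^{\alpha_p}\phi(\lambda\cdot)$, which preserves \cref{in:eq:main}, so that $q$ sits at the origin and the solution lives in $\M=\{\abs{x}\leq 3(1+t), f(x)\leq t\leq 1\}$. After shrinking by scaling we also get $\abs{\partial f}<1/10$ on $\BB_4$ by continuity. These transformations preserve smoothness of Cauchy data on a spacelike slice $\{t=\tilde t\}$ inside $\D$, since $\phi$ is smooth in the interior of $\D$ by propagation of regularity. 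Hence $(\phi,\partial_t\phi)|_\Sigma\in H^{s+1}\times H^s$ for every $s$.

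\textbf{Verifying the weak asymptotics \eqref{eq:weak_asymptotics}.} This is the main obstacle. We need $\phi=\tilde\phi_0+\tstar^{-\alpha_p+\alpha}L^\infty$ near $q$. I would take this from the one-dimensional blow-up description of Merle--Zaag used in \cite{merle_openness_2008,nouaili_1_2008}. At each non-characteristic point $x_0$ the solution in self-similar variables converges to the boosted ODE profile $\kappa(d(x_0),\cdot)$, with $d(x_0)$ determined by the slope $T'(x_0)$. Written in original variables, this profile is exactly $c_p(1-\abs{\partial f}^2)^{1/(p-1)}(T(x_0)-t-\ldots)^{-\alpha_p}$. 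The work \cite{nouaili_1_2008} obtains the $C^{1,\alpha}$ regularity precisely through an exponential convergence rate in the self-similar time $s=-\log(T(x_0)-t)$, uniform for $x_0$ in a neighbourhood. That rate translates into a polynomial gain $(T(x_0)-t)^{\alpha'}$. I would then pass from the cone-wise statement centred at each $x_0$ to a uniform statement in $\tstar=t-f(x)$. For a point $(t,x)$ I choose $x_0$ with $\abs{x-x_0}\lesssim\tstar$. The $C^{1,\alpha}$ bound on $f$ gives $f(x_0)-t$ comparable to $\tstar$, with error $O(\tstar^{1+\alpha})$, and the difference of profile parameters is $O(\tstar^{\alpha})$. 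Together these give \eqref{eq:weak_asymptotics} after possibly shrinking $\alpha$.

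\textbf{Conclusion.} With these hypotheses verified, \cref{in:cor:alpha} gives $f\in H^{s-200\kappa_p}(\BB_4)$ for every admissible $s$, because $s$ can be taken arbitrarily large by smoothness of the data. Sobolev embedding then makes $f$ smooth near $q$. Since $q\in\mathscr{R}$ was arbitrary and smoothness is local, $\mathscr{R}$ is smooth.
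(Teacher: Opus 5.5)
Your proposal is correct and follows the paper's route. The paper obtains this corollary by feeding the $C^{1,\alpha}$ regularity of the non-characteristic part from \cref{in:thm:MZ} into \cref{in:cor:alpha}, after using the Poincar\'e and scaling symmetries to normalise, and it leaves the check of \eqref{eq:weak_asymptotics} implicit. Your derivation of that condition from the uniform exponential self-similar convergence of Merle--Zaag is a sound way to fill the gap; taking $x_0=x$ and evaluating at $y=0$ already gives the pointwise bound, so the extra comparison over nearby $x_0$ is not needed.
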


\subsection{Outlook and overview}
We conclude the introduction by discussing several open questions that we do not address, but hope that some of the techniques and ideas developed here might be relevant towards, and give an overview for the rest of the paper.

\paragraph{Maximal globally hyperbolic development of finite energy solutions:}
Given any inextendible spacelike hypersurface $\Sigma$, \cref{in:thm:main} gives a way to construct a solution that forms an ODE type singularity towards $\Sigma$.
However, such solutions, as already noted in \cite{cazenave_solutions_2020}, need not be of finite energy.
We thus leave the construction of solutions to \cref{in:eq:main} of finite $H^1 \times L^2$ energy forming an ODE type singularity along an inextendible spacelike hypersurface as the following open problem:
\begin{conjecture}
    There exists an inextendible spacelike hypersurface $\Sigma\subset\R^{n+1}$ and finite energy smooth Cauchy data $(\phi_0,\phi_1)\in H^1(\R^n)\times L^2(\R^n) \cap C^{\infty}(\R^n) \times C^{\infty}(\R^n)$ such that the solution to
	\begin{equation}\label{in:eq:ivp}
		P[\phi]=0,\qquad (\phi|_{t=0},\partial_t\phi|_{t=0})=(\phi_0,\phi_1)
	\end{equation}
	forms an ODE type singularity along all of $\Sigma$.
\end{conjecture}

\paragraph{Threshold behaviour:} 
To illustrate what we mean by threshold behaviour, let us consider the specific case of the cubic focusing nonlinear wave with $p=3$ in \eqref{in:eq:main}, in \emph{supercritical dimensions}\footnote{See \cite{beceanu_center-stable_2014,krieger_center-stable_2015} for examples of threshold behaviour in energy critical problems.} $n \geq 5$. In this case, it is straightforward to show that the trivial solution $\phi \equiv 0$ is stable against smooth and localised perturbations of the initial data $(\phi|_{t=0}, \partial_t \phi|_{t = 0}) = (0, 0)$, meaning that for small and localised initial data solutions to $P[\phi] = 0$ exist globally in time and scatter to a linear wave. On the other hand, for sufficiently large (though still finite energy) initial data, we expect ODE type singularity formation, for example the explicit model ODE blow up solution \eqref{in:eq:model}; further, this qualitative blow-up behaviour is also stable to small and localised perturbations of initial data, as proven in \cref{in:thm:main2}.

An interesting problem is to understand the \emph{transition} between the open small data regime (which exhibits global in time existence and scattering) and the open large data regime (which, in our case, means ODE-type blow up). That is, we study the dynamics of a one parameter family of initial data interpolating between the small data regime (e.g.~trivial initial data) and the large data regime (e.g.~that of the model ODE blow-up). Following \cite{glogic_co-dimension_2021, glogic_threshold_2020}, we make the following conjecture, with the nontrivial self-similar solution \eqref{in:eq:self_similar} being a threshold between the two regimes.

\begin{conjecture}
    There exists a smooth\footnote{By smoothness, we mean this family is both smooth in the spatial variable $x \in \R^n$ and with respect to the interpolating parameter $\omega$. We will put the initial data at $t = 1$ in analogy the setup of \cref{in:thm:main2}, where the singularity is near $t = 0$.} one-parameter family $(\phi_{0, \omega}, \phi_{1, \omega})$ with $\omega \in [0, 1]$, serving as initial data for the nonlinear wave $P[\phi] = 0$ with $p = 3$ and $n \geq 5$ in the sense $(\phi|_{t = 1}, \partial_t \phi|_{t = 1}) = (\phi_{0, \omega}, \phi_{1, \omega})$, such that the following hold, for some $\omega^* \in (0, 1)$:
    \begin{enumerate}
        \item We have $(\phi_{0,0}, \phi_{1, 0}) = (0, 0)$ and, for $0 \leq \omega < \omega^*$, the corresponding solution exists globally in time to the past.
        
        \item We have $(\phi_{0, 1}, \phi_{1, 1}) \equiv (c_3, - \alpha_3 c_3)$, at least in some sufficiently large ball and, for $\omega^* < \omega \leq 1$, the corresponding solution to $P[\phi] = 0$ blows up to the past, and some portion of the past boundary of the maximal domain of existence is a smooth spacelike hypersurface $\Sigma^-_{\omega}$ along which the solution exhibits ODE-type blow up.

        \item At the threshold $\omega = \omega^*$, the past development of the data $(\phi_{0, \omega^*}, \phi_{1, \omega^*})$ blows up at the point $(t, x) = (0, 0)$, and, within the future light cone of this point, the solution approaches the nontrivial self-similar solution \eqref{in:eq:self_similar} as $t \downarrow 0$. The solution should exhibit locally naked singularity formation.
    \end{enumerate}
\end{conjecture}

We refer the reader to \cite{glogic_co-dimension_2021, glogic_threshold_2020} for further commentary on such threshold behaviour, which is also relevant to other related semilinear equations, as well as the phenomenon of gravitational collapse and naked singularity formation for the Einstein equations, see \cite{gundlach_critical_2007}.

\paragraph{Overview:}
We begin by introducing the analytic and geometric framework for both the scattering construction and the stability problem in \cref{sec:setup}.
We prove \cref{in:thm:main1_precise,in:thm:main2} in \cref{sec:backward,sec:forward} respectively.
Finally, we prove \cref{in:cor:smoothness,in:cor:alpha} regarding the more general spacelike singularities in \cref{sec:local}.

\paragraph{Acknowledgements}

The authors would like to thank Birgit Sch\"orkhuber for helpful comments on the manuscript. Part of the work was completed while the first author was at the Princeton Gravity Initiative and the second author was a graduate student at Princeton University.
The first author acknowledges the support of the SNSF starting grant TMSGI2 226018. 
We thank the Erwin Schr\"odinger Institute for Mathematics and Physics at the University of Vienna for hosting us at the workshop on ``Nonlinear Waves and Relativity'', June 17--21 2024, where this work was initiated.

\section{Geometric and Analytic Setup} \label{sec:setup}
In this section, we introduce the geometric formalism and function spaces that will be used in the proof of both the scattering construction of \cref{in:thm:main} and the stability result of \cref{in:thm:main2} in \cref{no:sec:backward,sub:forward_setup} respectively.
As already discussed in the introduction, stability is proved in an alternative $(\tau,z)$ coordinate system, and we connect these to $(x,t)$ in \cref{no:sec:compare}

Throughout the present, we only use integer order regularity $s,K\in\N$, often without explicitly stating this restriction.

\subsection{Setup for the scattering construction}\label{no:sec:backward}

Consider a function $f\in C^\infty(\R^n)$ satisfying the following conditions:
\begin{equation}
	\sup \abs{\partial f}<1/10,\quad f(0)=0.
\end{equation}
We set $\tstar = \tstar(t, x) =t-f(x)$ and introduce the following spacetime regions for some $\tstar_1 > 0$:
\begin{nalign} \label{not:eq:region_back}
	\Mt\coloneqq\{\tstar<\tstar_1,\abs{x}<2(1-t)\},\quad  \Mt(s)\coloneqq\{\tstar\in(0,s)\},\quad \Sigma_s\coloneqq\{\tstar=s\}.
\end{nalign}
See \cref{fig:backward} for a detailed description.
In particular, let us note that the boundary $\partial \Mt(s)$ consists of future directed spacelike components  at $\{ \tstar = \tstar_1 \} \cap \partial \Mt(s)$ and the truncated cone $\mathscr{C} = \{ |x| = 2 (1 - t) \} \cap \partial \Mt(s)$, and a past directed spacelike boundary $\mathscr{S} = \{ \tstar = 0 \} \cap \partial \Mt(s)$.

We begin by recording some geometric quantities that will be helpful for the construction of the scattering solution in \cref{in:thm:main}. In the proof, we will be using $\tstar=t-f(x)$ and $y^i=x^i$ as coordinates. With respect to these coordinates, the Minkowski metric $\eta$, the usual coordinate vector fields $T = \partial_t$ and $\partial_{x^i}$, and the wave operator $\square$, can be written in the following form.
\begin{subequations}
	\begin{gather} \label{setup:eq:metric}
		\eta = - d\tstar^2 + 2 (\partial_{i} f) d \tstar d y^i + (\delta_{ij} - \partial_i f \partial_j f) dy^i dy^j, \\[0.5em] \label{setup:eq:vectorfields}
		T \coloneqq \partial_t|_x = \partial_\tstar|_{y}, \quad  \partial_{x^i}|_{t}\coloneqq\partial_{y^i}|_{\tstar} - (\partial_i f)\partial_\tstar|_{y}, \\[0.5em]
		\label{setup:eq:box}
		\Box=-(1-\abs{\partial f}^2)\partial_\tstar^2+\underbrace{(\Delta_x f-2\partial_i f\cdot \partial_{y^i})\partial_\tstar+\Delta_y}_{P_f}.
	\end{gather}
\end{subequations}

Let us introduce the two sets of operators $\Vt \coloneqq \{\tstar T,\tstar \partial_y,1\}$ and $\Vb\coloneqq\{\tstar T,\partial_y,1\}$.
These measure the top order regularity and the operators with which we will commute the equations respectively.
We write $\Diffb$ for the span of $\Vb$ over smooth function in $\Mt$ and $\Diffb^k$ for their $k$-fold product.
\begin{notation}
	For a function $f: \Mt \to \R$, will write $\Vb^k f$ for the $\R^{k(n+1)}$-valued function formed by acting with operators from $\Vb$:
	\begin{equation}
		\Vb^k f\coloneqq\{\Gamma_{\alpha_1}\Gamma_{\alpha_2}...\Gamma_{\alpha_k}f:\Gamma_i\in\Vb\}.
	\end{equation}
\end{notation}
We introduce the following $L^\infty$ and $L^2$ based function spaces: for any integer $s \geq 0$ and $q \in \R$,
\begin{nalign} \label{eq:X_spaces}
	\Ab^{s;q}(\Mt)&\coloneqq\{f\in \tstar^{q} L^\infty(\Mt): \Vb^s f\in L^\infty(\Mt)\},\\
	\Hb^{s;q}(\Mt)&\coloneqq\left\{f\in \tstar^{q} L^2(\Mt): \tstar^{-1/2}\Vb^s f\in L^2(\Mt)\right\},\\
	\Hc^{s;q}(\Mt)&\coloneqq\left\{f\in \tstar^{q} L^2(\Mt): \Vb^s f\in L^\infty\left((0,\tstar_1);L^2(\Sigma_\tstar)\right)\right\}.
\end{nalign}
with corresponding norms.
Here, $\Ab$ is used to most transparently state bounds in a coordinate independent way, $\Hb$ is used to close energy estimates and $\Hc$ to construct approximate solutions.
We also define spaces of finite regularity $s$ that have an expansion up to order $q$ in $\tstar$
\begin{equation}
	\Hs^{s;q}(\Mt)=\{f\in L^{2}(\Mt): f\in \sum_{j=0}^q \tstar^j H^{s-j}(\Sigma_0)+\sum_{j=q+1}^{Q} \tstar^j H^{s-q}(\Sigma_0)\},
\end{equation}
where we used the convention that $\sum^{q}=\sum^{\floor{q}}$.
The $s$ subscript stand for ``smooth'' indicating the behaviour in $\tstar$ coordinate.
Via Sobolev embedding, we observe the following inclusions, for any $\epsilon > 0$:
\begin{equation}\label{not:eq:Sob}
	\begin{gathered}
		\Ab^{s;q}(\Mt)\subset\Hc^{s;q}(\Mt)\subset\Ab^{s-\frac{n+1}{2};q}(\Mt)\\
        \Hc^{s;{q + \epsilon}}(\Mt)\subset\Hb^{s;q}(\Mt)\subset\Hc^{s-1;q {- \epsilon}}(\Mt).
	\end{gathered}
\end{equation}
Using the previously defined spaces of operators and functions, we trivially have
\begin{equation}
	\Diffb^k:\Ab^{s;q}(\Mt)\to\Ab^{s-k;q}(\Mt),\quad\Diffb^k:\Hc^{s;q}(\Mt)\to\Hc^{s-k;q}(\Mt).
\end{equation}

Let us now introduce the precise class of solutions of \cref{in:eq:main} that we study:
\begin{definition}\label{geo:def:scattering_sol}
	Let $s_p\coloneqq s-\floor{2\kappa_p}>\frac{n+2}{2}$ and $f\in C^{s+1}(\R^n),\psi\in C^{s_p}(\R^n)$.
    We say that a function $\phi\in \Ab^{2;-\alpha_p}(\Mt)$ is a \emph{singular solution of regularity $s$}  to \cref{in:eq:main} with scattering data $(f,\psi)$ if $\phi$ solves \cref{in:eq:main} and 
	\begin{nalign}\label{geo:eq:sol_form}
		\phi&=\phi_0+\tstar^{1-\alpha_p} \Hs^{s-1;2\kappa_p}(\Mt)+\tstar^{\beta_p}\log\tstar \phi_{\mathrm{log}}+\tstar^{\beta_p}\psi+\Hc^{s_p-1;\beta_p'}(\Mt), &\text{ if }2\kappa_p\in\N,\\
		\phi&=\phi_0+\tstar^{1-\alpha_p}\Hs^{s-1;2\kappa_p}(\Mt)+\tstar^{\beta_p}\psi+\Hc^{s_p-1;\beta_p'}(\Mt), &\text{ if }2\kappa_p\notin\N,
	\end{nalign}
    where $\phi_{\mathrm{log}}\in H^{s_p}(\Sigma_0)$, $\phi_0\coloneqq c_p(1-\abs{\partial f}^2)^{\frac{1}{p-1}}\tstar^{-\alpha_p}$ and some\footnote{the maximum value of $\beta_p'$ is limited by that in \cref{ode:eq:generic_sol}} $\beta_p'>\beta_p$. 
\end{definition}

    \subsection{Setup for the stability problem} \label{sub:forward_setup}

For the stability problem studied in \cref{in:thm:main2}, we recast the equation \eqref{in:eq:main} as an equation for a coordinate $\tau$, where the blow-up surface for $\phi$ will be represented exactly by the zero set of $\tau$.

\begin{lemma} \label{lem:newtime}
    Let $\phi$ be a solution of \cref{in:eq:main} and $\tau$ be such that $\phi \eqqcolon c_p \tau^{- \alpha_p}$. Then $\tau$ satisfies the equation
    \begin{equation} \label{eq:nlw_tau}
        \square \tau = \kappa_p \tau^{-1} \left( 1 + \nabla_{\alpha} \tau \, \nabla^{\alpha} \tau \right),
    \end{equation}
    where $\kappa_p$ is defined in \eqref{in:eq:betadelta}.
\end{lemma}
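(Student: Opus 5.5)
The proof is a direct chain-rule computation, done where $\phi>0$ so that $\tau = (\phi/c_p)^{-1/\alpha_p}$ is well defined and positive. We use the Lorentzian conventions of \eqref{setup:eq:metric}, so that $\square = \nabla^\alpha\nabla_\alpha$ and $\nabla_\alpha\tau\nabla^\alpha\tau = \eta^{\alpha\beta}\partial_\alpha\tau\partial_\beta\tau$.

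\textbf{Step 1: differentiate $\phi$.} Substituting $\phi = c_p\tau^{-\alpha_p}$ gives
\[
\nabla_\alpha\phi = -\alpha_p c_p \tau^{-\alpha_p-1}\nabla_\alpha\tau .
\]
Differentiating once more and contracting,
\[
\square\phi = -\alpha_p c_p\tau^{-\alpha_p-1}\,\square\tau + \alpha_p(\alpha_p+1)c_p\tau^{-\alpha_p-2}\,\nabla_\alpha\tau\nabla^\alpha\tau .
\]

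\textbf{Step 2: rewrite the nonlinearity.} Since $\phi>0$, we have $|\phi|^{p-1}\phi = c_p^p\tau^{-p\alpha_p}$. The exponent satisfies $p\alpha_p = \alpha_p + 2$, because $\alpha_p(p-1) = 2$. So the nonlinearity carries the same power $\tau^{-\alpha_p-2}$ as the gradient term in Step 1.

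\textbf{Step 3: divide through.} Insert both expressions into \eqref{in:eq:main} and divide by $-\alpha_p c_p\tau^{-\alpha_p-1}$. This yields
\[
\square\tau = (\alpha_p+1)\,\tau^{-1}\,\nabla_\alpha\tau\nabla^\alpha\tau + \frac{c_p^{p-1}}{\alpha_p}\,\tau^{-1}.
\]

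\textbf{Step 4: identify the constants.} From \eqref{in:eq:model} and \eqref{in:eq:betadelta}:
\[
\alpha_p + 1 = \frac{p+1}{p-1} = \kappa_p,
\qquad
\frac{c_p^{p-1}}{\alpha_p} = \frac{2(p+1)}{(p-1)^2}\cdot\frac{p-1}{2} = \frac{p+1}{p-1} = \kappa_p .
\]
Both coefficients are therefore $\kappa_p$, and the equation becomes \eqref{eq:nlw_tau}.

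There is no real obstacle here. The only points needing care are the sign conventions for $\square$ and for $\nabla\tau\cdot\nabla\tau$ in signature $(-,+,\dots,+)$, and the positivity of $\phi$, which is needed to define $\tau$.
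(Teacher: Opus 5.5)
Your computation is correct and is essentially the paper's proof: the paper also assumes $\phi>0$, expands $\square(c_p\tau^{-\alpha_p})$ by the chain rule, and rewrites $|\phi|^{p-1}\phi=c_p^p\tau^{-\alpha_p-2}$ using $p\alpha_p=\alpha_p+2$. It then identifies both coefficients with $\kappa_p$ via $\alpha_p+1=\kappa_p$ and $c_p^{p-1}=\alpha_p\kappa_p$; your bookkeeping is slightly cleaner here, since the paper's displayed identity $c_p^p\tau^{-p\alpha_p}=\alpha_p\kappa_p\tau^{-\alpha_p-2}$ drops a factor of $c_p$.
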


\begin{proof}
    Assuming $\phi > 0$, this is an explicit computation, as follows:
    \begin{gather*}
        \square \left( c_p \tau^{- \alpha_p} \right) = c_p \alpha_p \kappa_p \nabla_{\alpha} \tau \nabla^{\alpha} \tau \cdot \tau^{- \alpha_p - 2} - c_p \alpha_p \square \tau \cdot \tau^{- \alpha_p - 1}, \\[0.5em]
        |\phi|^{p-1} \phi = c_p^p \tau^{- p \alpha_p} = \alpha_p \kappa_p \tau^{- \alpha_p - 2},
    \end{gather*}
    where we used the explicit expressions for $\alpha_p$ and $c_p$ in \eqref{in:eq:model}.
\end{proof}

\begin{remark}
    We comment that equation \eqref{eq:nlw_tau} with $\gamma = 1$ is also a suitably transformed version of the focusing exponential nonlinear wave equation $\square \phi = - e^{\phi}$
    upon applying the change of variables $\tau = e^{\phi}$. In particular, one may easily adapt the arguments of the present article to study this problem. 
\end{remark}

In the study of the stability problem, we use coordinates $(\tau, z^i)$ where $\tau$ is as above and $z^i = x^i$. The following definitions will be used to describe to the Jacobian matrix of the coordinate transformation\footnote{We derive the equations for $W,V^i$ geometrically and thus often do not specify with respect to which coordinate system  $W,V^i$ are expressed. For the sake of readability, we prefer to use terminology such as $W$ ``expressed in $(x,t)$'' or  ``expressed in $(\tau,z)$'' coordinates rather than inserting pushforwards such as $\Phi^\star$.}  $\Phi:(t, x^i)\to(\tau, z^i)$.

\begin{definition} We define $W, V^i$ and the \emph{lapse} $\Omega^2$ by\footnote{we use the Einstein summation convention throughout the paper and always raise and lower Greek indices with $\eta$ and latin indices with $\delta_{ij}$ and $\delta^{ij}$.}
    \begin{equation} \label{eq:metric_qty}
        W \coloneqq - \nabla_{\alpha} \tau \, \nabla^{\alpha} t= - \frac{\partial \tau}{\partial t}, \quad V^i \coloneqq \nabla_{\alpha} \tau \, \nabla^{\alpha} x^i=\frac{\partial\tau}{\partial x^i}, \quad \Omega^2 \coloneqq - \nabla_{\alpha} \tau \, \nabla^{\alpha} \tau = W^2 - \delta_{ij} V^i V^j.
    \end{equation}
    We also use the notation $J = (W, V^i)^{\intercal} \in \R^{n+1}$.
\end{definition}

Thereby, we have the following linear transformations for coordinate vector fields and differentials:%
\begin{subequations}
\begin{gather} \label{eq:zx}
        T = \frac{\partial}{\partial t} = W \frac{\partial}{\partial \tau}, \quad \frac{\partial}{\partial x^i} = V_i \frac{\partial}{\partial \tau} + \frac{\partial}{\partial z^i}, \\[0.5em]
        \label{eq:xz}
        dt = W^{-1} \left( d \tau - V_i dz^i \right), \quad dx^i = dz^i.
    \end{gather}
    We also introduce the unit normal to the constant $\tau$ hypersurface as 
    \begin{equation}
        N \coloneqq -\frac{\nabla \tau}{\Omega}=\frac{W}{\Omega}\partial_{\tau} - \frac{V_i}{\Omega}\partial_{z^i}.
    \end{equation}
    With respect to the $(\tau, z^i)$ coordinate system, one can check that the Minkowski metric $\eta$ and the inverse Minkowski metric $\eta^{-1}$ may be written as follows.
    \begin{gather} 
        \label{eq:metric}
        \eta = - W^{-2} \left( d \tau - V_i dz^i \right) \left( d \tau - V_j dz^j \right) + \delta_{ij} dz^i dz^j, \\[0.5em]
        \label{eq:invmetric}
        \eta^{-1} = - \Omega^{2} \frac{\partial}{\partial \tau} \otimes \frac{\partial}{\partial \tau} + V^i \frac{\partial}{\partial \tau} \otimes \frac{\partial}{\partial z^i} + V^i \frac{\partial}{\partial z^i} \otimes \frac{\partial}{\partial \tau} + \delta^{ij} \frac{\partial}{\partial z^i} \otimes \frac{\partial}{\partial z^j},
    \end{gather}
    Finally, the wave operator $\square$ will be -- using $\det \eta = W^{-2}$ -- given by
    \begin{align}
        \tau^2 \square \Phi
        &= \frac{\tau^2}{\sqrt{- \det \eta}} \partial_{\mu} \left( \sqrt{ - \det \eta} \cdot  (\eta^{-1})^{\mu \nu} \partial_{\nu} \Phi \right) \nonumber \\[0.5em]
        &= (\eta^{-1})^{\mu\nu} \cdot \tau^2 \partial_{\mu} \partial_{\nu} \Phi + W \tau \partial_{\mu} (W^{-1} (\eta^{-1})^{\mu\nu})) \cdot \tau \partial_{\nu} \Phi. \label{eq:wave_op}
    \end{align}
\end{subequations}

\begin{remark}[Model ODE values]\label{not:rem:boosted_model}
    For the model ODE blowup $\phi_{\mathrm{model}}$ in \eqref{in:eq:model} the above quantities take the values $W=1, V=0,\Omega^2 = 1$.
    Using a Lorentz boost in the direction $\mathbf{v}\in \BB_1$, these quantities change to $W=(1-\abs{\mathbf{v}}^2)^{-1/2}$, $V_i=\mathbf{v}_iW$, $\Omega^2=1$.
\end{remark}

For the stability problem, we again utilise the operators $\Vt = \{ \tau \partial_{\tau}, \tau \partial_z, 1 \}$ to measure the top order regularity, while our commutator vector fields will always be of the form $\partial_z$, and in particular are \emph{chosen to commute with operators in $\Vt$}. For a function $\Phi$, we use $\Vt \Phi$ to denote the vector $(\tau \partial_{\tau} \Phi, \tau \partial_{z} \Phi, \Phi)^{\intercal}$, while $\Vt J$ will denote the concatenation of $\Vt W$ and $\Vt V^i$ for all $i$. 

Next, we introduce the following spacetime regions which will be helpful for the proof, where the hat notation is used to signify that we are working with respect to a $(\tau, z)$ coordinate system.
\begin{equation} \label{eq:forward_regions}
    \Mhat_{\tau_1} \coloneqq \{ 0 < \tau \leq \tau_1, |z| < 2(\tau + 2) \}, \quad
    \Mhat_{\tau_0, \tau_1} \coloneqq \{ \tau_0 < \tau < \tau_1 \} \cap \Mhat_{\tau_1}, \quad
    \Shat_{\tilde{\tau}} = \{ \tau = \tilde{\tau} \} \cap \Mhat_{\tau_1}.
\end{equation}
The boundary of the region $\Mhat_{\tau_0, \tau_1}$ consists of two past-directed spacelike portions $\{ \tau = \tau_0 \} \cap \partial \Mhat_{\tau_0, \tau_1}$ and $\mathscr{C} = \{ |z| = 2 (\tau + 2) \} \cap \partial \Mhat_{\tau_0, \tau_1}$, and the future-directed spacelike portion $\{ \tau = \tau_1 \} \cap \partial \Mhat_{\tau_0, \tau_1}$, see \cref{fig:forward}.
We also make use of the following $L^2$ and $L^{\infty}$ based function spaces on the hypersurfaces $\Sigma_{\tau}$ for $s \in \N, q \in \R$.
\begin{nalign}
    &H^{s; q}(\Shat_{\tau})\coloneqq \{f\in \tau^{q} L^2(\Shat_{\tau}): \partial_z^{\alpha} f\in \tau^{q} L^2(\Shat_{\tau}) \; \forall \; 0 \leq |\alpha| \leq s \}, \\
    &C^{s; {q}}(\Shat_{\tau}) \coloneqq \{ f \in \tau^{q} L^{\infty}(\Shat_t): \partial_z^{\alpha} f \in \tau^q L^2 (\Shat_{\tau}) \; \forall \; 0 \leq |\alpha| \leq s\},
\end{nalign}
with appropriate norms $\| \cdot \|_{H^{s;q}(\Shat_{\tau})}$ and $\| \cdot \|_{C^{s;q} (\Shat_{\tau})}$. 
When $q = 0$, we often omit the second superscript. 
Note that, in comparison to the $X^{s,q}_{\bullet, \bullet}$ spaces defined in \eqref{eq:X_spaces}, we do not include commutation with $\tau \partial_{\tau}$ here.
Nevertheless, we also define $X^{s,q}_{\bullet, \bullet}(\Mhat)$ as in \cref{eq:X_spaces} with $(\tstar,x)\mapsto(\tau,z)$ in order to facilitate translations between the two problems.

\begin{notation}
    $L^2$ norms on any hypersurface $\Shat_{\tau}$ will not include volume forms, so
    \[
        \| \Phi \|_{L^2 (\Shat_{\tau})}^2 = \int_{\Shat_{\tau}} |f|^2 dz = \int_{\BB_{2(\tau + 4)}} |f(\tau, z)|^2 dz.
    \]
    We use the Sobolev embedding: for $s > s' + \frac{n}{2}$, we have
    \begin{equation} \label{eq:Sob2}
        C^{s; q} (\Shat_{\tau}) \subset H^{s; q} (\Shat_{\tau}) \subset C^{s'; q} (\Shat_{\tau}),
    \end{equation}
    where any implicit constants will depend on $s, s'$ but not on $\tau$.
\end{notation}

\subsubsection{The system of equations for \texorpdfstring{$W$}{W} and \texorpdfstring{$V^i$}{V\^i}}

The analysis of the nonlinear wave equations \eqref{eq:nlw_tau} will be done via deriving suitable nonlinear equations for the quantities $W$, $V^i$ and $\Omega^2$. The first of these are a system of nonlinear wave equations:

\begin{lemma} \label{lem:nlw_jac}
    Let $\tau$ be a solution to \cref{eq:nlw_tau} and $W,V^i$ as defined in \cref{eq:metric_qty}.
    The Jacobian coefficients $W$ and $V^i$ satisfy the equations
    \begin{subequations}\label{eq:nlw_Jacobi}
    	    \begin{gather}
    		\label{eq:nlw_W}
    		\tau^2 \square W = - \kappa_p (1 - \Omega^2) W - \kappa_p W \tau \partial_{\tau} \Omega^2, 
    		\\[0.5em] \label{eq:nlw_V}
    		\tau^2 \square V^i = - \kappa_p ( 1 - \Omega^2 ) V^i - \kappa_p V^i \tau \partial_{\tau} \Omega^2 - \kappa_p \tau \partial_{z^i} \Omega^2.
    	\end{gather}
    \end{subequations}
\end{lemma}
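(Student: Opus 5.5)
The plan is to differentiate \eqref{eq:nlw_tau} and commute $\square$ with derivatives. On Minkowski space the coordinate vector fields $\partial_t$ and $\partial_{x^i}$ are Killing and commute with $\square$. We have $W = -\partial_t \tau$ and $V^i = \partial_{x^i}\tau$, and we write \eqref{eq:nlw_tau} as $\square\tau = \kappa_p \tau^{-1}(1-\Omega^2)$, using $\nabla_\alpha\tau\nabla^\alpha\tau = -\Omega^2$. Applying $\partial_t$ and $\partial_{x^i}$ to both sides gives
\begin{equation*}
\square W = -\partial_t\big(\kappa_p\tau^{-1}(1-\Omega^2)\big), \qquad \square V^i = \partial_{x^i}\big(\kappa_p\tau^{-1}(1-\Omega^2)\big).
\end{equation*}

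The next step is to expand the right-hand sides with the product rule, expressing $\partial_t$ and $\partial_{x^i}$ in $(\tau,z)$ coordinates via \eqref{eq:zx}. For $W$ we have $\partial_t = W\partial_\tau$, so $\partial_t\tau = -W$ and $\partial_t\Omega^2 = W\partial_\tau\Omega^2$. Hence
\begin{equation*}
-\partial_t\big(\kappa_p\tau^{-1}(1-\Omega^2)\big) = -\kappa_p\tau^{-2}W(1-\Omega^2) - \kappa_p\tau^{-1}W\partial_\tau\Omega^2,
\end{equation*}
and multiplying by $\tau^2$ gives \eqref{eq:nlw_W}. One point needs care: $\partial_t\tau$ computed in $(t,x)$ coordinates is $-W$ by definition, while in $(\tau,z)$ coordinates the identity $W\partial_\tau\tau = W$ carries the sign convention of \eqref{eq:metric_qty}. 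I would fix the signs from the definition $W=-\partial\tau/\partial t$ and track them carefully, since with that definition $T=\partial_t$ would equal $-W\partial_\tau$. Matching the claimed form \eqref{eq:nlw_W} requires the sign convention of \eqref{eq:zx} to hold, so I would adopt \eqref{eq:zx} as given and check the result against the ODE case.

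For $V^i$ we have $\partial_{x^i} = V_i\partial_\tau + \partial_{z^i}$, so $\partial_{x^i}\tau = V_i$ and $\partial_{x^i}\Omega^2 = V_i\partial_\tau\Omega^2 + \partial_{z^i}\Omega^2$. The derivative of $\tau^{-1}$ gives $-\kappa_p\tau^{-2}V^i(1-\Omega^2)$. The derivative of $\Omega^2$ gives $-\kappa_p\tau^{-1}(V^i\partial_\tau\Omega^2 + \partial_{z^i}\Omega^2)$. Multiplying by $\tau^2$ yields \eqref{eq:nlw_V}.

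The main obstacle is purely bookkeeping: keeping the sign conventions of $W$ and $T$ consistent, and remembering that $\partial_\tau$ and $\partial_{z^i}$ refer to the $(\tau,z)$ frame while the commutation with $\square$ is done in $(t,x)$. As a sanity check I would use the model of Remark~\ref{not:rem:boosted_model}. There $W=1$, $V=0$ and $\Omega^2=1$, so both sides vanish identically, consistent with the claimed equations.
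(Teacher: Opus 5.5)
Your strategy is the same as the paper's. The paper writes $W=-\eta(\nabla\tau,\nabla t)$ and uses that $\nabla$ commutes with $\square$ and $\nabla^2 t=0$. That is exactly commuting the Killing derivatives $\partial_t,\partial_{x^i}$ through $\square\tau=\kappa_p\tau^{-1}(1-\Omega^2)$, then applying the product rule together with \eqref{eq:zx}. Your $V^i$ computation is correct as written.

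The $W$ computation is not. You substitute $\partial_t\tau=-W$, but you also use $\partial_t\Omega^2=W\partial_\tau\Omega^2$, i.e.\ $\partial_t=W\partial_\tau$, which forces $\partial_t\tau=+W$. With the inputs you list,
\[
-\partial_t\bigl(\kappa_p\tau^{-1}(1-\Omega^2)\bigr)=\kappa_p\tau^{-2}(\partial_t\tau)(1-\Omega^2)+\kappa_p\tau^{-1}\partial_t\Omega^2=-\kappa_p\tau^{-2}W(1-\Omega^2)+\kappa_p\tau^{-1}W\partial_\tau\Omega^2,
\]
so the second term has the wrong sign. Your displayed identity therefore does not follow from your own substitutions. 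You noticed the tension but left it unresolved. The model check cannot settle it, because $\Omega^2\equiv 1$ makes every term in both equations vanish.

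The fix is to evaluate the geometric definition directly. Since $\nabla^\alpha t=\eta^{\alpha\beta}\partial_\beta t=-\delta^\alpha_0$, you get $W=-\nabla_\alpha\tau\,\nabla^\alpha t=+\partial_t\tau$. So the expression $-\partial\tau/\partial t$ in \eqref{eq:metric_qty} is a sign slip in the paper. The convention $W=\partial_t\tau$ is what \eqref{eq:zx}, the model values ($\tau=t$, $W=1$) in \cref{not:rem:boosted_model}, and \cref{not:lem:comparison}(b) actually use.

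With this, $\square W=\partial_t\square\tau=+\partial_t\bigl(\kappa_p\tau^{-1}(1-\Omega^2)\bigr)$. Using $\partial_t\tau=W$ and $\partial_t\Omega^2=W\partial_\tau\Omega^2$, you obtain \eqref{eq:nlw_W} after multiplying by $\tau^2$.

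Contrary to your remark, nothing forces the convention of \eqref{eq:zx}. Since $\Omega^2$ depends on $W$ only through $W^2$, \eqref{eq:nlw_W} is odd in $W$. It therefore also holds for $W=-\partial_t\tau$, provided you consistently use $\partial_t=-W\partial_\tau$. What breaks your computation is mixing the two conventions.
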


\begin{proof} 
    Using the definition of $W$ in \eqref{eq:metric_qty}, the fact that $\nabla$ commutes with $\square$ and $\eta$, that $\nabla^2 t= 0$, and \cref{eq:nlw_tau}, we find that
    \begin{align*}
        \tau^2 \square W
        &= - \tau^2 \square \left( \eta ( \nabla \tau, \nabla t ) \right) 
        = - \tau^2 \eta ( \nabla \square \tau, \nabla t ) \\
        &= - \kappa_p \tau^2 \eta \left( \nabla \left( \tau^{-1} (1 - \Omega^2) \right), \nabla t \right) \\
        &= \kappa_p (1 - \Omega^2) \eta( \nabla \tau, \nabla t) + \kappa_p \tau \eta (\nabla \Omega^2, \nabla t),
    \end{align*}
    which gives \eqref{eq:nlw_W} upon using \eqref{eq:zx}. The derivation of \eqref{eq:nlw_V} is similar.
\end{proof}

On top of these second-order equations, it will later be useful when deriving pointwise estimates for $W, V^i, \Omega^2$ or constructing expansions, to consider the following system of \emph{first-order equations}. 
We derive a system of $N+1$ such equations, corresponding to the degrees of freedom of the Jacobian coefficients, with respect to the following ``linearly small quantities'' around $\phi_{\mathrm{model}}$ .

\begin{definition} \label{def:transport_variables}
    We define $U^i$ and $\mathring{\Omega^2}$ to be the following:
    \begin{equation} \label{eq:uomega}
        U^i = \frac{V^i}{W}, \qquad \mathring{\Omega}^2 = \Omega^2 - 1.
    \end{equation}
    Note that we may express $V^i, W$ and $\Omega^2$ in terms of these via
    \begin{equation} \label{eq:vwomega}
        \Omega^2 = \mathring{\Omega}^2 + 1, \quad W = \sqrt{ \frac{\Omega^2}{1 - \delta_{ij} U^i U^j} }, \quad V^i = W U^i.
    \end{equation}
\end{definition}

The first-order equations for $U^i$ and $\mathring{\Omega}$ are then as follows.

\begin{prop} \label{prop:transport}
    For $\tau$ satisfying \eqref{eq:nlw_tau}, and $W, V^i, \Omega^2$ as in \eqref{eq:metric_qty}, we have
    \begin{equation} \label{eq:transport_U}
        \tau \partial_{\tau} U^i = W^{-2} \tau \partial_{z^i} W,
    \end{equation}
    \begin{equation} \label{eq:transport_omega}
        \tau \partial_{\tau} \mathring{\Omega}^2 = 2 \kappa_p \mathring{\Omega}^2  + 2 \tau \partial_{z^i} V^i.
    \end{equation}
\end{prop}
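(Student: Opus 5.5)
Both identities are pure coordinate computations. They use only the change of variables $(t,x)\mapsto(\tau,z)$ from \eqref{eq:zx}--\eqref{eq:invmetric}, the symmetry of mixed partial derivatives of $\tau$, and, for the second identity, the equation \eqref{eq:nlw_tau} from \cref{lem:newtime}.

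\textbf{Sign convention.} I will work with the convention fixed by \eqref{eq:zx}. There $T=W\partial_\tau$ and $\partial_{x^i}=V_i\partial_\tau+\partial_{z^i}$, which amounts to $\partial_t\tau=W$ and $\partial_{x^i}\tau=V_i$. This convention is the one used in \eqref{eq:invmetric} and \eqref{eq:wave_op}. The overall sign of $W$ does not enter the final formulas, since $W$ appears only through $W^{-2}\partial_z W$ and through $\Omega^2$.

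\textbf{Step 1: the compatibility relation and \eqref{eq:transport_U}.} Because $d\tau$ is exact, $\partial_{x^i}(\partial_t\tau)=\partial_t(\partial_{x^i}\tau)$, that is $\partial_{x^i}W=\partial_t V_i$. Rewriting both sides with \eqref{eq:zx} gives the compatibility relation
\[
W\partial_\tau V_i = V_i\partial_\tau W+\partial_{z^i}W .
\]
By the quotient rule,
\[
\partial_\tau U^i = W^{-2}\bigl(W\partial_\tau V^i - V^i\partial_\tau W\bigr).
\]
Substituting the compatibility relation into the bracket gives $\partial_\tau U^i=W^{-2}\partial_{z^i}W$. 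Multiplying by $\tau$ yields \eqref{eq:transport_U}.

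\textbf{Step 2: $\square\tau$ in the $(\tau,z)$ coordinates.} I evaluate $\square\tau$ with the divergence formula underlying \eqref{eq:wave_op}, using $\sqrt{-\det\eta}=W^{-1}$ and $\partial_\nu\tau=\delta_\nu^\tau$. Reading off $(\eta^{-1})^{\mu\tau}$ from \eqref{eq:invmetric} gives
\[
\square\tau = W\partial_\tau\bigl(-W^{-1}\Omega^2\bigr)+W\partial_{z^i}\bigl(W^{-1}V^i\bigr)
= -\partial_\tau\Omega^2+\partial_{z^i}V^i+W^{-1}\bigl(\Omega^2\partial_\tau W - V^i\partial_{z^i}W\bigr).
\]

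\textbf{Step 3: removing the $\partial W$ terms.} This is the only step that needs care. Contracting the compatibility relation from Step 1 with $V^i$ gives
\[
V^i\partial_{z^i}W = W V^i\partial_\tau V_i-|V|^2\partial_\tau W .
\]
Using this together with $\Omega^2=W^2-|V|^2$,
\[
\Omega^2\partial_\tau W - V^i\partial_{z^i}W = W^2\partial_\tau W - W V^i\partial_\tau V_i = \tfrac{W}{2}\partial_\tau\Omega^2 .
\]
Hence $\square\tau=-\tfrac12\partial_\tau\Omega^2+\partial_{z^i}V^i$.

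\textbf{Step 4: conclusion.} By definition $1+\nabla_\alpha\tau\nabla^\alpha\tau=1-\Omega^2=-\mathring{\Omega}^2$, so \eqref{eq:nlw_tau} reads $\square\tau=-\kappa_p\tau^{-1}\mathring\Omega^2$. Equating this with the expression from Step 3, noting $\partial_\tau\Omega^2=\partial_\tau\mathring\Omega^2$, and multiplying by $-2\tau$ gives \eqref{eq:transport_omega}.
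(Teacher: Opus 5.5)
Your proof is correct and follows essentially the same route as the paper. The $U^i$ equation comes from an integrability condition: you use exactness of $d\tau$ in $(t,x)$ coordinates, the paper uses closedness of $dt=W^{-1}d\tau-U_i\,dz^i$, and these give the same relation. The $\mathring{\Omega}^2$ equation comes from evaluating $\square\tau$ via \eqref{eq:wave_op} and cancelling the $V^i\partial_{z^i}W$ term with that relation; you do this directly in $(W,V^i)$ by contracting with $V^i$, whereas the paper passes through $\log(1-\delta_{ij}U^iU^j)$ and \eqref{eq:transport_U}. Fixing the convention from \eqref{eq:zx}, so that $W=\partial_t\tau$ and $\sqrt{-\det\eta}=W^{-1}$, is the consistent reading of the paper's setup.
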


\begin{proof}
    The equation \eqref{eq:transport_U} is an immediate consequence of the fact that the $1$-form $dt = W^{-1} d\tau - U_i dz^i$ from \eqref{eq:xz} is closed. To derive \eqref{eq:transport_omega}, we use \eqref{eq:nlw_tau} together with the expression for the wave operator \eqref{eq:wave_op} to get
    \begin{align*}
        \kappa_p (1 - \Omega^2) 
        &= \tau \square \tau \\
        &= W \tau \partial_{\mu} (W^{-1} (\eta^{-1})^{\mu \tau} ) \\
        &= W \tau \partial_{\tau} (- \Omega^2 W^{-1}) + W \tau \partial_{z^i} (W^{-1} V^i) \\
        &= - \tau \partial_{\tau} \Omega^2 + \Omega^2 \tau \partial_{\tau} \log W + W \tau \partial_{z^i} (W^{-1} V^i) \\
        &= - \frac{1}{2} \tau \partial_{\tau} \mathring{\Omega}^2 - \frac{\Omega^2}{2} \tau \partial_{\tau} \log(1 - \delta_{ij} U^i U^j) - V^i W^{-1} \partial_{z^i} W + \tau \partial_{z^i} V^i,
    \end{align*}
    where in the final line we used \eqref{eq:vwomega}.

    Using \eqref{eq:transport_U} to expand the second term, we see that
    \[
        - \frac{\Omega^2}{2} \tau \partial_{\tau} \log(1 - \delta_{ij} U^i U^j) = W^2 \delta_{ij} U^i \tau \partial_{\tau} U^j = V^i W^{-1} \tau \partial_{z^i} W.
    \]
    Inserting this in the above and simplifying, we get \eqref{eq:transport_omega}. The commuted equations \eqref{eq:transport_U_comm} and \eqref{eq:transport_omega_comm} are also immediate.
\end{proof}

Note, that geometrically $U^i$ is the partial derivatives of $t$ along constant $\tau$ hypersurfaces.
Since we want constant $\tau$ (equivalently, constant $\phi$) to correspond to hypersurfaces, this forces an additional integrability condition on $U^i$.
\begin{remark}
Using that $U^i=-\partial_{z^i}t$, the following integrability condition will be true at all times
    \begin{equation}\label{eq:integrability}
        \partial_{z_j}U^i-\partial_{z_i}U^j=0.
    \end{equation}
    We can consider \eqref{eq:integrability} as a ``constraint equation'' that is propagated by the remaining equations \eqref{eq:nlw_Jacobi}, and thus largely ignore it in the sequel.
\end{remark}

\begin{remark}[Degrees of freedom]
	The scattering data for $\phi$ clearly constitutes of two functions $(f,\psi)$.
	For $W,V^i$, it is clear that we can reconstruct $W,V^i$ from \cref{eq:transport_U,eq:transport_omega} in an expansion starting with the information $U^i|_{\Sigma_0}$ and the $\tau^{2\kappa}$ part of $ \mathring{\Omega}^2$.
    While the latter seems to be more initial data, it is easy to see that $U^i=\frac{\partial_i \tau}{\partial_t\tau}=\partial_i|_{\Sigma_0}t=\partial_i f$. Thus, in light of \eqref{eq:integrability}, there is no mismatch between the degrees of freedom.
\end{remark}

To conclude \cref{sub:forward_setup}, we introduce the notion of \emph{scattering data} at the singularity within the context of the $(\tau, z^i)$ coordinate system.

\begin{definition}\label{not:def:scattering_VW}
    Let $s > \frac{n}{2} + 2 + \lfloor 2 \kappa_p \rfloor$ and $s_p \coloneqq s  - \lfloor 2 \kappa_p \rfloor$, and let $\mathfrak{f} \in C^{s+1}(\BB_r),
    \mathfrak{w} \in C^{s_p}(\BB_r)$. We say that functions $W, V^i$ are
    \emph{scattering solutions of regularity $s$} to the nonlinear wave
    equations \eqref{eq:nlw_Jacobi}, with scattering data
    $(\mathfrak{f}, \mathfrak{w})$ if $W, V^i$ solve \eqref{eq:nlw_Jacobi} in
    the $(\tau, z^i)$ coordinate system, $W\in(1/2,2)$ and that moreover, the
    associated $(U^i, \mathring{\Omega}^2)$ satisfy the following asymptotics
    for all $0 < \tau < \tau_1$: 
    \begin{nalign} \label{eq:asymp:U}
        U^i &= \partial_i \mathfrak{f} + \tau \Hs^{s-1; 2\kappa_p}(\Mhat)+\tau^{2\kappa_p+1}\log\tau H^{s_p-1}(\Shat_0)+\tau^{2\kappa_p+1}H^{s_p-1}(\Shat_0)+\Hc^{s_p-2;2\kappa'+1} (\Mhat), &\text{ if }2\kappa_p\in\N\\
        U^i &= \partial_i \mathfrak{f} + \tau \Hs^{s-1; 2\kappa_p}(\Mhat)+\tau^{2\kappa_p+1}H^{s_p-1}(\Shat_0)+\Hc^{s_p-2;2\kappa'+1} (\Mhat), &\text{ if }2\kappa_p\notin\N.
    \end{nalign}
   for some $\kappa'>\kappa_p$ and
	\begin{nalign} \label{eq:asymp:Omega}
        \mathring{\Omega}^2 &= \tau \Hs^{s-1; 2 \kappa_p}(\mathcal{M}_{\tau_1}) + \tau^{2 \kappa_p} \log \tau \, H^{s_p}(\Shat_0) + \tau^{2 \kappa_p} \mathfrak{w} + \Hc^{s_p-1; 2\kappa'}(\Mhat), &\text{ if }2 \kappa_p \in \N, \\
        \mathring{\Omega}^2 &= \tau \Hs^{s-1; 2\kappa_p}(\mathcal{M}_{\tau_1}) + \tau^{2 \kappa_p} \mathfrak{w} + \Hc^{s_p-1;2\kappa'}(\Mhat), &\text{ if }2 \kappa_p \notin \N.
	\end{nalign}
\end{definition}

\subsection{Comparison between the two geometric setups}\label{no:sec:compare}

In this section, we record how solutions of \cref{in:eq:main} determine the constant $\phi$ hypersurfaces and geometric quantities such as $(W, V^i)$, as well as how to reconstruct $\phi$ from these geometric quantities.
We do this only in the case $2\kappa_p \notin \N$ for sake of notational simpliciaty and leave the other case to the reader.
We shall also be rather loose about domains; the following correspondences should only be made in intersections $\Mt \cap \Mhat_{\tau_1}$ but we are not precise about this in the sequel.

\begin{lemma}\label{not:lem:comparison}
	Fix $p>1$ such that $2\kappa_p\notin\N$ and define $s_p=s-\floor{2\kappa_p}>\frac{n+2}{2}+2$.
    \begin{enumerate}[(a)]
		\item Let $\phi$ be a singular solution to \cref{in:eq:main} of regularity $s$ as in \cref{geo:def:scattering_sol}.
		Then it holds that
		$\tau\in \Hs^{s;2\kappa_p}(\Mt)
		+\tstar^{2\kappa_p+1} H^{s_p}(\Sigma_{0})+\Hc^{s_p-1;\floor{2\kappa_p+1}}(\Mt)$.
		
        Furthermore, the geometric quantities $V^i,W$ in $(\tau,z)$ coordinates are a scattering solution of regularity $s-1$ as in \cref{not:def:scattering_VW}.
		
		\item 
		The reverse statements holds: given $V^i,W$ geometric scattering solution of regularity $s$ in $\Mhat$ as in \cref{not:def:scattering_VW}, it follows that: there exists $f\in H^s(\Sigma_0)$ and corresponding $\tstar=t-f(x)$ such that
		$\phi=\tau^{-\alpha_p}c_p$ with $\partial_t\tau=W, \partial_{x^i}\tau=V^i$ is a singular solution of regularity $s-1$.
	\end{enumerate}
\end{lemma}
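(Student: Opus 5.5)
Both directions reduce to explicit algebraic manipulation of expansions, using that the spaces $\Hs$, $\Hc$ are closed under products and under composition with smooth functions on the relevant ranges, together with the Sobolev embeddings \cref{not:eq:Sob}.

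For (a), I would start from \cref{geo:eq:sol_form} and write $\phi=\phi_0(1+\varrho)$ with $\phi_0=c_p(1-\abs{\partial f}^2)^{\frac1{p-1}}\tstar^{-\alpha_p}$. Then $\varrho\in\tstar\Hs^{s-1;2\kappa_p}+\tstar^{2\kappa_p}\psi'+\Hc^{s_p-1;\beta_p'+\alpha_p}$ with $\psi'$ a multiple of $\psi$; recall $\beta_p+\alpha_p=2\kappa_p$. Inverting $\phi=c_p\tau^{-\alpha_p}$ gives
\[
\tau=(1-\abs{\partial f}^2)^{-1/2}\,\tstar\,(1+\varrho)^{-1/\alpha_p}.
\]
Since $\abs{\varrho}\ll1$ for small $\tstar$, the map $x\mapsto(1+x)^{-1/\alpha_p}$ is smooth there. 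Taylor expanding it and using the algebra properties gives the claimed decomposition of $\tau$. The only coupling between the $\tstar^{2\kappa_p}\psi'$ term and the smooth part arises at order $\tstar^{2\kappa_p+1}$ or higher, so it lands in the error space. For $2\kappa_p\notin\N$ no logarithm is generated.

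Next I would compute $W=-\partial_t\tau=-T\tau$ and $V^i=\partial_{x^i}\tau$ via \cref{setup:eq:vectorfields}. These lose one derivative, which explains the regularity $s-1$. I would then check that $U^i=V^i/W$ has leading term $-\partial_{y^i}\tstar=\partial_i f$; this identifies $\mathfrak f=f$ up to the paper's sign convention. For $\mathring\Omega^2=W^2-\abs V^2-1$, the constant term cancels exactly because of the normalization $(1-\abs{\partial f}^2)^{-1/2}$. This is the reason for the factor in $\phi_0$: $\eta(d\tau,d\tau)=-1$ at leading order. The $\tstar^{2\kappa_p}\psi'$ term contributes $\tstar^{2\kappa_p}\mathfrak w$ through $\partial_\tstar(\tstar^{2\kappa_p+1})$. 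Finally the expansions must be re-expressed in $(\tau,z)$ coordinates rather than $(\tstar,y)$. Because $\tau/\tstar$ is a smooth positive function in $\Hs$ plus higher-order corrections, $\tstar^a=\tau^a(\tstar/\tau)^a$ re-expands within the same classes. A function in $H^k(\Sigma_0)$ composed with the coordinate change $y=z$ at fixed $\tau$ versus fixed $\tstar$ differs by terms of the form $\tau\partial_y(\cdot)$, which are absorbed into $\tau\Hs^{s-1;2\kappa_p}$ at the cost of one derivative. That \cref{eq:nlw_Jacobi} holds follows from \cref{lem:newtime,lem:nlw_jac}.

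For (b), I would reverse the steps. Because $dt=W^{-1}d\tau-U_idz^i$ is closed, $t$ is a function of $(\tau,z)$. Setting $f(z)\coloneqq t(0,z)$ and integrating $\partial_\tau t=W^{-1}$ from $\tau=0$ gives $t-f=\int_0^\tau W^{-1}$. Substituting the expansions \cref{eq:vwomega}, \cref{eq:asymp:U}, \cref{eq:asymp:Omega} yields $\tstar=\tau\,\mathcal G$, with $\mathcal G$ having an expansion whose leading coefficient is $(1-\abs{\partial f}^2)^{1/2}$ and whose $\tau^{2\kappa_p}$ coefficient is proportional to $\mathfrak w$. Inverting by the implicit function theorem in these classes gives $\tau(\tstar,y)$; then $\phi=c_p\tau^{-\alpha_p}$ has the form \cref{geo:eq:sol_form} with $\psi\propto\mathfrak w$. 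The PDE holds because \cref{lem:newtime} is reversible, provided $\tau$ solves \cref{eq:nlw_tau}. This is where I expect the main obstacle: \cref{eq:nlw_Jacobi} is only the gradient of \cref{eq:nlw_tau}. So $F\coloneqq\tau\square\tau-\kappa_p(1-\Omega^2)$ satisfies $\nabla(\tau^{-1}F)=0$, forcing $F=c\tau$ for a constant $c$. The asymptotics, via \cref{eq:transport_omega}, which encodes $F=0$ modulo $O(\tau^{2\kappa'})$, then force $c=0$. I would also check carefully that the regularity bookkeeping in the coordinate change loses at most one derivative.
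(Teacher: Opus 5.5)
Your plan for (a), and for the construction of $t$, $f=t(0,\cdot)$, $\tstar$ and the inversion $\tstar\mapsto\tau$ in (b), is essentially the paper's proof. You go beyond the paper in one place: the check that $c_p\tau^{-\alpha_p}$ actually solves \eqref{in:eq:main}, which the paper's proof simply asserts. You are right that this step needs an argument, but the argument you give fails.

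Your reduction to $F\coloneqq\tau\Box\tau-\kappa_p(1-\Omega^2)=c\tau$ is correct. The gap is the next step: the asymptotics do not force $c=0$.

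\begin{itemize}
\item \cref{not:def:scattering_VW} only prescribes which powers of $\tau$ occur. It imposes no relations between their coefficients.
\item \eqref{eq:transport_omega} is not a hypothesis; it is derived from $F=0$.
\item The computation in the proof of \cref{prop:transport} uses only \eqref{eq:transport_U}, i.e.\ closedness of $dt$. It gives the identity $F=-\tfrac12\big(\tau\partial_\tau\mathring\Omega^2-2\kappa_p\mathring\Omega^2-2\tau\partial_{z^i}V^i\big)$.
\item Reading off the $\tau^1$ coefficient gives $c=(\kappa_p-\tfrac12)a_1+\partial_{z^i}V^i|_{\tau=0}$. Here $a_1$ is the $\tau$-coefficient of $\mathring\Omega^2$, which the class $\tau\Hs^{s-1;2\kappa_p}$ leaves completely free.
\end{itemize}

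Here is an explicit counterexample. Take $c\neq0$, $V^i\equiv0$, $\mathring\Omega^2=\frac{2c}{2\kappa_p-1}\tau+\hat\psi\tau^{2\kappa_p}$, $W=(1+\mathring\Omega^2)^{1/2}$, and $t=\int_0^\tau W^{-1}$.

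\begin{itemize}
\item Then $\dot\tau=W$ and $\ddot\tau=\tfrac12\partial_\tau\mathring\Omega^2$. Hence $\tau(t)$ solves $-\ddot\tau=\kappa_p\tau^{-1}(1-\dot\tau^2)+c$.
\item Differentiating this in $t$ kills $c$, so $W=\dot\tau$ and $V^i=0$ solve \eqref{eq:nlw_Jacobi}.
\item $W\in(1/2,2)$ for small $\tau$, and all asymptotics of \cref{not:def:scattering_VW} hold with $\mathfrak f\equiv0$ and $\mathfrak w=\hat\psi$.
\item Yet the computation in \cref{lem:newtime} gives $P[c_p\tau^{-\alpha_p}]=-c\,c_p\alpha_p\tau^{-\alpha_p-1}\neq0$.
\end{itemize}

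So (b), read literally with \cref{not:def:scattering_VW} as the only hypothesis, is false. The paper's proof has the same hole, hidden in its final sentence.

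The repair is to add a hypothesis: that $(U^i,\mathring\Omega^2)$ satisfy \eqref{eq:transport_omega}. Given the other assumptions, this is equivalent to requiring that the $\tau^1$ coefficient of $\mathring\Omega^2$ equal $\frac{2}{1-2\kappa_p}\partial_{z^i}V^i|_{\tau=0}$. With it, $F=0$ follows at once from the identity above, so your gradient argument is not even needed. This is exactly the situation in which the lemma is used in \cref{sec:forward}: there $W,V^i$ come from a genuine solution $\phi$, and \cref{back:lem:recovery} assumes the transport equations.
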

\begin{proof}
    (a)
	We note that $(\phi_0/c_p)^{-1/\alpha_p}=\tstar(1-\abs{\partial f}^2)^{-1/2}$.
	Let us expand $\tau$ using \cref{geo:def:scattering_sol} as
	\begin{multline}\label{not:eq:tau_t}
        \tau=(\phi/c_p)^{-1/\alpha_p}=(\phi_0/c_p)^{-1/\alpha_p}\Big(1+\frac{\phi-\phi_0}{\phi_0}\Big)^{-1/\alpha_p}\\[0.5em] =\tstar(1-\abs{\partial f}^2)^{-1/2}
		+\tstar^2 \Hs^{s-1;2\kappa_p-1}(\M)-\tstar^{2\kappa_p+1}\frac{\psi}{c_p\alpha_p}+\Hc^{s_p-1;2\kappa'+1}(\M),
	\end{multline}
	where $\psi$ is the scattering data of \cref{geo:eq:sol_form} and $\kappa'>\kappa_p$. 
	Differentiating $\tau$ and using \cref{eq:metric_qty}, we also obtain
	\begin{subequations}
		\begin{align}
			V^i&=\partial_i f (1-\abs{\partial f}^2)^{-1/2}+\tstar\Hs^{s-2;2\kappa_p-1}(\Mt)-\tstar^{2\kappa_p}\partial_i f \frac{(2\kappa_p+1)\psi}{c_p\alpha_p}+\Hc^{s_p-2;2\kappa'}(\M),\\
			W&=(1-\abs{\partial f}^2)^{-1/2}+\tstar\Hs^{s-2;2\kappa_p-1}(\Mt)-\tstar^{2\kappa_p} \frac{(2\kappa_p+1)\psi}{c_p\alpha_p}+\Hc^{s_p-2;2\kappa'}(\M),\label{comp:eq:W}\\
            \text{ and thus } U^i&=\Hs^{s-1;2\kappa_p}(\M)+\tstar^{2\kappa_p+1}H^{s_p}(\Sigma_0)+\Hc^{s_p-2;2\kappa'}(\M),
		\end{align}
	\end{subequations}
	where in the final step we used a cancellation for the coefficients at the $\tstar^{2\kappa_p}$ level that follows from the explicit expressions.
    One may compare the leading order terms to \cref{not:rem:boosted_model}.
	A similar cancellation also occurs for the error term $\Hc^{s_p-2;2\kappa'}(\M)$ and thus it may be improved to $\Hc^{s_p-2;2\kappa'+1}(\M)$.
	
	It still remains to change coordinates from $(\tstar, y)$ to $(\tau, z)$. From \cref{comp:eq:W} it follows that for $\tstar_1$ sufficiently small $\partial_t\tau=W\in(1/2,2)$ and thus  $\Hb^{s';q'}(\M)=\Hb^{s';q'}(\Mhat)$ for all $s'\leq s_p-1$ and for all $q'$, with equivalent norms.
	Using the inclusion $\Hc^{s;q+\epsilon}(\M)\subset \Hb^{s;q}(\M)\subset \Hc^{s-1;q}(\M)$ for all $\epsilon>0$, it follows that we can invert $\tau(\tstar,x)$ as
	\begin{equation}
		\tstar(\tau,z)=\tau (1-\abs{\partial f}^2)^{1/2}+\tau^2\Hs^{s-1;2\kappa_p-1}(\Mhat)+\tau^{2\kappa_p+1}H^{s_p}(\Sigma_0)+\Hc^{s_p-2;2\kappa'+1-\epsilon}(\Mhat).
	\end{equation}
	Therefore, we obtain that $V^i,W$ are of the correct form to be scattering solutions in $(\tau,z)$ coordinates of regularity $s-1$.
	
	Let us also compute that $\tilde{\tau}\coloneqq\tstar(1-\abs{\partial f}^2)^{-1/2}$ satisfies
	$\eta(\dd\tilde{\tau},\dd\tilde{\tau})=-1+\tstar \Hc^{s-1;1}(\M)$.
	In order to recover the scattering data for $\mathring{\Omega}^2$, we compute
	\begin{equation}
		\mathring{\Omega}^2=\eta(\dd\tau,\dd\tau)+1=\mathbf{t}\Hs^{s-1;2\kappa_p-1}(\M)-\mathbf{t}^{2\kappa_p}\frac{2\psi(2\kappa_p+1)}{c_p\alpha_p}+\Hc^{s_p-2;2\kappa'}(\M).
	\end{equation}
	A similar expression therefore also holds in $(\tau,x)$ coordinates.
	
    (b)
	For the contrary, we use the definition of $W$ in \cref{eq:metric_qty} to express $t(\tau,y)$ from
	\begin{equation}\label{comp:eq:Jacobi_b}
		\begin{aligned}
			\partial_\tau t&=W^{-1}\in \Hs^{s;2\kappa_p}(\Mhat)+\tau^{2\kappa_p}H^{s_p}(\Sigma_0)+\Hc^{s_p-1;2\kappa'_p}(\Mhat),\\
            \partial_{z^i} t&=\frac{V^i}{W}=U^i\in \Hs^{s;2\kappa_p}(\Mhat)+\tau^{2\kappa_p+1}H^{s_p}(\Sigma_0)+\Hc^{s_p-1;2\kappa'_p+1}(\Mhat),
		\end{aligned}
	\end{equation}
	where the inclusion follows from the $L^\infty$ bound on $W$, and \cref{not:def:scattering_VW}.
    Thus, using the integrability conditions \eqref{eq:integrability} and \eqref{eq:transport_U} to determine $t$, we obtain
	\begin{equation}
		t\in \Hs^{s+1;2\kappa_p}(\Mhat)+\tau^{2\kappa_p+1}H^{s_p}(\Sigma_0)+\Hc^{s_p-1;2\kappa'+1}(\Mhat).
	\end{equation}
	Let us define $\tstar(\tau, z)= t(\tau, z) - t(0, z)$, so that
	\begin{equation}\label{comp:eq:t_tau}
		\tstar=W^{-1}\tau+\tau^2\Hs^{s-1;2\kappa_p}(\Mhat)+\tau^{2\kappa_p+1}H^{s_p}(\Sigma_0)+\Hc^{s_p-1;2\kappa_p'}(\Mhat).
	\end{equation}
	Using that $W\in(1/2,2)$, we know that $\tstar(\tau,z)$ is invertible function for each $z$.
	As in (a), we may use \cref{comp:eq:Jacobi_b,comp:eq:t_tau} to find the inverse
	\begin{equation}
		\tau= W\tstar+\tstar\Hs^{s-1;2\kappa_p}(\M)+\tstar^{2\kappa_p+1}H^{s_p}(\Sigma_0)+\Hc^{s_p-2;2\kappa'-\epsilon}(\M).
	\end{equation}
	From this, we can use the definition $\phi=c_p\tau^{-\alpha_p}$ to obtain that $\phi$ is a singular solution of regularity $s-1$.
\end{proof}	

    \section{The scattering construction: Proof of \texorpdfstring{\cref{in:thm:main}}{Theorem 1.1}}\label{sec:backward}

    i
We follow the strategy explained in \cref{in:sec:ideas}.
We begin in \cref{back:sec:map} by linearising \cref{in:eq:main} around the leading order ODE blow-up, namely $\phi_0$ as in \eqref{eq:phi_0}, and study properties of this linear operator and associated nonlinear operators between the spaces introduced in \cref{no:sec:backward}.
Next, in \cref{back:sec:ansatz}, we construct an ansatz $\phi_N$ that solves \cref{in:eq:main} up to an arbitrarily small error term $\mathcal{O}(\tstar^N)$ for $N \gg 1$.
Finally, in \cref{back:sec:energy}, we derive an energy estimate that yields coercive control for the linearised operator around such an approximate solution, thus yielding a good a priori estimate provided that the forcing is $\mathcal{O}(\tstar^N)$ for sufficiently large $N$. 
We find the nonlinear solution by a limiting argument.

\subsection{Mapping properties of linear and nonlinear operators}\label{back:sec:map}

Fix $f\in H^{s+1}(\BB_3)$ for $s>\frac{n+4}{2}$. We are looking for a solution that to leading order takes the form 
\begin{equation} \label{eq:phi_0}
	\phi_0:=(1-\abs{\partial f}^2)^{\frac{1}{p-1}}c_p\tstar^{-\alpha_p}.
\end{equation}
Using \cref{setup:eq:box}, the linearisation of $P[\phi]$ in \cref{in:eq:main} around $\phi_0$ then takes the form 
\begin{equation}
	\Box+p\phi_0^{p-1}=-(1-\abs{\partial f}^2)\big(\partial_\tstar^2-\gamma_p\tstar^{-2}\big)+P_f \eqqcolon P_0+P_f.
\end{equation}
We note that $P_0$ belongs to $\tstar^{-2}\Diffb^2$, while $P_f$ belongs to $\tstar^{-1}\Diffb^2$ with $H^{s-1}$ coefficients.
We also note that the dominant operator, namely $P_0$, is invertible on expressions of the form $g(y) \tstar^{\beta}$ for $\beta\neq \beta_p$, as shown in \cref{ode:eq:invert}.

We also introduce the following expressions, which represent the additional linear term and the nonlinear terms, which arise when one is ``linearising'' $P[\Psi + \Phi] = 0$ around $\Psi$:
\begin{equation}
	\mathfrak{E}_{\Psi}[\Phi]=-p(\phi_0^{p-1}-\Psi^{p-1})\Phi,\qquad \mathcal{N}_{\Psi}[\Phi]=(\Psi+\Phi)^p-\Psi^p-p\Psi^{p-1}\Phi.
\end{equation}
These are defined such that $P[\Psi + \Phi] = 0$ is exactly equivalent to
\begin{equation} \label{back:eq:lin_psi}
    P_0 \Phi + P_f \Phi + \mathfrak{E}_{\Psi}[\Phi] + \mathcal{N}_{\Psi}[\Phi] = - P[\Psi].
\end{equation}
In the following lemma, we record some mapping properties of $\mathcal{N},\mathfrak{E}$, following the analytic setup introduced in \cref{no:sec:backward}.
\begin{lemma}\label{map:lem:nonlin}	
	Fix $s > \frac{n+4}{2}$ and $s' \in \left[\frac{n+2}{2},s-1\right]$, $q\geq1$. 
	\begin{enumerate}
		\item Let $\Phi\in\Hc^{s';q-\alpha_p}(\Mt)$ and $\Phi_1\in\Hc^{s';1-\alpha_p}(\Mt)$.
		Then, for $\tstar_1<1$, it holds that
		\begin{equation}\label{map:eq:N_A}
			\mathfrak{E}_{\phi_0+\Phi_1}[\Phi],\mathcal{N}_{\phi_0+\Phi_1}[\Phi]\in\Hc^{s';q-\alpha_p-1}(\Mt).
		\end{equation}
		\item Let $\Phi\in\Hb^{s';q-\alpha_p}(\Mt)$ and $\Phi_1\in\Hb^{s';1-\alpha_p}(\Mt)$, then it holds that
		\begin{equation}\label{map:eq:N_H}
			\mathfrak{E}_{\phi_0+\Phi_1}[\Phi],\mathcal{N}_{\phi_0+\Phi_1}[\Phi]\in\Hb^{s';q-\alpha_p}(\Mt).
		\end{equation}
		\item 	Let $q\in\N$ and $\Phi\in\tstar^{q-\alpha_p} \Hs^{s';0}(\Mt)$ and $\Phi_1\in\tstar^{1-\alpha_p} \Hs^{s;s-s'}(\Mt)$, then it holds that
		\begin{equation}\label{map:eq:N_phg}
			\mathfrak{E}_{\phi_0+\Phi_1}[\Phi],\mathcal{N}_{\phi_0+\Phi_1}[\Phi]\in\tstar^{q-\alpha_p-1} \Hs^{s;s-s'}(\Mt).
		\end{equation}	
		\item 
            We have a nonlinear estimate for $\Phi_1\in\Hc^{s;1-\alpha_p}(\M)$ and $\norm{\Phi_2}_{\Hb^{s;q-\alpha_p}(\M)} ,\norm{\Phi_3}_{\Hb^{s;q-\alpha_p}(\M)}\leq 1$
		\begin{equation}\label{map:eq:N_linearised}
			\norm{\mathcal{N}_{\phi_0+\Phi_1}[\Phi_2]-\mathcal{N}_{\phi_0+\Phi_1}[\Phi_3]}_{\Hb^{s;q-\alpha}}\lesssim\norm{\Phi_2-\Phi_3}_{\Hb^{s;q-\alpha_p}}.
		\end{equation}
	\end{enumerate}
	
\end{lemma}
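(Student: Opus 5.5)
The plan is to reduce everything to a single structural fact: near $\Psi=\phi_0+\Phi_1$ both operators are smooth functions of the ratio $\Phi/\phi_0$, multiplied by the weight $\phi_0^{p-1}\sim \tstar^{-2}$. Write $\Psi=\phi_0(1+u_1)$ with $u_1=\Phi_1/\phi_0$, and $\Phi=\phi_0 v$. Since $\phi_0=c_p(1-|\partial f|^2)^{1/(p-1)}\tstar^{-\alpha_p}$ with $|\partial f|<1/10$ and $f\in H^{s+1}$, the factor $\phi_0^{\pm1}$ is $\tstar^{\mp\alpha_p}$ times a $\Vb$-smooth coefficient of $H^{s}$ regularity in $y$. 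Hence the hypotheses $\Phi_1\in X^{s';1-\alpha_p}$ and $\Phi\in X^{s';q-\alpha_p}$ translate to $u_1\in X^{s';1}$ and $v\in X^{s';q}$, in the respective space.

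In these variables we have
\[
\mathfrak{E}_{\Psi}[\Phi]=p\phi_0^{p}\big((1+u_1)^{p-1}-1\big)v,\qquad \mathcal{N}_{\Psi}[\Phi]=\phi_0^{p}\,G(u_1,v)\,v^2,
\]
where $G$ is smooth (indeed real-analytic) for $|u_1|,|v|<1/2$. The smallness $|u_1|,|v|<1/2$ follows from Sobolev embedding \eqref{not:eq:Sob}, because $s'\geq\frac{n+2}{2}$ gives $L^\infty$ control, together with the $\tstar^{1}$, respectively $\tstar^{q}$, decay once $\tstar_1$ is small. I should flag that for $\mathcal{N}$ one needs either $\Phi$ small or a bound depending on the norm; I would assume smallness as part of the setup. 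Since $\phi_0^p=\tstar^{-\alpha_p-2}\cdot(\text{coefficient})$, we obtain, for (1):
\[
\mathfrak{E}\in \tstar^{-\alpha_p-2}\,\tstar^{1}\,\tstar^{q}\,X=\tstar^{q-\alpha_p-1}X,\qquad \mathcal{N}\in \tstar^{-\alpha_p-2+2q}X\subset \tstar^{q-\alpha_p-1}X,
\]
using $q\geq1$ and $\tstar_1<1$.

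The key technical step is a Moser-type product and composition estimate in the $\Vb$-adapted spaces. The $\Hc^{s';\cdot}$ and $\Hb^{s';\cdot}$ spaces are algebras for $s'>\frac{n+1}{2}$ after factoring out weights, and smooth functions act on them. I would prove this by distributing $\Vb^{s'}$ by Leibniz and placing all but the highest-order factor in $L^\infty$ via \eqref{not:eq:Sob}. The vector fields $\tstar T$ and $\partial_y$ act on $\tstar^{a}$ by multiplication by a constant, so powers of $\tstar$ pass through harmlessly. This is the \textbf{main obstacle}: the endpoint $s'=\frac{n+2}{2}$ versus the $(n+1)$-dimensional spacetime embedding, and the fact that $\Hb$ carries an extra $\tstar^{-1/2}$ that $L^\infty$ factors must not consume. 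I would handle the latter by putting the $\tstar^{-1/2}$ weight only on the $L^2$ factor.

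For (2), the same computation gives $\tstar^{q-\alpha_p-1}$, which is stronger than $\Hb^{s';q-\alpha_p}$. I suspect the intended statement sits at that shifted weight anyway, but we would prove the stronger version. For (3), the polyhomogeneous $\Hs$ structure is preserved: Taylor expanding $G$ and $(1+u_1)^{p-1}$ in powers of $\tstar$ produces products of $H^{s-j}(\Sigma_0)$ coefficients. These are controlled by the algebra property of $H^{\sigma}$ for $\sigma>n/2$, which accounts for the $s-s'$ shift in expansion order. For (4), I would write
\[
\mathcal{N}[\Phi_2]-\mathcal{N}[\Phi_3]=\int_0^1 p\big((\Psi+\Phi_3+\theta(\Phi_2-\Phi_3))^{p-1}-\Psi^{p-1}\big)\,\dd\theta\,(\Phi_2-\Phi_3),
\]
and apply the product estimate again, with the bracket bounded using $\|\Phi_i\|\leq1$.
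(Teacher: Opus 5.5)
Your strategy is the paper's: normalize by the leading profile, write $\mathfrak{E}$ and $\mathcal{N}$ as $\phi_0^{p}$ times smooth functions of the ratios with linear, respectively quadratic, vanishing (the paper factors out $\Psi^p$ for $\mathcal{N}$), count powers of $\tstar$ using $(p-1)\alpha_p=2$, and close with Sobolev embedding and product estimates. Your part (1) is exactly the paper's computation.

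The step that goes wrong is (2), where you have the weight comparison backwards. Since $\tstar<1$, a larger weight index means more vanishing at $\tstar=0$. Hence $\Hb^{s';q-\alpha_p}\subset\Hb^{s';q-\alpha_p-1}$, and landing in $\Hb^{s';q-\alpha_p-1}$ is \emph{weaker} than the printed conclusion, not stronger. So your argument does not give (2) as printed. In fact no argument can, because as printed it is false. Recall that a power $\tstar^{a}$ lies in $\Hb^{s';w}$ exactly when $a>w$. Take $f=0$, $\Phi_1=\tstar^{1-\alpha_p+\epsilon}$ and $\Phi=\tstar^{q-\alpha_p+\epsilon}$. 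Then $\mathfrak{E}_{\phi_0+\Phi_1}[\Phi]=p(p-1)c_p^{p-2}\,\tstar^{q-\alpha_p-1+2\epsilon}(1+O(\tstar))$, which is not in $\Hb^{s';q-\alpha_p}$ for $\epsilon\le 1/2$.

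Your suspicion about the intended statement is nevertheless correct. The right assertion is $\mathfrak{E},\mathcal{N}\in\Hb^{s';q-\alpha_p-1}$. This is all the paper's own proof produces (it explicitly says a power of decay is lost), and it is exactly what is used later, as the bound $\Hb^{k;q+1}\to\Hb^{k;q}$ in \cref{en:lem:correction}. So state and prove that shifted version and flag the printed weight as a typo, rather than claiming a stronger result.

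For (4), your integral formula is more careful than the paper's one-liner. The paper's displayed inequality only records a bound $\lesssim|y|$, and after multiplying by $\Psi^{p}\sim\tstar^{-\alpha_p-2}$ that alone costs $\tstar^{-2}$. Your bracket $p\big((\Psi+\Phi_3+\theta(\Phi_2-\Phi_3))^{p-1}-\Psi^{p-1}\big)$ is instead $O(\tstar^{q-2})$ once you use $\|\Phi_i\|\le 1$ and Sobolev. This is the quantitative form of the ``quadratic nature'' the paper appeals to. Do finish the bookkeeping, though. It yields $\Hb^{s;2q-\alpha_p-2}$, which lies inside the printed $\Hb^{s;q-\alpha_p}$ only for $q\ge 2$. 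That restriction is harmless where the lemma is applied, since the decay exponent there is large.

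Similarly in (3), the Taylor series of $G$ must be truncated, with the remainder placed in an error space. Also, the leading output coefficient is a product involving the $H^{s'}$ coefficient of $\Phi$, so the algebra property alone does not give the $H^{s}$ regularity that the printed $\Hs^{s;s-s'}$ asks for. The paper's one-line argument does not track this either, so claim only what the indices actually give.
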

\begin{proof}
	The first inequality follows by noting that $\abs{(1+x)^p-1-px}\lesssim_p x^2$ for $x$ sufficiently small and expanding
	\begin{gather}
		\mathcal{N}_{\Psi}[\Phi]=\Psi^p\Big((1+\Phi/\Psi)^p-1-p\Phi/\Psi\Big)\in\Hc^{s;-p\alpha_p+2q}\subset\Hc^{s;q-\alpha_p-1},\\
		\mathfrak{E}_{\phi_0+\Phi_1}[\Phi]=\phi_0^{p-1}\big( (1+\Phi_1/\phi_0)^{p-1}-1\big)\Phi\in\Hc^{s;-(p-1)\alpha_p+1+q-\alpha_p}=\Hc^{s;q-\alpha_p-1},
	\end{gather}
	where we used $\alpha_p(p-1)=2,q\geq1$.
	The second follows from the same computation using the embedding \cref{not:eq:Sob} and product estimates; note we lose one power of decay from Sobolev but this will be ultimately harmless.
	   Next, \cref{map:eq:N_H} follows from noting that all the expansions we performed are smooth, so if $\Phi_1,\Phi$ are smooth, so is the result.

       Finally, \cref{map:eq:N_linearised} follows from the smoothness of $\mathcal{N}_\Psi[\Phi]$ in its argument for $\abs{\Phi}/\Psi<1/2$ and its quadratic nature
       \begin{equation}
           \abs{(1+x)^p-1-px}-\abs{(1+(x+y))^p-1-p(x+y)}\lesssim \abs{y},\qquad \forall x,y<1/2.
       \end{equation}
\end{proof}

\subsection{The singular ansatz}\label{back:sec:ansatz}

\begin{definition}
	We say that $\phi$ is a singular \emph{ansatz} of decay $N$ and regularity $s>N+\frac{n+1}{2}$ if it is of the form\footnote{We use the $1/2$ shift in the regularity so that the $p\in\N$ case, where logarithmic terms appear in the expansion, can be treated with the same notation.} \cref{geo:eq:sol_form} and
	\begin{equation}
		\Box\phi+\phi^p\in\Hc^{s-N-2;N-\alpha_p-3/2}(\Mt).
	\end{equation}
\end{definition}
One easily obtains that $\phi_0$ is a singular ansatz of decay $1/2$ and regularity $s-2$ whenever $f\in H^{s+1}(\BB_1)$.
To improve on this ansatz, we use the explicit inversion given by \cref{ode:eq:invert} for slowly decaying errors, $\mathcal{O}(\tstar^{q})$ with $q \leq \beta_p$.
For faster decaying errors, it is not necessary to keep track of the exact structure and we rather bound the errors and corrections in $\Hc(\M)$ spaces, using the following general ODE result:
\begin{lemma}\label{ode:lemma:invert}
	Given $f\in\Hc^{s;q}(\M)$ with $q>\beta_p$, there exists a unique $g\in \Hc^{s;\beta_p+}(\M)$ such that $(t^2\partial_t^2-\gamma_p)g=f$, which in fact satisfies $g \in\Hc^{s;q}(\M)$.
\end{lemma}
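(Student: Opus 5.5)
The plan is to solve the ODE explicitly by factorising the operator, integrating from the singular time $\tstar=0$, and then reading off the weighted bounds. Throughout, $t$ in the statement stands for $\tstar$, and the equation is an ODE in $\tstar$ at each fixed $y$.

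\textbf{Factorisation.} Since $\tstar^2\partial_\tstar^2=(\tstar\partial_\tstar)(\tstar\partial_\tstar-1)$ and the roots of $r(r-1)=\gamma_p$ are $\beta_p$ and $1-\beta_p$, we have
\begin{equation*}
\tstar^2\partial_\tstar^2-\gamma_p=(\tstar\partial_\tstar-\beta_p)(\tstar\partial_\tstar+\beta_p-1).
\end{equation*}

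\textbf{Existence.} I would define $w$ and then $g$ by
\begin{equation*}
w(\tstar,y)=\tstar^{1-\beta_p}\int_0^{\tstar}\sigma^{\beta_p-1}f(\sigma,y)\frac{\dd\sigma}{\sigma},\qquad g(\tstar,y)=\tstar^{\beta_p}\int_0^{\tstar}\sigma^{-\beta_p}w(\sigma,y)\frac{\dd\sigma}{\sigma}.
\end{equation*}
By construction $(\tstar\partial_\tstar+\beta_p-1)w=f$ and $(\tstar\partial_\tstar-\beta_p)g=w$, so $(\tstar^2\partial_\tstar^2-\gamma_p)g=f$.

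Both integrals converge at $\sigma=0$ because $q+\beta_p-1>0$ and $q>\beta_p$.

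The $L^2$ bounds come from Minkowski's integral inequality on the slices. Formally, $\|f(\sigma)\|_{L^2(\Sigma_\sigma)}\le C\sigma^{q}$ gives
\begin{equation*}
\|w(\tstar)\|_{L^2(\Sigma_\tstar)}\le \frac{C\tstar^q}{q+\beta_p-1},\qquad \|g(\tstar)\|_{L^2(\Sigma_\tstar)}\le \frac{C\tstar^q}{(q+\beta_p-1)(q-\beta_p)}.
\end{equation*}
This is where $q>\beta_p$ is essential. There is a geometric point to check: the line of fixed $y$ followed from $\tstar$ down to $0$ stays in $\Mt$. This holds because the slices $\{|x|<2(1-t)\}$ only grow as $\tstar$ decreases, so the $L^2(\Sigma_\sigma)$ norm over the relevant set is controlled by the full slice norm.

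\textbf{Regularity.} Next I would propagate the $\Vb$ regularity. Derivatives $\partial_y$ commute with the integral formulas, since the integration is in $\tstar$ at fixed $y$. The field $\tstar\partial_\tstar$ commutes with $\tstar^2\partial_\tstar^2-\gamma_p$. Moreover, after the change of variables $\sigma=\lambda\tstar$, one sees directly that $\tstar\partial_\tstar$ applied to each integral operator equals the same operator applied to $\tstar\partial_\tstar f$. Hence $\Vb^k g$ is the solution given by the formula with data $\Vb^k f\in \tstar^qL^\infty L^2$, for all $k\le s$, and the bound above applies. This gives $g\in\Hc^{s;q}(\M)$, and in particular $g\in\Hc^{s;\beta_p+}$.

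\textbf{Uniqueness.} If two solutions exist, their difference solves the homogeneous equation. So at each $y$ it equals $a(y)\tstar^{\beta_p}+b(y)\tstar^{1-\beta_p}$. Lying in $\Hc^{s;\beta_p+\epsilon}$ means that $\tstar^{-\beta_p-\epsilon}$ times the slice norms is bounded as $\tstar\to0$, and this forces $a=b=0$.

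\textbf{Main obstacle.} The only real subtlety is how to read the weight space "$\beta_p+$". I would interpret it as the decay $\sup_\tstar \tstar^{-q'}\|\cdot\|_{L^2(\Sigma_\tstar)}<\infty$ for some $q'>\beta_p$, rather than an integrated $\tstar^{q'}L^2(\Mt)$ condition. The integrated condition might fail to exclude $\tstar^{\beta_p}$, in which case one would take $q'>\beta_p+\tfrac12$.
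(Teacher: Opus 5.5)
Your proposal is correct and follows essentially the paper's route: the paper conjugates by $\tstar^{-\beta_p}$ to write the operator as $(\tstar\partial_\tstar+2\beta_p-1)\tstar\partial_\tstar$ and applies twice a weighted $\tstar^{q}L^\infty$ bound for $\tstar\partial_\tstar-q'$ (for $q>q'$), proved by integrating from $\tstar=0$; this is what your two explicit integral formulas do, and $\{1,\tstar\partial_\tstar\}$-commutation is handled as in your $\sigma=\lambda\tstar$ rescaling remark. Your reading of ``$\beta_p+$'' as a supremum-in-$\tstar$ weight is the one the paper's proof uses, and the paper's uniqueness is implicit in that bound holding for every $g\in \tstar^{q'+}L^\infty$, which your argument via the kernel $\{\tstar^{\beta_p},\tstar^{1-\beta_p}\}$ makes explicit.
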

\begin{proof}
	To prove the lemma, it will suffice to show the following ODE\footnote{See \cite[Lemma 7.5]{hintz_stability_2020} for an $L^2$ based analogue.} result 
	\begin{equation}\label{an:eq:ode}
		\norm{\{1,t\partial_t\}^kg}_{t^qL^{\infty}((0,1])}\lesssim\norm{\{1,t\partial_t\}^k(t\partial_t-q') g}_{t^qL^{\infty}((0,1])},\qquad \forall q>q', g\in t^{q'+}L^{\infty}((0,1]).
	\end{equation}
	Observe that by commuting with $t^{q'}$, it suffices to prove \cref{an:eq:ode} for $q'=0$.
	Let us normalise $\norm{t\partial_t g}_{t^qL^\infty}=1$ and for $k=0$ compute
	\begin{equation}
		\norm{g}_{L^\infty}\leq \sup_{t\in[0,1)}t^{-q}\int_0^t\frac{\dd t}{t}\abs{t\partial_tg}\leq  \sup_{t\in[0,1)}t^{-q}\int_0^t\frac{\dd t}{t}t^q =q^{-1}
	\end{equation}
	
	We commute $(t^2\partial_t^2-\gamma_p)g=f$ with  $t^{-\beta_p}$ using \cref{in:eq:betadelta} to get
	\begin{equation}\label{ode:eq:commuted}
		(t^2\partial_t^2 + 2\beta_p t\partial_t)(t^{-\beta_p}\phi) = \big(t\partial_t + 2 \beta_p - 1\big)t\partial_t (t^{-\beta_p}\phi)=t^{-\beta_p}f.
	\end{equation}
	Applying \cref{an:eq:ode} twice yields the result.			
\end{proof}

It will be important when constructing the ansatz to separate the case when  $2\gamma_p$ is an integer.

\begin{prop}\label{an:prop:ansatz}
	Fix $N\in\N$, $s-N>\floor{2\kappa_p}+5+\frac{n+1}{2}$, $f\in H^{s+1}(\R^n)$ and $\psi\in H^{s-\floor{2\kappa_p}}(\R^n)$ scattering data.
	Then, there exists a singular ansatz $\phi$ of regularity $s$, decay order $N+\floor{2\kappa_p}$ and scattering data $\psi$ satisfying $P[\phi]\in \Hc^{s-N-\floor{2\kappa_p}-2;N-5/2}(\M)$.
\end{prop}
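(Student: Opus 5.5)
We construct the ansatz by iteratively correcting $\phi_0$ from \eqref{eq:phi_0}, gaining one power of $\tstar$ per step. Each correction is obtained by inverting the dominant operator $P_0$ explicitly via \cref{ode:eq:invert} while the error is slower than $\tstar^{\beta_p-2}$, and via \cref{ode:lemma:invert} once it is faster.

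\textbf{Regime 1: polyhomogeneous corrections.} Set $\phi^{(1)}=\phi_0$. Since $\phi_0$ is built so that the $P_0$ part of $P[\phi_0]$ cancels exactly, \cref{setup:eq:box} gives
\[
P[\phi_0]=P_f\phi_0\in \tstar^{-\alpha_p-1}\Hs^{s-1;\cdot}.
\]
Its leading coefficient is $(\Delta_xf)$ and $\partial_y f\cdot\partial_y$ acting on the $\tstar^{-\alpha_p}$ profile.

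Inductively, suppose $\phi^{(j)}=\phi_0+\sum_{1\le k<j}\tstar^{k-\alpha_p}a_k(y)$ with $a_k\in H^{s-k}$, and $P[\phi^{(j)}]\in\tstar^{j-\alpha_p-2}\Hs^{s-j;\cdot}$. Write the leading term of the error as $\tstar^{j-\alpha_p-2}b_j(y)$ and add
\[
\tstar^{j-\alpha_p}a_j,\qquad a_j=-b_j\big/\big[(1-|\partial f|^2)\big((j-\alpha_p)(j-\alpha_p-1)-\gamma_p\big)\big].
\]
This is allowed because $j-\alpha_p\neq\beta_p$ for $j<2\kappa_p$, since $\beta_p+\alpha_p=2\kappa_p$. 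The new error is one order better. To see this, expand $P[\phi^{(j)}+\Phi]$ using \eqref{back:eq:lin_psi}. The terms $P_f\Phi$, $\mathfrak{E}$ and $\mathcal{N}$ gain at least one power of $\tstar$ by \cref{map:lem:nonlin}(3), which also shows the coefficients stay polyhomogeneous in $\Hs$ with one derivative lost per step. Run this for $j\le\floor{2\kappa_p}$.

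At the resonant level $\beta_p=2\kappa_p-\alpha_p$, add $\psi\tstar^{\beta_p}$, which lies in the kernel of $P_0$. If $2\kappa_p\in\N$, the error carries a term $\tstar^{\beta_p-2}b$ that cannot be inverted by a pure power. By the second identity in \cref{ode:eq:invert} we remove it with $\tstar^{\beta_p}\log\tstar\,\phi_{\log}$, where $\phi_{\log}=-b/\big((1-|\partial f|^2)(2\beta_p-1)\big)$. This is the only place the log term enters. The result has the form \cref{geo:eq:sol_form} with scattering data $\psi$, and its error lies in $\tstar^{\beta_p-2+\delta}$ for some $\delta>0$. The regularity at this stage is $s-\floor{2\kappa_p}-O(1)$.

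\textbf{Regime 2: non-explicit corrections.} Once the error $F_k=P[\phi^{(k)}]$ lies in $\Hc^{s_k;q_k-2}$ with $q_k>\beta_p$, we stop tracking structure. Define $g_k$ by
\[
(1-|\partial f|^2)(\tstar^2\partial_\tstar^2-\gamma_p)g_k=\tstar^2F_k,
\]
which \cref{ode:lemma:invert} solves with $g_k\in\Hc^{s_k;q_k}$. The coefficient $1-|\partial f|^2$ is $y$-independent in $\tstar$ and has $H^{s}$ regularity, so it can be divided out. Then set $\phi^{(k+1)}=\phi^{(k)}-g_k$. The new error is
\[
P_fg_k+\mathfrak{E}_{\phi^{(k)}}[g_k]+\mathcal{N}_{\phi^{(k)}}[g_k].
\]
By \cref{map:lem:nonlin}(1) and $P_f\in\tstar^{-1}\Diffb^2$, this lies in $\Hc^{s_k-2;q_k-1}$, a gain of one power for two derivatives. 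After $N$ iterations the error lies in $\Hc^{s-N-\floor{2\kappa_p}-2;N-5/2}$, after adjusting the $\tfrac12$ shifts and the constant losses. The hypothesis $s-N>\floor{2\kappa_p}+5+\tfrac{n+1}{2}$ keeps $s'\ge\tfrac{n+2}{2}$ throughout, so \cref{map:lem:nonlin} stays applicable.

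\textbf{Main obstacle.} The hardest part is the derivative bookkeeping. Each application of $P_f$ costs two $\partial_y$ derivatives (in $\Diffb$, $\partial_y$ counts as one) but gains only one power of $\tstar$. Also, \cref{map:lem:nonlin} requires the ratio $\Phi/\phi_0$ to be small, which holds only for small $\tstar_1$. I would track the regularity index linearly in the number of steps, and check that the loss matches the stated $s-N-\floor{2\kappa_p}-2$ by counting that $\Vb$ derivatives absorb $\tstar\partial_\tstar$ for free. If two derivatives are lost per step, I would instead measure the gain in half-powers, using the $\Hc$ versus $\Hb$ flexibility from \cref{not:eq:Sob}.
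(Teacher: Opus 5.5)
Your architecture matches the paper's: explicit indicial inversion via \eqref{ode:eq:invert} below the resonance, $\psi\tstar^{\beta_p}$ inserted freely (plus the $\tstar^{\beta_p}\log\tstar$ correction when $2\kappa_p\in\N$), and \cref{ode:lemma:invert} above it. The gap is in Regime~2, at the point you flag as the main obstacle but leave unresolved. By your own count each step there gains one power of $\tstar$ for two $\Vb$-derivatives. This is because you treat $P_f$ as an element of $\tstar^{-1}\Diffb^2$, and \cref{ode:lemma:invert} as stated returns $g_k$ in the same space $\Hc^{s_k;\cdot}$ as the forcing. Reaching decay order $N+\floor{2\kappa_p}$ takes roughly $N$ such steps, so you end with regularity about $s-2N$, not $s-N-\floor{2\kappa_p}-2$. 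Neither ``adjusting the $\frac12$ shifts'' nor the $\Hc$/$\Hb$ flexibility can repair this, since the embeddings \eqref{not:eq:Sob} only ever cost decay or derivatives and never improve the ratio of derivatives to powers. For the same reason, your claim that $s-N>\floor{2\kappa_p}+5+\frac{n+1}{2}$ keeps $s'\geq\frac{n+2}{2}$ is false for your scheme. Near that threshold with $N$ large, $s-2N$ is far below $\frac{n+2}{2}$, so \cref{map:lem:nonlin} stops applying partway through the iteration.

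The missing idea is to split
\[
P_f=\tstar^{-1}\big(\Delta_x f-2\partial_i f\,\partial_{y^i}\big)\,\tstar\partial_\tstar+\Delta_y ,
\]
and to count $\partial_y$-derivatives separately from $\tstar\partial_\tstar$-derivatives. The term $\Delta_y$ costs two $\partial_y$-derivatives, but it is $O(\tstar^0)$ against $P_0\sim\tstar^{-2}$, so it also gains two powers. In the first term the $\tstar\partial_\tstar$ is free. Indeed, $\tstar^2\partial_\tstar^2-\gamma_p=(\tstar\partial_\tstar-\beta_p)(\tstar\partial_\tstar+\beta_p-1)$, and $(\tstar\partial_\tstar-c)h=F$ gives $\tstar\partial_\tstar h=F+ch$. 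Applying \eqref{an:eq:ode} to each factor therefore shows that the inversion gains two $\tstar\partial_\tstar$-derivatives, and the inverse commutes with $\partial_y$. Every term of the ansatz is then as regular in $\tstar\partial_\tstar$ as you like, and only $\partial_y$-regularity is consumed, at one derivative per power. This is the bookkeeping behind the paper's splitting $P[\phi_q]\in\Hc^{s-q-1;q-\alpha_p-3/2}+\Hc^{s-q-2;q-\alpha_p-1/2}$. Only the first summand is inverted at each step. The second is one derivative worse and one power better, so it becomes part of the first summand at the next step. (A cosmetic point: with $P_0=-(1-|\partial f|^2)(\partial_\tstar^2-\gamma_p\tstar^{-2})$, the signs of your $a_j$, $\phi_{\log}$ and $g_k$ must all be flipped; otherwise each correction doubles the error instead of cancelling it.)
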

\begin{proof}
	This follows from iteratively inverting the leading operator via either the explicit formulae \cref{ode:eq:invert} or \cref{ode:lemma:invert}, similarly as done in \cite{cazenave_solutions_2020,kichenassamy_fuchsian_2007}.
	We give some details for completeness.

	Let us focus on the case $2\kappa_p\in\N$, the other case being easier.
We first find $\phi_q$ for $q< \floor{2\kappa_p}$ such that $P[\phi_q]\in\tstar^{q-\alpha_p-1}\Hs^{s-q-1}(\Mt)$.
	For $q=0$, we use $\phi_0$ from \cref{geo:def:scattering_sol}.
    For $q\geq1$, let us define $\phi_q=\phi_{q-1}+\psi_q\tstar^{q-\alpha_p}$ by solving $P_0[\psi_q \tstar^{q - \alpha_p}]$ to be the leading term of $P[\phi_{q-1}]$, or more precisely
	\begin{equation}
		\big((\alpha_p-q)(\alpha_p-q+1)-\gamma_p\big)(1-\abs{\partial f}^2)\psi_q:=-\tstar ^{\alpha_p+1-q}P[\phi_{q-1}]|_{\tstar=0}.
	\end{equation}
	We claim that $P[\phi_{q-1}]\in \tstar^{q-\alpha_p-2} \Hs^{s-q-2}(\Mt)$ and $\psi_q\in H^{s-q}(\Sigma_0)$.
	Indeed, we can write
	\begin{equation}
		P[\phi_q]= P[\phi_{q-1}]+ P_0[\psi_q]+\big(P_f[\psi_q]+\mathfrak{E}_{\phi_{q-1}}[\psi_q]+\mathcal{N}_{\phi_{q-1}}[\psi_q]\big),
	\end{equation}
	and the claim follows by induction using \cref{map:eq:N_A} of \cref{map:lem:nonlin} and the explicit formula \cref{ode:eq:invert}.
	
	Next, we define $\phi_{\mathrm{log}}$ as $(2\beta_p-1)\phi_{\mathrm{log}}=-\lim_{\tstar\to0}\tstar^{\beta_p+2}P[\phi_{2\kappa_p-1}]$, so that
	\begin{equation}
		P[\underbrace{\phi_{2\kappa_p-1}+\phi_{\log}\tstar^{\beta_p}\log\tstar+t^{\beta_p}\psi}_{\eqqcolon \phi_{\floor{2\kappa_p}}} ]\in  \Hc^{s-{2\kappa_p}-1;\beta_p-1-\epsilon}(\Mt)+ \Hc^{s-{2\kappa_p}-2;\beta_p-\epsilon}(\Mt),\quad \forall\epsilon>0.
	\end{equation}
	
	Next, we construct $\phi_q$ for $q>\floor{2\kappa_p}$ such that $P[\phi_q]\in\Hc^{s-q-1;q-\alpha_p-3/2}(\M)+\Hc^{s-q-2;q-\alpha_p-1/2}(\M)$.
	Let us write $\left(P[\phi_q]\right)_{1}$ and $\left(P[\phi_q]\right)_{2}$ for the decomposition into these summands.
	We define $\psi_q\in \Hc^{s-2q;q-\alpha_p}(\Mt)$ for $2\kappa_p< q<N$ via solving parametrically in $y$ the equation $P_0\psi_q=-\left(P[\phi_{q-1}]\right)_1$,  the solution of which is given by \cref{ode:lemma:invert}.
	Using $\phi = \phi_{N+\floor{2\kappa_p}}$ as the ansatz yields the result:
	\begin{multline}
		P[\phi]\in\Hc^{s-N-\floor{2\kappa_p}-1;N+\floor{2\kappa_p}-\alpha_p-3/2}(\M)+\Hc^{s-N-\floor{2\kappa_p}-2;N+\floor{2\kappa_p}-\alpha_p-1/2}(\M)\\
		\subset \Hc^{s-N-\floor{2\kappa_p}-2;N+\floor{2\kappa_p}-\alpha_p-3/2}(\M)\subset  \Hc^{s-N-\floor{2\kappa_p}-2;N-3/2}(\M).
	\end{multline}
\end{proof}

\subsection{Energy estimates and the nonlinear solution} \label{back:sec:energy}	

Next, we prove an energy estimate for the linearised operator $P_{\phi_0}=\Box+p\phi^{p-1}_0$. 
Albeit the $p\phi_0^{p-1}$ part of $P_{\phi_0}$ will be treated as an error term, we keep it explicit because it determines the value of $q$ such that we obtain invertibility of $P_{\phi_0}:\Hb^{s;q}(\M)\to\Hb^{s;q-2}(\M)$, just as in \cref{in:sec:ideas}.
We first give some general definitions.
\begin{definition} \label{def:energymomentum}
	For some function $\Phi: U \to \R$ defined on a domain $\mathcal{D}$ in Minkowski space, define its \underline{energy momentum tensor} to be
	\begin{equation} \label{eq:energymomentum}
		\mathbb{T}_{\mu\nu} [\Phi] \coloneqq \nabla_{\mu} \Phi \, \nabla_{\nu} \Phi - \frac{1}{2} \eta_{\mu\nu} \cdot \eta(\nabla \Phi, \nabla \Phi).
	\end{equation}
	
	Then, for some vector fields $Z^{\mu}$ and $Q^{\mu}$ in the same domain $\mathcal{D}$, we define the \underline{energy current} associated to the triple $(\Phi, Z^{\mu}, Q^{\mu})$ to be the vector field $\J^{\mu} = \J^{\mu}[\Phi, Z, Q]$, given by 
	\begin{equation} \label{eq:current_general}
		\J^{\mu} = ( \eta^{-1} )^{\mu\nu} \mathbb{T}_{\nu \sigma} [\Phi] Z^{\sigma} - Q^{\mu} \Phi^2.
	\end{equation}
\end{definition}

\begin{lemma} \label{back:lem:energy_est}
	Consider the vector fields $Z^{\mu}$ and $Q^{\mu}$ given by
	\begin{equation} \label{back:eq:multiplier}
		Z^{\mu} \coloneqq \tstar^{-2q} T^{\mu} = \tstar^{-2q} \left( \frac{\partial}{\partial \tstar} \right)^{\mu}, \quad Q^{\mu} \coloneqq \frac{\kappa_p^2}{2} \tstar^{-2(q+1)} T^{\mu}.
	\end{equation}
	Then, for $q \geq 4\kappa_p$ and $\J^{\mu} = \J^{\mu}[\Phi, Z, Q]$ as in \cref{eq:current_general}, one has
	\begin{equation} \label{back:eq:current}
		\nabla_{\mu} \J^{\mu} \geq \tstar^{-2q} \Box \Phi \, T \Phi + \frac{q}{4} \tstar^{-2q-3} | \Vt \Phi |_{\ell^2}^2+\frac{\kappa_p^2q\abs{\Phi}^2}{4},
	\end{equation}
	as well as the coercivity on constant $\tstar$ hypersurfaces: for $N^{\mu} = - \frac{d \tstar}{ - \eta(\nabla \tstar, \nabla \tstar)^{1/2}}$, 
	\begin{equation} \label{back:eq:coercivity}
		\frac{1}{4} | \Vt \Phi |_{\ell^2}^2 \leq \J^{\mu} N_{\mu} \leq 4 | \Vt \Phi |_{\ell^2}^2.
	\end{equation}
\end{lemma}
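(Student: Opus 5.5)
We compute the divergence of $\J^\mu$ directly from the standard identity
\begin{equation*}
\nabla_\mu \J^\mu = \Box\Phi\, Z\Phi + \tfrac12 \mathbb{T}^{\mu\nu}[\Phi]\,(\mathcal{L}_Z\eta)_{\mu\nu} - (\nabla_\mu Q^\mu)\Phi^2 - 2\Phi\, Q^\mu\nabla_\mu\Phi .
\end{equation*}
We work in the $(\tstar,y)$ coordinates of \cref{setup:eq:metric}. Since $T=\partial_\tstar$ is Killing (it equals $\partial_t$), the deformation tensor of $Z=\tstar^{-2q}T$ is
\begin{equation*}
\tfrac12(\mathcal{L}_Z\eta)_{\mu\nu}=\tfrac12\big(\nabla_\mu(\tstar^{-2q})T_\nu+\nabla_\nu(\tstar^{-2q})T_\mu\big)=-q\,\tstar^{-2q-1}\big(\dd\tstar_\mu T_\nu+\dd\tstar_\nu T_\mu\big).
\end{equation*}
The bulk term is then $-2q\tstar^{-2q-1}\mathbb{T}(\nabla\tstar,T)$. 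Because $\nabla\tstar$ is past-directed timelike and $T$ is future-directed timelike, with $\abs{\partial f}<1/10$, the dominant energy condition gives $-\mathbb{T}(\nabla\tstar,T)\ge c\big((T\Phi)^2+\abs{\partial_y\Phi}^2\big)$ with $c$ close to $1/2$. This yields a positive term of order $q\tstar^{-2q-1}\big((T\Phi)^2+\abs{\partial_y\Phi}^2\big)=q\tstar^{-2q-3}\big(\abs{\tstar T\Phi}^2+\abs{\tstar\partial_y\Phi}^2\big)$, which controls the derivative components of $\abs{\Vt\Phi}^2$.

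For the $Q$ terms, note that $\nabla_\mu T^\mu=0$. So $-\nabla_\mu Q^\mu\,\Phi^2=-\tfrac{\kappa_p^2}{2}T(\tstar^{-2q-2})\Phi^2=\kappa_p^2(q+1)\tstar^{-2q-3}\Phi^2$, which is a good zeroth-order term. The cross term $-2\Phi Q^\mu\nabla_\mu\Phi=-\kappa_p^2\tstar^{-2q-2}\Phi\,T\Phi$ is handled by Cauchy--Schwarz:
\begin{equation*}
\kappa_p^2\tstar^{-2q-2}\abs{\Phi T\Phi}\le \tfrac{\kappa_p^2(q+1)}{2}\tstar^{-2q-3}\Phi^2+\tfrac{\kappa_p^2}{2(q+1)}\tstar^{-2q-1}(T\Phi)^2 .
\end{equation*}
Since $q\ge4\kappa_p$, the last piece is absorbed into the derivative term $\sim q\tstar^{-2q-1}(T\Phi)^2$, because $\kappa_p^2/(q+1)\ll q$. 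Collecting terms gives \cref{back:eq:current}, with $\tfrac q4$ in front of $\tstar^{-2q-3}\abs{\Vt\Phi}^2$; the stated zeroth-order term is understood with the same weight. The $\Box\Phi\,Z\Phi$ term is exactly $\tstar^{-2q}\Box\Phi\,T\Phi$.

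For the coercivity \cref{back:eq:coercivity}, note that $\J\cdot N=\tstar^{-2q}\mathbb{T}(T,N)-\tfrac{\kappa_p^2}{2}\tstar^{-2q-2}\eta(T,N)\Phi^2$. With $\abs{\partial f}<1/10$, $T$ and $N$ are nearly parallel unit timelike vectors, so $\mathbb{T}(T,N)$ is comparable to $\tfrac12\big((T\Phi)^2+\abs{\partial_y\Phi}^2\big)$ with constants in $[1/3,2/3]$, and $-\eta(T,N)\in[1,1.1]$. Hence $\J\cdot N$ is comparable to $\tstar^{-2q-2}\big(\abs{\tstar T\Phi}^2+\abs{\tstar\partial_y\Phi}^2+\kappa_p^2\Phi^2\big)$. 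This gives \cref{back:eq:coercivity}, where $\abs{\Vt\Phi}$ is read with the weight $\tstar^{-q-1}$ and with the $\Phi$ component normalised by $\kappa_p$; the constants $1/4$ and $4$ follow because the metric perturbation is at most $O(10^{-2})$.

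The main obstacle is the bookkeeping of explicit constants: showing that the dominant-energy lower bound for $-\mathbb{T}(\nabla\tstar,T)$ and the absorption of the $\Phi\,T\Phi$ cross term leave at least $q/4$, uniformly, for $q\ge4\kappa_p$. I would handle this by computing $\mathbb{T}(\nabla\tstar,T)$ exactly using \cref{setup:eq:metric} and \cref{setup:eq:vectorfields}, then bounding the $\partial f$ corrections crudely by $1/10$.
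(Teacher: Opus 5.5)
Your proposal is correct and follows the paper's proof essentially step for step: the same divergence identity with the deformation tensor of $Z=\tstar^{-2q}T$ (using that $T$ is Killing), the same computation of $\mathrm{div}\,Q=-\kappa_p^2(q+1)\tstar^{-2q-3}$, and the same evaluation of $\mathbb{T}(T,\nabla\tstar)=-\tfrac12\big((1-|\partial f|^2)(T\Phi)^2+|\partial_y\Phi|^2\big)$, which you propose to compute exactly as the paper does. You then absorb the $\Phi\,T\Phi$ cross term by Young's inequality using $q\ge4\kappa_p$, as the paper does, and your reading of the weights matches what the paper's computation actually yields: the zeroth-order term carries $\tstar^{-2q-3}$, and in the coercivity bound $|\Vt\Phi|$ is weighted by $\tstar^{-q-1}$ with its $\Phi$-component scaled by $\kappa_p$ (the zeroth-order bookkeeping also quietly uses $\kappa_p>1$).
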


\begin{proof}
	It will help to first compute the gradient $ \nabla^{\mu} \tstar$ and the normal vector field $N^{\mu}$. Via the usual Minkowskian coordinates, we find
	\[
	d \tstar = dt - \partial_{i} f dx^i,
	\]
	so the gradient vector field $\nabla^{\mu} \tstar$ is given by
	\[
	- \nabla \tstar = \frac{\partial}{\partial t} + \partial_i f \frac{\partial}{\partial x^i} = (1 - |\partial f|^2) T + \partial_i f \frac{\partial}{\partial y^i},
	\]
	where we used \eqref{setup:eq:vectorfields}. It also follows that $N = (1 - |\partial f|^2)^{1/2} T + (1 - |\partial f|^2)^{-1/2} \partial_i f \frac{\partial}{\partial y^i}$.
	
	Now, since $T$ is a Killing vector field it follows that the deformation field $\mathcal{L}_Z \eta$ and $\mbox{div}\, Q$ are
	\begin{gather*}
		(\mathcal{L}_Z \eta)_{\mu\nu} = \nabla_{\mu} Z_{\nu} + \nabla_{\nu} Z_{\mu} = - 2 q \tstar^{-2q-1} (\nabla_{\mu} \tstar T_{\nu} + \nabla_{\nu} \tstar T_{\mu}), \\[0.3em]
		\mbox{div} \, Q = - \kappa_p^2(q + 1) \tstar^{-2q-3} T^{\mu} \partial_{\mu} \tstar = - \kappa_p^2(q+1) \tstar^{-2q-3}.
	\end{gather*}
	A classical computation then yields that
	\begin{align}
		\nabla_{\mu} \J^{\mu} 
		&= \frac{1}{2} \T_{\mu\nu} (\mathcal{L}_Z \eta)^{\mu\nu} - \mbox{div} \, Q \Phi^2 - 2 (Q \Phi) \Phi + \tstar^{-2q} \square \Phi \cdot T \Phi \nonumber \\
		&= - 2q \tstar^{-2q - 3} \cdot \tstar^2 \T( T, \nabla \tstar) + \kappa_p^2(q+1) \tstar^{-2q-3} \Phi^2 - \kappa_p^2\tstar^{-2q-3} (\tstar T \Phi) \Phi + \tstar^{-2q} \Box \Phi \cdot T \phi. \label{forward:energymomentum}
	\end{align}
	Next, using \eqref{setup:eq:metric} and the above to see that $\eta(T, \nabla \tstar) = -1$, we evaluate $\T(T, \nabla \tstar)$ to be
	\[
	\T(T, \nabla \tstar) = - \frac{1}{2} \left( (1 - |\partial f|^2) (T \Phi)^2 + \gamma^{ij} \partial_{y^i} \Phi \partial_{y^j} \Phi \right).
	\]
	Inserting this into \eqref{forward:energymomentum} and using that $|\partial f|^2 < \frac{1}{2}$, then using Young's inequality to bound the cross term $(\tstar T \Phi) \Phi$, the estimate \eqref{back:eq:current} follows. The coercivity \eqref{back:eq:coercivity} is similar, where we repeatedly make use of $|\partial f|^2 < \frac{1}{2}$, as well as $\eta(T, N) = - (1 - |\partial f|^2)^{-1/2}$.
\end{proof}

We now use this formalism to prove an energy estimate for the operator $P_{\phi_0} = \Box + p \phi_0^{p-1}$.
\begin{prop}\label{en:prop:main}
	Let $f\in H^{s+1}(\BB_3)$ for $s>\geq\frac{N+4}{2}$.
    Let $\psi,F$ be smooth functions supported in $\{ \tstar \geq \tstar_0 > 0\} \cap \Mt$ solving $P_{\phi_0}\psi=F$. Then, for $q>q_0=\frac{10\kappa_p}{1-\abs{\partial f}^2}$, and $k\in\N_{<s}$ and $j\in\N$ the following estimate holds
	\begin{equation}\label{back:eq:high}
		\norm{(\tstar \partial_\tstar)^j\Vt\psi}_{\Hb^{k;q+2}(\Mt)}^2 \lesssim_{k, q,j} \norm{(\tstar \partial_\tstar)^jF}_{\Hb^{k;q}(\Mt)}^2.
	\end{equation}
\end{prop}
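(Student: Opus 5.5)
The plan is to apply the divergence identity for the current $\J^\mu=\J^\mu[\psi,Z,Q]$ of \cref{back:lem:energy_est}, with weight $\tstar^{-2q}$ matched to the $\Hb^{k;q+2}$ norm. This gives the case $k=j=0$. We then commute with $\Vb$-type operators and $\tstar\partial_\tstar$, treating commutators as lower order in the derivative count and absorbing them by induction.

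\emph{Base case.} Integrate $\nabla_\mu\J^\mu$ over $\Mt(\tstar_1)$. Since $\psi$ is supported in $\{\tstar\geq\tstar_0\}$, there is no contribution near $\mathscr{S}$. The flux through $\Sigma_{\tstar_1}$ is nonnegative by \cref{back:eq:coercivity}. The flux through the cone $\mathscr{C}$ is nonnegative because $\mathscr{C}$ is spacelike and $Z,Q$ are future timelike. Let $\mathcal{B}=\tstar^{-2q-3}|\Vt\psi|^2$ denote the bulk density. Inserting $\Box\psi=F-p\phi_0^{p-1}\psi$ into \cref{back:eq:current}, we get
\[
\frac{q}{4}\int \mathcal{B}\lesssim \int \tstar^{-2q}|F|\,|T\psi| + \int \tstar^{-2q}p\phi_0^{p-1}|\psi|\,|T\psi|.
\]
Here $p\phi_0^{p-1}=\gamma_p(1-|\partial f|^2)^{-1}\tstar^{-2}$, so the last term is bounded by $\gamma_p(1-|\partial f|^2)^{-1}\int\mathcal{B}$. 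We absorb it into the left-hand side using $q>q_0$; this is exactly where the threshold $q_0=10\kappa_p/(1-|\partial f|^2)$ enters, since $\gamma_p\leq 4\kappa_p^2$ and one must be more careful with the $\Phi^2$ term from $Q$. This absorption also accounts for the constant $\kappa_p^2/2$ in $Q$. For the $F$ term we use Cauchy–Schwarz, $\tstar^{-2q}|F||T\psi|\leq \epsilon\,\tstar^{-2q-3}|\tstar T\psi|^2+\epsilon^{-1}\tstar^{-2q+1}|F|^2$, and absorb the first piece. Since $\tstar^{-2q-3}|\Vt\psi|^2$ is precisely the $\Hb^{0;q+1}$-density of $\Vt\psi$ (up to the $\tstar^{-1/2}$ in the definition), and $\tstar^{-2q+1}|F|^2$ is the $\Hb^{0;q}$-density of $F$ with the right power, this gives \cref{back:eq:high} for $k=j=0$. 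The factor bookkeeping between $q+2$ on the left and $q$ on the right is exactly the gain of $\tstar^{-2}$ from $P_{\phi_0}\in\tstar^{-2}\Diffb^2$, and I would check it carefully against the norm conventions.

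\emph{Commutation.} For $\Gamma\in\{\partial_y,\tstar\partial_\tstar\}$ we have $[P_0,\tstar\partial_\tstar]=2P_0$-type terms, namely $\tstar^{-2}\Diffb^2$ with no loss. Also $[P_0,\partial_y]$ and $[P_f,\Gamma]$ involve derivatives of $f$, giving operators in $\tstar^{-1}\Diffb^2$ (respectively $\tstar^{-2}\Diffb^1$ acting with $H^{s-1}$ coefficients). We apply the base estimate to $\Gamma^\alpha(\tstar\partial_\tstar)^j\psi$. The commutator terms are either of lower total order, controlled by the induction hypothesis, or of the same order but carrying an extra factor of $\tstar$ (from $P_f$). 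The latter are absorbed by taking $\tstar_1$ small, or by slightly increasing $q$ via the $\tstar$-gain. For $[P_0,\tstar\partial_\tstar]$, which is order-preserving, we instead rewrite $P_0(\tstar\partial_\tstar\psi)=\tstar\partial_\tstar F+2P_0\psi+\dots$ and use $P_0\psi=F-P_f\psi-\dots$; this recovers $F$-terms with lower $j$, and induction in $j$ applies. Coefficients in $H^{s-1}$ are handled by product estimates, which requires $k<s$ and Sobolev embedding for the lowest-order coefficient factors.

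\emph{Main obstacle.} The main obstacle is the top-order commutator $[P_f,\partial_y^k]$. It contains $\partial^{k+1}\partial f$ times $\partial_\tstar\psi$, which needs the coefficient regularity $f\in H^{s+1}$. It also creates terms $\tstar^{-1}\partial_y\cdot\tstar\partial_\tstar\partial_y^{k-1}\psi$, which are not bounded by $\Vt$ of lower order but by $\Vb$ applied one more time with a $\tstar^{-1}$ loss. I would handle these by noting that the $\partial_y\partial_\tstar$ cross term in $P_f$ is exactly what the energy identity produces through the metric \cref{setup:eq:metric}. I would therefore commute with the geometric vector fields $\partial_{x^i}=\partial_{y^i}-\partial_if\,\partial_\tstar$ rather than with $\partial_y$, so that the principal part commutes up to terms that are genuinely lower order times $\tstar^{-1}$. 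Those terms are then absorbed with the large-$q$ gain, which is available because $q$ is allowed to depend on $k$.
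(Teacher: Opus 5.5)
Your base case is the paper's Step 1: the current of \cref{back:lem:energy_est}, absorption of the potential term for large $q$, Cauchy--Schwarz on $F$, no flux through $\Sigma_{\tstar_0}$, and a good sign on the future boundary. There is one harmless slip: $p\phi_0^{p-1}=\gamma_p(1-\abs{\partial f}^2)\tstar^{-2}$, not with $(1-\abs{\partial f}^2)^{-1}$. The gap is in how you finally propose to handle the top-order commutators.

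First, what you call the main obstacle is not one. Consider the terms $\partial\partial_if\,\partial_{y^i}\partial_\tstar\partial_y^{k-1}\psi=\tstar^{-1}\,\partial\partial_if\,\partial_{y^i}\partial_y^{k-1}(\tstar\partial_\tstar\psi)$, and likewise $\partial_y^2\psi=\tstar^{-1}\partial_y(\tstar\partial_y\psi)$ and $\partial_\tstar\psi=\tstar^{-1}\tstar\partial_\tstar\psi$. They enter as forcing, which is measured in $\Hb^{0;q}$, two powers of $\tstar$ below the left-hand side. Since $\norm{\tstar^{-1}g}_{\Hb^{0;q}}=\norm{g}_{\Hb^{0;q+1}}\le\tstar_1\norm{g}_{\Hb^{0;q+2}}$, these terms are bounded by $\tstar_1\norm{\Vt\psi}_{\Hb^{k;q+2}}$ and absorbed for small $\tstar_1$. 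This is the mechanism you already state in your commutation paragraph, and it is exactly the paper's Step 2. The paper applies it to $(1-\abs{\partial f}^2)^{-1}P_{\phi_0}$, whose $\partial_\tstar^2-\gamma_p\tstar^{-2}$ part commutes exactly with $\partial_y$. Without that normalisation, $[P_0,\partial_y]=-\partial_y(\abs{\partial f}^2)(\partial_\tstar^2-\gamma_p\tstar^{-2})$ lies in $\tstar^{-2}\Diffb^2$ with no $\tstar$-gain, contrary to your classification.

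Second, the replacement you propose, commuting with $\partial_{x^i}$, would break the estimate. $\partial_{x^i}$ commutes with $\Box$ but not with the potential. The commutator $[P_{\phi_0},\partial_{x^i}]\psi=-\partial_{x^i}\big(\gamma_p(1-\abs{\partial f}^2)\tstar^{-2}\big)\psi$ contains $-2\gamma_p(1-\abs{\partial f}^2)\,\partial_if\,\tstar^{-3}\psi$. This is one power of $\tstar$ more singular than $\tstar^{-2}\Diffb^2\psi$. Moreover $\partial_{x^i}=\partial_{y^i}-\partial_if\,\tstar^{-1}(\tstar\partial_\tstar)$ mixes $\tstar$-homogeneities $0$ and $-1$ and is not in $\Diffb$.

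Whatever weight you measure $\partial_{x^i}\psi$ in, you lose one power of $\tstar$ per commutation. Bounding $\tstar^{-3}\psi$ in $\Hb^{0;q}$ requires $\norm{\psi}_{\Hb^{0;q+3}}$, i.e. the lower-order estimate with $F$ at weight $q+1$. You end with something like $\norm{\Vt\psi}_{\Hb^{1;q+2}}\lesssim\norm{F}_{\Hb^{1;q+1}}$. Enlarging $q$ does not help, because the energy identity gains only a constant factor $q$, not a power of $\tstar$. Letting $q$ or $q_0$ depend on $k$ proves a different statement, since here $q_0$ is independent of $k$ and the weight gap is $2$ at every order.

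The singular part of $[P_{\phi_0},\partial_{x^i}]$ is exactly $-\partial_if\,[P_{\phi_0},\partial_\tstar]$, so it cancels when you recombine into $\partial_{y^i}=\partial_{x^i}+\partial_if\,\partial_\tstar$. That is why $\partial_y$ is the right commutator. A $\tstar^{-3}$ commutator is harmless only for $\partial_\tstar$ itself, where it is balanced by the $\tstar^{-1}$ in $\partial_\tstar=\tstar^{-1}\tstar\partial_\tstar$. This is how the paper gets the $(\tstar\partial_\tstar)^j$ bounds: it commutes with $\partial_\tstar$, using $P_{\phi_0}\partial_\tstar\psi=\partial_\tstar F+2\gamma_p(1-\abs{\partial f}^2)\tstar^{-3}\psi$, and applies the estimate with $q$ shifted by one. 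This also avoids the $P_f\psi$ terms your $\tstar\partial_\tstar$-commutation generates.
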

\begin{remark}
	This proposition is far from optimal with regards to the minimum value of $q_0$.
	We believe that $q_0=\beta_p-2$ is possible with the same methods by optimising the constants and restricting $\abs{\partial f}$ to be smaller.
\end{remark}

\begin{proof}
	\emph{Step 1:}
	Let us start with the uncommuted estimate $k=0$ and fix $q>4\kappa_p$ sufficiently large to be determined during the proof.
	We apply the divergence theorem to the current $\J^{\mu} = \J^{\mu}[\psi, Z, Q]$ considered in \cref{back:lem:energy_est}. Using this lemma, we know that
	\[
	\nabla_{\mu} \J^{\mu}[\psi, Z, Q] \geq \tstar^{-2q-1} (- p \phi_0^{p-1} \psi + F) \tstar T \psi + \frac{q}{4} \tstar^{-2q-3} |\Vt \psi|_{\ell^2}^2+\frac{q\kappa_p^2\psi^2}{4}.
	\]
	
	Next, using that $\phi_0 = (1 - |\partial f|^2)^{\frac{1}{p-1}} c_p \tstar^{- \alpha_p}$, with $\alpha_p = \frac{2}{p-1}$ and $pc_p^{p-1} = \frac{2p (p+1)}{(p-1)^2} = \gamma_p<2\kappa_p^2$, we find that whenever $\frac{q\kappa_p}{8} > \gamma_p(1 - |\partial f|)^2$, we can absorb $\tstar^{-2q-1} p \phi_0^{p-1} |\psi \, \tstar T \psi| \leq \frac{q}{8} |\Vt \psi|_{\ell^2}^2+\frac{q\kappa_p^2}{8}\psi^2$, and we therefore have
	\[
	\nabla_{\mu} \J^{\mu} [\psi, Z, Q] \geq  \tstar^{-2q-3} \left( \frac{q}{8}|\Vt \psi|_{\ell^2}^2 + \tstar T \psi \cdot \tstar^2 F \right).
	\]
	
	
	Since both components of $\J^{\mu}$ are past directed causal, so is $\J^{\mu}$ itself. Therefore, upon applying the divergence theorem in $\Mt$, the contributions from any boundary integral with future-directed timelike normal is positive, and can be ignored. On the other hand, due to our assumption of compact support in $\{ \tstar \geq \tstar_0 \}$, the contribution from the boundary integral at $\Sigma_{\tstar_0}$ can also be assumed to vanish.
	
	As a result, the divergence theorem yields that
	\[
	\frac{q}{8} \int_{\Mt} \tstar^{-2q-3} |\Vt \psi|_{\ell^2}^2 \leq \int_{\Mt} \tstar^{-2q-1} F \cdot \tstar T \psi.
	\]
	Using Young's inequality yields the estimate \eqref{back:eq:high} with $k=0$ and $q\mapsto q+0.5$.

	\emph{Step 2:}		
    We now apply commutators to increase the possible value of $k$, recalling that we use the commutator vector fields $\partial_y$ and $\tstar \partial_{\tstar}$.
	Firstly, let us note the following commutation, 
	\begin{equation}
		[\partial_y,(1-\abs{\partial f}^2)^{-1}P_{\phi_0}] \psi = A \psi,
	\end{equation}
	where $A$ is a second order operator with $H^{s-1}$ coefficients in front of $\partial_{\tstar}\partial_y$ and $\partial_y^2$, whose size only depends on $\norm{f}_{C^2}$. In particular, applying the $k = 0$ case of \eqref{back:eq:high} to the equation 
	\[P_{\phi_0} \partial_y \psi = (1 - |\partial f|^2) A \left(  \psi \right) + (1 - |\partial f|^2) \partial_y ((1 - |\partial f|^2)^{-1} F), \]
	we find that
	\begin{multline*}
		\| \Vt \psi \|_{\Hb^{1; q+2}}^2 \lesssim_f \| F \|_{\Hb^{1; q}}^2 + \| ( \partial_y \partial_{\tstar} \psi, \partial_y^2 \psi) \| _{\Hb^{0; q}}^2\lesssim \| F \|_{\Hb^{1; q}}^2 + \| \tstar^{-1}\Vt\psi\| _{\Hb^{0; q}}^2\\
		\lesssim \| F \|_{\Hb^{1; q}}^2 + \tstar_1^2\| \Vt\partial_y\psi\| _{\Hb^{1; q+2}}^2,
	\end{multline*}
	where, in the final step, we used that $\tstar \leq \tstar_1$.
	Thus, for $\tstar_1$ sufficiently small, the last term can be absorbed and the estimate \eqref{back:eq:high} for $k=1$ follows. 
	Similarly, one can commute with $\partial_y^{\alpha}$ for any multi-index $\alpha$, yielding \eqref{back:eq:high} for any $k$, with implicit constant depending on $\| f \|_{H^{k+1}}$.

    In order to recover $\tstar \partial_{\tstar}$ derivatives as well, we  use the form of $\Box$ in \eqref{setup:eq:box} to express $\partial_{\tstar}^2 \psi$ in terms of other derivatives to get
	\begin{equation}
		\norm{(\tstar\partial_{\tstar})^2\psi}_{\Hb^{0;q+2}}\lesssim_f \norm{\psi}_{\Hb^{0;q+2}}+\norm{\Vt\partial_y\psi}_{\Hb^{0;q-1}}+\norm{F}_{\Hb^{0;q}}.
	\end{equation}
	Higher order $\tstar\partial_{\tstar}$ derivatives following similarly.
	
	\emph{Step 3:}
	To show that $\partial_{\tstar}$ commutations can be achieved independently of the $\partial_y$ commutations, we compute that
	\begin{equation}
		P_{\phi_0}\partial_{\tstar}\phi=-\frac{2}{\tstar^3}(1-\abs{\partial f}^2)\phi+\partial_{\tstar}F.
	\end{equation}
	Applying the \cref{back:eq:high} with $q\mapsto q+1$ and bounding the first term on the right hand side with the previous estimates yields the result.
\end{proof}

To apply the linear estimate above, we consider the linearised problem around an ansatz $\phi_1$ with decay order $q_0$, where $q_0$ is as given in \cref{en:prop:main}.
Let us write a solution to \cref{in:eq:main} as $\phi_1+\bar{\phi}$, where $\bar{\phi}$ satisfies
\begin{equation}\label{back:eq:nonlinear}
	-P_{\phi_0}\bar{\phi}=\mathfrak{E}_{\phi_1}[\bar\phi]+\mathcal{N}_{\phi_1}[\bar{\phi}]+\underbrace{\Box\phi_1+\phi_1^p}_{=:F_1}.
\end{equation}
We show that provided we cutoff $F_1$, there is a existence interval for $\bar{\phi}$ uniform in where we cutoff $F_1$, and thus, by compactness, a solution to \cref{back:eq:nonlinear}.

\begin{lemma}\label{en:lem:correction}  
	Let $k\geq5+\frac{n+1}{2}$ and $q>q_0+1$ for $q_0$ as in \cref{en:prop:main}.
    Fix $f\in\Hb^{k+1}(\BB_3)$,
	$F_1\in\Hb^{k;q}(\Mt)$ and $\phi_1=\phi_0+\Hc^{k;1-\alpha_p}(\M)$.
	There exists a $\tstar_1$ sufficiently small such that a unique solution $\bar{\phi}\in\Hb^{k-1;q-1}(\Mt)$ for \cref{back:eq:nonlinear} exists in $\Mt$.
	Here, uniqueness is within the space $\Hb^{k-1;q-1}(\Mt)$.
	
	Assuming $(\tstar\partial_{\tstar})^jF\in\Hb^{k;q}(\Mt)$ the same improvement holds for $\bar{\phi}$, i.e. $(\tstar\partial_{\tstar})^j\bar{\phi}\in\Hb^{k-1;q-1}(\Mt)$.
\end{lemma}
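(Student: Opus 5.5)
We will construct $\bar\phi$ by a cutoff--and--compactness argument built on the linear estimate of \cref{en:prop:main}, exactly as sketched in \cref{in:sec:ideas}. For $m\in\N$, let $\chi_m(\tstar)=\chi(\tstar/2^{-m})$ be a smooth cutoff vanishing for $\tstar\le 2^{-m-1}$ and equal to $1$ for $\tstar\ge 2^{-m}$. We then consider the truncated problem
\[
-P_{\phi_0}\bar\phi_m=\mathfrak{E}_{\phi_1}[\bar\phi_m]+\mathcal{N}_{\phi_1}[\bar\phi_m]+\chi_mF_1,
\]
with $\bar\phi_m\equiv0$ for $\tstar\le 2^{-m-1}$. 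Since the forcing vanishes near $\Sigma_0$, this is a regular Cauchy problem on $\{\tstar\ge 2^{-m-1}\}$, where $\phi_1\sim\phi_0$ is bounded. Local well-posedness for semilinear waves therefore gives a smooth local solution, which persists as long as $|\bar\phi_m/\phi_1|<1/2$. We also note that $\|\chi_mF_1\|_{\Hb^{k;q}}\lesssim\|F_1\|_{\Hb^{k;q}}$ uniformly in $m$, because $\tstar\partial_\tstar\chi_m$ is uniformly bounded.

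\emph{Uniform a priori bound.} We run a bootstrap on $\|\Vt\bar\phi_m\|_{\Hb^{k-1;q+1}(\M_{\tstar_1}(s))}\le C_0\|F_1\|_{\Hb^{k;q}}$. Applying \cref{en:prop:main} with $q-1>q_0$ in place of $q$ and $k-1$ commutations, we need to bound the right-hand side in $\Hb^{k-1;q-1}$. For the forcing, $\|\chi_mF_1\|_{\Hb^{k-1;q-1}}\le\tstar_1\|F_1\|_{\Hb^{k;q}}$ up to constants. For the linear error, $\mathfrak{E}_{\phi_1}[\bar\phi_m]=-p(\phi_0^{p-1}-\phi_1^{p-1})\bar\phi_m$. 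Its coefficient is $\tstar^{-2}\cdot\tstar\Hc^{k;0}$, so the term gains one power of $\tstar$ relative to $P_0$. Using the product rules underlying \cref{map:lem:nonlin}, we get $\|\mathfrak{E}\|_{\Hb^{k-1;q-1}}\lesssim\|\bar\phi_m\|_{\Hb^{k-1;q}}\lesssim\tstar_1\|\bar\phi_m\|_{\Hb^{k-1;q+1}}$. For the nonlinear term, \cref{map:eq:N_linearised} together with its quadratic structure gives $\|\mathcal{N}_{\phi_1}[\bar\phi_m]\|\lesssim\tstar^{-2+\alpha_p}\|\bar\phi_m\|^2$, which is small once $\bar\phi_m$ is small. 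Here we use $k-1>\frac{n+1}{2}$ and the Sobolev embedding \cref{not:eq:Sob} to get the pointwise bound $|\bar\phi_m|\lesssim\tstar^{q+1/2-\epsilon}\ll\phi_1$, and this also closes the $|\bar\phi_m/\phi_1|<1/2$ condition. Choosing $\tstar_1$ small depending on $\|f\|_{H^{k+1}}$, $\|\phi_1-\phi_0\|_{\Hc^{k;1-\alpha_p}}$ and $\|F_1\|$, every term is absorbed and the bootstrap improves. The solution therefore extends to all of $\Mt$ with bounds independent of $m$.

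\emph{Limit, uniqueness, and $\tstar\partial_\tstar$ regularity.} We take the difference $\bar\phi_m-\bar\phi_{m'}$. It solves the same linear equation with forcing $(\chi_m-\chi_{m'})F_1$, which is supported in $\tstar\le 2^{-\min(m,m')}$, plus differences of $\mathfrak{E}$ and $\mathcal{N}$, which are controlled by \cref{map:eq:N_linearised}. Applying \cref{en:prop:main} at one lower level of regularity and absorbing again for small $\tstar_1$ shows that $(\bar\phi_m)$ is Cauchy in $\Hb^{k-2;q-1}$ (loss of one order). Since the uniform bound holds in $\Hb^{k-1;q}$, weak compactness places the limit in $\Hb^{k-1;q-1}$, and it solves \cref{back:eq:nonlinear}. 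Uniqueness in $\Hb^{k-1;q-1}$ comes from the same difference estimate applied to two solutions with identical forcing. This step needs care: the support hypothesis of \cref{en:prop:main} is not available here, so we must justify it by approximating with cutoffs. The boundary term at $\Sigma_{\tstar_0}$ carries a weight $\tstar_0^{-2q}\|\Vt\bar\phi\|^2_{L^2(\Sigma_{\tstar_0})}$, and we need it to tend to zero along a sequence $\tstar_0\to0$. This follows because $\tstar^{-1/2}\tstar^{-(q-1)}\Vt\bar\phi\in L^2$, after lowering the weight used in the multiplier slightly, which is permitted since $q-1>q_0$. Finally, for the $(\tstar\partial_\tstar)^j$ improvement we commute the truncated equation with $(\tstar\partial_\tstar)^j$ and use the $j$-version of \cref{en:prop:main}. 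The commutator terms hit only $\mathfrak{E}$ and $\mathcal{N}$, whose coefficients are smooth in $\tstar\partial_\tstar$, and lower-order terms are handled by induction on $j$.

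The main obstacle will be the careful bookkeeping of weights in the absorption step. We must check that \cref{en:prop:main} genuinely gains $\tstar^2$, from $\Hb^{\cdot;q-1}$ to $\Vt\bar\phi\in\Hb^{\cdot;q+1}$, while $\mathfrak{E}$ costs only $\tstar^{-1}$ relative to that gain. This is what produces the small factor $\tstar_1$, and it has to hold with regularity loss of at most one derivative, so that the argument closes with $k\ge5+\frac{n+1}{2}$.
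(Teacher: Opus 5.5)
Your existence argument is sound and follows the paper's scheme: truncate $F_1$, solve from zero data, and close a uniform bound with \cref{en:prop:main} using the extra powers of $\tstar$ in $\mathfrak{E}_{\phi_1}$ and $\mathcal{N}_{\phi_1}$. The one genuine difference is how you pass to the limit. The paper extracts a subsequence through the compact embedding $\Hb^{k;q}\hookrightarrow\Hb^{k-1;q-1}$, whereas you show that $(\bar\phi_m)$ is Cauchy. Your route works because $\bar\phi_m-\bar\phi_{m'}$ vanishes near $\Sigma_0$, so \cref{en:prop:main} applies with no boundary term. Moreover, \cref{map:eq:N_linearised} is Lipschitz at fixed regularity, so you do not even need to drop an order. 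Your route gives strong convergence of the whole sequence in the top norm and avoids a Rellich-type lemma on the degenerate domain; compactness, by contrast, needs nothing beyond the a priori bound. For the $(\tstar\partial_\tstar)^j$ claim you need not commute by hand, since \cref{en:prop:main} already carries $(\tstar\partial_\tstar)^j$ on both sides. Note also that $[\tstar\partial_\tstar,P_f]\neq0$, so the commutators do not only hit $\mathfrak{E}$ and $\mathcal{N}$.

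The uniqueness step, however, has a genuine weight gap.
\begin{itemize}
\item The estimate of \cref{en:prop:main} gains exactly $\tstar^{2}$. The flux through $\Sigma_{\tstar_0}$ is the boundary density of the bulk term that the estimate controls.
\item At the level $\|\Vt\psi\|_{\Hb^{\cdot;Q+2}}\lesssim\|F\|_{\Hb^{\cdot;Q}}$ with $Q>q_0$, this flux is comparable to $\tstar_0^{-2(Q+2)}\|\Vt\phi_\Delta\|^2_{L^2(\Sigma_{\tstar_0})}$.
\item From $\phi_\Delta\in\Hb^{k-1;q-1}$ you only get $\int_0\tstar^{-2(q-1)}\|\Vt\phi_\Delta\|^2_{L^2(\Sigma_\tstar)}\frac{d\tstar}{\tstar}<\infty$, because the components $\phi_\Delta$ and $\tstar T\phi_\Delta$ of $\Vt\phi_\Delta$ gain nothing. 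Hence you only know that $\tstar^{-2(q-1)}\|\Vt\phi_\Delta\|^2_{L^2(\Sigma_\tstar)}\to0$ along a sequence.
\item You therefore need $Q\le q-3$, while the hypothesis gives only $q-1>q_0$. The required shift is the full $\tstar^2$ gain of the estimate, not a slight lowering.
\end{itemize}
The cutoff formulation fails in the same way, since $[P_{\phi_0},\chi_n]\phi_\Delta\sim\tstar^{-2}\tilde\chi_n\Vb\phi_\Delta$ must be small in $\Hb^{\cdot;Q}$. The paper's Step 3 is written with the same mismatch: it needs $\|(\chi_{n+2}-\chi_n)\phi_i\|_{\Hb^{k-1;q+1}}\to0$ although $\phi_i$ is only assumed to lie in $\Hb^{k-1;q-1}$.

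What is missing is a decay upgrade for $\phi_\Delta$ before the energy argument, as in the uniqueness part of \cref{back:thm:main}.
\begin{itemize}
\item Write the difference equation as $(1-|\partial f|^2)(\tstar^2\partial_\tstar^2-\gamma_p)\phi_\Delta=\tstar^2G$. Here $G$ collects $P_f\phi_\Delta$, $\mathfrak{E}_{\phi_1}[\phi_\Delta]$ and $\mathcal{N}_{\phi_1}[\bar\phi_1]-\mathcal{N}_{\phi_1}[\bar\phi_2]$.
\item Since $P_f\in\tstar^{-1}\Diffb^2$, the term $\tstar^2G$ is one power of $\tstar$ better than $\phi_\Delta$, at the cost of two derivatives.
\item Because $q-1>q_0>\beta_p$, $\phi_\Delta$ is the unique solution of this ODE decaying faster than $\tstar^{\beta_p}$. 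An $L^2$ version of \cref{ode:lemma:invert} then gives $\phi_\Delta\in\Hb^{k-3;q}$, and one more iteration gives $\phi_\Delta\in\Hb^{k-5;q+1}$.
\item The energy argument now closes with $Q=q-1$ at low regularity, with the nonlinear difference controlled by Sobolev pointwise bounds on $\bar\phi_i$. The assumption $k\ge5+\frac{n+1}{2}$ leaves enough room for this.
\end{itemize}
Alternatively, strengthen the hypothesis to $q>q_0+3$.
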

\begin{proof}
	We construct $\bar{\phi}$ via compactness from a sequence $\bar{\phi}_m$ defined as follows.
	Let $\bar{\phi}_m$ be a solution of \cref{back:eq:nonlinear} with $F_1\in\Hb^{k;q}(\Mt)$ multiplied by $\chi(\tstar/2^m)$ and $\bar{\phi}_m$ vanishing for $\tstar$ small.
	There exist $\tstar_1$ sufficiently small, such that for all $m>-\log \tstar_1$, $\bar{\phi}_m$ exists in $(0,\tstar_1)$ and the following uniform estimate holds
	\begin{equation}\label{back:eq:boundedness}
		\norm{\Vt\bar{\phi}_m(\Mt)}_{\Hb^{k;q}}\leq1.
	\end{equation}
	
	\emph{Step 1:}
	We apply \cref{en:prop:main} to obtain
	\begin{equation}
		\norm{\Vt\bar{\phi}_m}_{\Hb^{k;q+2}}\lesssim_f\norm{F_1}_{\Hb^{k;q}}+\norm{\mathfrak{E}_{\phi_1}[\bar{\phi}_m]+\mathcal{N}_{\phi_1}[\bar{\phi}_m]}_{\Hb^{k;q}}
	\end{equation}
	We use \cref{map:eq:N_H} from \cref{map:lem:nonlin} to bound the last two terms as
	\begin{equation}
		\norm{\mathfrak{E}_{\phi_1}[\bar{\phi}_m]+\mathcal{N}_{\phi_1}[\bar{\phi}_m]}_{\Hb^{k;q}}\lesssim_{\phi_1}\norm{\bar{\phi}_m}_{\Hb^{k;q+1}}
	\end{equation}
	under the assumption $\norm{\bar{\phi}_m}_{\Hb^{k;q}}\leq1$.
	By choosing $\tstar_1$ sufficiently small, we can make $\norm{F_1}_{\Hb^{k;q}(\M_{\tstar_1})}$ arbitrarily small, and obtain \cref{back:eq:boundedness} by bootstrap.
	
	\emph{Step 2:}
	Using that $\Hb^{k,q}\to\Hb^{k-1,q-1}$ is a compact embedding, we know that $\bar{\phi}_m\to\bar{\phi}$ in $\Hb^{k-1,q-1}$.
	This $\bar{\phi}$ solves \cref{back:eq:nonlinear} and satisfies $\norm{\bar{\phi}}_{\Hb^{k-1,q-1}}\leq1$.
	
	\emph{Step 3:}
	In order to show uniqueness, assume that $\bar{\phi}_1,\bar{\phi}_2\in \Hb^{k-1,q-1}(\M)$ and that both solve \cref{back:eq:nonlinear}.
	Let's write $\phi_\Delta=\bar{\phi}_1-\bar{\phi}_2$, so that
	\begin{equation}\label{back:eq:linearised}
        P_{\phi_0} \phi_{\Delta} -\mathfrak{E}_{\phi_1}[\phi_\Delta]=\mathcal{N}_{\phi_1}[\bar{\phi}_1]-\mathcal{N}_{\phi_1}[\bar{\phi}_2].
	\end{equation}
	The right hand side is linear in $\phi_\Delta$ captured by \cref{map:eq:N_linearised}.
	Let us multiply \cref{back:eq:linearised} with a cutoff $\chi_n:=\chi(\tstar/2^n)$, and
	apply the estimate \cref{back:eq:high} to \cref{back:eq:linearised} with $(k-1,q-1)$ to obtain
	\begin{equation}
		\norm{\chi_n\phi_\Delta}_{\Hb^{k-1;q+1}(\M_{\tstar_2})}\lesssim\tstar_2\norm{\chi_n\phi_\Delta}_{\Hb^{k-1;q+1}(\M_{\tstar_2})}+\sum_{i\in{1,2}}\norm{\left(\chi_{n+2}-\chi_n\right)\phi_i}_{\Hb^{k-1;q+1}(\M_{\tstar_2})}.
	\end{equation}
	The right hand term vanishes in the limit $n\to\infty$ and therefore, for $\tstar_2$ sufficiently small, we obtain $\phi_\Delta=0$.
	The uniqueness in $\{\tstar\in(\tstar_2,\tstar_1)\}$ follows from standard uniqueness for the wave equation.
\end{proof}

\begin{theorem}\label{back:thm:main}
		Let $s,f,\psi$ be as in \cref{in:thm:main}.
		Then, there exists a unique singular solution $\phi$ to \cref{in:eq:main} of regularity $s-12\kappa_p$ as in \cref{geo:def:scattering_sol}.
\end{theorem}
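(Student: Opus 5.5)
Existence will come from combining the ansatz of \cref{an:prop:ansatz} with the nonlinear correction of \cref{en:lem:correction}. Uniqueness will use the energy estimate of \cref{en:prop:main} after showing that two singular solutions with the same data differ by a fast-decaying remainder.

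\emph{Existence.} Fix $N$ so large that $N-5/2>q_0+2$, where $q_0=\frac{10\kappa_p}{1-|\partial f|^2}\le 12\kappa_p$ because $|\partial f|<1/10$.
\begin{enumerate}
\item Apply \cref{an:prop:ansatz} with this $N$ to obtain an ansatz $\phi_1$ of the form \cref{geo:eq:sol_form}, carrying scattering data $\psi$, with $F_1=P[\phi_1]\in\Hc^{s-N-\floor{2\kappa_p}-2;N-5/2}(\Mt)$.
\item By \cref{not:eq:Sob}, $F_1\in\Hb^{k;q}(\Mt)$ with $k=s-N-\floor{2\kappa_p}-3$ and $q=N-3$, and likewise for $(\tstar\partial_\tstar)^jF_1$, since the ansatz is built from terms smooth in $\tstar\partial_\tstar$. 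Also $\phi_1-\phi_0\in\Hc^{k;1-\alpha_p}$.
\item Apply \cref{en:lem:correction} to get $\bar\phi\in\Hb^{k-1;q-1}(\Mt)$ solving \cref{back:eq:nonlinear}, together with its $\tstar\partial_\tstar$ improvements.
\item Sobolev embedding \cref{not:eq:Sob} gives $\bar\phi\in\Hc^{k-2;q-1-\epsilon}$. Choosing $N\approx 12\kappa_p-\floor{2\kappa_p}$ up to constants, we have $q-1-\epsilon>\beta_p'$ and the regularity is $s-12\kappa_p$ in the sense of \cref{geo:def:scattering_sol}. The threshold $s\ge s_0=30\kappa_p+\frac{n+1}{2}+5$ guarantees $k\ge5+\frac{n+1}{2}$.
\item Hence $\phi=\phi_1+\bar\phi$ is a singular solution of regularity $s-12\kappa_p$ with data $(f,\psi)$, and $\phi\in\Ab^{2;-\alpha_p}$ for small $\tstar_1$.
\end{enumerate}

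\emph{Uniqueness.} Let $\phi,\tilde\phi$ be two singular solutions with the same $(f,\psi)$, so both have expansion \cref{geo:eq:sol_form}.
\begin{enumerate}
\item Show the coefficients of the $\Hs$-part of the expansion are determined by $(f,\psi)$. Inserting \cref{geo:eq:sol_form} into $P[\phi]=0$ and matching powers $\tstar^{j-\alpha_p-2}$ for $j<2\kappa_p$ yields the recursion in the proof of \cref{an:prop:ansatz}. Since $P_0$ is invertible on $\tstar^{j-\alpha_p}$ for $j-\alpha_p\ne\beta_p,-\alpha_p-1+\dots$, by \cref{ode:eq:invert}, each coefficient $\psi_j$ is forced. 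At $\tstar^{\beta_p}$, the $\log$ coefficient is forced and the free coefficient is $\psi$.
\item Conclude $\phi-\tilde\phi\in\Hc^{s_p-1;\beta_p'}$.
\item Bootstrap the decay of $\phi-\tilde\phi$ as high as needed. Write both solutions relative to the ansatz $\phi_1$ of order $N$: $\phi-\phi_1=:\bar\phi$ solves \cref{back:eq:nonlinear}. Inverting $P_0$ with \cref{ode:lemma:invert}, treating $P_f$, $\mathfrak{E}$ and $\mathcal N$ as $\tstar^{-1}$-better perturbations via \cref{map:lem:nonlin}, improves decay by one power of $\tstar$ at the cost of two derivatives per step. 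Iterate until $\bar\phi,\tilde{\bar\phi}\in\Hb^{k-1;q-1}$.
\item Then $\bar\phi=\tilde{\bar\phi}$ by the uniqueness clause of \cref{en:lem:correction}, and standard wave-equation uniqueness extends this to all of $\Mt$.
\end{enumerate}

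The main obstacle is the uniqueness bootstrap in step 3. A derivative is lost each time decay is gained, because $P_f$ carries $\partial_y$ and costs regularity in $\Hc$-spaces. About $12\kappa_p$ steps are needed to pass from $\beta_p'$ to $q_0+2$. This accounts for the gap between $s_0\sim30\kappa_p$ and the final regularity $s-12\kappa_p$, and is where the bookkeeping must be done carefully.
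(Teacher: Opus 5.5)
Your proposal is correct and follows the paper's proof. Existence comes from the ansatz of \cref{an:prop:ansatz} corrected by \cref{en:lem:correction}; uniqueness comes from using the ODE inversion of \cref{back:sec:ansatz} to raise the decay of the remainder above $q_0$, then closing with the energy estimate of \cref{en:prop:main}, which is what underlies the uniqueness clause of \cref{en:lem:correction} that you invoke.

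The differences are minor. The paper boosts the difference $\bar\phi_\Delta$ of the two remainders directly rather than each remainder relative to $\phi_1$. It leaves your Step~1 implicit (that the coefficients below $\tstar^{\beta_p}$, including the logarithmic one, are forced by $(f,\psi)$). It also counts one lost derivative per gained power of $\tstar$ rather than two, since the $\Delta_y$ part of $P_f$ gains two powers for two derivatives. You will want that sharper count to stay within the $s_0$ threshold.
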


\begin{proof}
	Let $q_0$ be as in \cref{en:prop:main}.
	Using \cref{an:prop:ansatz}, we set $\phi_1$ be an ansatz of decay $q_0+\floor{2\kappa_p}+3$ and regularity $s>q_0+\floor{2\kappa_p}+10+\frac{n+2}{2}$, so that $P[\phi_1]\in\Hc^{s-q_0-\floor{2\kappa_p};q_0}$.
	Let us write the solution we are to construct as $\phi=\phi_1+\bar{\phi}$.
	Then \cref{en:lem:correction} yields $\bar{\phi}\in\Hb^{s-q_0-\floor{2\kappa_p}-1;q_0-1}$ the solution.

    For uniqueness, assume that two solutions $\phi_1,\phi_2$ exist with an expansion \cref{geo:eq:sol_form} with $\bar\phi_{1},\bar\phi_{2}$ denoting the remainder in $\Hb^{s-30\kappa_p;\beta_p'}$.
    We may write the equation satisfied by $\bar\phi_{\Delta}=\bar\phi_1-\bar\phi_2$ as
    \begin{equation}
        P_0\bar\phi_\Delta+P_f\bar\phi_\Delta=\mathcal{N}_{\phi_0}[\phi_2+\bar\phi_\Delta]-\mathcal{N}_{\phi_0}[\phi_1].
    \end{equation}
    Using the same argument as in \cref{back:sec:ansatz}, we can improve to get $\bar\phi_\Delta\in \Hb^{s-30\kappa_p-j;\beta_p'+j}$.
    Picking $j=12\kappa_p$ implies $\beta_p'+j>q_0$, thus we may apply the energy estimate of \cref{en:prop:main} to conclude that $\bar\phi_\Delta=0$.
\end{proof}

    \section{Stability: Proof of \texorpdfstring{\cref{in:thm:main2}}{Theorem 1.2}} \label{sec:forward}

In this section, we prove the stability of the singular hypersurface and the scattering data given in \cref{in:thm:main2}. As explained in \cref{in:sec:ideas}, the bulk of this proof will not use the usual Minkowskian time coordinate $t$, but instead the $\tau$ coordinate defined implicitly as $\phi = c_p \tau^{- \alpha_p}$, where the singularity $\mathscr{S}$ appears naturally as the surface $\{ \tau = 0 \}$.

That is, we use the $(\tau, z^i)$ coordinate system introduced in \cref{sub:forward_setup}. In \cref{for:sub:wave}, we collect computations involving commutator and multiplier vector fields for the nonlinear wave equations \eqref{eq:nlw_Jacobi}, which we then use to derive the main (degenerate) energy estimate in \cref{for:sub:energy}. We then use this energy estimate to get $L^{\infty}$ estimates in \cref{for:sub:transport}, where one crucially uses the ``transport'' system in \cref{prop:transport}. Finally, in \cref{for:sub:scattering} we recover the scattering data as in \cref{not:def:scattering_VW}.

\subsection{Commuted wave equations for Jacobian coefficients} \label{for:sub:wave}

We will use \eqref{eq:nlw_W} and \eqref{eq:nlw_V} to derive \emph{energy estimates} for $W$ and $V^i$. To capture the geometry of the problem and simplify computations, we once again use the geometric formalism of \cref{def:energymomentum}.
The key estimate will arise from applying the divergence theorem to the following energy current $\J^{\mu}[\Phi, Z, Q]$, which we simply refer to in the sequel as $\J^{\mu}[\Phi]$, via the following. 

\begin{lemma} \label{lem:current}
    For some $\tilde{q} > 0$, we define the multiplier vector field $Z^{\mu}$ and the vector field $Q^{\mu}$  to be
    \begin{equation} \label{eq:multiplier}
        Z^{\mu} \coloneqq \tau^{2(\tilde{q} + 1)} T^{\mu} = W \tau^{2(\tilde{q} + 1)} \left( \frac{\partial}{\partial \tau} \right)^{\mu}, \qquad Q^{\mu} \coloneqq - \frac{\kappa_p^2}{2} T^{\mu} = - \frac{\kappa_p^2}{2} W \tau^{2 \tilde{q}} \left( \frac{\partial}{\partial \tau} \right)^{\mu}
    \end{equation}
    Let $\J^{\mu}[\Phi] = \J^{\mu}[\Phi, Z, Q]$ be as in Definition~\ref{def:energymomentum}. Then one has the following divergence identity
    \begin{equation} \label{eq:divergence_J}
        \nabla_{\mu} \J^{\mu}[\Phi] = - \Omega^{-1} \tau^{2\tilde{q} - 1}
        \left[
            2 (\tilde{q} + 1) \tau^2 \mathbb{T}_{\mu\nu}[\Phi] T^{\mu} N^{\nu} + \tilde{q} \kappa_p^2 W \Phi^2 - \Omega W (\kappa_p^2 \Phi + \tau^2 \square \Phi) \cdot \tau \partial_{\tau} \Phi
        \right],
    \end{equation}
    where one may express $\T[\Phi](T, N) = \mathbb{T}_{\mu\nu}[\Phi] T^{\mu} N^{\nu}$ by the following coercive quantity: 
    \begin{equation}\label{eq:coercivity}
        \T_{\mu\nu}[\Phi]T^{\mu}N^\nu
        = \frac{W \Omega^{-1}}{2} \Big ( \Omega^2 (\partial_{\tau} \Phi)^2 + \delta^{ij} (\partial_{z^i} \Phi)(\partial_{z^j} \Phi) \Big).
    \end{equation}
\end{lemma}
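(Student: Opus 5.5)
The plan is to apply the standard divergence identity for currents of the form \eqref{eq:current_general} and then rewrite every term in the $(\tau,z^i)$ coordinates, using \eqref{eq:zx}, \eqref{eq:invmetric} and the definition of $N$. Recall that for any $Z,Q$ one has
\[
\nabla_\mu \J^\mu = \tfrac12 \mathbb{T}^{\mu\nu}[\Phi](\mathcal{L}_Z\eta)_{\mu\nu} + \square\Phi\cdot Z\Phi - (\mathrm{div}\, Q)\Phi^2 - 2\Phi\, Q\Phi .
\]
This follows from $\nabla^\mu \mathbb{T}_{\mu\nu}=\square\Phi\,\nabla_\nu\Phi$ and the symmetry of $\mathbb{T}$, exactly as in the proof of \cref{back:lem:energy_est}. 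I read $Q$ as $-\frac{\kappa_p^2}{2}\tau^{2\tilde q}T^\mu$, as the second expression in \eqref{eq:multiplier} indicates.

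\emph{Deformation term.} Since $T=\partial_t$ is Killing and $\nabla_\mu T_\nu=0$, we have $\nabla_\mu Z_\nu = 2(\tilde q+1)\tau^{2\tilde q+1}\nabla_\mu\tau\, T_\nu$. Hence
\[
\tfrac12\mathbb{T}^{\mu\nu}(\mathcal{L}_Z\eta)_{\mu\nu}=2(\tilde q+1)\tau^{2\tilde q+1}\,\mathbb{T}[\Phi](\nabla\tau,T).
\]
Substituting $\nabla\tau=-\Omega N$ expresses this as a multiple of $\tau^{2\tilde q+1}\mathbb{T}(T,N)$. I will then pull out the common prefactor $-\Omega^{-1}\tau^{2\tilde q-1}$ and check that the powers of $\Omega$ match \eqref{eq:divergence_J}. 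This normalization bookkeeping (powers of $\Omega$ and $W$, and the sign of $T\tau$) is where I expect errors to creep in. So I will fix $T\tau=W$ from \eqref{eq:zx} once and for all, and verify the final identity on the model solution, where $W=\Omega=1$ and $V=0$ by \cref{not:rem:boosted_model}.

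\emph{Zeroth-order and source terms.} We have $\mathrm{div}\,Q=-\frac{\kappa_p^2}{2}\cdot 2\tilde q\,\tau^{2\tilde q-1}\,T\tau$, again using $\nabla T=0$. With $T\tau=W$ this gives the term proportional to $\tilde q\kappa_p^2 W\Phi^2\tau^{2\tilde q-1}$. Next, $-2\Phi\,Q\Phi=\kappa_p^2\tau^{2\tilde q}W\Phi\,\partial_\tau\Phi$, and $\square\Phi\cdot Z\Phi=\tau^{2\tilde q+2}W\,\square\Phi\,\partial_\tau\Phi$. Combining these two gives $\tau^{2\tilde q-1}W(\kappa_p^2\Phi+\tau^2\square\Phi)\,\tau\partial_\tau\Phi$. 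Together with the deformation term, this yields \eqref{eq:divergence_J} after factoring out $-\Omega^{-1}\tau^{2\tilde q-1}$.

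\emph{Coercivity \eqref{eq:coercivity}.} I compute
\[
\mathbb{T}(T,N)=T\Phi\,N\Phi-\tfrac12\eta(T,N)\,\eta^{-1}(d\Phi,d\Phi).
\]
The ingredients are $T\Phi=W\partial_\tau\Phi$, $N\Phi=\Omega^{-1}(W\partial_\tau\Phi-V^i\partial_{z^i}\Phi)$, and $\eta(T,N)=-\Omega^{-1}T\tau=-W\Omega^{-1}$. From \eqref{eq:invmetric},
\[
\eta^{-1}(d\Phi,d\Phi)=-\Omega^2(\partial_\tau\Phi)^2+2V^i\partial_\tau\Phi\,\partial_{z^i}\Phi+|\partial_z\Phi|^2 .
\]
Expanding, the cross terms $V^i\partial_\tau\Phi\,\partial_{z^i}\Phi$ cancel. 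Using $W^2-\Omega^2=|V|^2$, the remaining $(\partial_\tau\Phi)^2$ coefficient becomes $\frac{W}{2\Omega}\Omega^2$, which should leave exactly $\frac{W\Omega^{-1}}{2}\big(\Omega^2(\partial_\tau\Phi)^2+|\partial_z\Phi|^2\big)$. This step is routine; the main obstacle is solely the consistent tracking of the $\Omega$ and $W$ factors and the signs noted above.
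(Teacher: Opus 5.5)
Your route is the paper's: the general divergence formula, $\nabla T=0$, $\mathrm{div}\,Q$, a substitution for $\nabla\tau$, and a coordinate expansion of $\mathbb{T}(T,N)$. Your individual divergence ingredients are correct. However, two steps fail as written.

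\emph{Coercivity.} Your formula $N\Phi=\Omega^{-1}(W\partial_\tau\Phi-V^i\partial_{z^i}\Phi)$ is wrong. It comes from the misprinted expression for $N$ in Section~\ref{sub:forward_setup}. That expression is neither unit nor compatible with $\eta(T,N)=-W\Omega^{-1}$, which you use in the same line. From $N=-\Omega^{-1}\nabla\tau$ and \eqref{eq:invmetric} one has $\nabla\tau=-\Omega^2\partial_\tau+V^i\partial_{z^i}$, so $N=\Omega\partial_\tau-\Omega^{-1}V^i\partial_{z^i}$. With your $N\Phi$, the $(\partial_\tau\Phi)^2$ coefficient is $W^2\Omega^{-1}-\tfrac12 W\Omega$, not $\tfrac12W\Omega$ (a boosted model with $\Omega=1<W$ shows this). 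The identity $W^2-\Omega^2=|V|^2$ does not repair it. With the correct $N$ the coefficient is $W\Omega-\tfrac12 W\Omega$ at once, as in the paper's proof.

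\emph{Divergence.} The final assembly, which you deferred, does not produce \eqref{eq:divergence_J}. Your terms sum to
\[
\nabla_\mu\J^\mu=-2(\tilde q+1)\,\Omega\,\tau^{2\tilde q+1}\,\mathbb{T}(T,N)+\tilde q\kappa_p^2W\tau^{2\tilde q-1}\Phi^2+W\tau^{2\tilde q-1}\big(\kappa_p^2\Phi+\tau^2\square\Phi\big)\tau\partial_\tau\Phi .
\]
This differs from \eqref{eq:divergence_J} in two places:
\begin{itemize}
\item The first term carries $\Omega$ instead of $\Omega^{-1}$. The paper's proof gets $\Omega^{-1}$ by inserting $\nabla^\mu\tau=-\Omega^{-1}N^\mu$, which contradicts $N=-\nabla\tau/\Omega$.
\item The $\Phi^2$ term has the opposite sign. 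This is visible already at $W=\Omega=1$, so your proposed model check would have caught it; hiding it behind ``proportional to'' is where your argument breaks.
\end{itemize}
Reversing the sign of $Q$ fixes the $\Phi^2$ term but turns the cross term into $\tau^2\square\Phi-\kappa_p^2\Phi$. So no reading of \eqref{eq:multiplier} gives the identity exactly as printed, and you should not claim it.

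What you can prove, with $Q=+\frac{\kappa_p^2}{2}\tau^{2\tilde q}T$, is
\[
\nabla_\mu\J^\mu=-\Omega^{-1}\tau^{2\tilde q-1}\Big[2(\tilde q+1)\Omega^2\tau^2\,\mathbb{T}(T,N)+\tilde q\kappa_p^2\,\Omega W\Phi^2-\Omega W\big(\tau^2\square\Phi-\kappa_p^2\Phi\big)\tau\partial_\tau\Phi\Big].
\]
This is the version the backward estimate of Proposition~\ref{prop:l2} actually needs. Both bulk terms have the same (negative) sign, and $\J_\mu N^\mu$ has the positive $\Phi^2$ part $\frac{\kappa_p^2}{2}\tau^{2\tilde q}W\Omega^{-1}\Phi^2$. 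With your sign of $Q$, the $\Phi^2$ contributions to both the bulk and the flux have the wrong sign for that argument. By contrast, the extra power of $\Omega$ and the sign of the cross term are harmless, given \eqref{eq:vw_bootstrap} and the absolute-value bound in Corollary~\ref{cor:wave_error}.
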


\begin{proof}
    \emph{Divergence:}
    It will be useful to recall that the vector field $T^{\mu} = W \left(\frac{\partial}{\partial \tau}\right)^{\mu}$ is a Killing field of Minkowski space, and that, for
    $(\pi^Z)_{\mu\nu} = (\mathcal{L}_Z \eta)_{\mu\nu} = \nabla_{\mu} Z_{\nu} + \nabla_{\nu} Z_{\mu}$, one has
    \begin{equation} \label{eq:current_comp}
        \nabla_{\mu} \J^{\mu}[\Phi] = \frac{1}{2} \mathbb{T}_{\mu \nu} [\Phi] \left( \pi^Z \right)^{\mu \nu} - (\mbox{div} \, Q) \Phi^2 + \square \Phi \cdot Z^{\mu} \nabla_{\mu} \Phi - 2 \Phi \cdot Q^{\mu} \nabla_{\mu} \Phi.
    \end{equation}

    Now, since $T^{\mu}$ is a Killing field, it is easy to verify that
    \[
        (\mathcal{L}_Z \eta)^{\mu\nu} = 2 (\tilde{q} + 1) \tau^{2 \tilde{q} + 1} \left( \nabla^{\mu} \tau \, T^{\nu} + T^{\mu} \, \nabla^{\nu} \tau \right) \quad \text{ and } \quad \mbox{div} \, Q = -\tilde{q} \kappa_p^2 \tau^{2\tilde{q} - 1} T^{\mu} \,\nabla_{\mu} \tau.
    \]
    The result then follows upon inserting $\nabla^{\mu} \tau = - \Omega^{-1} N^{\mu}$ and using that $\eta(N, T) = - \Omega^{-1} W$.

    \vspace{0.5em}
    
    \emph{Coercivity:}
    To compute $\T_{\mu\nu}[\Phi] N^{\mu} T^{\mu}$ we shall use that $T = W \frac{\partial}{\partial \tau}$ and $N = \Omega \frac{\partial}{\partial \tau} - \Omega^{-1} V^i \frac{\partial}{\partial z^i}$, yielding that $\eta(T, N) = - \Omega^{-1} W$ and thus that
    \begin{align*}
        \T_{\mu\nu}[\Phi] N^{\mu} T^{\mu}
        &= (N \Phi) (T \Phi) + \frac{1}{2} \Omega^{-1} W \eta( \nabla \Phi, \nabla \Phi) \\[0.5em]
        &= W \frac{\partial \Phi}{\partial \tau} \left( \Omega \frac{\partial \Phi}{\partial \tau} - \Omega^{-1} V^i \frac{\partial \Phi}{\partial z^i} \right)
        + \frac{1}{2} \Omega^{-1} W \left \{ 
            - \Omega^2 \left( \frac{\partial \Phi}{\partial \tau} \right)^2 + 2 V^i \frac{\partial \Phi}{\partial \tau} \cdot \frac{\partial \Phi}{\partial z^i}
            + \delta^{ij} \frac{\partial \Phi}{\partial z^i} \frac{\partial \Phi}{\partial z^j}
        \right \},
    \end{align*}
    from which \cref{eq:coercivity} immediately follows.
\end{proof}

Next, we introduce some schematic notation, to simplify the outputs of certain vector field commutation computations.
Firstly, we introduce the quantities that we will eventually show to be uniformly bounded.
\begin{definition}
    We use $\vec{\Phi}$ to represent the following vector in $\R^{n+2}$:
    \[
        \vec{\Phi} = ( \kappa_p \Phi, \tau \partial_{\tau} \Phi, \tau \partial_{z^k} \Phi)^{\top}.
    \]
    Similarly, we use $\vec{W}$ and $\vec{V}^i$ to represent the vectors $(\kappa_p(W-1), \tau \partial_{\tau} W, \tau \partial_{z^k} W)^{\intercal}$ and $(\kappa_p V^i, \tau \partial_{\tau} V^i, \tau \partial_{z^k} V^i)^{\intercal}$ respectively. We will also use the notation $\vec{J}$ to represent the vector in $\R^{(n+1)(n+2)}$ given by the concatenation of $\vec{W}$ and $\vec{V}^i$ for all $i$.
\end{definition}

Next, we introduce the vector $\vec{J}_0$ capturing quantities that converge\footnote{one may compare the components of $\vec{J}_0$ to \cref{not:def:scattering_VW} for some explanation for their vanishing} to $0$ as $\tau \to 0$.

\begin{definition} \label{def:j_0}
    We also define $\vec{J}_0$ to represent the following, where we enumerate over all $i, j$.
    \[
        \vec{J}_0 = (\tau \partial_{\tau} W, \tau \partial_{z^j} W, \tau^2 \partial_{\tau}^2 W, \tau^2 \partial_{\tau} \partial_{z^j} W, \tau \partial_{\tau} V^i, \tau \partial_{z^j} V^i, \tau^2 \partial_{\tau}^2 V^i, \tau^2 \partial_{\tau} \partial_{z^j} V^i, \Omega^2 - 1)^{\intercal}.
    \]
\end{definition}

\begin{notation}\label{not:multiforms}
    In the following, bold font symbols such as $\mathbf{L}, \mathbf{B}, \mathbf{Q}_{\bullet}$ will represent linear, bilinear or multilinear forms on suitable vector spaces whose coefficients will be allowed to depend (smoothly) on the value of $W$ and $V^i$. The linear forms $\mathbf{L}$ will always act on top order terms, while the bilinear forms $\mathbf{B}$ will be such that one of its entries is a top order term, while the other entry will be $\vec{J}_0$. On the other hand, the entries of $\mathbf{Q}_{\bullet}$ will all be lower order terms in a suitable sense. The bullet subscript ${}_{\bullet}$ on $\mathbf{Q}_{\bullet}$ and sometimes on $\mathbf{B}_{\bullet}$  will be used as a shorthand to denote that these coefficients will depend on the indices $\alpha$ and $\alpha_{\ell}$ but it is not necessary to make this dependence explicit. 

    Due to the coefficients of $\mathbf{L}, \mathbf{B}, \mathbf{Q}_{\bullet}$ only depending on $V^i, W$ and not their derivatives, the bootstrap \eqref{eq:vw_bootstrap} to be introduced later will imply that these forms have bounded operator norms. Another bootstrap, \eqref{eq:firstorder_bootstrap}, will show that the objects in $\vec{J}_0$ will further have good vanishing properties towards $\tau \to 0$, which has the effect that the bilinear forms $\mathbf{B}(\cdot, \vec{J}_0)$ can be estimated easily.
\end{notation}

Next, we derive some lemmas that describe what happens when one commutes the equations \eqref{eq:nlw_W} and \eqref{eq:nlw_V} with the coordinate vector fields $\partial_z^{\alpha}$.
The nonlinear terms from commuting $\partial_z^{\alpha}$ with $\Box$ will be either lower order in derivatives, or contain top order terms which are multiplied by terms in $\vec{J}_0$ and thus vanish suitably as $\tau \to 0$.
However, $\partial_z^\alpha(\Box\Phi)$ contains leading order terms and bounding these will yield the lower bounds on the decay rate of energy estimates $\tilde{q}$.

\begin{lemma} \label{lem:commutation}
    Suppose that $V^i$ and $W$ are solutions of the system \eqref{eq:nlw_W} and \eqref{eq:nlw_V}. Then for $\alpha$ an arbitrary multi-index with $|\alpha| \geq 1$, one has that for $\Phi \in \{ W \} \cup \{ V^i: i = 1, \ldots, n\}$,
    \begin{equation} \label{eq:comm_J}
        [\tau^2 \square, \partial_z^{\alpha}] \Phi = \mathbf{B}_{comm} (\partial_z^{|\alpha|} \vec{J}, \vec{J}_{0}) + 
        \sum_{\substack{L \geq 2, \, |\alpha_{\ell}| \geq 1 \\ \sum|\alpha_{\ell}|= |\alpha|}}
        \mathbf{Q}_{comm, \bullet} (\partial_z^{\alpha_1} \vec{J}, \ldots, \partial_z^{\alpha_L} \vec{J}),
    \end{equation}
    where the notation $\partial_z^{|\alpha|} \vec{J}$ will represent the concatenation of $\partial_z^{\beta} \vec{J}$ for all multi-indices $\beta$ with $|\beta| = |\alpha|$.
\end{lemma}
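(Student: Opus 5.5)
The plan is to write $\tau^2\square$ explicitly in $(\tau,z)$ coordinates using \eqref{eq:wave_op} and \eqref{eq:invmetric}. The coefficients are then smooth functions of $J=(W,V^i)$ and of first derivatives of $J$. Concretely,
\[
\tau^2\square\Phi = -\Omega^2\,\tau^2\partial_\tau^2\Phi + 2V^i\,\tau^2\partial_\tau\partial_{z^i}\Phi + \tau^2\Delta_z\Phi + \mathcal{C}^\nu(J,\partial J)\,\tau\,\partial_\nu\Phi .
\]
Here
\[
\mathcal{C}^\nu = W\,\tau\,\partial_\mu\bigl(W^{-1}(\eta^{-1})^{\mu\nu}\bigr)
\]
is a sum of terms, each of the form (smooth function of $W,V$) times one of $\tau\partial_\tau J$, $\tau\partial_z J$. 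Since $\Omega^2=W^2-|V|^2$, every coefficient is either a smooth function of $J$ alone, or such a function multiplied by a single entry of $\vec{J}_0$ (namely $\tau\partial_\tau W$, $\tau\partial_z V$, etc.). The key observation is that $\partial_z$ commutes with $\tau\partial_\tau$ and $\tau\partial_z$. Hence $[\tau^2\square,\partial_z^\alpha]\Phi$ is a sum, over the Leibniz rule, of terms in which at least one $z$-derivative falls on a coefficient.

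I would then expand $\partial_z^\alpha$ via the Leibniz rule applied to products of the form $a(J)\cdot(\text{second-order } \tau\text{-weighted derivative of }\Phi)$ and $a(J)\,(\tau\partial J)\cdot(\tau\partial\Phi)$, and sort the resulting terms by how the derivatives are distributed.

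\emph{Case 1: all $|\alpha|$ derivatives land on a single factor other than $\Phi$.} There are two subcases.
\begin{itemize}
\item If they land on a principal coefficient $a(J)$ (for example $\Omega^2$ or $V^i$), the chain rule gives $a'(J)\partial_z^\alpha J$ plus products of lower derivatives. The leading piece $a'(J)\,\partial_z^\alpha J\cdot\tau^2\partial^2\Phi$ contains $\partial_z^{|\alpha|}\kappa_p J\in\partial_z^{|\alpha|}\vec{J}$ multiplied by $\tau^2\partial^2\Phi\in\vec{J}_0$ (recall $\Phi\in\{W,V^i\}$). It is therefore of the $\mathbf{B}_{comm}$ form.
\item If they land on the factor $\tau\partial J$ inside $\mathcal{C}$, we get $\partial_z^\alpha(\tau\partial J)\in\partial_z^{|\alpha|}\vec{J}$ times $\tau\partial\Phi\in\vec{J}_0$, again bilinear.
\end{itemize}

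\emph{Case 2: derivatives are split.} When the derivatives split as $\alpha=\alpha_1+\dots+\alpha_L$ with $L\ge2$ and each $|\alpha_\ell|\ge1$, the product is a multilinear expression in the $\partial_z^{\alpha_\ell}\vec{J}$, with smooth coefficients in $J$. This covers two kinds of term:
\begin{itemize}
\item terms where some derivatives hit a coefficient and some hit $\Phi$ (note $\partial_z^{\beta}\tau^2\partial^2\Phi$ is a $\tau\partial$-derivative of $\partial_z^\beta\vec\Phi$, and $\vec\Phi$ is a component of $\vec J$);
\item terms from the chain rule for $a(J)$ with several factors.
\end{itemize}
All such terms are absorbed into $\mathbf{Q}_{comm,\bullet}$.

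Two small points need care.
\begin{itemize}
\item An entry $\tau^2\partial_\tau\partial_z\partial_z^{\beta}\Phi$ is $\tau\partial_\tau$ applied to $\partial_z^{\beta+1}(\tau\partial_z\Phi)$. Such top-order-looking terms, arising when $|\alpha|-1$ derivatives hit $\Phi$ and one hits a coefficient, are of the form $\partial_z a(J)\cdot\tau\partial(\tau\partial\,\partial_z^{\alpha-1}\Phi)$. I would count $\partial_z a(J)=a'\,\partial_z J$ as a $\partial_z^{\alpha_1}\vec J$ factor with $|\alpha_1|=1$, so the term fits $\mathbf{Q}$ with $L=2$. This interpretation, namely that the entries of $\mathbf{Q}$ may carry one extra $\tau\partial$ on $\vec J$ (equivalently, that $\partial_z^{\alpha_\ell}\vec J$ is understood to include such second-order weighted derivatives), needs to be stated.
\item The equation's right-hand side is not involved, since only the commutator is computed.
\end{itemize}

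The main obstacle I expect is bookkeeping of this top-order structure: ensuring that no term with $|\alpha|+1$ unweighted derivatives appears. This rests on the $\tau^2$ weights and the commutation $[\partial_z,\tau\partial_\tau]=[\partial_z,\tau\partial_z]=0$, which I would verify term by term on the explicit form of $\tau^2\square$ above.
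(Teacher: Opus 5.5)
There is a genuine gap, and it sits at exactly the term you propose to handle by reinterpreting $\mathbf{Q}$. Your Leibniz decomposition and your Case~1 agree with the paper's treatment.

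The trouble is the piece of the commutator where one $z$-derivative falls on the principal coefficient $(\eta^{-1})^{\tau\tau}=-\Omega^2$ and the other $|\alpha|-1$ fall on $\Phi$, namely $\partial_z\Omega^2\cdot\tau^2\partial_\tau^2\partial_z^{\gamma}\Phi$ with $|\gamma|=|\alpha|-1$. Since $\tau^2\partial_\tau^2=(\tau\partial_\tau)^2-\tau\partial_\tau$, this term contains a \emph{second} $\tau\partial_\tau$-derivative at spatial order $|\alpha|-1$. That is not a component of $\partial_z^{\beta}\vec J$ for any $\beta$, and no bookkeeping of the commutator alone puts it in the stated form.

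Letting the entries of $\mathbf{Q}$ carry an extra $\tau\partial$ proves a weaker statement that is useless downstream. In \cref{prop:l2} the $\mathbf{Q}$ terms are bounded via \cref{lem:product} by $\|\vec J\|_{H^{K-1}}$, i.e.\ by the lower-order energies $\mathcal{E}^{(\le K-1)}$, and this is what lets the induction close. An entry like $\tau\partial_\tau\partial_z^{K-1}\vec J$ is controlled by none of the energies, since only $\partial_z$ is ever commuted.

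The missing idea is exactly what your remark that ``the equation's right-hand side is not involved'' rules out. The lemma assumes $W,V^i$ solve \eqref{eq:nlw_W}--\eqref{eq:nlw_V} so that you can solve for $\tau^2\partial_\tau^2\Phi$. You then rewrite it as $\Omega^{-2}$ times a combination of $\tau\partial_z\vec J$, $\vec J*\tau\partial_z\vec J$ and terms without second derivatives (cf.\ \eqref{eq:top_schematic}). After this, every second-order factor is $\tau\partial_z$ of a component of $\vec J$.

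The second missing ingredient is moving the weight $\tau$ onto the differentiated coefficient. Even after that substitution, a split term $\partial_z^\beta a(J)\cdot\partial_z^\gamma(\tau\partial_z\vec J)$ carries $|\beta|+|\gamma|+1=|\alpha|+1$ derivatives on $\vec J$-type quantities. So your Case~2 claim that it is multilinear in $\partial_z^{\alpha_\ell}\vec J$ with $\sum|\alpha_\ell|=|\alpha|$ is false as stated, and \cref{lem:product} would return a top-order norm.

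The fix uses that $(\eta^{-1})^{\mu\nu}$ is $V^i$ or $|V|^2-W^2$ (the constant $\delta^{ij}$ produces no commutator) and that $|\beta|\ge1$. One writes $\partial_z^\beta(\eta^{-1})^{\mu\nu}=\tau^{-1}\partial_z^{\beta'}\vec J$ or $\tau^{-1}\partial_z^{\beta'}(\vec J*\vec J)$ with $|\beta'|=|\beta|-1$, and absorbs the $\tau^{-1}$ using the $\tau$ and one $\partial_z$ from $\tau\partial_z\vec J$.
\begin{itemize}
\item If $|\beta'|\ge1$, this gives a genuinely lower-order $\mathbf{Q}_{comm,\bullet}$ term with the correct count.
\item If $\beta'=\emptyset$, the coefficient factor $\tau\partial_z(\eta^{-1})^{\mu\nu}$ lies in $\vec J_0$ (or $\vec J_0*\vec J$), while $\partial_z^\gamma\partial_z\vec J\in\partial_z^{|\alpha|}\vec J$.
\end{itemize}
So the term you placed in $\mathbf{Q}$ with $L=2$ actually belongs in $\mathbf{B}_{comm}$. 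It is top order and is admissible only because of the smallness of $\vec J_0$.

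The criterion you named as the main obstacle is therefore the wrong one. What matters is not merely that no term has $|\alpha|+1$ unweighted derivatives. Every top-order factor must be paired with an entry of $\vec J_0$, and all other factors must be $\partial_z^{\alpha_\ell}\vec J$ with $|\alpha_\ell|\le|\alpha|-1$.
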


\begin{proof}
    We recall the form of the wave operator $\Box$ in $(\tau, z^i)$ coordinates from \eqref{eq:wave_op}. Since $\partial_{z}^{\alpha}$ commutes with the operators $\tau \partial_{\mu}$, it follows that
    \begin{multline} \label{eq:commutation_expansion}
        [ \tau^2 \square, \partial_z^{\alpha} ] \Phi = - \partial_z^{\alpha} (\eta^{-1})^{\mu\nu} \cdot \tau^2 \partial_{\mu} \partial_{\nu} \Phi
        - \partial_z^{\alpha} ( W \tau \partial_{\mu} (W^{-1} (\eta^{-1})^{\mu\nu}))) \cdot \tau \partial_{\nu} \Phi \\[0.5em]
        + \sum_{\substack{|\beta| + |\gamma| = |\alpha| \\ |\beta|, |\gamma| \geq 1}} \partial_z^{\beta} (\eta^{-1})^{\mu\nu} * \partial_z^{\gamma} (\tau^2 \partial_{\mu} \partial_{\nu} \Phi)
        + \sum_{\substack{|\beta| + |\gamma| = |\alpha| \\ |\beta|, |\gamma| \geq 1}} \partial_z^{\beta} ( W \tau \partial_{\mu} (W^{-1} (\eta^{-1})^{\mu\nu}))) * \partial_z^{\gamma} (\tau \partial_{\nu} \Phi).
    \end{multline}

    Note that for $\Phi$ being one of $W$ or $V^i$, the expressions $\tau^2 \partial_{\mu} \partial_{\nu} \Phi$ and $\tau \partial_{\nu} \Phi$ in the first line may be written as $\tau \partial_z \vec{J}$, and are also included in $\vec{J}_0$. Therefore, upon expressing $(\eta^{-1})^{\mu\nu}$ in terms of $W$ and $V^i$ using \eqref{eq:invmetric}, it is straightforward to see the terms on the first line can either be included in either the $\mathbf{B}_{comm}$ or the $\mathbf{Q}_{comm, \bullet}$ terms in \eqref{eq:comm_J}. Similarly, the final term in \eqref{eq:commutation_expansion} can straightforwardly be included in the $\mathbf{Q}_{comm, \bullet}$ part of \eqref{eq:comm_J}.

    It remains to deal with the first term on the second line of \eqref{eq:commutation_expansion}. Using the wave equations \eqref{eq:nlw_W} and \eqref{eq:nlw_V} to express $\tau^2 \partial_{\tau}^2 \Phi$ in terms of expressions involving $\tau \partial_{z^i}$ and lower order terms, we can rewrite the object $\tau^2 \partial_{\mu} \partial_{\nu}$ schematically as
    \begin{equation} \label{eq:top_schematic}
        \tau^2 \partial_{\mu} \partial_{\nu} \Phi = \tau \partial_{z} \vec{J} \quad \text{ or } \quad \Omega^{-2} ( \tau \partial_z \vec{J} + \vec{J} * \tau \partial_z \vec{J} + \vec{J} + \vec{J} * \vec{J} * \vec{J}).
    \end{equation}
    Consider now the expression $\partial_z^{\beta} (\eta^{-1})^{\mu\nu} * \partial_z^{\gamma} (\tau^2 \partial_{\mu} \partial_{\nu} \Phi)$. Inserting \eqref{eq:top_schematic} and expanding, any terms not involving $\tau \partial_z \vec{J}$ are straightforwardly included in the $\mathbf{Q}_{comm, \bullet}$ part of \eqref{eq:comm_J}. For terms involving $\tau \partial_z \vec{J}$, we note that, since $|\beta| \geq 1$ and $(\eta^{-1})^{\mu\nu}$ is one of $V^i$ or $\sum (V^i)^2 - W^2$, there exists some $\beta'$ with $|\beta'| = |\beta| -1$ such that one may write
    \[
        \partial_z^{\beta} (\eta^{-1})^{\mu\nu} = \tau^{-1} \partial_z^{\beta'} \vec{J} \quad \text{ or } \tau^{-1} \partial_z^{\beta'} (\vec{J} * \vec{J}).
    \]

    This means we can distribute both the derivative and the $\tau$ from any $\tau \partial_z \vec{J}$ onto $\partial_z^{\beta} (\eta^{-1})^{\mu\nu}$. For example, we can rewrite $ \partial_z^{\beta} (\eta^{-1})^{\mu\nu} * \partial_z^{\gamma} \tau \partial_z \vec{J}$ as
    \[
        \partial_z^{\beta'} \vec{J} * \partial_z^{\gamma} \partial_z \vec{J} \quad \text{ or } \quad \partial_z^{\beta'} (\vec{J} * \vec{J}) * \partial_z^{\gamma} \partial_z \vec{J}.
    \]
    If $|\gamma| \leq |\alpha| - 2$, so that $|\beta'| \geq 1$, then such terms can be included in the $\mathbf{Q}_{comm, \bullet}$ part of \eqref{eq:comm_J}. On the other hand, if $|\gamma| = |\alpha| - 1$, so that $\beta'$ is empty, then we note that $\partial_z^{\beta}(\eta^{-1})^{\mu\nu}$ may further be written as $\vec{J}_0$ or $\vec{J}_0 * \vec{J}$, while the top order term $\partial_z^{\gamma} \partial_z \vec{J}$ is included in $\partial_z^{|\alpha|} \vec{J}$, so such expressions are instead included in the $\mathbf{B}_{comm}$ part of \eqref{eq:comm_J}. All other terms follow similarly.
\end{proof}

\begin{lemma} \label{lem:inhom}
    Suppose that $W$ and $V^i$ are solutions of \eqref{eq:nlw_W} and \eqref{eq:nlw_V}. Then, for $|\alpha| > 1$,
    \begin{gather} 
        \label{eq:inhom_W}
        \partial_z^{\alpha} \left( \tau^2 \square W \right) = \mathbf{L}_W(\partial_z^{\alpha} \vec{J})
        + \mathbf{B}_{in, \bullet}(\partial_z^{\alpha} \vec{J}, \vec{J}_0)  + 
        \sum_{\substack{J \geq 2, \, |\alpha_j| \geq 1 \\ \sum|\alpha_j| = |\alpha|}}
        \mathbf{Q}_{in, \bullet} (\partial_z^{\alpha_1} \vec{J}, \ldots, \partial_z^{\alpha_J} \vec{J}), \\[0.5em]
        \label{eq:inhom_V}
        \partial_z^{\alpha} \left( \tau^2 \square V^i \right) = \mathbf{L}_{V^i}(\partial_z^{\alpha} \vec{J})
        + \mathbf{B}_{in, \bullet}(\partial_z^{\alpha} \vec{J}, \vec{J}_0)  + 
        \sum_{\substack{J \geq 2, \, |\alpha_j| \geq 1 \\ \sum|\alpha_j| = |\alpha|}}
        \mathbf{Q}_{in, \bullet} (\partial_z^{\alpha_1} \vec{J}, \ldots, \partial_z^{\alpha_J} \vec{J}),
    \end{gather}
    where the linear terms $\mathbf{L}_W$ and $\mathbf{L}_{V^i}$ have coefficients explicitly depending on $V^i$ and $W$ as follows:
    \begin{gather}
        \label{eq:linear_W}
        \mathbf{L}_W(\partial_{z}^{\alpha} \vec{J}) = 2 \kappa_p W^2 ( \partial_z^{\alpha} W - \partial_z^{\alpha} \tau \partial_{\tau} W) - 2 \kappa_p \sum_i V^i W ( \partial_z^{\alpha} V^i - \partial_z^{\alpha} \tau \partial_{\tau} V^i), \\[0.5em]
        \label{eq:linear_V}
        \begin{split}
            \mathbf{L}_{V^i}(\partial_z^{\alpha} \vec{J}) =  
            2 \kappa_p W ( V^i \partial_z^{\alpha} W - V^i \partial_z^{\alpha} \tau \partial_{\tau} W - \partial_z^{\alpha} \tau \partial_{z^i} W) \hspace{10em} \\
            \hspace{10em} - 2 \kappa_p \sum_j V^j ( V^i \partial_z^{\alpha} V^j - V^i \partial_z^{\alpha} \tau \partial_{\tau} V^j - \partial_z^{\alpha} \tau \partial_{z^i} V^j).
        \end{split}
    \end{gather}
\end{lemma}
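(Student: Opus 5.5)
We apply $\partial_z^{\alpha}$ directly to the right-hand sides of \eqref{eq:nlw_W} and \eqref{eq:nlw_V}, i.e. to
\[
-\kappa_p(1-\Omega^2)W - \kappa_p W\,\tau\partial_\tau\Omega^2, \qquad -\kappa_p(1-\Omega^2)V^i - \kappa_p V^i\,\tau\partial_\tau\Omega^2 - \kappa_p\,\tau\partial_{z^i}\Omega^2 .
\]
Since $\partial_z$ commutes with $\tau\partial_\tau$ and $\tau\partial_{z^i}$, every derivative can be distributed by the Leibniz rule. We then sort the resulting terms according to how the $|\alpha|$ derivatives are distributed among the factors.

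First, expand $\Omega^2 = W^2 - \sum_j (V^j)^2$, so that
\[
\tau\partial_\tau\Omega^2 = 2W\,\tau\partial_\tau W - 2\sum_j V^j\,\tau\partial_\tau V^j, \qquad \tau\partial_{z^i}\Omega^2 = 2W\,\tau\partial_{z^i}W - 2\sum_j V^j\,\tau\partial_{z^i}V^j .
\]
The right-hand sides are therefore polynomials in $W, V^j$ and in the first-order quantities $\tau\partial W, \tau\partial V^j$, which are components of $\vec J$ and also of $\vec J_0$. Apply $\partial_z^\alpha$ and split the Leibniz expansion into three classes:
\begin{itemize}
\item[(i)] all $|\alpha|$ derivatives land on a single factor, and every other factor is undifferentiated;
\item[(ii)] the derivatives are split among at least two factors, each receiving at least one;
\item[(iii)] within class (i), the undifferentiated cofactors include a vanishing quantity.
\end{itemize}
Class (ii) is of the form $\mathbf{Q}_{in,\bullet}(\partial_z^{\alpha_1}\vec J,\dots,\partial_z^{\alpha_J}\vec J)$ with $J\ge2$, $|\alpha_\ell|\ge1$ and $\sum|\alpha_\ell| = |\alpha|$. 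This holds because $\partial_z^{\alpha_\ell}$ of $W-1$, $V^j$ or $\tau\partial W$ is a component of $\partial_z^{\alpha_\ell}\vec J$. Coefficients that are smooth functions of $W, V$ are allowed in $\mathbf{Q}$.

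Class (i) is handled as follows. When the derivatives hit $\tau\partial_\tau\Omega^2$ or $\tau\partial_{z^i}\Omega^2$, the top-order pieces are $2W\partial_z^\alpha\tau\partial_\tau W$ and its analogues. These are multiplied by undifferentiated $W$ or $V$, which gives the $\partial_z^\alpha\tau\partial_\tau$ and $\partial_z^\alpha\tau\partial_{z^i}$ entries of $\mathbf{L}_W$ and $\mathbf{L}_{V^i}$. The remaining pieces carry coefficient $\partial_z^\alpha W$ times $\tau\partial_\tau W$; these have a $\vec J_0$ cofactor and are placed in $\mathbf{B}_{in,\bullet}(\partial_z^\alpha\vec J,\vec J_0)$. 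When the derivatives hit $(1-\Omega^2)$ in $(1-\Omega^2)W$, we get $-\partial_z^\alpha\Omega^2\cdot W$ with
\[
\partial_z^\alpha\Omega^2 = 2W\partial_z^\alpha W - 2\sum_j V^j\partial_z^\alpha V^j + (\text{class (ii)}),
\]
which produces the $\partial_z^\alpha W$ and $\partial_z^\alpha V^j$ entries of \eqref{eq:linear_W}. When the derivatives hit the outer factor $W$ or $V^i$, the cofactor is $(1-\Omega^2)$ or $\tau\partial_\tau\Omega^2$. Both are in $\vec J_0$ (the first up to sign, the second as a combination), so these terms are class (iii) and belong in $\mathbf{B}$.

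The main step is the careful sign and coefficient bookkeeping needed to match the explicit formulas \eqref{eq:linear_W} and \eqref{eq:linear_V} exactly. In particular, we must verify that the factor $2\kappa_p$ and the relative minus signs come out right, for example $-\kappa_p\cdot(-2W\partial_z^\alpha W)\cdot W = 2\kappa_p W^2\partial_z^\alpha W$ and $-\kappa_p W\cdot 2W\partial_z^\alpha\tau\partial_\tau W = -2\kappa_p W^2\partial_z^\alpha\tau\partial_\tau W$. The rest is routine. The hypothesis $|\alpha|>1$ is only used to ensure that class (ii) and class (iii) are cleanly separated, and that no first-order top term is double counted.
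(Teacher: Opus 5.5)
Your proposal is correct and follows the paper's own proof. You apply $\partial_z^{\alpha}$ to the right-hand sides of \eqref{eq:nlw_W}--\eqref{eq:nlw_V} and keep the top-order terms with coefficients in $W,V$ in $\mathbf{L}$. Top-order terms multiplied by $\Omega^2-1$ or $\tau\partial_\tau\Omega^2$ go into $\mathbf{B}_{in,\bullet}$, and split-derivative terms into $\mathbf{Q}_{in,\bullet}$; your sign bookkeeping reproduces \eqref{eq:linear_W} and \eqref{eq:linear_V} exactly. One small correction to your closing remark: the hypothesis $|\alpha|>1$ plays no role in the argument, since for $|\alpha|=1$ your class (ii) is simply empty, and the paper in fact uses the lemma for all $|\alpha|\geq 1$ in Corollary~\ref{cor:wave_error}.
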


\begin{proof}
    Here, we simply apply $\partial_z^{\alpha}$ to the right hand sides of \eqref{eq:nlw_W} and \eqref{eq:nlw_V}. For instance, we have
    \[
        \partial_z^{\alpha} \left( - \kappa_p (1 - \Omega^2) W \right)
        = - \kappa_p (1 - \Omega^2) \partial_z^{\alpha} W + 2 \kappa_p W ( W \partial_z^{\alpha} W - \sum_i V^i \partial_z^{\alpha} V^i ) + \text{lower order terms. }
    \]
    Now, while we keep the contribution of $2 \kappa_p W ( W \partial_z^{\alpha} W - \sum_i V^i \partial_z^{\alpha} V^i)$ within the linear term \eqref{eq:linear_W}, we use that $\Omega^2 - 1$ is part of $\vec{J}_0$ to put $- \kappa_p (1 - \Omega^2) \partial_z^{\alpha} W$ on the $\mathbf{B}_{in, \bullet}$ part of \eqref{eq:inhom_W}. The lower order terms are then included in the multilinear contribution $\mathbf{Q}_{in, \bullet}$. The remaining computations follow similarly.
\end{proof}

\begin{corollary} \label{cor:wave_error}
    Suppose that the following hold in any region $\mathcal{D} \subset \mathcal{M}$.
    \begin{equation} \label{eq:apriori_bounds}
        W, \Omega, W^{-1}, \Omega^{-1} \leq \sqrt{2},  \quad \sum_i|V_i|^2 \leq \frac{1}{2}, \quad
        | \vec{J}_0 |_{\ell^2} \leq \epsilon^{1/2} \tau^{1/2}.
    \end{equation}

    Then, for $\Jac$ denoting the set $\{W-1, V^1, \ldots, V^n \}$, we have, for $\alpha$ an arbitrary multi-index with $|\alpha| = K \geq 1$, the following bound in the region $\mathcal{D}$, where $D > 0$ is some fixed constant. 
    \begin{multline} \label{eq:bulk_estimate}
        \sum_{\Phi \in \Jac }|\Omega W (\kappa_p^2 \partial_z^{\alpha} \Phi + \tau^2 \square \partial_z^{\alpha} \Phi) (\tau \partial_{\tau} \partial_z^{\alpha} \Phi)| \leq 
        100 \kappa_p \sum_{\Phi \in \Jac} \left( 2 \tau^2 \mathbb{T}_{\mu\nu}[\partial_z^{\alpha} \Phi] T^{\mu} N^{\nu} + \kappa_p^2 (\partial_z^{\alpha} \Phi)^2 \right) \\[0.5em]
        + D \epsilon^{1/2}\tau^{1/2} \sum_{\substack{\Phi \in \Jac \\ \beta = |\alpha|}} [2 \tau^2 \mathbb{T}_{\mu\nu}[\partial_z^{\beta} \Phi] T^{\mu} N^{\nu} + \kappa_p^2 ( \partial_z^{\beta} \Phi )^2 ]
        + \left |  \sum_{\substack{L \geq 2, |\alpha_{\ell}| \geq 1 \\ \sum |\alpha_{\ell}| = K}} \mathbf{Q}_{\bullet} (\partial_z^{\alpha_1} \vec{J}, \ldots, \partial_z^{\alpha_L} \vec{J}) \cdot \partial_z^{\alpha} \tau \partial_{\tau} \Phi
        \right |.
    \end{multline}
    At zeroth order we also have the simpler bound
    \begin{equation} \label{eq:bulk_estimate_0}
        \sum_{\Phi \in \Jac } |\Omega W (\kappa_p^2 \Phi + \tau^2 \square \Phi) (\tau \partial_{\tau} \Phi)| \leq 100 \kappa_p \sum_{\Phi \in \Jac} \left( 2 \tau^2 \mathbb{T}_{\mu\nu}[\Phi] T^{\mu} N^{\nu} + \kappa_p^2 \Phi^2 \right).
    \end{equation}
\end{corollary}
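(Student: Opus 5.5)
The plan is to decompose $\kappa_p^2 \partial_z^{\alpha}\Phi + \tau^2\square\partial_z^\alpha\Phi$ using \cref{lem:commutation} and \cref{lem:inhom}. Writing $\tau^2\square\partial_z^\alpha\Phi = \partial_z^\alpha(\tau^2\square\Phi) - [\tau^2\square,\partial_z^\alpha]\Phi$, we obtain
\[
\kappa_p^2 \partial_z^{\alpha}\Phi + \tau^2\square\partial_z^\alpha\Phi = \kappa_p^2\partial_z^\alpha\Phi + \mathbf{L}_\Phi(\partial_z^\alpha\vec{J}) + \mathbf{B}(\partial_z^{|\alpha|}\vec{J},\vec{J}_0) + \sum \mathbf{Q}_\bullet(\partial_z^{\alpha_1}\vec{J},\ldots,\partial_z^{\alpha_L}\vec{J}).
\]
We then multiply by $\Omega W\,\tau\partial_\tau\partial_z^\alpha\Phi$ and estimate the three kinds of terms separately. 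The $\mathbf{Q}$ terms are carried over verbatim into the last term of \eqref{eq:bulk_estimate}, after absorbing the bounded factor $\Omega W\le 2$ into the coefficients of $\mathbf{Q}_\bullet$.

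For the linear part, we use the explicit forms \eqref{eq:linear_W} and \eqref{eq:linear_V}. Under \eqref{eq:apriori_bounds}, all coefficients $W^2$, $V^iW$, $V^iV^j$ are bounded by an absolute constant, say $\le 4$. Thus $|\mathbf{L}_\Phi(\partial_z^\alpha\vec J)|$ is at most $C\kappa_p$ times a sum of $|\partial_z^\alpha\Psi|$, $|\tau\partial_\tau\partial_z^\alpha\Psi|$ and $|\tau\partial_{z^i}\partial_z^\alpha\Psi|$ over $\Psi\in\Jac$. Using \eqref{eq:coercivity} together with $W,\Omega,W^{-1},\Omega^{-1}\le\sqrt2$, we have
\[
(\tau\partial_\tau\partial_z^\alpha\Psi)^2 + |\tau\partial_z\partial_z^\alpha\Psi|^2 \le 8\,\tau^2\mathbb{T}[\partial_z^\alpha\Psi](T,N).
\]
Each product is then bounded by Young's inequality, pairing $\kappa_p|\partial_z^\alpha\Psi|\,|\tau\partial_\tau\partial_z^\alpha\Phi|$ as $\tfrac12\kappa_p^2(\partial_z^\alpha\Psi)^2 + \tfrac12(\tau\partial_\tau\partial_z^\alpha\Phi)^2$. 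The term $\kappa_p^2\partial_z^\alpha\Phi\cdot\tau\partial_\tau\partial_z^\alpha\Phi$ is handled the same way. The expected difficulty here is purely bookkeeping: we must check that the total constant is $\le 100\kappa_p$. Since $\kappa_p>1$, a factor $\kappa_p$ can always be absorbed, so the delicate point is only the counting across the $n+1$ components of $\Jac$. Because the coefficients involve $\sum_i (V^i)^2\le 1/2$, Cauchy–Schwarz in $i$ avoids any $n$-dependence. I would first try to verify this crude numerical bound directly.

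For the bilinear term, $|\mathbf{B}(\partial_z^{|\alpha|}\vec J,\vec J_0)|\le C|\vec J_0|\,|\partial_z^{|\alpha|}\vec J|\le C\epsilon^{1/2}\tau^{1/2}|\partial_z^{|\alpha|}\vec J|$. Multiplying by $|\tau\partial_\tau\partial_z^\alpha\Phi|$ and applying Young's inequality with the same coercivity gives the $D\epsilon^{1/2}\tau^{1/2}$ term, summed over all $\beta$ with $|\beta|=|\alpha|$. The zeroth-order bound \eqref{eq:bulk_estimate_0} follows the same scheme, applied directly to \eqref{eq:nlw_W} and \eqref{eq:nlw_V} with no commutator. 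There, $(1-\Omega^2)$ is written as $W^2-1-\sum(V^i)^2$, which is linear in $\Jac$ up to bounded factors, and $\tau\partial_\tau\Omega^2$ and $\tau\partial_{z^i}\Omega^2$ are expanded as bounded combinations of $\tau\partial W$ and $\tau\partial V$. All of these are then controlled by $\mathbb{T}(T,N)$ and $\kappa_p^2\Phi^2$ exactly as in the linear part.
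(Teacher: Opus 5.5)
Your proposal is correct and follows the paper's proof essentially step for step. You combine \cref{lem:commutation} and \cref{lem:inhom}, carry the $\mathbf{Q}_\bullet$ terms over verbatim, bound the bilinear term via $|\vec J_0|_{\ell^2}\le\epsilon^{1/2}\tau^{1/2}$, and control the linear part (including $\kappa_p^2\partial_z^\alpha\Phi$) by Young's inequality, the coercivity \eqref{eq:coercivity}, and Cauchy--Schwarz in the indices to avoid $n$-dependence.

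Two minor remarks. The identity should read $\tau^2\square\partial_z^\alpha\Phi=\partial_z^\alpha(\tau^2\square\Phi)+[\tau^2\square,\partial_z^\alpha]\Phi$; the sign slip is harmless because the commutator is only used schematically. Also, to actually land under $100\kappa_p$, keep the weight $\Omega W$ attached and use $\Omega W(\tau\partial_\tau\partial_z^\alpha\Psi)^2\le 2\tau^2\mathbb{T}[\partial_z^\alpha\Psi](T,N)$ directly, as the paper does, rather than your lossy factor $8$.
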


\begin{proof}
    For any $\Phi \in \Jac$, we combine Lemma~\ref{lem:commutation} and Lemma~\ref{lem:inhom} to find that
    \begin{align*}
        \tau^2 \square \partial_{z}^{\alpha} \Phi 
        &= [\tau^2 \square, \partial_z^{\alpha} \Phi] + \partial_z^{\alpha} \Phi \\[0.5em]
        &= \mathbf{L}_{\Phi}(\partial_z^{\alpha} \vec{J}) + \mathbf{B}_{\bullet}(\partial_z^{\alpha} \vec{J}, \vec{J}_0) + \sum_{\substack{L \geq 2, |\alpha_{\ell}| \geq 1 \\ \sum{|\alpha_{\ell}| = L }}} \mathbf{Q}_{\bullet} (\partial_z^{\alpha_1} \vec{J}, \ldots, \partial_z^{\alpha_L} \vec{J}),
    \end{align*}
    where we simply define $\mathbf{B}_{\bullet} = \mathbf{B}_{comm, \bullet} + \mathbf{B}_{in, \bullet}$ and $\mathbf{Q}_{\bullet} = \mathbf{Q}_{comm, \bullet} + \mathbf{Q}_{in, \bullet}$.

    Thereby, we have that
    \begin{align*}
        \Omega W (\kappa_p^2 \partial_z^{\alpha} \Phi + \tau^2 \square \partial_z^{\alpha} \Phi) (\tau \partial_{\tau} \partial_z^{\alpha} \Phi) 
        &= \underbrace{\Omega W ( \kappa_p^2 \partial_z^{\alpha} \Phi + \mathbf{L}_{\Phi}( \partial_z^{\alpha} \vec{J})) \cdot \tau \partial_{\tau} \partial_z^{\alpha} \Phi}_{\mathrm{(I)}} \\[0.5em]
        &\hspace{-5em} + \underbrace{\mathbf{B}_{\bullet}(\partial_z^{|\alpha|} \vec{J}, \vec{J}_0) \cdot W \Omega \tau \partial_{\tau} \partial_z^{\alpha} \Phi}_{\mathrm{(II)}}
        + \underbrace{\sum_{\substack{L \geq 2, |\alpha_{\ell}| \geq 1 \\ \sum{|\alpha_{\ell}| = L }}} \mathbf{Q}_{\bullet} (\partial_z^{\alpha_1} \vec{J}, \ldots, \partial_z^{\alpha_L} \vec{J}) \cdot W \Omega \tau \partial_{\tau} \partial_z^{\alpha} \Phi}_{\mathrm{(III)}}.
    \end{align*}
    The term $\mathrm{(III)}$ is kept as the final term in \eqref{eq:bulk_estimate}, upon abusing notation and absorbing the $\Omega W$ into $\mathbf{Q}_{\bullet}$. For the term $\mathrm{(II)}$, we use the bounds \eqref{eq:apriori_bounds} to write
    \[
        \mathrm{(II)} \leq D \epsilon^{1/2} \tau^{1/2} \cdot \frac{1}{4} | \partial_z^{\alpha} \vec{J} |_{\ell^2}^2 \leq D \epsilon^{1/2} \tau^{1/2} \sum_{|\beta| = |\alpha|} \sum_{\Phi \in \Jac} [ 2 \tau^2 \T_{\mu\nu} [ \partial_z^{\beta} \Phi ] T^{\mu} N^{\nu} + \kappa_p^2 (\partial_z^{\beta} \Phi)^2 ], 
    \]
    where we used \eqref{eq:coercivity}. This is exactly the penultimate term in \eqref{eq:bulk_estimate}.
        
    The main contribution, in the first line of \eqref{eq:bulk_estimate}, arises from the linear term $\mathrm{(I)}$. To bound this term, we can expand out $\mathbf{L}_{\Phi}(\partial_z^{\alpha} \vec{J})$ via \eqref{eq:linear_W} and \eqref{eq:linear_V}, sum over $\Phi \in \Jac$ then use Young's inequality repeatedly. We give some examples to illustrate this procedure:
    \begin{itemize}
        \item 
            The contribution of the second term in \eqref{eq:linear_W} is
            \begin{align*}
                | 2 \kappa_p \Omega W^3 \partial_z^{\alpha} \tau \partial_{\tau} W \cdot \partial_z^{\alpha} \tau \partial_{\tau} W | 
                &= 2 \kappa_p W^2 \cdot \Omega W (\partial_z^{\alpha} \tau \partial_{\tau} W)^2 \\
                &\leq 4 \kappa_p \cdot 2 \tau^2 \T_{\mu\nu}[\partial_z^{\alpha} W] T^{\mu} N^{\mu},
            \end{align*}
            using \eqref{eq:coercivity} in the final estimate.

        \item
            The contribution of the fourth term in \eqref{eq:linear_W}, is, using the Cauchy-Schwarz inequality for the summation in $i$ and then Young's inequality,
            \begin{align*}
                \sum_i| 2 \kappa_p \Omega V_i W^2 \partial_z^{\alpha} \tau \partial_{\tau} V^i \cdot \partial_z^{\alpha} \tau \partial_{\tau} W |
                &\leq \kappa_p W \left( \sum_i |V_i|^2 \right )^{1/2} \cdot \Omega W \left( (\tau \partial_{\tau} W)^2 + \sum_i (\tau \partial_{\tau} V^i)^2 \right) \\
                &\leq 2 \kappa_p \cdot \sum_{\Phi \in \Jac} 2 \tau^2 \T_{\mu\nu}[\partial_z^{\alpha} \Phi] T^{\mu} N^{\mu}.
            \end{align*}

        \item
            The contribution of the fourth term in \eqref{eq:linear_V}, together with summation in $i$ and $j$ is, upon using Cauchy-Schwarz for both summations, estimated by
            \begin{align*}
                \sum_{i, j} | 2 \kappa_p \Omega W V_i V_j \partial_{z}^{\alpha} V^j \cdot \partial_z^{\alpha} \tau \partial_{\tau} V^i |
                &\leq 2 \kappa_p \Omega W \left( \sum_i |V_i|^2 \right ) \left( \sum_j (\partial_z^{\alpha} V^j)^2 \right)^{1/2} \left( \sum_i (\partial_z^{\alpha} \tau \partial_{\tau} V^i)^2 \right)^{1/2} \\
                &\leq 2 \left( \sum_j \kappa_p^2 (\partial_z^{\alpha} V^j)^2 + \sum_i 2 \tau^2 \T_{\mu\nu}[\partial_z^{\alpha} V^i] T^{\mu} N^{\mu}\right).
            \end{align*}

        \item
            The contribution of the additional linear terms due to $\kappa_p^2 \partial_z^{\alpha} \Phi$ in $\mathrm{(I)}$ are estimated by
            \begin{align*}
                \sum_{\Phi \in \Jac} \Omega W | \kappa_p^2 \partial_z^{\alpha} \Phi \cdot \tau \partial_{\tau} \partial_z^{\alpha} \Phi |
                &\leq \sum_{\Phi \in \Jac} \kappa_p \sqrt{\Omega W} | \kappa_p \partial_z^{\alpha} \Phi | \cdot |\sqrt{\Omega W} \partial_z^{\alpha} \tau \partial_{\tau} \Phi | \\
                &\leq \kappa_p \sum_{\Phi \in \Jac} \left( \kappa_p^2 (\partial_z^{\alpha} \Phi)^2 + 2 \tau^2 \T_{\mu\nu}[\partial_z^{\alpha} \Phi] T^{\mu} N^{\mu} \right).
            \end{align*}
    \end{itemize}
    Systematically going through the remaining terms, and recalling that we consider only $\kappa_p \geq 1$, one eventually determines that
    \[
        \mathrm{(I)} \leq 100 \kappa_p \sum_{\Phi \in \Jac} \left( \kappa_p^2 (\partial_z^{\alpha} \Phi)^2 + 2 \tau^2 \T_{\mu\nu}[ \partial_z^{\alpha} \Phi] T^{\mu} N^{\nu} \right),
    \]
    as desired. The zeroth order estimate \eqref{eq:bulk_estimate_0} follows similarly, upon directly using the equations \eqref{eq:nlw_W} and \eqref{eq:nlw_V}.
\end{proof}


\subsection{Bootstrap assumptions and energy estimates} \label{for:sub:energy}

In what remains of \cref{sec:forward}, we often refer to the following \emph{bootstrap asssumptions}:
\begin{subequations}\label{eq:bootstrap_all}
	\begin{gather}
		\label{eq:vw_bootstrap}
		W, W^{-1}, \Omega, \Omega^{-1} \leq \sqrt{2}, \quad V^i \leq \sqrt{2}, \\
		\label{eq:firstorder_bootstrap}
		| \partial_{z^i} \vec{J} |_{\ell^2} \leq 1, \\
		\label{eq:convergent_bootstrap}
		| \vec{J}_0 |_{\ell^2} \leq \epsilon^{1/2} \tau^{1/2}.
	\end{gather}
\end{subequations}
The key proposition of \cref{for:sub:energy} will be the following, recalling the definitions of the domain $\Mhat_{\tau_0, \tau_1}$ and the hypersurfaces $\Shat_{\tau}$ from \eqref{eq:forward_regions}.

\begin{prop}  \label{prop:l2}
    Let $\tau_0<\tau_1$. Suppose that $W, V^i$ satisfy the system \eqref{eq:nlw_Jacobi} in the domain $\Mhat_{\tau_0, \tau_1}$. Furthermore,
    the bootstrap assumptions \eqref{eq:vw_bootstrap}, \eqref{eq:firstorder_bootstrap} and \eqref{eq:convergent_bootstrap} all hold in this region. Then, for all integers $K \geq 0$ and $\tilde{q} \geq 200 \kappa_p$, it holds that 
    \begin{equation} \label{eq:l2}
        \| \Vtop J \|_{H^{K, - \tilde{q}}(\Shat_{\tau_0})}^2 
        + \int_{\tau_0}^{\tau_1} \| \Vtop J \|_{H^{K, - \tilde{q}}(\Shat_{\tilde{\tau}})}^2 \frac{d \tilde{\tau}}{\tilde{\tau}}
        \lesssim_{K, \tilde{q}}
        \| \Vtop J \|_{H^{K, - \tilde{q}}(\Shat_{\tau_1})}^2.
    \end{equation}
\end{prop}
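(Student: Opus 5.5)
The estimate comes from applying the divergence theorem to the current $\J^{\mu}[\partial_z^{\alpha}\Phi]$ of \cref{lem:current}, summed over $\Phi\in\Jac$ and $|\alpha|\le K$. We then induct on $K$, treating the multilinear terms as lower order.

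\textbf{Step 1: the divergence identity.} With the weight $\tau^{2\tilde q}$, the multiplier $Z$ is $\tau^{2(\tilde q+1)}T$. The constant-$\tau$ fluxes are then comparable to $\tau^{2\tilde q}\bigl(\Omega^2(\tau\partial_\tau\partial_z^\alpha\Phi)^2+|\tau\partial_z\partial_z^\alpha\Phi|^2+\kappa_p^2(\partial_z^\alpha\Phi)^2\bigr)$, using \cref{eq:coercivity} and the $-Q^\mu\Phi^2$ term. Under \cref{eq:vw_bootstrap} this is equivalent to $|\Vtop\partial_z^\alpha J|^2$ measured in $H^{0,-\tilde q}$.

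The divergence \cref{eq:divergence_J} carries an overall factor $-\tau^{2\tilde q-1}$. It has a coercive part of size $\tilde q\times$(energy density), which furnishes the spacetime integral $\int\|\cdot\|^2\,d\tau/\tau$. It also has an error part $\Omega W(\kappa_p^2\partial_z^\alpha\Phi+\tau^2\square\partial_z^\alpha\Phi)\tau\partial_\tau\partial_z^\alpha\Phi$. Integrating over $\Mhat_{\tau_0,\tau_1}$, the contribution on the cone $\mathscr C$ has a sign, since $\mathscr C$ is spacelike and $Z,Q$ are causal with favourable orientation, and can be dropped. The flux through $\Shat_{\tau_0}$ and the bulk term then both appear on the left-hand side, bounded by the flux through $\Shat_{\tau_1}$. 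The orientation is the opposite of \cref{back:lem:energy_est}: here we integrate backwards, and the positive power $\tau^{2\tilde q}$ makes the bulk have the good sign toward $\tau\to0$.

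\textbf{Step 2: absorbing the error.} We apply \cref{cor:wave_error}, whose hypotheses \cref{eq:apriori_bounds} are implied by the bootstrap assumptions \cref{eq:bootstrap_all}. The linear part is bounded by $100\kappa_p$ times the same energy density, which the coercive term $2(\tilde q+1)$ and $\tilde q$ absorb once $\tilde q\ge200\kappa_p$; this is exactly where the threshold on $\tilde q$ comes from. The bilinear part carries the extra factor $D\epsilon^{1/2}\tau^{1/2}$ and is absorbed for $\epsilon$ small. The zeroth order case $K=0$ follows in the same way from \cref{eq:bulk_estimate_0} with no multilinear term, and this already gives \cref{eq:l2} for $K=0$.

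\textbf{Step 3: the multilinear terms, induction on $K$.} The main obstacle is the term $\mathbf Q_\bullet(\partial_z^{\alpha_1}\vec J,\dots,\partial_z^{\alpha_L}\vec J)\,\tau\partial_\tau\partial_z^\alpha\Phi$ with $L\ge2$, $|\alpha_\ell|\ge1$ and $\sum|\alpha_\ell|=K$. By Cauchy--Schwarz against the coercive bulk, it suffices to bound $\int\tau^{2\tilde q}\|\mathbf Q_\bullet\|_{L^2(\Shat_\tau)}^2\,d\tau/\tau$.

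The plan is to use \cref{eq:firstorder_bootstrap} to control every factor with $|\alpha_\ell|=1$ in $L^\infty$. Factors with $2\le|\alpha_\ell|\le K-1$ are handled by Gagliardo--Nirenberg/Moser interpolation between $\partial_z\vec J\in L^\infty$ and $\partial_z^{K}\vec J\in L^2$. This yields a bound by $\|\partial_z^{\le K}\vec J\|_{L^2}$ times powers of $\|\partial_z\vec J\|_{L^\infty}\le1$, with the weights $\tau^{-\tilde q}$ placed only on the top factor.

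One subtlety is that the top-order contributions produced this way are not small, since \cref{eq:firstorder_bootstrap} gives a bound of $1$ rather than $\epsilon$. If needed, I would instead use the lower-order norms $\|\Vtop J\|_{H^{K-1,-\tilde q}}$, already controlled by the induction hypothesis, together with the fact that each $\partial_z^{\alpha_\ell}\vec J$ with $|\alpha_\ell|<K$ is genuinely lower order. I would also use a slightly larger weight, $\tilde q$ replaced by a larger value at lower orders, so that these pieces enter as forcing rather than needing absorption.

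Concluding with Gr\"onwall in $\log\tau$ closes the estimate at order $K$, with constants depending on $K$ and $\tilde q$.
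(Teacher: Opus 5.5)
Your Steps 1--2 and the overall induction on $K$ match the paper's proof. That means the divergence identity for $\J[\partial_z^\alpha\Phi]$, the linear terms absorbed by $\tilde q\ge 200\kappa_p$, and the bilinear terms absorbed through the $D\epsilon^{1/2}\tau^{1/2}$ factor.

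The problem is Step 3 as you primarily wrote it. Interpolating up to $\partial_z^{K}\vec J\in L^2$ produces top-order terms with $O(1)$, $K$-dependent constants, since \cref{eq:firstorder_bootstrap} only gives a bound of $1$. Such terms cannot be absorbed by a fixed $\tilde q\ge 200\kappa_p$. Closing instead with Gr\"onwall in $\log\tau$ yields a factor $(\tau_1/\tau_0)^{C_K}$, which effectively replaces $\tilde q$ by a $K$-dependent larger weight. That defeats the purpose of the proposition: $\tilde q$ must be independent of $K$ for \cref{lem:improvement} to work, since it needs $\tilde q\sigma<1$ with $\sigma=(K-\tfrac n2)^{-1}$. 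Your suggested weight shift also goes the wrong way. A larger $\tilde q'>\tilde q$ at order $K-1$ gives a \emph{weaker} norm near $\tau=0$, so it cannot control forcing measured with the weight $\tau^{2\tilde q}$.

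The fix is the one you list as a fallback, and it should be the argument itself; it is exactly what the paper does. Since $L\ge 2$ and $|\alpha_\ell|\ge 1$, every factor has $|\alpha_\ell|\le K-1$. Apply \cref{lem:product} with each $f_\ell$ a first derivative of $\vec J$. Then only $K-L\le K-2$ derivatives fall on the $f_\ell$, and $\|f_\ell\|_{L^\infty}\le 1$ by \cref{eq:firstorder_bootstrap}. This gives
\[
\|\partial_z^{\alpha_1}\vec J\cdots\partial_z^{\alpha_L}\vec J\|_{L^2(\Shat_\tau)}^2\lesssim\|\vec J\|_{H^{K-1}(\Shat_\tau)}^2\lesssim\tau^{-2\tilde q}\mathcal{E}^{(\le K-1)}_\tau[J],
\]
with no top-order contribution at all. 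After Young's inequality you obtain \cref{eq:current_integrated_3}:
\[
\mathcal{E}^{(K)}_{\tau_0}[J]+\tilde q\,\overline{\mathcal{E}}^{(K)}_{\tau_0,\tau_1}[J]\lesssim\overline{\mathcal{E}}^{(\le K-1)}_{\tau_0,\tau_1}[J]+\mathcal{E}^{(K)}_{\tau_1}[J].
\]
The first term on the right is the lower-order \emph{bulk} energy with the same $\tilde q$, which the induction hypothesis already controls. No smallness, no Gr\"onwall and no change of weight are needed.
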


For the proof, it will be helpful to devise the following geometric energies.

\begin{definition} \label{def:energies}
    For $0 < \tau_0 \leq \tau_1$, we define the (zeroth order) energy $\mathcal{E}_{\tau_0}[\Phi]$ of a function $\Phi \in C^{K+1}(\Mhat_{\tau_0, \tau_1})$ on the hypersurface $\Shat_{\tau}$ and the bulk energy $\overline{\mathcal{E}}_{\tau_0, \tau_1}[\Phi]$  as
    \begin{align}
        \mathcal{E}_{\tau_0}[\Phi] 
        &\coloneqq
        \int_{\Shat_{\tau_0}} \J_{\mu} [\Phi] N^{\mu} \, d \mathrm{vol}
        = \int_{\Shat_{\tau_0}} \tau_0^{-2 \tilde{q}} W \left( \tau_0^2 \mathbb{T}_{\mu\nu}[\Phi] T^{\mu} N^{\nu} + \frac{1}{2} \kappa_p^2 \Phi^2 \right) dz,
    \end{align}
    \begin{equation}
        \overline{\mathcal{E}}_{\tau_0, \tau_1}[\Phi] \coloneqq
        \int_{\mathcal{M}_{\tau_0, \tau_1}} \J_{\mu} [\Phi] N^{\mu} \, \frac{d \mathrm{vol}}{\tau} = \int_{\tau_0}^{\tau_1} \mathcal{E}_{\tilde{\tau}}[\Phi] \Omega^{-1} \, \frac{d \tau}{\tau}.
    \end{equation}
    We also often use $\mathcal{E}_{\tau_0}^{(0)}[\Phi]$ and $\overline{\mathcal{E}}_{\tau_0, \tau_1}^{(0)}[\Phi]$ in place of $\mathcal{E}_{\tau_0}[\Phi]$ and $\overline{\mathcal{E}}_{\tau_0, \tau_1}[\Phi]$.

    For $K \geq 1$, we also define the $K$-th order energies (both on hypersurfaces and in the bulk) as
    \begin{equation}
        \mathcal{E}_{\tau_0}^{(K)}[\Phi] = \sum_{|\alpha| = K} \mathcal{E}_{\tau_0}[\partial_z^{\alpha} \Phi], \quad \overline{\mathcal{E}}_{\tau_0, \tau_1}^{(K)}[\Phi] = \sum_{|\alpha|=K} \overline{\mathcal{E}}_{\tau_0, \tau_1}[\partial^\alpha_z\Phi],
    \end{equation}
    and the total energies $\mathcal{E}_{\tau_0}^{(K)}[J]$ and $\overline{\mathcal{E}}_{\tau_0, \tau_1}^{(K)}[J]$ by 
    \begin{equation}
        \mathcal{E}_{\tau_0}^{(K)}[J] = \mathcal{E}_{\tau_0}^{(K)}[W - 1] + \sum_i \mathcal{E}_{\tau_0}^{(K)}[V^i], \quad
        \overline{\mathcal{E}}_{\tau_0, \tau_1}^{(K)}[J] = \overline{\mathcal{E}}_{\tau_0, \tau_1}^{(K)}[W-1] + \sum_i \overline{\mathcal{E}}_{\tau_0, \tau_1}^{(K)}[V^i].
    \end{equation}
    Finally, for notational convenience we define the following. (Similarly for $\Phi$ replaced by $J$.)
    \begin{equation}
        \mathcal{E}_{\tau_0}^{(\leq K)}[\Phi] = \sum_{k = 0}^K \mathcal{E}_{\tau_0}^{(k)}[\Phi], \qquad 
        \overline{\mathcal{E}}_{\tau_0, \tau_1}^{(\leq K)}[\Phi] = \sum_{k = 0}^K \overline{\mathcal{E}}_{\tau_0, \tau_1}^{(k)}[\Phi].
    \end{equation}

\end{definition}

\begin{lemma} \label{lem:energy_coercivity}
    Assuming the bootstrap \eqref{eq:vw_bootstrap}, for any integer $K \geq 0$ the following coercivity holds
    \begin{gather*}
        \| \Vtop \Phi \|_{H^{K, - \tilde{q}}(\Shat_{\tau})}^2 \lesssim_{K, \tilde{q}, p} 
        \mathcal{E}_{\tau_0}^{(\leq K)}[\Phi] \lesssim_{K, \tilde{q}, p}
        \| \Vtop \Phi \|_{H^{K, - \tilde{q}}(\Shat_{\tau})}^2 
    \end{gather*}
    and similarly for $\Phi$ replaced by $J$.
\end{lemma}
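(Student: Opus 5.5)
The plan is to unpack the definition of $\mathcal{E}^{(\leq K)}_{\tau}[\Phi]$ pointwise and compare the integrand with $|\Vtop \partial_z^\alpha \Phi|^2_{\ell^2}$, weighted by $\tau^{-2\tilde q}$. Fix $0\le k\le K$ and a multi-index $\alpha$ with $|\alpha|=k$. By Definition~\ref{def:energies} and the coercivity identity \eqref{eq:coercivity}, the integrand of $\mathcal{E}_{\tau}[\partial_z^\alpha\Phi]$ equals
\[
\tau^{-2\tilde q}\Big( \tfrac{W^2\Omega^{-1}}{2}\big(\Omega^2(\tau\partial_\tau\partial_z^\alpha\Phi)^2+\delta^{ij}\tau\partial_{z^i}\partial_z^\alpha\Phi\,\tau\partial_{z^j}\partial_z^\alpha\Phi\big)+\tfrac12\kappa_p^2 W(\partial_z^\alpha\Phi)^2\Big).
\]
Under the bootstrap \eqref{eq:vw_bootstrap}, each of $W,W^{-1},\Omega,\Omega^{-1}$ lies in $[2^{-1/2},2^{1/2}]$. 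Hence every coefficient above ($W^2\Omega$, $W^2\Omega^{-1}$, $\kappa_p^2W$) is bounded above and below by constants depending only on $p$. This gives the pointwise two-sided bound
\[
\tau^{-2\tilde q}\big|(\tau\partial_\tau\partial_z^\alpha\Phi,\ \tau\partial_z\partial_z^\alpha\Phi,\ \partial_z^\alpha\Phi)\big|^2_{\ell^2}.
\]

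Next I use that the commutators $\partial_z^\alpha$ commute with $\tau\partial_\tau$ and $\tau\partial_z$ in the $(\tau,z)$ coordinates. So the vector above is exactly $\partial_z^\alpha \Vtop\Phi$, and the weight $\tau^{-2\tilde q}$ is constant on $\Shat_\tau$. Integrating in $z$ over $\Shat_\tau$ (the energies carry no extra volume factor, matching the convention for $L^2(\Shat_\tau)$ norms) and summing over $|\alpha|=k\le K$ then gives, up to constants depending on $p$ and $K$ (the latter from counting multi-indices),
\[
\sum_{|\alpha|\le K}\|\partial_z^\alpha\Vtop\Phi\|^2_{\tau^{\tilde q}L^2(\Shat_\tau)}=\|\Vtop\Phi\|^2_{H^{K,-\tilde q}(\Shat_\tau)}.
\]
Here I read $H^{K,-\tilde q}$ as carrying the weight $\tau^{\tilde q}$ in the denominator, i.e.\ $\tau^{-\tilde q}$ times $L^2$, consistent with the $\tau^{-2\tilde q}$ in the energy.

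For $\Phi$ replaced by $J$, I apply the above to $W-1$ and to each $V^i$ and sum. Note that $\Vtop(W-1)$ and $\Vtop W$ differ only in the zeroth-order slot, which is where the shift by $1$ in $\vec W$ is built in.

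I do not expect a genuine obstacle; the estimate is routine. The one point to be careful about is the mixed $\partial_\tau\partial_z$ cross term, which \eqref{eq:coercivity} has already eliminated using $\eta(T,N)=-\Omega^{-1}W$. There is also a harmless mismatch between the subscripts $\tau_0$ and $\tau$ in the statement, which I treat as the same hypersurface. The dependence on $\tilde q$ in the implicit constants is in fact absent, since the weight is the same on both sides.
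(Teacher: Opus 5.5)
Your proposal is correct and is essentially the paper's own proof, which simply declares the equivalence immediate from the bootstrap bounds \eqref{eq:vw_bootstrap} on $W,\Omega$ and the coercivity identity \eqref{eq:coercivity}; your remarks on the $W-1$ shift in the zeroth-order slot and on the $\tau_0$ versus $\tau$ typo are also right. Because the same constant power of $\tau$ appears on both sides, your argument is unaffected by the paper's sign inconsistency in the weight: its later use $\|\vec J\|_{H^K}\le\tau^{-\tilde q}\|\vec J\|_{H^{K,-\tilde q}}$ and the multiplier $Z=\tau^{2(\tilde q+1)}T$ indicate the intended squared weight is $\tau^{+2\tilde q}$ in both, not $\tau^{-2\tilde q}$ as written in Definition~\ref{def:energies}.
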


\begin{proof}
    Immediate from the definitions of the bounds on $\Omega$ and $W$ from \eqref{eq:vw_bootstrap} and the coercivity \cref{eq:coercivity}.
\end{proof}

\begin{proof}[Proof of \cref{prop:l2}]
    We integrate the divergence identity in Lemma~\ref{lem:current}, to yield that, for $0 < \tau_0 \leq \tau_1$, one has the following inequality for $\mathcal{E}[\Phi]$ and $\overline{\mathcal{E}}[\Phi]$.
    \begin{equation} \label{eq:current_integrated}
    \mathcal{E}_{\tau_0} [\Phi] + 2 \tilde{q} \overline{\mathcal{E}}_{\tau_0, \tau_1} [\Phi] \leq \mathcal{E}_{\tau_1} [\Phi] + \int_{\Mhat_{\tau_0, \tau_1}} \tau^{2\tilde{q}} W(\kappa_p^2 \Phi + \tau^2 \square \Phi) \cdot \tau \partial_{\tau} \Phi \, \frac{ d \mbox{vol}}{\tau}.
\end{equation}
Using \cref{eq:bulk_estimate_0} and summing over $\Phi \in \Jac = \{W, V^1, \ldots, V^n\}$, for the choice $\tilde{q} \geq 200\kappa_p$, we can absorb the bulk term involving $\int_{\Mhat_{\tau_0, \tau_1}}$ into the left hand side, yielding the desired result for $K=0$.

Similarly, using $\partial_z^{\alpha} \Phi$ in place of $\Phi$ in \eqref{eq:current_integrated} for $\Phi \in \Jac$, we can use \eqref{eq:bulk_estimate} and sum over all $|\alpha| = K$ and $\Phi \in \Jac$ to deduce the following, for multilinear forms $\mathbf{Q}_{\bullet}$ with bounded coefficients.
\begin{multline}
    \mathcal{E}^{(K)}_{\tau_0} [J] + 2 \tilde{q} \overline{\mathcal{E}}^{(K)}_{\tau_0, \tau_1}[J] \leq (200 \kappa_p + D \epsilon^{1/2} \tau^{1/2}) \overline{\mathcal{E}}^{(K)}_{\tau_0, \tau_1} [J]  + \mathcal{E}^{(K)}_{\tau_1} [J] \\[0.5em]
    + \sum_{\substack{|\alpha| = K \\ \Phi \in \Jac}} \sum_{\substack{L \geq 2, |\alpha_{\ell}| \geq 1 \\ \sum |\alpha_{\ell}| = K}}\int_{\Mhat_{\tau_0, \tau_1}} \tau^{2 \tilde{q}} \left | \mathbf{Q}_{\bullet} (\partial_z^{\alpha_1} \vec{J}, \ldots, \partial_z^{\alpha_L} \vec{J}) \cdot \partial_z^{\alpha} \tau \partial_{\tau} \Phi \right| \frac{d \mbox{vol}}{\tau}.
\end{multline}
Now, for $\tilde{q} \geq 200 \kappa_p$ and $\epsilon$ sufficiently small, we can absorb the first term on the right hand side into the left. Furthermore, one may apply Young's inequality to the final term on the right hand side and use that, assuming the bootstrap \eqref{eq:vw_bootstrap}, the coefficients of $\mathbf{Q}_{\bullet}$ are bounded, to find that
\begin{equation} \label{eq:current_integrated_2}
    \mathcal{E}^{(K)}_{\tau_0} [J] + \tilde{q} \overline{\mathcal{E}}^{(K)}_{\tau_0, \tau_1}[J] \lesssim \sum_{\substack{L \geq 2, |\alpha_{\ell}| \geq 1 \\ \sum |\alpha_{\ell}| = K}} \int_{\Mhat_{\tau_0, \tau_1}} \tau^{2 \tilde{q}} | \partial_z^{\alpha_1} \vec{J} \cdots \partial_z^{\alpha_L} \vec{J} |^2 \, \frac{d \mbox{vol}}{\tau} + \mathcal{E}^{(K)}_{\tau_1} [J].
\end{equation}
To estimate the multilinear term on the right hand side, it will be useful to apply the following product estimate; this is a mild generalization of \cite[Proposition 3.6]{TaylorPDE3}.
\begin{lemma}  \label{lem:product}
    Let $B \subset \R^n$ be a ball, and let $f_1, f_2, \ldots, f_L$ be suitably regular functions on $B$. Let $K > 0$ and $\sum |\alpha_{\ell}| = K$. Then there exists a constant $D = D(K, L)$, independent of the radius of $B$, such that the following product estimate applies.
    \begin{equation}
        \int_B \left | \partial_{x}^{\alpha_1} f_1 \cdots \partial_{x}^{\alpha_{L}} f_L \right|^2 \, dx \leq D \sum_{\ell = 1}^L \left( \| f_{\ell} \|_{H^K(B)}^2 \cdot \prod_{m \neq \ell} \| f_m \|_{L^{\infty}(B)}^2 \right).
    \end{equation}
\end{lemma}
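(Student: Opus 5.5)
The plan is to reduce to the unit ball by scaling and prove the unit-ball estimate via an extension operator and the Gagliardo--Nirenberg inequality on $\R^n$.

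One point must be settled first: the uniformity is only possible for radii bounded below, which is the regime in which the lemma is used. Here $B = \BB_{2(\tau+4)}$, so the radius is at least $8$.

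For arbitrarily small radii the inequality fails. Take $K=2$, $L=2$, $f_1=f_2=x^1$ and $\alpha_1=\alpha_2=e_1$ on $B=\BB_r$. The left-hand side is $|B|$. The right-hand side is $\lesssim |B|\,(1+r^2)\,r^2$, which is much smaller than $|B|$ as $r\to 0$. So I will prove the estimate with $D=D(K,L,n)$ uniform over all balls of radius $r\ge 1$, and hence over every ball arising in \cref{prop:l2}.

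\emph{Step 1 (unit ball).} On $\BB_1$ I use Stein's universal extension operator $E$. It is bounded simultaneously $H^K(\BB_1)\to H^K(\R^n)$ and $L^\infty(\BB_1)\to L^\infty(\R^n)$. I multiply by a fixed cutoff so that $E f$ has compact support. For $f_\ell$ on $\BB_1$ and $j_\ell=|\alpha_\ell|$, the whole-space Gagliardo--Nirenberg inequality gives
\[
\|\partial^{\alpha_\ell} Ef_\ell\|_{L^{2K/j_\ell}} \lesssim \|Ef_\ell\|_{L^\infty}^{1-j_\ell/K}\,\|Ef_\ell\|_{H^K}^{j_\ell/K}.
\]
Factors with $j_\ell=0$ are simply bounded in $L^\infty$.

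Since $\sum_\ell j_\ell/(2K)=1/2$, Hölder's inequality bounds the $L^2(\BB_1)$ norm of the product by $\prod_\ell \|f_\ell\|_{L^\infty}^{1-j_\ell/K}\|f_\ell\|_{H^K}^{j_\ell/K}$. I then apply the weighted AM--GM inequality $\prod_\ell a_\ell^{j_\ell/K}\le\sum_\ell (j_\ell/K)\,a_\ell$ with $a_\ell=\|f_\ell\|_{H^K}\prod_{m\ne\ell}\|f_m\|_{L^\infty}$. This yields the claim on $\BB_1$ with $D=D(K,L,n)$.

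\emph{Step 2 (scaling to radius $r\ge1$).} Translate so that $B=\BB_r$ and set $g_\ell(y)=f_\ell(ry)$ on $\BB_1$. The left-hand side for $f$ equals $r^{n-2K}$ times the left-hand side for $g$. The $L^\infty$ norms are unchanged. Moreover
\[
\|g_\ell\|_{H^K(\BB_1)}^2=\sum_{j\le K} r^{2j-n}\|\nabla^j f_\ell\|_{L^2(\BB_r)}^2 \le r^{2K-n}\|f_\ell\|_{H^K(\BB_r)}^2
\]
because $r\ge 1$. Applying Step 1 to the $g_\ell$ and multiplying by $r^{n-2K}$ gives the estimate on $\BB_r$ with the same constant $D$.

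The only delicate point is Step 1's need for an extension operator controlling $H^K$ and $L^\infty$ at once. Stein's operator provides this, with a constant depending only on $K$ and $n$; the smooth-domain reflection extension would also work. Everything else is Hölder's inequality and bookkeeping of scaling exponents.
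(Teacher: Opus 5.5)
Your proof is correct and is essentially the argument the paper has in mind: the paper gives no proof, only calling the lemma a mild generalization of Taylor's Moser-type estimate (Proposition 3.6 of PDE III). That estimate is proved by Gagliardo--Nirenberg with the $L^\infty$ endpoint, H\"older with exponents $2K/|\alpha_\ell|$, and weighted AM--GM, and your unit-ball extension plus rescaling is the localisation that ``mild generalization'' leaves implicit. Your counterexample ($f_1=f_2=x^1$, $K=2$, giving left side $|B_r|$ and right side $O(r^2|B_r|)$) is also valid, so the lemma as stated fails for small radii and must be read for radii bounded below, which suffices since it is only applied on the balls $\Shat_\tau$ of radius at least $4$ in \cref{prop:l2}.
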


To apply Lemma~\ref{lem:product} to the multilinear term in \eqref{eq:current_integrated_2}, we set $f_1, \ldots, f_L$ to be each be some first order derivative of $\vec{J}$, and apply bootstrap assumption \eqref{eq:firstorder_bootstrap}, to see that
\[
    \| \partial_z^{\alpha_1} \vec{J} \cdots \partial_z^{\alpha_L} \vec{J} \|^2_{L^2(\Shat_{\tau})} \lesssim \sum_{k = 1}^{K-1} \| \partial_z \vec{J} \|_{L^{\infty}}^{2(K-k)} \| \vec{J} \|^2_{H^k(\Sigma_{\tau})} \lesssim \| \vec{J} \|_{H^{K-1}}^2 \lesssim \tau^{-2 \tilde{q}} \mathcal{E}_{\tau}^{(\leq K - 1)}[J].
\]
Thus, upon expressing the multilinear expression in \eqref{eq:current_integrated_2} as a suitable integral in time of the expression $\| \partial_z^{\alpha_1} \vec{J} \cdots \partial_z^{\alpha_L} \vec{J} \|_{L^2(\Shat_{\tau})}$ and using \cref{lem:energy_coercivity}, we deduce that
\begin{equation} \label{eq:current_integrated_3}
    \mathcal{E}^{(K)}_{\tau_0} [J] + \tilde{q} \overline{\mathcal{E}}^{(K)}_{\tau_0, \tau_1}[J] 
    \lesssim \overline{\mathcal{E}}^{(\leq K - 1)}_{\tau_0, \tau_1}[J] + \mathcal{E}^{(K)}_{\tau_1} [J],
\end{equation}
from which a straightforward induction argument shows that
\begin{equation}
    \mathcal{E}^{(\leq K)}_{\tau_0} [J] + \tilde{q} \overline{\mathcal{E}}^{(\leq K)}_{\tau_0, \tau_1}[J] 
    \lesssim 
    \mathcal{E}_{\tau_1}^{(\leq K)}[J].
\end{equation}
One then derives \eqref{eq:l2} simply upon using \cref{lem:energy_coercivity}.
\end{proof}

\subsection{Interpolation and transport estimates} \label{for:sub:transport}

To improve the bootstrap assumptions we use the first-order equations in \cref{prop:transport}, which we view as ``transport equations'' with respect to $\tau \partial_{\tau}$. To overcome the ``derivative loss'' in the right hand side of these equations, we will apply an interpolation estimate, using the high order $H^s$ control derived in \cref{for:sub:energy}. 

Prior to using the transport equations, one first needs to translate the $\{ t = 1 \}$ initial data assumption in \cref{in:thm:main2}, namely \eqref{in:eq:forward_ass}, to an assumption on some $\Shat_{\tau_1}$ in the framework of \cref{sub:forward_setup}. This will be done via a \emph{Cauchy stability} argument locally in time, together with a Sobolev inequality. We have the following:

\begin{lemma}\label{lem:Cauchy}
    Fix $s_0> \frac{n}{2}+1$.
    Let $\phi = \phi(t, x)$ be a solution of \eqref{in:eq:main} satisfying the initial data assumption \eqref{in:eq:forward_ass}. Then there exists $\tau_1$ with $|1 - \tau_1| \lesssim \epsilon$ such that one has the following estimates for $W, V^i$ on the hypersurface $\Shat_{\tau_1}$, with respect to the $(\tau, z^i)$ coordinate system as in \cref{sub:forward_setup}.
    \begin{equation} \label{eq:stau1_data}
        \| W - 1 \|_{H^{s}(\Shat_{\tau_1})} + \| V^i \|_{H^s(\Shat_{\tau_1})} + \| U^i \|_{C^2(\Shat_{\tau_1})} + \| \mathring{\Omega}^2 \|_{C^2(\Shat_{\tau_1})} \lesssim \epsilon.
    \end{equation}
    Furthermore, for $\phi^{(1)}$, $\phi^{(2)}$ satisfying \eqref{in:eq:main} with initial data $(\phi_0^{(m)}, \phi_1^{(m)})$ on $\{ t = 1 \}$ as described in \cref{in:thm:main2}, one moreover has the corresponding stability estimates for the respective geometric quantities $W_{(m)}, V^i_{(m)}, U^i_{(m)}, \mathring{\Omega}^2_{(m)}$.
    \begin{multline} \label{for:eq:cauchy_stability}
        \| W_{(1)} - W_{(2)} \|_{H^{s-1}(\Shat_{\tau_1})} + \| V_{(1)}^{i} - V_{(2)}^{i} \|_{H^{s-1}(\Shat_{\tau_1})} + \| U_{(1)}^{i} - U_{(2)}^{i} \|_{C^2(\Shat_{\tau_1})} + \| \mathring{\Omega}^2_{(1)} - \mathring{\Omega}^2_{(2)} \|_{C^2(\Shat_{\tau_1})} \\[0.5em]
        \lesssim \norm{\phi_0^{(1)}-\phi_0^{(2)},\phi_1^{(1)}-\phi_1^{(2)}}_{H^{s+2}\times H^{s+1}}.
    \end{multline}
\end{lemma}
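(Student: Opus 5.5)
The plan is to combine local well-posedness and Cauchy stability for \eqref{in:eq:main} in $(t,x)$ coordinates with an implicit function argument that defines the hypersurface $\Shat_{\tau_1}$.

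\emph{Step 1: comparison with the model on a slab.} The model solution $\phi_{\mathrm{model}}=c_pt^{-\alpha_p}$ is smooth on the slab $\{1/2\le t\le 3/2\}\times\BB_6$, and $\phi_{\mathrm{model}}\approx c_p$ there. Since $s_0>\frac n2+1$, the standard $H^{s+2}\times H^{s+1}$ local theory applies to the semilinear equation. Hence the solution $\phi$ with data satisfying \eqref{in:eq:forward_ass} exists on a slab $\{|t-1|\le\delta\}$, restricted to the domain of dependence of $\BB_6$. Moreover
\[
\sup_{|t-1|\le\delta}\norm{\phi-\phi_{\mathrm{model}}}_{H^{s+2}}+\norm{\partial_t(\phi-\phi_{\mathrm{model}})}_{H^{s+1}}\lesssim\epsilon .
\]
The same energy estimate, applied to the difference of two solutions $\phi^{(1)}-\phi^{(2)}$, gives the Lipschitz bound by $\norm{\phi_0^{(1)}-\phi_0^{(2)},\phi_1^{(1)}-\phi_1^{(2)}}_{H^{s+2}\times H^{s+1}}$. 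We need positivity of $\phi$, which follows because $\phi\ge c_p/2$ by Sobolev embedding. Using it, $\tau=(\phi/c_p)^{-1/\alpha_p}$ is a smooth function of $\phi$. Composition estimates then give $\tau-t\in H^{s+2}$ on each time slice, together with the corresponding bounds on $\partial_t\tau$. Taking one more derivative shows
\[
\norm{W-1}_{H^{s+1}}+\norm{V^i}_{H^{s+1}}\lesssim\epsilon
\]
on every $\{t=\mathrm{const}\}$ slice. By Sobolev embedding the same quantities are $O(\epsilon)$ in $C^2$.

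\emph{Step 2: locating $\Shat_{\tau_1}$.} Take $\tau_1=\tau(1,0)$, so that $|1-\tau_1|\lesssim\epsilon$. Since $\partial_t\tau=W\in(1/2,2)$, the implicit function theorem writes $\{\tau=\tau_1\}$ as a graph $t=h(x)$. This graph lies inside the slab, with $\norm{h-\tau_1}_{H^{s+2}}\lesssim\epsilon$. The relation $\partial_ih=-V_i/W=-U^i$ gives this bound for $h$ directly from Step 1.

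\emph{Step 3: restriction to the graph.} Restricting $W-1$ and $V^i$ to the graph amounts to composing with $x\mapsto(h(x),x)$. This costs no derivatives, since $h$ is more regular than $W$ and $V^i$ and $W,V^i$ are controlled in $H^{s+1}$ uniformly on the slab. Hence we obtain the $H^s$ bounds in \eqref{eq:stau1_data}. The $C^2$ bounds on $U^i=V^i/W$ and $\mathring\Omega^2=W^2-|V|^2-1$ then follow from Sobolev embedding and the algebraic formulas \eqref{eq:vwomega}.

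\emph{Step 4: the difference estimate \eqref{for:eq:cauchy_stability}, the main obstacle.} The difficulty is that the two surfaces $\{\tau_{(1)}=\tau_1^{(1)}\}$ and $\{\tau_{(2)}=\tau_1^{(2)}\}$ differ. One needs a common parametrization: either fix a common value $\tau_1$, or compare after the $z$-identification, where $z=x$ in both cases. The differences then split into two parts:
\begin{itemize}
\item the difference of the fields on a fixed slice, bounded by Step 1;
\item the difference $h^{(1)}-h^{(2)}$ of the graph functions.
\end{itemize}
The second part is estimated by the implicit function theorem with Lipschitz dependence. The composition with different graphs costs one derivative, via the mean value theorem, $W_{(1)}(h^{(1)},x)-W_{(1)}(h^{(2)},x)=\int\partial_tW_{(1)}\cdot(h^{(1)}-h^{(2)})$. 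This is why only $H^{s-1}$ appears on the left of \eqref{for:eq:cauchy_stability}. The data norm $H^{s+2}\times H^{s+1}$ supplies enough room for this loss and for the Sobolev embeddings. I expect the bookkeeping of this derivative loss, together with the domain-of-dependence issues near the boundary $|z|=2(\tau_1+2)$, to be the only delicate point; everything else is standard.
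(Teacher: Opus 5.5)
Your argument is correct, but it takes a different route from the paper's.

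\textbf{The paper's route.} The paper never returns to the semilinear equation for $\phi$. It converts the data bound on $\{t=1\}$ into a bound on the energy flux of $\partial_z^\alpha(W-1)$ and $\partial_z^\alpha V^i$ through $\mathscr{H}_1=\{t(\tau,z)=1\}$, using the current $(\eta^{-1})^{\mu\nu}\mathbb{T}_{\nu\sigma}T^\sigma-\frac12T^\mu\Phi^2$. It then solves the quasilinear system \eqref{eq:nlw_Jacobi} backwards to $\Shat_{\tau_1}$, with $\tau_1=\min_{\mathscr{H}_1}\tau$, via the divergence theorem, and finishes with Sobolev embedding. The derivative loss in \eqref{for:eq:cauchy_stability} is attributed to the standard loss for differences of quasilinear equations.

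\textbf{Your route.} You use Cauchy stability for the semilinear equation in the fixed $(t,x)$ coordinates, which is Lipschitz in $H^{s+2}\times H^{s+1}$ with no loss. You then carry out the change to $(\tau,z)$ by hand via the implicit function theorem.

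\textbf{Comparison.} Your route needs only the standard semilinear theory, not well-posedness of \eqref{eq:nlw_Jacobi} in solution-dependent coordinates. It also makes explicit that the lost derivative comes from comparing two different level graphs $h^{(1)}\neq h^{(2)}$, which is exactly what the quasilinearity of \eqref{eq:nlw_Jacobi} encodes. The paper's route gets top-order control on the tilted surface $\Shat_{\tau_1}$ for free, from the energy flux through a spacelike hypersurface. It also directly outputs the energy norms used in the later bootstrap.

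\textbf{Two points to tighten.}

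First, restriction to a tilted graph $t=h(x)$ is a trace, and it is not free for general functions. For example, $|F(h(x),x)|\le|F(1,x)|+\int|\partial_tF|\,dt$ costs one $\partial_t$. So from slice bounds alone, $h$ is only $H^{s+1}$, not $H^{s+2}$ as claimed. This is harmless. All space--time derivatives of $W-1$ and $V^i$ up to order $s+1$ are controlled, using the equation to trade $\partial_t^2\phi$ for $\Delta\phi+|\phi|^{p-1}\phi$. You only need $H^s$, or $H^{s-1}$ for the mean-value term in Step 4.

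Second, instead of $\tau_1=\tau(1,0)$, take $\tau_1=1-C\epsilon$, where $C\ge 2C'$ and $C'$ is the constant in $|\tau(1,\cdot)-1|\le C'\epsilon$.
\begin{itemize}
\item This gives one common level for both solutions in Step 4.
\item It also forces $h<1$ with $1-h\le (C+C')\epsilon(1+O(\epsilon))<2C\epsilon$.
\item Hence $\Shat_{\tau_1}$, whose radius is $2(\tau_1+2)=6-2C\epsilon$, lies inside the past domain of dependence of $\BB_6$.
\end{itemize}
This disposes of the edge issue you flagged. The paper's choice $\tau_1=\min_{\mathscr H_1}\tau$ is equally loose there when $\tau>1$ on $\{t=1\}$.
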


\begin{proof}
    Let us denote by $\mathscr{H}_1$ the hypersurface $\{ t(\tau, z^i) = 1, z \in \BB_6\}$ in the $(\tau, z^i)$ coordinate system. Then, for $T = \partial_t$ the standard time translation in Minkowski, and a current defined as
    \[
        \mathbb{J}^{\mu}[\Phi] = (\eta^{-1})^{\mu\nu} \T_{\nu \sigma}[\Phi] T^{\sigma} - \frac{1}{2} T^{\mu} \Phi^2,
    \]
    it follows from the transformation of $\phi = c_p \tau^{-\alpha_p}$ that the initial data assumption \eqref{in:eq:forward_ass} exactly translates to the following geometric estimate.
    \[
        \sum_{\Phi \in \{V^i, W - 1\}} \sum_{|\alpha| \leq s} \int_{\mathscr{H}_1} \J_{\nu}[\partial_z^{\alpha} \Phi] N^{\nu} d\mbox{vol} \lesssim \epsilon^2.
    \]

    Now, let us consider the region $\Mhat \coloneqq \{ \tau > 0, |z| < 2 (\tau + 2) \}$. Note that, for $\epsilon$ sufficiently small, $\mathscr{H}_1$ is a spacelike hypersurface which pierces the outer boundary of $\Mhat$, meaning that one is able to solve the system \eqref{eq:nlw_Jacobi} locally, to the past of $\mathscr{H}_1 \cap \Mhat$ in $\Mhat$. Letting $\tau_1 = \min_{\mathscr{H}_1} \tau$ and using the divergence theorem with respect to $\J^{\mu}[\Phi]$, it is straightforward to verify that one can, in particular, solve \eqref{eq:nlw_Jacobi} to the hypersurface $\Shat_{\tau_1} = \Mhat \cap \{ \tau = \tau_1 \}$, and propagate the estimate
    \[
        \sum_{\Phi \in \{V^i, W - 1\}} \sum_{|\alpha| \leq s} \int_{\Shat_{\tau_1}} \J_{\nu}[\partial_z^{\alpha} \Phi] N^{\nu} d\mbox{vol} \lesssim \epsilon^2.
    \]
    Using an extra step of Sobolev embedding, since $s \geq \frac{n}{2} + 2$, this can be shown to imply \eqref{eq:stau1_data}. The stability estimate \eqref{for:eq:cauchy_stability} is derived by taking differences between the equations; note that one extra derivative is lost as is standard for quasilinear equations.
\end{proof}

In the next lemma, we shall use the results of \cref{for:sub:energy} to propagate low order $L^{\infty}$ estimates to the entire region $\tau_0 \leq \tau \leq \tau_1$, with the goal of eventually improving our bootstrap assumptions.

\begin{lemma} \label{lem:improvement} 
    Let $K\geq200\kappa_p+\frac{n}{2}+2$.
    Consider $W, V^i$ solving the first order system of \cref{prop:transport}. Assume that, at $\tau = \tau_1$, the initial data bound \eqref{eq:stau1_data} holds. 
    Furthermore, suppose that the energy estimate \eqref{eq:l2} holds for $s=K$, along with the bootstrap assumption \eqref{eq:vw_bootstrap}. Then we have, for all $0 < \tau_0 \leq \tau_1$, the following improved $L^{\infty}$ bound:
    \begin{equation} \label{eq:linfty}
        \sum_i |U^i|^2 + |\mathring{\Omega}^2|^2 \leq \epsilon^{2/3}.
    \end{equation}

    In fact, for any multiindex $\alpha$ with $|\alpha| \leq 2$, the same conclusion holds: 
    \begin{equation} \label{eq:linfty_comm}
        \sum_i |\partial_z^{\alpha} U^i|^2 + |\partial_z^{\alpha} \mathring{\Omega}^2 |^2 \leq \epsilon^{2/3}.
    \end{equation}
\end{lemma}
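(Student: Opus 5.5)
We integrate the transport equations of \cref{prop:transport} backwards in $\tau$ from $\tau_1$, treating the $\tau\partial_z$ terms as forcing. The derivative loss in that forcing is handled by interpolating between the weak low-order bootstrap \eqref{eq:vw_bootstrap} and the high-order energy bound \eqref{eq:l2} with $s = K$.

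\emph{Step 1: a pointwise bound on the forcing.} The right-hand sides of \eqref{eq:transport_U} and \eqref{eq:transport_omega} are $W^{-2}\tau\partial_{z^i}W$ and $2\tau\partial_{z^i}V^i$. Differentiating by $\partial_z^\alpha$ with $|\alpha|\le 2$ gives terms of the form $\tau\partial_z^{\beta}J$ with $|\beta|\le 3$, multiplied by smooth functions of $W$ and lower derivatives. By \eqref{eq:l2} and \cref{lem:energy_coercivity}, together with the data \eqref{eq:stau1_data}, we have
\[
\|\Vtop J\|_{H^{K,-\tilde q}(\Shat_\tau)} \lesssim \epsilon ,
\]
which means $\|J - (1,0)\|_{H^K(\Shat_\tau)} \lesssim \epsilon\,\tau^{-\tilde q}$ with $\tilde q = 200\kappa_p$. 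On the other hand, $\|J - (1,0)\|_{L^2}$ is bounded by a constant, and $\lesssim \epsilon$ after another use of \eqref{eq:l2} at order zero. Gagliardo--Nirenberg interpolation on the balls $\Shat_\tau$ (uniform in the radius) followed by Sobolev embedding \eqref{eq:Sob2} then gives
\[
\|\partial_z^{\le 3} J\|_{L^\infty(\Shat_\tau)} \lesssim \epsilon\,\tau^{-\tilde q\,\theta}, \qquad \theta = \frac{3+\frac n2+}{K}.
\]
Since $K \ge 200\kappa_p + \frac n2 + 2$, we get $\tilde q\,\theta < 1$ provided the interpolation bookkeeping is done carefully. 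This is the main obstacle: one must check that $\theta\tilde q$ stays strictly below $1$, say equal to $1-\delta$. If the constants are too tight, I would apply the energy estimate at slightly higher order or use a finer interpolation of the $\tau$-weighted norms. Granting this, the forcing is $O(\epsilon\,\tau^{\delta})$ pointwise.

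\emph{Step 2: integrating $U$.} From \eqref{eq:transport_U} with the $\partial_z^\alpha$ derivatives applied, and using \eqref{eq:vw_bootstrap} for the coefficients, we obtain $|\tau\partial_\tau \partial_z^\alpha U| \lesssim \epsilon\,\tau^\delta$ plus terms with fewer derivatives on $U$, handled inductively. Integrating in $d\tau/\tau$ from $\tau_1$ gives
\[
|\partial_z^\alpha U(\tau)| \le |\partial_z^\alpha U(\tau_1)| + C\epsilon\,\delta^{-1} \lesssim \epsilon \le \tfrac12\,\epsilon^{1/3}
\]
for $\epsilon$ small.

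\emph{Step 3: integrating $\mathring\Omega^2$.} Write \eqref{eq:transport_omega} as $\tau\partial_\tau(\tau^{-2\kappa_p}\mathring\Omega^2) = 2\tau^{1-2\kappa_p}\partial_{z^i}V^i$. Integrating backwards in this form is bad, because $\tau^{-2\kappa_p}$ grows. Instead I would integrate the original equation backwards directly, viewing it as $\tau\partial_\tau X = 2\kappa_p X + g$. The linear part is a backwards-stable direction: as $\tau$ decreases, $X$ decays like $\tau^{2\kappa_p}$. Duhamel's formula gives
\[
X(\tau) = (\tau/\tau_1)^{2\kappa_p} X(\tau_1) - \int_\tau^{\tau_1} (\tau/s)^{2\kappa_p}\, g(s)\,\frac{ds}{s},
\]
with $|g| \lesssim \epsilon\, s^{\delta}$. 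Both terms are $\lesssim \epsilon$, and the same holds after applying $\partial_z^\alpha$ with $|\alpha|\le 2$, with the coefficient commutators again controlled by Step 1. Squaring and summing over $i$ yields \eqref{eq:linfty} and \eqref{eq:linfty_comm}, since $C\epsilon^2 \le \epsilon^{2/3}$ for $\epsilon$ small.
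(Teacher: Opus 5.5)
Your Steps 2 and 3 are sound. Your Duhamel formula for $\mathring{\Omega}^2$ is the same thing as the paper's use of $\mathbf{B}=\mathrm{diag}(0,2\kappa_p)\geq 0$ in \eqref{eq:transport_matrix}. However, Step 1 fails on two counts, and these are exactly where the lemma's content lies.

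\emph{The exponent.} Interpolating $\partial_z^{\le 3}J$ in $L^\infty$ between an $L^2$ norm and $H^K$ costs $\theta=(3+\frac n2)/K$. With $\tilde q=200\kappa_p$ and $K$ near $200\kappa_p+\frac n2+2$, this gives $\tilde q\theta\approx 3+\frac n2$, far above $1$. The forcing $\tau^{1-\tilde q\theta}$ is then not integrable against $d\tau/\tau$. This is not a matter of tight constants, and since $K$ is fixed by hypothesis you cannot simply pass to higher order. Even for \eqref{eq:linfty} alone, which needs one derivative, your scheme gives $\tilde q\theta\approx 1+\frac n2>1$.

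The hypothesis on $K$ is tailored to the $L^\infty$-based Gagliardo--Nirenberg inequality \eqref{eq:Gagliardo}: $\|\partial_z\vec X\|_{L^\infty}\lesssim\|\vec X\|_{L^\infty}^{1-\sigma}\|\vec X\|_{\dot H^K}^{\sigma}$ with $\sigma=1/(K-\frac n2)$, so that $\tilde q\sigma<1$. Its low norm is $L^\infty$, not $L^2$, which avoids the $\frac n2$ loss. For $|\alpha|\le 2$ you must induct on $|\alpha|$, using the already established $L^\infty$ bound on $\partial_z^{\alpha}\vec X$ as the low norm, so each step again costs a single derivative. This requires $K-|\alpha|-\frac n2>\tilde q$; the slack is supplied by the extra $A$ derivatives in \cref{prop:existence}.

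\emph{The smallness.} Your assertion that $\|J-(1,0)\|_{L^2(\Shat_\tau)}\lesssim\epsilon$ follows from \eqref{eq:l2} at order zero is false. The zeroth-order estimate carries the same weight and yields only $\epsilon\,\tau^{-\tilde q}$. The only $\tau$-uniform low-order information is the $O(1)$ bootstrap \eqref{eq:vw_bootstrap}. Interpolating against it gives forcing of size $\epsilon^{\theta}\tau^{1-\tilde q\theta}$, hence at best $|\vec X|\lesssim\epsilon^{\theta}$. Since $\theta\ll 1/3$, this is far weaker than the required $\epsilon^{1/3}$.

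The missing idea is to take the low norm to be $\|\vec X\|_{L^\infty}$ itself, the very quantity being estimated. To do this, rewrite the forcing as $\mathbf{M}^i\tau\partial_{z^i}\vec X$ via \eqref{eq:vwomega}, as in \eqref{eq:transport_matrix}. Combined with $\mathbf{B}\geq 0$, this gives the nonlinear differential inequality $\partial_\tau\bigl(\|\vec X\|_{L^\infty}^{\sigma}\bigr)\gtrsim-\sigma\epsilon^{\sigma}\tau^{-\tilde q\sigma}$. This integrates backwards from $\tau_1$ to $\|\vec X\|_{L^\infty}\lesssim\epsilon$, precisely because $\tilde q\sigma<1$. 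Alternatively, a continuity argument assuming $\|\vec X\|_{L^\infty}\le 2\epsilon^{1/3}$ closes the same way, since it produces a gain of $\epsilon^{2\sigma/3}$.
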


\begin{proof}
    Let $\tilde{q} = 200 \kappa_p$. By the definition of $\Vtop$, and using \eqref{eq:transport_U} and \eqref{eq:transport_omega} to control $\tau \partial_{\tau}$ derivatives via appropriate product estimates, from the initial data assumption \eqref{eq:stau1_data} one can moreover find that, for $K = s \geq 200 \kappa_p + \frac{n}{2} + 2$, that we have
\begin{equation} \label{eq:data_example}
    \| \Vtop J \|_{H^{K, - \tilde{q}}(\Shat_{\tau_1})} \lesssim \epsilon,
\end{equation}
    where we note that the $\tilde{q}$ is harmless since $|\tau - 1| \lesssim \epsilon$ is small thus powers $\tau^{- \tilde{q}}_1$ are negligible. 

    We let $\vec{X}$ represent the vector $(U^i, \mathring{\Omega}^2)^{\intercal}$. Upon expressing the right hand sides of \eqref{eq:transport_U} and \eqref{eq:transport_omega} in terms of $U^i$ and $\mathring{\Omega}^2$, we find that
    \begin{equation} \label{eq:transport_matrix}
        \tau \partial_{\tau} \vec{X} = 
        \underbrace{\begin{bmatrix} 0 & 0 \\ 0 & 2 \kappa_p \end{bmatrix}}_{\mathbf{B}} \vec{X}
        + \mathbf{M}^i \tau \partial_{z^i} \vec{X},
    \end{equation}
    where the entries of the matrices $\mathbf{M}^i$ are explicit functions of $W, V^i$ and thus satisfy $\| \mathbf{M}^i \vec{X} \|_{L^{\infty}} \lesssim \| \vec{X} \|_{L^{\infty}}$, in light of the bootstrap assumption \eqref{eq:vw_bootstrap}.

    A Gagliardo--Nirenberg--Sobolev interpolation formula \cite[Proposition 3.5]{TaylorPDE3} adapted to balls $\Shat_{\tau} \cong \BB_r$ yields that, the following estimate holds.
    \begin{equation}\label{eq:Gagliardo}
        \| \partial_{z} \vec{X} \|_{L^{\infty}(\Shat_{\tau})} \lesssim \| \vec{X} \|_{L^{\infty}}^{1 - \sigma} \| \vec{X} \|_{\dot{H}^K(\Shat_{\tau})}^{\sigma}, \quad \text{ where } \sigma = \frac{1}{K - \frac{n}{2}}.
    \end{equation}
    By expressing $\partial_z^{\alpha} \vec{X}$ in terms of $\partial_z^{\alpha} \vec{J}$ and inserting \eqref{eq:l2}, to deduce that $\| \vec{J} \|_{H^K(\Shat_{\tau})} \leq \tau^{- \tilde{q}} \| \vec{J} \|_{H^{K, - \tilde{q}}(\Shat_{\tau_0})} \lesssim \tau^{- \tilde{q}} \epsilon$, we find that
    \[
        \| \mathbf{M}^i \tau \partial_{z^i} \vec{X} \|_{L^{\infty}(\Shat_{\tau})} \lesssim \| \vec{X} \|_{L^{\infty}}^{1 - \sigma} \cdot \epsilon^{\sigma} \tau^{1 - \tilde{q} \sigma}.
    \]
    Inserting this into \eqref{eq:transport_matrix}, and using that the matrix $\mathbf{B}$ is positive definite with respect to the standard $\ell^2$ norm on $\R^{n+1}$, we thereby deduce that
    \[
        \tau \partial_{\tau} \| \vec{X} \|_{L^{\infty}}^2 \gtrsim \| \vec{X} \|_{L^{\infty}}^{2 - \sigma} \cdot \epsilon^{\sigma} \tau^{1 - \tilde{q} \sigma},
    \]
    which simplifies to
    \[
        \partial_{\tau} ( \| \vec{X} \|_{L^{\infty}}^{\sigma} ) \gtrsim \sigma \epsilon^{\sigma} \tau^{- \tilde{q} \sigma}.
    \]

    Thus, since $K$ is chosen large enough so that $0 < \tilde{q} \sigma < 1$ for our choice $\tilde{q} = 200 \kappa_p$, we may integrate backwards and deduce that 
    \[ 
        \| \vec{X} \|_{L^{\infty}(\Shat_{\tau})}^{\sigma} \lesssim (1 + \sigma) \epsilon^{\sigma} \quad \forall \tau \in (0, 1),
    \]
    from which it is straightforward to deduce \eqref{eq:linfty}. 

    To derive the higher regularity estimate \eqref{eq:linfty_comm}, we first commute the equations \eqref{eq:transport_U} and \eqref{eq:transport_omega} with $\partial_z^{\alpha}$ to find that, 
    for an arbitrary multiindex $\alpha$, one has
    \begin{gather}
        \label{eq:transport_U_comm}
        \tau \partial_{\tau} \partial_z^{\alpha} U^i = W^{-2} \tau \partial_{z^i} \partial_z^{\alpha} W + \sum_{\substack{L \geq 2, |\alpha_{\ell}|\geq 1 \\ \sum |\alpha_{\ell}| = |\alpha| + 1}} \tau \boldsymbol{Q}_{U^i, \bullet} (\partial_z^{\alpha_1} W, \ldots, \partial_z^{\alpha_L} W), \\
        \label{eq:transport_omega_comm}
        \tau \partial_{\tau} \partial_z^{\alpha} \mathring{\Omega}^2 = 2 \kappa_p \partial_z^{\alpha} \mathring{\Omega}^2 + 2 \tau \partial_{z^i} \partial_z^{\alpha} V^i.
    \end{gather}
    The estimate \eqref{eq:linfty_comm} then follows by an analogous argument, together with an induction on $\abs{\alpha}$ to handle the multilinear term in \eqref{eq:transport_U_comm}.
\end{proof}

\begin{prop} \label{prop:existence}
    Suppose that \eqref{eq:stau1_data} holds for $\epsilon$ sufficiently small and for $s=K \geq 200 \kappa_p + \frac{n}{2} + A + 2$, where $A \geq 2$. Then the maximal domain of existence of $W$ and $V^i$ solving \eqref{eq:nlw_W} and \eqref{eq:nlw_V} includes the entire region $\Mhat_{\tau_1}$, and we have, for $\Phi \in \{ W - 1, V^i \}$, the following $L^2$ and $L^{\infty}$ bounds for $\tilde{q} = 200 \kappa_p$ and all $\tau \in (0, \tau_1]$.
    \begin{equation} \label{eq:existence_bounds}
        \| \Vtop \Phi \|_{H^{K, - \tilde{q}}(\Shat_{\tau})} \lesssim_K \epsilon, \qquad \| \Phi \|_{C^2(\Shat_{\tau})} \lesssim_K \epsilon.
    \end{equation}

    Furthermore, identifying the region $\{ \tau = 0, |z| < 4 \} \cong \BB_4 \subset \partial \Mhat$, there exist $C^{A}(\BB_4)$ functions $\mathfrak{W}(z)$ and $\mathfrak{V}^i(z)$ with $\| \mathfrak{W} - 1 \|_{C^A} + \| \mathfrak{V}^i \|_{C^A} \lesssim \epsilon$ such that
    \begin{equation} \label{eq:smooth_convergence}
        W(\tau, z) \to \mathfrak{W}(z), \quad V^i(\tau, z) \to \mathfrak{V}^i(z) \quad \text{ in } C^A(\BB_4) \text{ as } \tau \to 0.
    \end{equation}
    Moreover, one necessarily has $\mathfrak{W}^2(z) - \delta_{ij} \mathfrak{V}^i(z) \mathfrak{V}^j(z) \equiv 1$. 
\end{prop}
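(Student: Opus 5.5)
The plan is a standard continuity (bootstrap) argument in $\tau$, run backwards from $\Shat_{\tau_1}$, followed by an interpolation argument for convergence as $\tau\to0$.

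\emph{Setup of the bootstrap.} Let $\tau_*\in[0,\tau_1)$ be the infimum of those $\tau_0$ for which the solution $(W,V^i)$ of \eqref{eq:nlw_W}--\eqref{eq:nlw_V} exists on $\Mhat_{\tau_0,\tau_1}$ and satisfies \eqref{eq:vw_bootstrap}, \eqref{eq:firstorder_bootstrap}, \eqref{eq:convergent_bootstrap}. By \eqref{eq:stau1_data}, Cauchy stability and local well-posedness, this set of $\tau_0$ is nonempty. It is also open, by continuity of the relevant norms in $\tau$.

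\emph{Improving the bootstrap on $\Mhat_{\tau_0,\tau_1}$.} Proposition~\ref{prop:l2} with $\tilde q=200\kappa_p$ and \eqref{eq:data_example} give $\|\Vtop J\|_{H^{K,-\tilde q}(\Shat_\tau)}\lesssim\epsilon$, which is the first bound in \eqref{eq:existence_bounds}. Lemma~\ref{lem:improvement} then gives $|\partial_z^{\le2}U|+|\partial_z^{\le2}\mathring\Omega^2|\lesssim\epsilon^{1/3}$. From \eqref{eq:vwomega}, $W,V^i$ are smooth functions of $(U,\mathring\Omega^2)$, so $\|W-1\|_{C^2}+\|V\|_{C^2}\lesssim\epsilon^{1/3}$. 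This improves \eqref{eq:vw_bootstrap}, and also improves the $\partial_z$ part of \eqref{eq:firstorder_bootstrap} and the $\Omega^2-1$ entry of $\vec J_0$.

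\emph{The $\tau$-weighted bounds, which I expect to be the main obstacle.} The remaining issue is the gain of $\tau^{1/2}$ in \eqref{eq:convergent_bootstrap} for the entries $\tau\partial_\tau J$, $\tau\partial_z J$, $\tau^2\partial^2 J$ (and the $\tau\partial_\tau$ components of \eqref{eq:firstorder_bootstrap}). Here I would again interpolate with \eqref{eq:Gagliardo}, now between the $C^2$ bound and the bound $\|J\|_{H^K}\lesssim\epsilon\tau^{-\tilde q}$:
\[
\|\partial_z^{j}J\|_{L^\infty}\lesssim\epsilon^{1/3}\tau^{-\tilde q\sigma_j},
\]
with $\sigma_j$ small once $K\ge 200\kappa_p+\frac n2+A+2$. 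This makes $\tau\partial_zJ$ and $\tau^2\partial_z^2J$ of size $\epsilon^{1/3}\tau^{1-\tilde q\sigma}\le\epsilon^{1/2}\tau^{1/2}$ after shrinking $\epsilon$. For the $\tau\partial_\tau$ components, use the transport equations \eqref{eq:transport_U}, \eqref{eq:transport_omega}. They give $\tau\partial_\tau U=O(\tau\partial_zW)$, and $\tau\partial_\tau\mathring\Omega^2=2\kappa_p\mathring\Omega^2+O(\tau\partial_z V)$. For the latter I must also show $|\mathring\Omega^2|\lesssim\epsilon^{1/2}\tau^{1/2}$. Integrating $\tau\partial_\tau(\tau^{-2\kappa_p}\mathring\Omega^2)=O(\tau^{1-2\kappa_p-\tilde q\sigma})$ backwards, and using the smallness of $\mathring\Omega^2$ at $\tau_1$, gives
\[
\mathring\Omega^2=\tau^{2\kappa_p}\,c(z)+O(\epsilon^{1/3}\tau^{1-\tilde q\sigma}),
\]
which is acceptable since $2\kappa_p>1$. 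Second $\tau$-derivatives $\tau^2\partial_\tau^2J$ are expressed through the wave equations \eqref{eq:nlw_Jacobi} in terms of the above quantities. Together these improve \eqref{eq:convergent_bootstrap} with constant $\tfrac12$, so $\tau_*=0$ by continuity, and local well-posedness excludes breakdown at positive $\tau$.

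\emph{Limits at $\tau=0$.} The transport bounds show that $\partial_\tau$ of $\partial_z^{\le A}U$ is $O(\tau^{-\tilde q\sigma_A})$, which is integrable near $0$. The same holds for $\mathring\Omega^2$, which in addition tends to $0$. Hence $U\to\mathfrak U$ and $\mathring\Omega^2\to0$ in $C^A$, and via \eqref{eq:vwomega} this yields $\mathfrak W,\mathfrak V^i$ with $\mathfrak W^2-|\mathfrak V|^2=1$ and the stated $C^A$ smallness.
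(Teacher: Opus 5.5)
Your architecture is the paper's: a bootstrap on \eqref{eq:bootstrap_all}, with \cref{prop:l2} and \cref{lem:improvement} giving the $H^{K,-\tilde q}$ and $C^2$ bounds. The transport equations, including the integration of $\partial_\tau(\tau^{-2\kappa_p}\mathring{\Omega}^2)$, give the $\tau$-gain in $\vec J_0$, and integrability of $\partial_\tau\partial_z^\alpha U^i$ gives the limits. However, two steps fail as written.

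\emph{The limit step for $A\geq 3$.} You interpolate once, between the fixed $C^2$ bound and $\|J\|_{H^K}\lesssim\epsilon\tau^{-\tilde q}$. Since $\partial_\tau\partial_z^\alpha U^i$ contains $\partial_z^{|\alpha|+1}W$, the case $|\alpha|=A$ needs $\partial_z^{A+1}W$ in $L^\infty$. That one-shot interpolation costs $\tau^{-\tilde q (A-1)/(K-2-\frac n2)}$. At the stated threshold $K-\frac n2=200\kappa_p+A+2$, the exponent is $200\kappa_p(A-1)/(200\kappa_p+A)$. This exceeds $1$ for every $A\geq 3$; for example, $A=3$ gives $400\kappa_p/(200\kappa_p+3)$. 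So the rate is not integrable and you obtain no $C^A$ limit.

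The missing idea is the induction the paper uses (``induct on $U^i,\mathring{\Omega}^2\in C^{|\alpha|-1}$''):
\begin{enumerate}
\item Integrate the transport equations at order $j$ to obtain \emph{uniform} $C^j$ bounds on $U^i$ and $\mathring{\Omega}^2$, hence on $W,V^i$ via \eqref{eq:vwomega}.
\item Only then interpolate $\partial_z^{j+1}W$ between $C^j$ and $H^K$.
\end{enumerate}
Each step costs only $\tau^{-\tilde q/(K-j-\frac n2)}$. This is integrable for all $j\leq A$ exactly because $K-\frac n2>200\kappa_p+A$.

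\emph{The powers of $\epsilon$.} Quoting \cref{lem:improvement} as stated gives only $\epsilon^{1/3}$. The inequality $\epsilon^{1/3}\tau^{1/2}\leq\epsilon^{1/2}\tau^{1/2}$ is false for every $\epsilon<1$, so ``after shrinking $\epsilon$'' does not close \eqref{eq:convergent_bootstrap}. The same loss means you also miss the claimed bounds $\|\Phi\|_{C^2(\Shat_\tau)}\lesssim\epsilon$ and $\|\mathfrak{W}-1\|_{C^A}+\|\mathfrak{V}^i\|_{C^A}\lesssim\epsilon$.

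You must instead use the linear bound $\|\vec X\|_{C^2(\Shat_\tau)}\lesssim\epsilon$ that the proof of \cref{lem:improvement} actually produces, by integrating $\partial_\tau\|\vec X\|^\sigma_{L^\infty}$ backwards from data of size $\epsilon$. With it, every entry of $\vec J_0$ is $O(\epsilon\tau)$ directly from the $C^2$ bound and the differentiated transport and wave equations, as in the paper. Your additional interpolation inside the bootstrap is then unnecessary.
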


\begin{proof}
    The proof follows by a standard bootstrap argument. Supposing that the bootstrap assumptions \eqref{eq:vw_bootstrap}, \eqref{eq:firstorder_bootstrap} and \eqref{eq:convergent_bootstrap} hold on some bootstrap region $\Mhat_{boot} = \Mhat_{\tau_{boot}, 1}$, i.e.~that these assumptions hold for $\tau \in [\tau_{boot}, 1]$. Then the conclusions of \cref{for:sub:energy} and Section~\ref{for:sub:transport} hold for $\tau \in [\tau_{boot}, 1]$. 

    In order to extend the region of existence to $\Mhat = \cup_{\tau_0 > 0} \Mhat_{\tau_0, 1}$, we use Lemma~\ref{lem:improvement} to estimate the right hand sides of \eqref{eq:transport_U_comm} and \eqref{eq:transport_omega_comm} and improve the bootstrap assumptions. Since $A \geq 2$, Lemma~\ref{lem:improvement} tells us that
    \begin{equation} \label{eq:linftyx}
        \| \vec{X} \|_{C^2(\Shat_{\tau})} \leq \epsilon^{2/3} \quad \forall \tau \in [\tau_{boot}, 1],
    \end{equation}
    from which one can immediately use \eqref{eq:vwomega} -- together with its first derivatives -- to show that
    \[
        | \Omega^2 - 1 |, | W - 1 |, |V^i|, |\partial_z W|, |\partial_z V^i| \leq \epsilon^{1/2},
    \]
    which improves upon the bootstrap assumptions \eqref{eq:vw_bootstrap} and \eqref{eq:firstorder_bootstrap}.

    It remains to improve upon the final bootstrap assumption \eqref{eq:convergent_bootstrap}. Note that this follows immediately from showing that all components of the tuple
    \[
        \vec{X}_0 = (\tau \partial_{\tau} \vec{X}, \tau \partial_{z^j} \vec{X}, \tau^2 \partial_{\tau}^2 \vec{X}, \tau^2 \partial_{\tau} \partial_{z^j} \vec{X}, \mathring{\Omega}^2),
    \]
    are bounded by, say, $D \epsilon \tau$. But this is more or less immediate from \eqref{eq:linftyx} and the transport equations in \eqref{eq:transport_matrix}. To be a little more detailed, \eqref{eq:linftyx} immediately yields that $| \tau \partial_{z^j} \vec{X} | \lesssim \epsilon$, which we plug into the tranport equation for $\mathring{\Omega}^2$ to yield
    \[
        | \tau \partial_{\tau} \mathring{\Omega}^2 - 2 \kappa_p \mathring{\Omega}^2 | \lesssim \epsilon \tau.
    \]
    Since $\kappa_p \geq 1$, thie gives also that $|\mathring{\Omega}^2| + |\tau \partial_{\tau} \mathring{\Omega}^2| \lesssim \epsilon$, and the remaining components of $\vec{X}_0$ are similarly bounded, upon taking another $\tau \partial_{\tau}$ derivative or $\partial_{z^j}$ derivative of the equations in \eqref{eq:transport_matrix}. Thus \eqref{eq:convergent_bootstrap} is also improved, yielding existence in the entire domain $\Mhat_{\tau_1}$.

    Next, we use the assumption that $A\geq2$.
    Taking further $\partial_z$ derivatives of the transport equation \eqref{eq:transport_matrix}, and dividing by $\tau$, it is straightforward to find for $\abs{\alpha}\leq A$ that 
    \[
        \partial_{\tau} \partial_z^{\alpha} U^i = O(\tau^{-\tilde{q}\sigma}), \quad \partial_{\tau} (\tau^{-2 \kappa_p} \mathring{\Omega}^2) = O(\tau^{-\tau^{-\tilde{q}\sigma}-2 \kappa_p}),
    \]
    where $\tilde{q}\sigma<1$ is as in the proof of \cref{lem:improvement}. 
    Indeed, we can induct on $U^i,\mathring{\Omega}^2\in C^{\abs{\alpha}-1}$ and apply \cref{eq:Gagliardo} together with the bound $L^{\infty}\left((0,\tau_1);\tau^{-\tilde{q}}H^{K}\right)$ to obtain the boundedness on the right hand side.
    From this, we have convergence of $U^i(\tau, z) \to \mathfrak{u}^i(z)$ and $\mathring{\Omega}^2(\tau, z) \to 0$ as $\tau \to 0$ at the rate $O(\tau)$ in the $C^A$ topology, from which the convergence \eqref{eq:smooth_convergence} follows upon using \eqref{eq:vwomega}.
\end{proof}

\subsection{Recovery of scattering data} \label{for:sub:scattering}
In this section, we take the solutions constructed in \cref{prop:existence} and apply the transport equations \cref{eq:transport_U,eq:transport_omega} to recover a scattering solution as defined in \cref{not:def:scattering_VW}.

\begin{lemma}\label{back:lem:recovery}
	Fix $\tilde{q}\notin\N$.
	Let $U^i,\Omega \in L^\infty([0,\tau_1];\tau^{-\tilde{q}}H^K(\Shat_\tau))$ be a solution of \cref{eq:transport_U,eq:transport_omega} for $K>\tilde{q}+2\kappa_p+\frac{n}{2}+10$ with the bounds \cref{eq:bootstrap_all}.
	Then $(U^i,\Omega)$ is a scattering solution of regularity $K-\floor{\tilde{q}+1}$ with scattering data $\mathfrak{f}\in H^{K-\floor{\tilde{q}+1}}(\Shat_0)$ and $\mathfrak{w}\in H^{K-\floor{\tilde{q}+1}-\floor{2\kappa}}(\hat{\Sigma}_0)$.
\end{lemma}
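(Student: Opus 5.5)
We would treat \cref{eq:transport_U,eq:transport_omega}, or equivalently the system \cref{eq:transport_matrix} for $\vec X=(U^i,\mathring\Omega^2)$, as a Fuchsian ODE in $\tau$ with a forcing term that is $O(\tau)$ up to derivative loss. We then peel off the expansion one power of $\tau$ at a time, trading $\partial_z$ regularity for $\tau$-decay, exactly as in the ansatz construction of \cref{an:prop:ansatz} but in reverse: there we built the expansion, here we read it off from a given solution.

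\emph{Step 1: removing the $\tau^{-\tilde q}$ weight.} The assumed bound is only $\vec X\in L^\infty\tau^{-\tilde q}H^K$. We first upgrade this by interpolation. \cref{prop:existence} gives uniform $C^2$ bounds, and \cref{eq:Gagliardo} then gives $\partial_z^{j}\vec X=O(\tau^{-\tilde q j/(K-n/2)})$ in $L^\infty$ for moderate $j$. Alternatively, we integrate the transport equation: since $\tau\partial_\tau U=\tau\,(\text{stuff})$, we get $\partial_z^\alpha U\in L^\infty H^{0}$ once $\tau\partial_z^{\alpha+1}W\in\tau^{1-\tilde q'}$ for some $\tilde q'<1$. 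Iterating, each application of the equations gains one power of $\tau$ at the cost of one $\partial_z$ derivative. After $\floor{\tilde q+1}$ iterations we obtain $\vec X\in L^\infty H^{K-\floor{\tilde q+1}}$ with no weight. The hypothesis $\tilde q\notin\N$ guarantees that no borderline integral $\int\tau^{-1}\,d\tau$ arises in these steps. This step fixes the regularity $K-\floor{\tilde q+1}$.

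\emph{Step 2: the $U^i$ expansion and the datum $\mathfrak f$.} Integrating \cref{eq:transport_U} from $0$ gives $U^i=\mathfrak u^i+\int_0^\tau W^{-2}\partial_{z^i}W$, where $\mathfrak u^i=\lim_{\tau\to0}U^i$ exists by \cref{prop:existence}. The integrability condition \cref{eq:integrability} passes to the limit, so $\mathfrak u^i=\partial_i\mathfrak f$ for some $\mathfrak f$ on the ball. This $\mathfrak f$ is obtained by integrating along rays, and it gains one derivative over $\mathfrak u$.

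\emph{Step 3: the $\mathring\Omega^2$ expansion and the datum $\mathfrak w$.} Write \cref{eq:transport_omega} as $\tau\partial_\tau(\tau^{-2\kappa_p}\mathring\Omega^2)=2\tau^{1-2\kappa_p}\partial_{z^i}V^i$. Then we iterate. At each stage we insert the current expansion of $V^i$, which is determined by \cref{eq:vwomega} from $U,\mathring\Omega^2$. We solve the resulting equations explicitly on monomials $\tau^j g(z)$, as in \cref{ode:eq:invert}, so that the powers $\tau^j$ with $j<2\kappa_p$ are determined by lower-order coefficients and lie in $H^{s-j}$. This produces the $\tau\Hs^{s-1;2\kappa_p}$ parts of \cref{eq:asymp:U,eq:asymp:Omega}. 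The free constant of integration at the power $\tau^{2\kappa_p}$ is $\mathfrak w$: explicitly, $\mathfrak w=\lim\tau^{-2\kappa_p}(\mathring\Omega^2-\text{lower expansion})$. This limit exists because the remaining integrand is $O(\tau^{2\kappa'-2\kappa_p-1})$ with $2\kappa'>2\kappa_p$. When $2\kappa_p\in\N$, the resonance between the expansion power and the eigenvalue $2\kappa_p$ produces the $\tau^{2\kappa_p}\log\tau$ term instead. Extracting $\mathfrak w$ costs a further $\floor{2\kappa_p}$ derivatives, which explains its regularity $H^{K-\floor{\tilde q+1}-\floor{2\kappa}}$. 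Finally, the remainders are controlled in $\Hc$ spaces using \cref{ode:lemma:invert}, applied parametrically in $z$, together with the $\tau\partial_\tau$ regularity supplied by the equations themselves.

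\emph{Main obstacle.} The delicate point is Step 1 combined with the bookkeeping in Step 3. We must show that the derivative loss from the $\tau\partial_z$ terms is exactly compensated: each power of $\tau$ extracted costs one derivative, and the weight $\tau^{-\tilde q}$ costs $\floor{\tilde q+1}$ derivatives. To keep this clean, we would run a single induction on $j$ with the statement
\[
\vec X-(\text{expansion to order } j)\in L^\infty \tau^{j+1-\tilde q}H^{K-j-1}.
\]
We would invoke \cref{map:lem:nonlin}-type product estimates for the nonlinear dependence of $V,W$ on $\vec X$, and we would cross the threshold $2\kappa_p$ only once the weight exponent exceeds $2\kappa_p$. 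This is where the assumption $K>\tilde q+2\kappa_p+\tfrac n2+10$ is used.
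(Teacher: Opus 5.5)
Your proposal is correct and follows essentially the same route as the paper. You integrate the transport equations $\floor{\tilde q+1}$ times, each time trading one $\partial_z$ derivative for one power of $\tau$, with $\tilde q\notin\N$ ruling out logarithms, to remove the $\tau^{-\tilde q}$ weight; you then continue the same iteration to build the expansion, read off $\mathfrak w$ as the free integration constant of $\partial_\tau(\tau^{-2\kappa_p}\mathring\Omega^2)=2\tau^{-2\kappa_p}\partial_{z^i}V^i$, and recover $\tau\partial_\tau$ regularity from the equations. Your explicit use of the integrability condition to write $\lim_{\tau\to0}U^i$ as a gradient $\partial_i\mathfrak f$ fills in a step the paper leaves implicit (note also that when $2\kappa_p\in\N$ the $\tau^{2\kappa_p}\log\tau$ term appears alongside $\mathfrak w$, not instead of it).
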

\begin{proof}
    a)
	We only show the case $2\kappa_p\notin\N$; the only difference when $2\kappa_p\in\N$ is that the final integration picks up an extra logarithmic term and we leave this to the reader. 
	It suffices to show that, for $\Phi\in\{U^i,\Omega\}$, it holds that
	\begin{equation}\label{eq:rec:UOmega}
		\begin{multlined}
			\Phi\in \sum_{j=c_\Phi}^{\floor{2\kappa_p}} \tau^j H^{K-\floor{\tilde{q}+1}-j}(\Shat_0)+c_\Phi \tau^{2\kappa_p}H^{K-\floor{\tilde{q}+1}-\floor{2\kappa_p}}(\hat{\Sigma}_0)\\
			+L^\infty \big([0,\tau_1]; \tau^{\floor{2\kappa_p}+\mathfrak{c}}H^{K-\floor{\tilde{q}+1}-\floor{2\kappa_p}}(\Shat_0)\big)
		\end{multlined}
	\end{equation}
	where $c_{\mathring{\Omega}}=1,c_{U^i}=0$, and $\mathfrak{c}=(\floor{\tilde{q}+1}-\tilde{q})/2$.
	Let us write \cref{eq:transport_omega} as $\partial_\tau \tau^{-2\kappa_p}\mathring{\Omega}=\tau^{-2\kappa_p}\partial_{z^i}V^i$.
	
	First, we claim that, for $ j\leq \floor{\tilde{q}}$, it holds that 
	\begin{equation}\label{eq:rec:step1}
		\Phi\in L^{\infty}\big([0,\tau_1];\tau^{-\tilde{q}+j} H^{K-j}(\Shat_0)\big).
	\end{equation}
	Indeed, this follows easily by induction.
	For $U^i$, we simply note that using \cref{eq:vw_bootstrap,eq:rec:step1} and interpolation, we have $\norm{W^{-2}\partial_{z^i}W}_{H^{K-j-1}}\lesssim \tau^{-\tilde{q}+j}$.
	Provided that $j+1\leq \floor{\tilde{q}}$ it follows by a simple integration that $\cref{eq:rec:step1}$ holds for $U^i$ with $j+1$.
	For $\mathring{\Omega}^2$, we instead obtain $\partial_\tau \tau^{-2\kappa_p}\mathring{\Omega}^2\in L^{\infty}\big([0,\tau_1];\tau^{-\tilde{q}+j-2\kappa_p} H^{K-j}(\Shat_0)\big)$.
	Using the induction hypothesis once more and integration when $j+1\leq \floor{\tilde{q}}$ implies \cref{eq:rec:step1}.
	
	Next, we claim that, for $l < \lfloor 2 \kappa_p \rfloor$, it holds that
    \begin{equation} \label{eq:rec:Omega2}
        \Phi\in Z_\Phi^{K,l}:=\sum_{j=c_\Phi}^l \tau^j H^{K-\floor{\tilde{q}+1}-j}(\Shat_0)+L^{\infty}\big([0,\tau_1];\tau^{\min(\mathfrak{c}+l,2\kappa_p)} H^{K-\floor{\tilde{q}+1+l}}(\hat{\Sigma}_0)\big).
    \end{equation}
    We shall prove \eqref{eq:rec:Omega2} by induction. 
	For $l=-1$, this is already proved.
	The induction step for $U^i$ follows for with the extra integral formula 
	\[ \Phi\in Z_\Phi^{K,l}, \partial_\tau U^i\in Z_U^{K-1,l} \implies U\in Z_U^{K,l+1}.
	\]
	For $\mathring{\Omega}^2$, we use that the extra terms obtained by the integration in $\tau$ yields a term that can be included in the $L^\infty$ term in the expansion, so that
	\[\partial_\tau \tau^{-2\kappa_p}\mathring{\Omega}^2\in \tau^{-2\kappa_p}Z_U^{K-1,l}\implies \tau^{-2\kappa_p}\mathring{\Omega}^2\in \tau^{-2\kappa_p}Z_{\mathring{\Omega}}^{K,l}.\
	\]
    Finally performing one more integration for $\mathring{\Omega}^2$ yields \cref{eq:rec:UOmega}.
	
	To obtain full $\Hb$ regularity for the error, note that we can use equations \cref{eq:transport_U,eq:transport_omega} to exchange regularity in $\partial_{z^i}$ for regularity with respect to $\tau\partial_\tau$. 
\end{proof}

\begin{cor}
	Let $\phi_0,\phi_1,s$ be as in \cref{in:thm:main2}.
    Then $\phi$ is a singular solution of regularity $s-202\kappa_p-1$.
\end{cor}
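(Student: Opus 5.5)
The proof chains the three ingredients of this section and then transfers the result back to $(t,x)$ coordinates with \cref{not:lem:comparison}(b).

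\emph{Step 1: data on $\Shat_{\tau_1}$.} Since $(\phi_0,\phi_1)$ satisfy \eqref{in:eq:forward_ass}, \cref{lem:Cauchy} produces $\tau_1$ with $|1-\tau_1|\lesssim\epsilon$ and the bound \eqref{eq:stau1_data} on $\Shat_{\tau_1}$, at regularity $s$.

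\emph{Step 2: existence down to $\tau=0$.} Take $K=s$ in \cref{prop:existence}, with $A\geq 2$ admissible because $s\geq 202\kappa_p+\frac n2+2$. This gives existence of $W,V^i$ on all of $\Mhat_{\tau_1}$, the weighted bound $\|\Vtop J\|_{H^{K,-\tilde q}(\Shat_\tau)}\lesssim\epsilon$ with $\tilde q=200\kappa_p$, and the bootstrap bounds \eqref{eq:bootstrap_all}. I would pick $\tilde q$ slightly larger than $200\kappa_p$ and non-integer, which is allowed since \cref{prop:l2} holds for every $\tilde q\geq200\kappa_p$. Then $U^i,\mathring\Omega^2\in L^\infty([0,\tau_1];\tau^{-\tilde q}H^{K}(\Shat_\tau))$ follows from \eqref{eq:vwomega} and product estimates.

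\emph{Step 3: recovery of scattering data.} Apply \cref{back:lem:recovery}. Its hypothesis $K>\tilde q+2\kappa_p+\frac n2+10$ holds provided $s$ is large enough, and the lemma shows that $(U^i,\mathring\Omega^2)$, equivalently $(W,V^i)$, form a scattering solution in the sense of \cref{not:def:scattering_VW}. The regularity is $K-\floor{\tilde q+1}\geq s-200\kappa_p-1$, the loss coming from dividing out $\tau^{-\tilde q}$.

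\emph{Step 4: back to $\phi$.} By \cref{not:lem:comparison}(b), integrating $\partial_\tau t=W^{-1}$ and $\partial_z t=U$ produces $f$ with $\tstar=t-f$. Then $\phi=c_p\tau^{-\alpha_p}$ is a singular solution in the sense of \cref{geo:def:scattering_sol}, at one further derivative loss.

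The main obstacle is the regularity bookkeeping. The scattering coefficient $\mathfrak w$ sits $\floor{2\kappa_p}$ derivatives below $\mathfrak f$, so its regularity is $s-\floor{\tilde q+1}-\floor{2\kappa_p}$. Together with the loss in Step 4, this must be shown to be at least the claimed $s-202\kappa_p-1$, which requires $\floor{\tilde q+1}+\floor{2\kappa_p}\leq 202\kappa_p$ (using $\kappa_p\geq1$). The corresponding scattering data $\psi$ is read off from $\mathfrak w$ via the relation $\mathring\Omega^2\sim-\frac{2(2\kappa_p+1)}{c_p\alpha_p}\psi\,\tstar^{2\kappa_p}$ from the proof of \cref{not:lem:comparison}(a).

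Two further points need checking. First, the domains $\Mhat_{\tau_1}$ and $\M$ must correspond: $W\in(1/2,2)$ makes $\tau\leftrightarrow\tstar$ bi-Lipschitz, and $|\partial f|$ stays $O(\epsilon)$ by the $C^A$ convergence in \eqref{eq:smooth_convergence}. Second, in the case $2\kappa_p\in\N$ the logarithmic terms must be tracked through the final integration in \cref{back:lem:recovery}.
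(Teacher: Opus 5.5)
Your proposal follows the paper's proof. The paper obtains a scattering solution $(W,V^i)$ by chaining \cref{lem:Cauchy}, \cref{prop:existence} and \cref{back:lem:recovery}, then invokes \cref{not:lem:comparison} to pass back to $\phi$, exactly as in your Steps 1--4; your choice of a non-integer $\tilde q$ slightly above $200\kappa_p$ is a sensible detail the paper leaves implicit.

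One correction to your bookkeeping remark. The $\floor{2\kappa_p}$ loss on $\mathfrak w$ and $\psi$ is already built into the meaning of ``regularity'' in \cref{not:def:scattering_VW} and \cref{geo:def:scattering_sol}. So Steps 3--4 directly give regularity $K-\floor{\tilde q+1}-1\geq s-200\kappa_p-2\geq s-202\kappa_p-1$, and no condition like $\floor{\tilde q+1}+\floor{2\kappa_p}\leq 202\kappa_p$ is needed.
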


\begin{proof}
    The existence of a scattering solution $(W,V^i)$ as in \cref{not:def:scattering_VW} follows from \cref{lem:Cauchy,prop:existence,back:lem:recovery}.
    We may use \cref{not:lem:comparison} to recover a singular solution for $\phi$.
\end{proof}

To conclude this section, we also explain how to derive stability estimates for the differences between two different solutions $(U^i_{(j)}, \mathring{\Omega}^2_{(j)})$ for $j = 1, 2$, which, after taking into account \eqref{for:eq:cauchy_stability}, will be used to find \eqref{in:eq:sing_Cauchy}, thus concluding the proof of \cref{in:thm:main2_precise}.

\begin{lemma}[Difference estimates]\label{lem:difference}
    For any pair of $(U^i_{(j)},\Omega_{(j)})$ for $j\in\{1,2\}$ as in \cref{back:lem:recovery}, denoting $\mathfrak{f}_{(j)},\mathfrak{w}_{(j)}$ the corresponding scattering solution, it holds that
    \begin{equation}\label{back:eq:comparing_scat}
        \norm{\mathfrak{f}_{(1)}-\mathfrak{f}_{(2)}}_{H^{K-1-\floor{\tilde{q}+1}}(\Shat_0)}+\norm{\mathfrak{w}_{(1)}-\mathfrak{w}_{(2)}}_{H^{K-1-\floor{\tilde{q}+1}-\floor{2\kappa_p}}(\hat{\Sigma}_0)}\lesssim \norm{(U^i_{(1)}-U^i_{(2)},\mathring{\Omega}^2_{(1)}-\mathring{\Omega}^2_{(2)})}_{H^{K-1}(\hat{\Sigma}_1)}.
    \end{equation}
    This in particular yields \cref{in:eq:sing_Cauchy}.
\end{lemma}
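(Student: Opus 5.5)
Set $\delta U^i = U^i_{(1)}-U^i_{(2)}$ and $\delta\mathring{\Omega}^2 = \mathring{\Omega}^2_{(1)}-\mathring{\Omega}^2_{(2)}$, and similarly $\delta W$, $\delta V^i$, $\delta\vec J$. The plan is to run the arguments of \cref{prop:l2}, \cref{lem:improvement} and \cref{back:lem:recovery} once more, now for these differences, with one derivative given up throughout. That loss is what produces the $K-1$ on both sides of \eqref{back:eq:comparing_scat}.

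\textbf{Step 1: energy estimate for the differences.} Subtract the two copies of \eqref{eq:nlw_Jacobi}, writing the wave operator of solution $(1)$ on the left. This gives
\[
\tau^2\square_{(1)}\delta\Phi = \mathbf{L}(\delta\vec J) + \mathbf{B}(\delta\vec J,\vec J_{0,(1)}) + \big(\tau^2\square_{(2)}-\tau^2\square_{(1)}\big)\Phi_{(2)} + \text{multilinear terms}.
\]
The operator difference has coefficients that are linear in $\delta J$, multiplied against $\tau^2\partial^2\Phi_{(2)}$. By \cref{eq:top_schematic}, this second-derivative factor is a $\tau\partial_z\vec J_{(2)}$-type quantity, and it costs one derivative of solution $(2)$. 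That is affordable, because solution $(2)$ is controlled in $H^{K,-\tilde q}$ by \cref{prop:existence}. I would commute with $\partial_z^\alpha$ for $|\alpha|\le K-1$ and apply the current of \cref{lem:current} with $\tilde q=200\kappa_p$. \cref{cor:wave_error} and \cref{lem:product} then apply verbatim; the only new ingredient is that the coefficients of $\mathbf{Q}_\bullet$ are now bounded in terms of the $H^K$ norms of both solutions. The result is the difference version of \eqref{eq:l2}:
\[
\|\Vtop\delta J\|_{H^{K-1,-\tilde q}(\Shat_\tau)}\lesssim \|\Vtop\delta J\|_{H^{K-1,-\tilde q}(\Shat_{\tau_1})}.
\]
The data term on the right is bounded by $\|(\delta U,\delta\mathring{\Omega}^2)\|_{H^{K-1}(\Shat_1)}$, using \eqref{eq:vwomega} together with the transport equations to trade $\tau\partial_\tau$ derivatives for $\partial_z$ derivatives.

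\textbf{Step 2: transport for the differences.} Subtracting the two copies of \eqref{eq:transport_matrix} gives
\[
\tau\partial_\tau\delta\vec X = \mathbf{B}\,\delta\vec X + \mathbf{M}^i_{(1)}\tau\partial_{z^i}\delta\vec X + (\delta\mathbf{M}^i)\,\tau\partial_{z^i}\vec X_{(2)}.
\]
The last term is harmless, since $\delta\mathbf{M}^i$ is Lipschitz in $\delta\vec X$ and $\tau\partial_z\vec X_{(2)}=O(\epsilon\tau^{1-\tilde q\sigma})$. I would treat the derivative-loss term $\tau\partial_z\delta\vec X$ exactly as in \cref{lem:improvement}, using the Gagliardo--Nirenberg inequality \eqref{eq:Gagliardo} with $K-1$ in place of $K$. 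The requirement $(K-1)-\tfrac n2>\tilde q$ still holds under the hypothesis $K>\tilde q+2\kappa_p+\tfrac n2+10$.

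This step is the main obstacle. The interpolation inequality is nonlinear in $\|\delta\vec X\|_{L^\infty}$, so I would run the backward Grönwall argument with the \emph{linear} quantity $\Delta := \|\delta\vec X\|_{L^\infty}+\|\delta\vec X\|_{H^{K-1}}$ as the normalising scale. On that scale the estimate from \cref{lem:improvement} gives $\|\delta\vec X\|_{L^\infty}\lesssim\Delta(\tau_1)$, and the same bound holds for $C^A$ norms. This scale is compatible with \eqref{eq:Gagliardo} because that inequality is homogeneous of degree one.

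\textbf{Step 3: recovering the scattering data.} Finally, I would redo the iterated integrations of \cref{back:lem:recovery} for the differences. These integrations are linear in $(\delta U,\delta\mathring{\Omega}^2)$, with forcing terms that are either $\partial_z\delta J$ or bilinear in (difference)$\times$(background). Every step therefore carries the bound from Steps 1 and 2 linearly, with one extra derivative lost relative to \cref{back:lem:recovery}.

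This yields $\mathfrak f_{(1)}-\mathfrak f_{(2)}$ from the $\tau\to0$ limit of $\delta U^i$, together with the integrability condition \eqref{eq:integrability}. It yields $\mathfrak w_{(1)}-\mathfrak w_{(2)}$ as the coefficient of $\tau^{2\kappa_p}$ in $\delta\mathring{\Omega}^2$, using the integrated form
\[
\tau^{-2\kappa_p}\delta\mathring{\Omega}^2\big|_{\tau\to0} = \tau_1^{-2\kappa_p}\delta\mathring{\Omega}^2(\tau_1)-\int_0^{\tau_1}2\tilde\tau^{-2\kappa_p}\partial_{z^i}\delta V^i\,d\tilde\tau .
\]
The integral converges thanks to the expansion already established. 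Together these give \eqref{back:eq:comparing_scat}.

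Combining \eqref{back:eq:comparing_scat} with \eqref{for:eq:cauchy_stability}, and passing to $(f,\psi)$ through \cref{not:lem:comparison}, which is Lipschitz in the data, gives \eqref{in:eq:sing_Cauchy}.
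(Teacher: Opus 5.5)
Your Step~1 does not close as stated, and this is the crux of the lemma. After you commute the difference equation with $\partial_z^\alpha$, $|\alpha|\le K-1$, two kinds of terms appear. The quasilinear term from $(\tau^2\square_{(2)}-\tau^2\square_{(1)})\Phi_{(2)}$ contributes $\mathbf{B}(\delta J,\tau^2\partial^2\partial_z^\alpha\Phi_{(2)})$. The multilinear terms contain products in which all $\partial_z$ derivatives fall on a background factor, for example $\delta\mathring{\Omega}^2\,\partial_z^{\alpha}W_{(1)}$. In both, the background factor is at or near top order and can only be placed in $L^2$, so the product estimate forces $\delta\vec J$ into $L^\infty$.

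The background is not known to be bounded in $H^K$ uniformly in $\tau$. \cref{prop:existence} only gives $\|\Vtop J_{(m)}\|_{H^K(\Shat_\tau)}\lesssim\epsilon\tau^{-\tilde q}$, together with the bound on the $d\tau/\tau$-integral of the weighted norm from \eqref{eq:l2}. Hence what Step~1 actually yields is
\[
\sup_{\tau_0\le\tau\le\tau_1}\|\Vtop\delta J\|^2_{H^{K-1,-\tilde q}(\Shat_\tau)}\lesssim\|\Vtop\delta J\|^2_{H^{K-1,-\tilde q}(\Shat_{\tau_1})}+\epsilon^2\,\|\delta\vec J\|^2_{L^\infty(\Mhat_{\tau_0,\tau_1})}.
\]
The last term cannot be controlled by the left-hand side via Sobolev embedding. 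That costs a factor $\tau^{-\tilde q}$ with $\tilde q\ge 200\kappa_p$, which the bulk term $\tilde q\int\|\Vtop\delta J\|^2\,d\tau/\tau$ cannot absorb.

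Your remark that the coefficients of $\mathbf{Q}_\bullet$ are ``bounded in terms of the $H^K$ norms of both solutions'' hides exactly this point. In \cref{prop:l2} the product lemma closed thanks to the $L^\infty$ bootstrap \eqref{eq:firstorder_bootstrap}, and for differences no such a priori bound is available. At zeroth order the estimate does close, since there the background enters only through $\vec J_0$.

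The fix is to close Steps~1 and~2 together rather than in sequence, which is what the paper does. Your transport and Gagliardo--Nirenberg argument for $\delta\vec X$ gives
\[
\|\delta\vec J\|_{L^\infty}\lesssim\|\delta\vec X\|_{L^\infty(\Shat_{\tau_1})}+\sup_\tau\|\Vtop\delta J\|_{H^{K-1,-\tilde q}(\Shat_\tau)}.
\]
Substituting this into the display above and taking $\epsilon$ small absorbs the $\epsilon^2$ term. All the ingredients are already in your proposal, but the order of the argument has to change.

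Separately, your displayed formula for $\mathfrak w_{(1)}-\mathfrak w_{(2)}$ is wrong as written. Since $2\kappa_p>2$ and $\partial_{z^i}\delta V^i$ does not vanish at $\tau=0$, the integral diverges and $\tau^{-2\kappa_p}\delta\mathring{\Omega}^2$ has no limit. You must first subtract the terms $\tau^j$, $1\le j\le\lfloor 2\kappa_p\rfloor$, in the expansion of $\delta\mathring{\Omega}^2$, as in the iteration of \cref{back:lem:recovery}, and only then integrate.
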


\begin{proof}[(Sketch proof)]

	The estimate \eqref{back:eq:comparing_scat} will be derived by taking differences between the equations for $(U^i_{(m)}, \mathring{\Omega}^2_{(m)})$. 
	We define $(U_{\Delta},\Omega_\Delta)=(U_{(1)}-U_{(2)},\Omega_{(1)}-\Omega_{(2)})$, and also $(V^i_{\Delta}, W_{\Delta})$ similarly.
	Denote by $J_{(m)} = (W_{(m)} , V_{(m)} ^i)^{\intercal}$ $J=(J_{(1)},J_{(2)} )^{\intercal}$ and let $J_{\Delta}$ be the difference.
	We prove a (degenerate) energy estimate for the differences $(V^i_{\Delta}, W_{\Delta})$, which we simply sketch below, as it mostly follows the same two steps as the stability proof of \cref{prop:l2,lem:improvement}.
    Then, the same steps as in \cref{lem:difference} yields the result.
	
	Recall the form of the wave operator from \cref{eq:wave_op}, noting that we will have two different wave operators $\Box_1, \Box_2$ from the two different metric coefficients $(V^i_{(1)}, W_{(1)})$ and $(V^i_{(2)}, W_{(2)})$. By taking differences of the equation \eqref{eq:nlw_W} for $m= 1, 2$, we derive an equation as follows:
	\begin{equation} \label{eq:nlw_W_diff}
		\tau^2 \Box_1 W_{\Delta} = - \tau^2 (\Box_1 - \Box_2) W_2 + \kappa_p \left(\mathring{\Omega}^2_{\Delta} W_{(1)} -  (1 - \Omega^2_{(2)}) W_{\Delta} -  W_{\Delta} \tau \partial_{\tau} \Omega^2_{(1)} - W_{(2)} \tau \partial_{\tau} \mathring{\Omega}^2_{\Delta}\right).
	\end{equation}
    The difference between the wave operator on the right hand side can be written using the multilinear forms of \cref{not:multiforms}, where smooth dependence on $J_{(m)}$ for $m\in\{1,2\}$ are hidden in the coefficients of the forms $\mathbf{B}_1, \mathbf{B}_2$:
	\begin{equation} \label{eq:quasilinear_difference}
		- \tau^2 (\Box_1 - \Box_2) \Phi = \mathbf{B}_1(J_{\Delta},\tau^2 \partial^2 \Phi) + \mathbf{B}_2(\tau \partial J_{\Delta},\tau \partial \Phi).
	\end{equation}
    Note that, for $\Phi$ a component of $J_{(m)}$ for some $m \in \{1, 2\}$, coefficients $\tau^2 \partial^2 \Phi$ and $\tau \partial \Phi$ are elements of $\vec{J}_0$ as in \cref{def:j_0}, at least upon using the equations \eqref{eq:nlw_Jacobi} to express $\tau^2 \partial_{\tau}^2 \Phi$ in terms of other terms in $\vec{J}_0$.

    Similarly, we obtain higher order estimates by commuting with $\partial_z^{\alpha}$ as in \cref{cor:wave_error}. For $|\alpha| = K$, one finds that, for suitably defined $\mathbf{B}_{\bullet}$ and $\mathbf{Q}_{\bullet}$, that
	\begin{nalign}\label{eq:difference_commuted}
        \tau^2\Box_1\partial^\alpha_z W_\Delta
        &=\partial^\alpha\Box_1 W_\Delta+[\Box_1,\partial^\alpha_z]W_\Delta\\[0.5em]
        &\begin{aligned}=\mathbf{L}(\partial_z^{\alpha} \vec{J}_\Delta) + \mathbf{B}_{\bullet}(\partial_z^{\alpha} \vec{J}_\Delta, \vec{J}_0) + \mathbf{B}_1(J_{\Delta}, \tau^2 \partial^2 \partial_z^{\alpha} W_2) \\[0.5em] \hspace{3em} + \sum_{\substack{L \geq 1, |\beta| \geq 0, |\alpha_{\ell}| \geq 1 \\ |\beta| + \sum{|\alpha_{\ell}| = K }}} \mathbf{Q}_{\bullet} (\partial_z^{\beta} \vec{J}_\Delta, \partial_z^{\alpha_1} \vec{J}, \ldots, \partial_z^{\alpha_L} \vec{J}),\end{aligned}
	\end{nalign}
    where we recall that $\vec{J}$ represents the concatenation of $(\kappa_p J, \tau \partial_{\tau} J, \tau \partial_{z^k} J)$ over all components of $J$, and $\vec{J}_{\Delta}$ is similar. 
    Note we have singled out the term $\mathbf{B}_1(J_{\Delta}, \tau^2 \partial^2 \partial_z^{\alpha} W_2)$, which is top order in derivatives and reflects the quasilinearity of the system \eqref{eq:nlw_Jacobi}. 

    Using \eqref{eq:nlw_W_diff} and the analogous equation for $V^i_{\Delta}$, much like in \cref{for:sub:energy}, we can  derive an energy estimate for all $|\alpha| \leq K$, $\tilde{q} \geq 200 \kappa_p$ and $0 < \tau_0 < \tau_1$ as below.
	\begin{multline}\label{eq:difference_H1_error}
        \| \Vtop \partial_z^\alpha J_{\Delta} \|^2_{H^{0, -\tilde{q}}(\hat{\Sigma}_{\tau_0})} + \tilde{q}\int_{\tau_0}^{\tau_1} \| \Vtop \partial_z^\alpha J_{\Delta} \|^2_{H^{0, - \tilde{q}}(\hat{\Sigma}_{\tilde{\tau}})} \frac{d \tilde{\tau}}{\tilde{\tau}}  \\
        \lesssim \| \Vtop J_{\Delta} \|^2_{H^{K, -\tilde{q}}(\hat{\Sigma}_{\tau_1})} + \int_{\tau_0}^{\tau_1} \norm{\tau^2\Box_1\partial^\alpha_z W_\Delta}_{{H^{0, - \tilde{q}}}(\hat{\Sigma}_{\tilde{\tau}})}\frac{\dd\tilde{\tau}}{\tilde{\tau}}.
	\end{multline}
    
    Since, by \cref{prop:l2} and \cref{back:lem:recovery}, we already have pointwise a priori control on (up to two derivatives of) $J_{(m)}$, we get that, for $K=0$, the following energy estimate holds.
	\begin{equation}\label{eq:difference_H1}
	    \| \Vtop J_{\Delta} \|^2_{H^{0, -\tilde{q}}(\hat{\Sigma}_{\tau_0})} +\tilde{q} \int_{\tau_0}^{\tau_1} \| \Vtop J_{\Delta} \|^2_{H^{0, - \tilde{q}}(\hat{\Sigma}_{\tilde{\tau}})} \frac{d \tilde{\tau}}{\tilde{\tau}} \lesssim \| \Vtop J_{\Delta} \|^2_{H^{0, -\tilde{q}}(\hat{\Sigma}_{\tau_1})}.
    \end{equation}
    We hope to show that a similar estimate holds at all orders. By repeating the procedure in \cref{for:sub:energy} to estimate the right hand side in \eqref{eq:difference_commuted}, one can first derive the following, for $\tilde{K} \geq 0$ and $\tilde{q} \geq 200 \kappa_p$.
	\begin{multline}\label{eq:difference_toporder}
        \| \Vtop J_{\Delta} \|^2_{H^{\tilde{K}, -\tilde{q}}(\hat{\Sigma}_{\tau_0})} +\tilde{q} \int_{\tau_0}^{\tau_1} \| \Vtop J_{\Delta} \|^2_{H^{\tilde{K}, - \tilde{q}}(\hat{\Sigma}_{\tilde{\tau}})} \frac{d \tilde{\tau}}{\tilde{\tau}} \lesssim \| \Vtop J_{\Delta} \|^2_{H^{\tilde{K}, -\tilde{q}}(\hat{\Sigma}_{\tau_1})} \\[0.3em]
        + \| \vec{J}_{\Delta} \|_{L^{\infty}(\Mhat_{\tau_1})}^2 \left( \int_{\tau_0}^{\tau_1} \| \Vtop J \|_{H^{\tilde{K}, - \tilde{q}}(\Shat_{\tilde{\tau}})}^2 + \| \tau^2 \partial^2 J \|_{H^{\tilde{K}, - \tilde{q}}(\Shat_{\tilde{\tau}})}^2 \frac{ d \tilde{\tau}}{\tilde{\tau}}  \right).
    \end{multline}
    We comment that the terms appearing in the second line of \eqref{eq:difference_toporder} appear after applying the product estimate \cref{lem:product} to the multilinear term $\mathbf{Q}_{\bullet}$ in \eqref{eq:difference_commuted} and also from the term $\mathbf{B}_1$.

    We now estimate the integral expression arising in the second line, by using \eqref{eq:existence_bounds}, or more precisely the combination of this and \eqref{eq:l2}, to get that, for $\tilde{K} \leq K$ as in \cref{prop:existence},
    \[
        \int_{\tau_0}^{\tau_1} \| \Vtop J \|_{H^{K, - \tilde{q}}(\Shat_{\tilde{\tau}})}^2 \frac{d \tilde{\tau}}{\tilde{\tau}} \lesssim \epsilon^2.
    \]
    In fact, provided that one chooses $\tilde{K} \leq K-1$, one can also estimate $\| \tau^2 \partial^2 J \|_{H^{\tilde{K}, - \tilde{q}}(\Shat_{\tilde{\tau}})}$ by the derivative--losing norm $\tau \| \Vtop J \|_{H^{K, - \tilde{q}}(\Shat_{\tilde{\tau}})}$, where one may have to once again insert the equations \eqref{eq:nlw_Jacobi} in the case $\partial^2$ is represented by $\partial_{\tau}^2$. Omitting the details, one therefore finds that
	\begin{equation}\label{eq:difference_toporder_2}
        \| \Vtop J_{\Delta} \|^2_{H^{\tilde{K}, -\tilde{q}}(\hat{\Sigma}_{\tau_0})} +\tilde{q} \int_{\tau_0}^{\tau_1} \| \Vtop J_{\Delta} \|^2_{H^{\tilde{K}, - \tilde{q}}(\hat{\Sigma}_{\tilde{\tau}})} \frac{d \tilde{\tau}}{\tilde{\tau}} \lesssim \| \Vtop J_{\Delta} \|^2_{H^{\tilde{K}, -\tilde{q}}(\hat{\Sigma}_{\tau_1})} + \epsilon^2 \| \vec{J}_{\Delta} \|_{L^{\infty}(\Mhat_{\tau_1})}^2.
    \end{equation}

    To estimate the $\| \vec{J}_{\Delta} \|_{L^{\infty}(\Mhat_{\tau_1})}$ term appearing on the right, we consider the transport equations of the differences 
	\begin{align} 
		\tau \partial_{\tau} U^i_\Delta &= W^{-2}_{(1)} \tau \partial_{z^i} W_{\Delta}-W_\Delta\frac{W_{(1)}+W_{(1)}}{W_{(1)}^{2}W_{(2)}^{2}}\tau \partial_{z^i}W_{(1)},\\
		\tau \partial_{\tau} \mathring{\Omega}^2_{\Delta} &= 2 \kappa_p \mathring{\Omega}^2_{\Delta}  + 2 \tau \partial_{z^i} V^i_\Delta.
	\end{align}
    That is, if $\vec{X}_{\Delta} = (U_{\Delta}^i, \mathring{\Omega}^2_{\Delta})$, then we have that, for some matrices $\tilde{\mathbf{M}}^i$ depending only on $J$,
    \begin{equation} \label{eq:transport_matrix_differences}
        \tau \partial_{\tau} \vec{X}_{\Delta} = 
        \begin{bmatrix} 0 & 0 \\ 0 & 2 \kappa_p \end{bmatrix}\vec{X}_{\Delta}
        + \tilde{\mathbf{M}}^i \tau \partial_{z^i} \vec{X}_{\Delta}.
    \end{equation}
    Applying the method of \cref{lem:improvement}, we thereby find that, for $\tilde{K} \geq 200 \kappa_p + \frac{n}{2} + 1$ and $\tilde{q} = 200 \kappa_p$, one has the pointwise bound
    \begin{equation} \label{eq:difference_pointwise}
        \| \vec{J}_{\Delta} \|_{L^{\infty}(\Mhat_{\tau_1})}^2 \lesssim \sup_{\tilde{\tau} \in (\tau_0, \tau_1)} \| \Vtop J_{\Delta} \|_{H^{\tilde{K}, - \tilde{q}}(\Shat_{\tilde{\tau}})}^2.
    \end{equation}

    Choosing $\epsilon$ to be sufficiently small in \eqref{eq:difference_toporder_2}, we can insert \eqref{eq:difference_pointwise} to get that, for $\tilde{q} = 200 \kappa_p$, 
    \[
        \sup_{\tilde{\tau} \in (\tau_0, \tau_1)} \| \Vtop J_{\Delta} \|_{H^{K-1, - \tilde{q}}(\Shat_{\tilde{\tau}})}^2  \lesssim \| \Vtop J_{\Delta} \|_{H^{K-1, - \tilde{q}}(\Shat_{\tau_1})}^2.
    \]
    Note that the right hand side is equivalent to the right hand side of \eqref{back:eq:comparing_scat} for $\tau_1 > 0$. We then complete the proof by again using the transport equations for the differences and proceeding as in \cref{back:lem:recovery}.
\end{proof}

	\section{Localised stability and regularity}\label{sec:local} 
	
In this section, we apply \cref{in:thm:main1_precise,in:thm:main2_precise} to deduce \cref{in:cor:general,in:cor:smoothness}. 
The general strategy is that though the general ODE blowup solution in \cref{in:cor:general} is not necessarily close to that of the model (spatially homogeneous) ODE blow-up solution, one can use the scaling, translation and Lorentz boost symmetries of the equation \eqref{in:eq:main} to zoom into a neighborhood of the singularity, so that $\phi$ will actually closely resemble the model ODE blow-up in suitable norms, in particular such that one may apply \cref{in:thm:main2_precise}.

The arguments presented will be largely soft, and not be as computationally intensive as \cref{sec:backward,sec:forward}. 
We begin  in \cref{gen:lem:Cauchy}with generalising \cref{in:thm:main2_precise} to place the perturbation on arbitrary spacelike hypersurfaces, which is a Cauchy stability argument.
Then, we use the symmetries of \cref{in:eq:main} to localise the stability close to a general spacelike hypersurface and prove \cref{in:cor:general}.
Finally, we use the same localisation argument to propagate the regularity all the way to the singularity and prove \cref{in:cor:general,in:cor:smoothness}.

\begin{lemma}\label{gen:lem:Cauchy}
	Let $\underline{\phi}=t^{-\alpha_p}c_p$ be the ODE blow-up and $s,s_0$ as in \cref{in:thm:main2_precise}.
    Fix a $H^{s+n+2}$ spacelike hypersurface $\Sigma\subset\{t\in(2,4)\}\cap\{\abs{x}<15\}$ with unit normal $N$ satisfying $N(\dd t)\in[1/2,2]$ and $\| \Sigma \|_{H^{s+n+2}} \leq C$. Then there exists $\epsilon_{C} > 0$, such that, for $\phi_0\in H^{s+2}(\Sigma),\phi_1\in H^{s+1}(\Sigma)$ satisfying
	\begin{equation}
        \norm{\phi_0-\underline{\phi}|_{\Sigma}}_{H^{s_0+1}}+\norm{\phi_1-\partial_t\underline{\phi}|_{\Sigma}}_{H^{s_0}(\Sigma)}\leq \epsilon_2\leq \epsilon_{C},
	\end{equation}
    the correspoding solution $\phi$ to \eqref{in:eq:main} to the past of $\Sigma$ with initial data $\phi|_{\Sigma}=\phi_0$ and $\partial_t\phi|_{\Sigma}=\phi_1$ has the following properties: 
	there exist $f,\psi\in H^{s-202\kappa_p}$ such that $\phi$ can be extended to the domain $\M=\{\abs{x}<3(1+t),f(x)<t\}\cap I^-(\Sigma)$, and exhibits ODE type blow-up on the spacelike hypersurface $\{t=f(x)\}\cap\M$ with auxiliary scattering data $\psi$.
\end{lemma}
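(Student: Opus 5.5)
The plan is to reduce \cref{gen:lem:Cauchy} to \cref{in:thm:main2_precise} by a Cauchy stability argument. The solution is transported from the data surface $\Sigma$ down to the flat slice $\{t=1\}$, and there it is checked that the induced data satisfy the smallness assumption \eqref{in:eq:forward_ass}. The background $\underline{\phi}=c_pt^{-\alpha_p}$ is smooth and bounded, together with all its derivatives, on the slab $\{t\in[1,4]\}\cap\{\abs{x}<20\}$. The perturbation problem is therefore a regular, non-degenerate semilinear wave equation on a compact region, far from the singularity.

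\emph{Step 1: propagation from $\Sigma$ to $\{t=1\}$.} Write $\phi=\underline{\phi}+\varphi$. Then
\[
\Box\varphi+p\underline{\phi}^{p-1}\varphi=-\mathcal{N}_{\underline{\phi}}[\varphi],
\]
with coefficients that are smooth and bounded on the slab. I would solve this to the past of $\Sigma$ in the domain of dependence region
\[
D=I^-(\Sigma)\cap\{t\geq 1\}\cap\{\abs{x}<3(1+t)\}\cap D^-(\Sigma).
\]
The geometric conditions make this region well defined and large enough to contain $\{t=1\}\cap\{\abs{x}<6\}$: we have $\Sigma\subset\{t\in(2,4),\abs{x}<15\}$, and $N(\dd t)\in[1/2,2]$ gives uniform spacelikeness.

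The estimate uses a standard energy argument with the Killing field $T$ and $\partial_x$ commutations, up to order $s_0+1$. The boundary term on $\Sigma$ is controlled by the $H^{s_0+1}\times H^{s_0}$ norm of the data. Transferring this norm requires rewriting tangential/normal derivatives on $\Sigma$ in terms of $\partial_t,\partial_x$, and this is where $\|\Sigma\|_{H^{s+n+2}}\le C$ enters: the rewriting costs products with up to $s$ derivatives of the graph function of $\Sigma$. Sobolev embedding with $s_0>n/2+1$ keeps the nonlinearity $\mathcal{N}$ perturbative, and a continuity/bootstrap argument over the finite time interval gives
\[
\norm{\varphi|_{t=1}}_{H^{s_0+2}(\BB_6)}+\norm{\partial_t\varphi|_{t=1}}_{H^{s_0+1}(\BB_6)}\le C'\epsilon_2 .
\]
The constant $C'$ depends only on $C$, $p$ and $n$.

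\emph{Step 2: persistence of higher regularity.} The same estimate, linear in the top order once lower norms are bounded, shows that the full $H^{s+2}\times H^{s+1}$ regularity persists down to $t=1$, with a constant depending on the $H^{s+1}\times H^s$ data norm. I expect a mismatch of one or two derivatives between the regularity assumed on $\Sigma$ and the one required at $\{t=1\}$. I would absorb it by adjusting the index bookkeeping, or by accepting a harmless loss that the margin in $s_0$ (and the assumed $H^{s+n+2}$ regularity of $\Sigma$) allows.

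\emph{Step 3: conclusion.} Choose $\epsilon_C$ so small that $C'\epsilon_C<\epsilon_*$. Then \cref{in:thm:main2_precise} applies to the data at $t=1$ and produces $f,\psi\in H^{s-202\kappa_p}(\BB_4)$ and the extension of $\phi$ to $\{\abs{x}<3(1+t),\ f(x)<t\le1\}$, with ODE-type blow-up on $\{t=f\}$ and scattering data $\psi$. Gluing this with the region $D$ from Step 1 yields the solution on $\M$. The remaining point is to check that $\M$ is indeed covered by the union of the two regions, which follows from the domain-of-dependence geometry.

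The main obstacle is Step 1: getting uniform estimates on a curved, merely Sobolev-regular initial surface, and tracking exactly how many derivatives of $\Sigma$ are consumed when changing frames. The analytic content is otherwise routine local well-posedness.
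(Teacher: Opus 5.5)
Your route is the paper's (its proof is the one-liner ``Cauchy stability down to $\{t=1\}$, then \cref{in:thm:main2_precise}''), but the estimate that carries your Step~1 is false. You commute with $\partial_x$ up to order $s_0+1$ and claim the flux through $\Sigma$ is controlled by the $H^{s_0+1}\times H^{s_0}$ norm of the data. From this you conclude $\|\varphi|_{t=1}\|_{H^{s_0+2}(\BB_6)}+\|\partial_t\varphi|_{t=1}\|_{H^{s_0+1}(\BB_6)}\le C'\epsilon_2$. That gains a derivative, and no energy estimate for a wave equation does this. The flux of $\partial_x^\alpha\varphi$ with $|\alpha|=s_0+1$ involves $s_0+2$ derivatives of $\varphi$ on $\Sigma$, which are finite (the data lie in $H^{s+2}\times H^{s+1}$) but not small. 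For a concrete obstruction, place a high-frequency bump on $\Sigma$ inside the future domain of dependence of $\{t=1\}\cap\BB_6$. It can be $\epsilon_2$-small in $H^{s_0+1}\times H^{s_0}$ yet of unit size in $H^{s_0+2}\times H^{s_0+1}$. The forward energy estimate from $\{t=1\}$ then shows the solution cannot be $O(\epsilon_2)$ in $H^{s_0+2}\times H^{s_0+1}$ at $t=1$. With commutations up to order $s_0$, which is all the hypothesis supports, Cauchy stability gives smallness $\lesssim_C\epsilon_2$ at $t=1$ only in $H^{s_0+1}\times H^{s_0}$. This is one derivative short of \eqref{in:eq:forward_ass}, so your Step~3 cannot invoke \cref{in:thm:main2_precise} as a black box. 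Interpolating against the $H^{s+2}\times H^{s+1}$ regularity does not help, because that norm is not assumed bounded and $\epsilon_C$ may not depend on the data.

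The mismatch has to be absorbed inside the proof of \cref{in:thm:main2_precise}, not in the Cauchy stability step. Smallness enters that proof only through \eqref{eq:stau1_data}, and \cref{prop:existence} (with $A=2$) needs it only at a level $K\ge200\kappa_p+\frac n2+4$. Smallness of the data in $H^{s_0+1}\times H^{s_0}$ gives smallness of $W-1,V^i$, which are first derivatives of $\tau=(\phi/c_p)^{-1/\alpha_p}$, in $H^{s_0}(\Shat_{\tau_1})$. Since $\kappa_p>1$, we have $s_0=202\kappa_p+\frac n2+2\ge K$. The high norm is needed only qualitatively, through your Step~2, to get $f,\psi\in H^{s-202\kappa_p}$. (If you instead read the lemma's hypothesis as $H^{s_0+2}\times H^{s_0+1}$, matching \eqref{in:eq:forward_ass}, your Step~1 is correct as it stands.) Your Step~2 remark about ``a mismatch of one or two derivatives'' gestures at this, but places it in the high-regularity bookkeeping. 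The real mismatch is in the smallness norm, which your Step~1 display asserts away. A smaller point: $\Sigma\subset\{|x|<15\}$ alone does not make $D^-(\Sigma)$ contain $\{t=1\}\cap\BB_6$, let alone $\{|x|<3(1+t)\}\cap I^-(\Sigma)$. You need $\Sigma$ to be a graph over essentially all of $\BB_{15}$ (note $3(1+4)=15$); this is the implicit intent of the statement and should be stated explicitly.
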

\begin{proof}
	This is immediate using Cauchy stability to propagate $\phi$ to (possibly a subset of) the appropriate $\{t=1\}$ hypersurface, from which one can apply \cref{in:thm:main2_precise}.
\end{proof}

Similarly, we reduce \cref{in:cor:general} to stability result localised to a small neighbourhood of the blow-up hypersurface.
\begin{lemma}\label{gen:lemma:local}
	Let $s,f,\psi,\phi,\tstar,\Mt$ be as in \cref{in:cor:general}.
    Then there exist $\delta,\epsilon>0$ such that, for all $\phi_0,\phi_1$, satisfying on $\Sigma_\delta=\{\tstar=\delta\}$
	\begin{equation}\label{gen:eq:gen_pert}
		\norm{(\phi_0-\phi)|_{\Sigma_\delta}}_{H^{s-30\kappa_p+1}(\Sigma_\delta)}+\norm{(\phi_1-\partial_t\phi)|_{\Sigma_\delta}}_{H^{s-30\kappa_p}(\Sigma_\delta)}\leq\epsilon_2<\epsilon,
	\end{equation}
	there exists $\bar{f},\bar{\psi}\in H^{s-232\kappa_p}(\BB_2)$ and a solution, $\bar{\phi}$, of \cref{in:eq:main} with initial data $(\phi_0,\phi_1)$ in $\bar\M=\{\abs{x}\leq 5(t+1/3)\}\cap \{\tstar\leq\delta\}\cap\{t>\bar{f}\}$ with $\bar{\phi}$ exhibiting ODE blow up towards $\{t=\bar{f}\}$ with auxiliary scattering data $\bar\psi$ which moreover satisfy
    \begin{equation}
        \norm{f-\bar{f},\psi-\bar\psi}_{H^{s-232\kappa_p}}\lesssim\epsilon_2.
    \end{equation}
\end{lemma}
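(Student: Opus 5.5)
We plan to reduce \cref{gen:lemma:local} to \cref{gen:lem:Cauchy} by a covering argument that uses the symmetries of \cref{in:eq:main}. Fix a point $x_0\in\BB_2$ and the corresponding point $(f(x_0),x_0)$ of the singular surface. Let $\Lambda_{x_0}$ be the Poincar\'e transformation that translates $(f(x_0),x_0)$ to the origin and boosts with velocity $\mathbf{v}=\partial f(x_0)$. After this, the tangent plane of $\{t=f\}$ at $x_0$ becomes $\{t=0\}$. We then rescale by $\lambda>0$, setting $\phi_\lambda(t,x)=\lambda^{\alpha_p}\phi(\lambda t,\lambda x)$, which preserves \cref{in:eq:main}. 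In the rescaled picture, $\{t=f\}$ becomes a graph $\{t=f_\lambda\}$ with $f_\lambda(0)=\partial f_\lambda(0)=0$ and $\|\partial^2 f_\lambda\|\lesssim\lambda$, and higher norms also scale favourably. The leading term $\phi_0=c_p(1-|\partial f|^2)^{1/(p-1)}\tstar^{-\alpha_p}$ becomes, after the boost (cf.\ \cref{not:rem:boosted_model}), $c_p t^{-\alpha_p}$ up to errors of size $O(\lambda)$. By the expansion \cref{in:eq:expansion} from \cref{in:thm:main1_precise}, the remaining terms $\tstar^{j-\alpha_p}H^{s-j}$ and $\psi\tstar^{\beta_p}$ pick up factors $\lambda^{j}$ and $\lambda^{\beta_p+\alpha_p}$ respectively. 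Hence, on a fixed spacelike hypersurface $\Sigma\subset\{t\in(2,4)\}\cap\{|x|<15\}$ in the rescaled coordinates, the data of $\phi_\lambda$ is $O(\lambda)$-close to $\underline\phi=c_pt^{-\alpha_p}$ in $H^{s_0+1}\times H^{s_0}$.

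We choose $\delta\sim\lambda$ so that, after the transformation, the image of $\Sigma_\delta$ near $x_0$ is exactly such a hypersurface $\Sigma$, with uniformly bounded $H^{s+n+2}$ norm and normal satisfying $N(\dd t)\in[1/2,2]$. The perturbation $(\phi_0,\phi_1)$ of \cref{gen:eq:gen_pert} transforms to a perturbation of size $\lesssim\lambda^{-C}\epsilon_2$. The loss from $s-30\kappa_p$ to $s_0$ derivatives is covered by the assumption $s\ge s_0+\dots$ of \cref{in:cor:general}, and we need the change of hypersurface $\Sigma_\delta\to\Sigma$ (restriction and composition with $H^{s}$ diffeomorphisms). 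Fixing $\lambda$ small first and then $\epsilon$ small, the total distance to $\underline\phi$ is below $\epsilon_C$, so \cref{gen:lem:Cauchy} applies. It yields local $f_{x_0},\psi_{x_0}\in H^{s-202\kappa_p}$ on the image of a neighbourhood of $x_0$, together with ODE blow-up on $\{t=f_{x_0}\}$.

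Next we undo the symmetries and cover $\BB_2$ by finitely many such neighbourhoods; compactness makes $\lambda,\delta$ uniform. On overlaps, the local blow-up surfaces coincide, since they are both the boundary of the maximal development of the same solution. The local $\psi$'s coincide by the uniqueness part of \cref{in:thm:main1_precise}, since the data is determined as the $\tstar^{\beta_p}$ coefficient. Patching gives $\bar f,\bar\psi$ on $\BB_2$ and the solution $\bar\phi$ on $\bar\M$. The domain $\bar\M$ is covered by the union of the local domains $\{|x|<3(1+t)\}$ transformed back, once $\delta$ is small relative to the cone aperture $5$. The regularity drop from $s$ to $s-232\kappa_p$ accounts for the $30\kappa_p$ loss in the data plus the $202\kappa_p$ loss in \cref{in:thm:main2_precise}.

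For the closeness estimate, we apply the difference bound \cref{in:eq:sing_Cauchy} locally, comparing $\bar\phi$ with $\phi$ itself. The unperturbed $\phi$ also satisfies the hypotheses, so by uniqueness its output is $(f,\psi)$. This gives $\|f-\bar f,\psi-\bar\psi\|\lesssim\lambda^{-C}\epsilon_2$ on each patch, and summing over the finite cover gives the claim. We expect the main obstacle to be the second step: checking quantitatively that, after boost and rescaling, the restriction of $\phi$ to $\Sigma$ is genuinely close to $\underline\phi$ in high norms. This requires controlling the $\mathfrak{e}$ remainder and the $\tstar^{j}H^{s-j}$ terms of \cref{in:eq:expansion} uniformly in $\lambda$, and tracking the derivative counts. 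A secondary point is making sure the local ``time of existence'' does not degenerate, which is handled by the normalisation \cref{in:eq:cor_assumptions}.
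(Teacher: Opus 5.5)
Your proposal is essentially the paper's proof. Both arguments boost and translate so that a point of the singular surface sits at the origin with vanishing gradient, then rescale at a scale comparable to $\delta$ so that the data induced by $\phi$ on the rescaled $\Sigma_\delta$ is $O(\delta)$-close to $c_p t^{-\alpha_p}$ (the paper's \eqref{gen:eq:bound1}--\eqref{gen:eq:bound2}), and then apply \cref{gen:lem:Cauchy} together with \eqref{in:eq:sing_Cauchy} and patch the local solutions by uniqueness.

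The only difference is that the paper does not assume the local domains cover all of $\bar{\M}$. It treats the leftover part of $\bar{\M}$, away from the singular surface, by plain Cauchy stability, and that is also the simplest way to justify your covering claim: the ratio of cone apertures is scale-invariant, so taking $\delta$ small does not by itself give the cover.
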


Before proving \cref{gen:lemma:local}, we use it to conclude our result:
\begin{proof}[Proof of \cref{in:cor:general} assuming \cref{gen:lemma:local}]
    Let $\bar\M_{\mathrm{large}},\bar\M_{\mathrm{small}}$ be the existence region in \cref{in:cor:general} and \cref{gen:lemma:local} respectively.
    See \cref{fig:cor1.6} for an illustration.
    Note that $\bar\M_{\mathrm{large}}\setminus \bar\M_{\mathrm{small}}$ does not contain the singular surface for $\phi$, and thus $\phi$ and suitable derivatives are bounded in this region.
    Therefore, from Cauchy stability, it follows that \cref{in:eq:gen_pert} implies \cref{gen:eq:gen_pert} for $\epsilon_1$ sufficiently small.
    Thus, we can apply \cref{gen:lemma:local} to obtain the stability in the restricted region $\bar\M_{\mathrm{large}}\cap\bar\M_{\mathrm{small}}$, which yields the result.
\end{proof}

\begin{figure}
    \centering
    
\begin{tikzpicture}

\begin{axis}[
    axis lines=none,
    domain=-3:3,
    samples=100,
    width=15cm,
    height=9cm,
    xmin=-2.5, xmax=2.5,
    ymin=-0.5, ymax=1,
    thick
]

    \fill [lightgray, opacity=0.4]
        plot[domain = -2:-1.2] ({\x}, {0.5*(\x+2)})
        -- plot[domain = -1.2:1.3] ({\x}, {(1/200)*(\x-6)*(\x-2)*(\x+2) + 0.3})
        -- plot[domain = 1.3:2] ({\x}, {-0.5*(\x-2)})
        -- plot[domain = 2:-2] ({\x}, {(1/200)*(\x-6)*(\x-2)*(\x+2)})
        -- cycle;

    \fill [pattern=north east lines, pattern color=gray]
        plot[domain = -1.5:1.4] ({\x}, {(1/200)*(\x-6)*(\x-2)*(\x+2) + 0.15})
        -- (0.9, 0.1)
        .. controls (0.3, 0.15) and (-0.2, 0.08) .. (-0.8, 0.08)
        -- cycle;
        ;

    \addplot[domain=-2:-1.2] {0.5*(x+2)} node[above left, midway] {\small {slope} $\frac{1}{2}$};
    \addplot[domain=1.3:2] {-0.5*(x-2)} node[above right, midway] {\small $\mathcal{C}_1$};

    \addplot[domain=-1.5:0] {-0.2*x -0.08} node[below left, midway] {\small slope $\frac{1}{5}$};
    \addplot[domain=0:1.4] {0.2*x - 0.08} node[below right, midway] {\small $\mathcal{C}_2$};

    \addplot[black, dotted, very thick] {(1/200)*(x-6)*(x-2)*(x+2)};
    \addplot[black, dashed, domain=-1.6:1.6] {(1/200)*(x-6)*(x-2)*(x+2) + 0.15} node[above, midway] {\small $\Sigma_{\mathbf{t}_1/2}$};
    \addplot[black, dashed, domain=-1.1:1] {(1/200)*(x-6)*(x-2)*(x+2) + 0.05} node[above, midway] {\small $\Sigma_{\delta}$};
    \addplot[black, very thick, domain=-1.2:1.3] {(1/200)*(x-6)*(x-2)*(x+2) + 0.3} node[above, midway] {\small $\Sigma_{\mathbf{t}_1}$, slope $<\frac{1}{10}$};

    \draw[red, thick] (0.75, 0.05) -- (1.05, 0.05) -- (1.05, 0.17) -- (0.75, 0.17) -- (0.75, 0.05);

    \node at (axis cs:1.5,-0.05) {\small $\Sigma_0$};
\end{axis}
\end{tikzpicture}

\captionsetup{justification = centering}
\caption{The solution constructed in \cref{in:thm:main1_precise} exists in the region bounded by $\mathcal{C}_1$, $\Sigma_{\tstar_1}$ and $\Sigma_0$ (indicated in gray), while the perturbed solution described in \cref{in:cor:general} will be constructed between $\Sigma_{\tstar_1/2}$, $\mathcal{C}_2$ and a new ODE blow-up hypersurface (indicated by the hatching). We have also indicated the slopes of the cones in the diagram. The red boxed region will be detailed in \cref{fig:zoomed} below.}
\label{fig:cor1.6}
\end{figure}
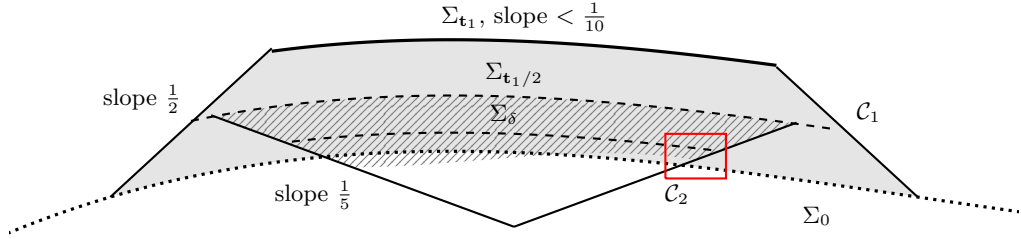


\begin{figure}
    \centering
    \makebox[\textwidth][c]{
\begin{tikzpicture}[scale=0.7]

\begin{axis}[
    axis lines=none,
    domain=-6:6,
    samples=200,
    width=14cm,
    height=10cm,
    xmin=-7, xmax=7,
    ymin=-3, ymax=7,
    thick]

    \fill [pattern=north east lines, pattern color=gray]
        (-6, 4.6) .. controls (0, 3.2) .. (4.6, 3.1)
        -- (0, 0.2)
        .. controls (-3, 0.3) .. (-6, 1.2)
        -- cycle;

    \draw[black, dotted, very thick] (-6, 1.6) .. controls (0, 0.2) .. (6, 0.0)
        node[below, pos=0.7] {$\Sigma_0$};
    \draw[black, dashed] (-6, 4.6) .. controls (0, 3.2) .. (6, 3.0)
        node[above, midway] {$\Sigma_{\delta}$};

    \draw[black, thick] (6, 4.0) -- (-2, -1.0)
        node[below right=-0.1cm, pos=0.5] {$\mathcal{C}_2$, slope $1/5$};

    \node [circle, draw, fill=red, inner sep=0.01mm] (p) at (-1.6, 0.6) { . };
    \node [below = 1mm of p] {\color{red} $p$};
    \node at (0, 6) {Standard $(t, x)$ coordinates};
    \node at (0, -2) {};
\end{axis}
\end{tikzpicture}%
\hspace{1em}%
\begin{tikzpicture}[scale=0.7]

\begin{axis}[
    axis lines=none,
    domain=-6:6,
    samples=200,
    width=14cm,
    height=10cm,
    xmin=-7, xmax=7,
    ymin=-3, ymax=7,
    thick]

    \fill [pattern=north east lines, pattern color=gray]
        (-6, 3.1) .. controls (0, 2.8) .. (5, 4.15)
        -- (0.05, -0.2)
        .. controls (-3, -0.5) .. (-6, -0.1)
        -- cycle;
    
    \fill [pattern=north west lines, pattern color=gray]
        (-2.6, 0.83) -- (-0.6, 0.83)
        -- (0.65, 3.10)
        .. controls (-1.6, 2.9) .. (-3.8, 3.0) 
        -- cycle;

    \draw[black, dotted, very thick] (-6, 0.1) .. controls (0, -0.2) .. (6, 1.4)
        node[below, pos=0.7] {$\Sigma_0$};
    \draw[black, dashed] (-6, 3.1) .. controls (0, 2.8) .. (6, 4.4)
        node[above, midway] {$\Sigma_{\delta}$};

    \draw[black, thick] (6, 5.0) -- (-2, -2.0)
        node[below right=-0.1cm, pos=0.4] {$\mathcal{C}_2$, slope $\in (1/10, 3/10)$};

    \draw[black, very thick] (-1.6, -1) -- (1.4, 4.5);
    \draw[black, very thick] (-1.6, -1) -- (-4.6, 4.5)
        node[below left, pos=0.45] {slope $1/3$};
    \draw[black, very thick] (-2.6, 0.83) -- (-0.6, 0.83);

    \node [circle, draw, fill=red, inner sep=0.01mm] (p) at (-1.6, -0.05) { . };
    \node [below = 1mm of p] {\color{red} $p$};
    \node at (0, 6) {Boosted coordinates};
    \node at (-1.6, 2) {$\mathcal{M}_{p}^{1/4}$};
\end{axis}
\end{tikzpicture}
}
\captionsetup{justification = centering}
\caption{On the left, we provide a zoom into the red boxed region of \cref{fig:cor1.6} where \cref{gen:lemma:local} is proved. On the right, we illustrate the same region in Lorentz boosted coordinates, so that $\Sigma_0$ has vanishing gradient at $p \in \Sigma_0$. We apply Cauchy stability in the cross hatched region $\mathcal{M}_p^{1/4}$, while stability in the cone below the region follows from \cref{in:thm:main2_precise}, as described below.}
\label{fig:zoomed}
\end{figure}
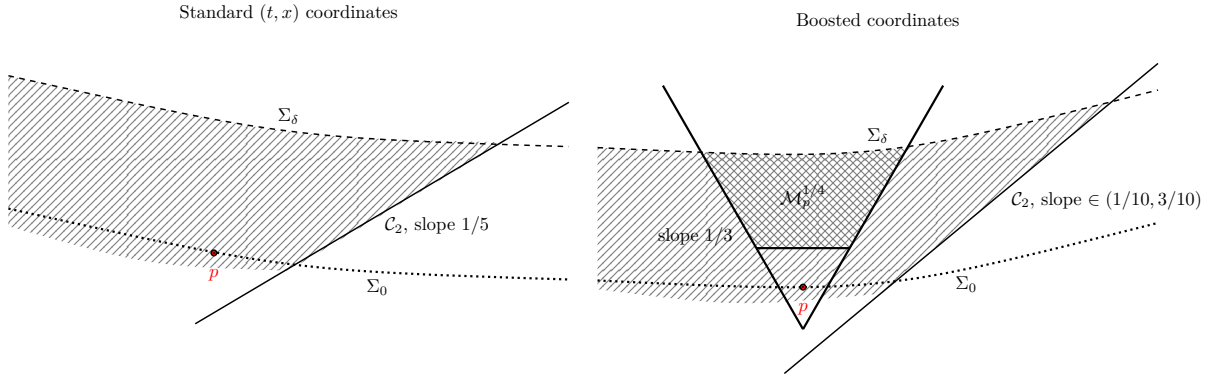
%
%
For the proof of \cref{gen:lemma:local}, we will apply the following scaling symmetry for solutions of the equation \eqref{in:eq:main}.

\begin{definition} \label{def:rescale}
    Let $\phi: \mathcal{D} \to \R$ be a solution of the equation \eqref{in:eq:main}. Then, for any $\delta > 0$, the \emph{$\delta$-rescaling of $\phi$} is defined as the function $\phi_{\delta}: \delta^{-1} \mathcal{D} \to \R$ defined as:
    \begin{equation}\label{gen:eq:scaling}
        \phi_\delta(\bar{t}, \bar{x})=\delta^{\alpha_p}\phi(\bar{t}\delta, \bar{x}\delta).
    \end{equation}
    It is straightforward to check that $\phi_{\delta}$ is also a solution of \cref{in:eq:main}.
\end{definition}

\begin{proof}[Proof of \cref{gen:lemma:local}]
	Let us write $\underline{\M}=\{\abs{x}\leq 5(t+1/3)\}\cap \{\tstar<\delta\}$ and $\underline{\Sigma}_0=\underline{\M}\cap\Sigma_0$ be the portion of the singularity for $\phi$ restricted to this region.
	
    \vspace{0.5em}
	\emph{Step 1 (Geometry):}
	Let $p\in\underline{\Sigma}_0$ be a point on the singularity.
    Using a Lorentz boost, of size at most $|\mathbf{v}| \leq \sup |\partial f| <1/10$, and spacetime translation symmetries, one may assume without loss of generality that $p = (0, 0)$ with $f(0)=\partial f(0)=0$.
	This transformation changes the $\Hb$ norms of $\phi$ given in \cref{in:eq:expansion} at most by a constant factor depending on $\abs{\mathbf{v}}$ and $s$.
    In the new coordinates it still holds that $\abs{\partial f }<1/5$.
	
	We consider the region $\M^{a}_p=\{\abs{x}<3(\delta/10+t)\}\cap \{\tstar<\delta\}\cap\{t>a\delta\}$, for values $-1 \leq a \leq 1$.
	From \cref{fig:zoomed}, it is clear that $\M^{-1}_p\subset \underline{\M}$, thus we have an explicit control of $\phi$ in $\M^{-1}_p$ provided by the asymptotic expansion \cref{in:eq:expansion} within $\underline{\M}$.
	Furthermore, it also holds that $t\sim \delta$ in $\M^{1/4}_p$.
	
    \vspace{0.5em}
	\emph{Step 2 (Scaling):}
    Let us now consider the $\delta$-rescaling of $\phi$, and, as in \cref{def:rescale}, use rescaled coordinates $\bar{t}$ and $\bar{x}$ to describe the domain of $\phi_{\delta}$. From Step 1, it follows that in the rescaled $\M^{1/4}_p$, it holds that $\abs{\bar{x}}\lesssim 1$ and $\bar{t}\sim 1$.
    Using \cref{gen:eq:scaling} and \eqref{in:eq:expansion}, we obtain that
	\begin{equation}\label{gen:eq:rescaled}
		\phi_\delta(\bar{t},\bar{x})=\underbrace{\delta^{\alpha_p}c_p(\bar{t}\delta-f(\bar{x}\delta))^{-\alpha_p}(1-\abs{\partial f(\delta \bar{x})}^2)^{\alpha_p/2}}_{\phi_{\delta,\textrm{m}}}+\mathring{\phi}_\delta.
	\end{equation}
    Here $\phi_{\delta, \mathrm{m}}$ arises from the leading term $\phi_0$ in \eqref{in:eq:expansion}, and the remaining terms $\mathring{\phi}_\delta$ satisfy, for sufficiently small $\delta$, the bound
	\begin{equation} 
		\norm{\delta^{-\alpha_p}(\bar{t}\delta-f(\bar{t}\delta))^{\alpha_p-1}\{1,\bar{t}\partial_{\bar{t}},\partial_{\bar{x}}\}^{s-30\kappa_p}\mathring{\phi}_\delta}_{L^2(\M^{1/4}_p)}\lesssim_{f,\psi} 1,
    \end{equation}
    which rearranges to
    \begin{equation} \label{gen:eq:bound1}
		\norm{\{1,\bar{t}\partial_{\bar{t}},\partial_{\bar{x}}\}^{s-30\kappa_p}\mathring{\phi}_\delta}_{L^2(\M^{1/4}_p)}\lesssim_{f,\psi}\delta.
    \end{equation}

	Next, we analyse the main part $\phi_{\delta,\textrm{m}}$ of $\phi_\delta$ by expanding $f$ in $\M^{1/4}_p$ to get the estimates
	\begin{equation}
        \norm{\{1,\partial_{\bar{x}}\}^{s-30\kappa_p}\partial_{\bar{x}} f(\bar{x}\delta)}_{L^2(\M^{1/4}_p)}\lesssim \delta, \qquad
		\norm{\{1,\partial_{\bar{x}}\}^{s-30\kappa_p} f(\bar{x}\delta)}_{L^2(\M^{1/4}_p)} \lesssim \delta^2.
    \end{equation}
	Therefore, we may formally expand $\phi_{\delta,\textrm{m}}$ as
	\begin{equation}
		\phi_{\delta,\textrm{m}}=c_p(\bar{t}-f(\bar{x}\delta)/\delta)^{-\alpha_p}(1-\abs{\partial f(\bar{x}\delta)}^2)^{\alpha_p/2}=c_p \bar{t}^{-\alpha_p}(1-\bar{t}^{-1}\mathcal{O}(\delta))^{-\alpha_p}(1-\mathcal{O}(\delta))^{\alpha_p/2}.
	\end{equation}
    Thus, in the region $\M^{1/4}_p$ where $|\bar{x}|, \bar{t}, \bar{t}^{-1} \lesssim 1$, one can use suitable Sobolev estimates to derive that
	\begin{equation}\label{gen:eq:bound2}
		\norm{\{1,\bar{t}\partial_{\bar{t}},\partial_{\bar{x}}\}^{s-30\kappa_p}\Big(\phi_{\delta,\textrm{m}}-c_p\bar{t}^{-\alpha_p}\Big)}_{L^2(\M^{1/4}_p)}\lesssim_f\delta.
	\end{equation}
	
    \vspace{0.5em}
    \emph{Step 3 (Local stability):}
	Let $\epsilon_{s, C}$ be the small constant from \cref{gen:lem:Cauchy}, and one may fix $C = 10^4$ for concreteness. 
    Let us note that by \cref{in:eq:cor_assumptions} the hypersurface $\Sigma_\delta=\{\tstar=\delta\}=\{t-f(x)=\delta\}$ is $H^s$ regular.
    Via a trace theorem, the bounds \cref{gen:eq:bound1,gen:eq:bound2} are sufficient to conclude that, for $\delta$ sufficiently small, it holds that
	\begin{equation}
        \norm{(\phi_\delta-c_p\bar{t}^{-\alpha_p})}_{H^{s_0-30\kappa_p-1}(\Sigma_\delta)}+\norm{(\partial_t\phi_\delta+\alpha_pc_p\bar{t}^{-1-\alpha_p})}_{H^{s_0-30\kappa_p-2}(\Sigma_\delta)}\leq \frac{\epsilon_{ 10^4}}{2}.
	\end{equation}
	By assumption, as in \cref{gen:eq:gen_pert}, the initial data of $\bar{\phi}$ is $\epsilon_2$ close to that of $\phi$.
    Choosing $\epsilon_2<\frac{\epsilon_{s, 10^4}}{2}$ we obtain from \cref{gen:lem:Cauchy} that $\bar{\phi}$ forms an ODE blow-up in $\M^{-1}_p\cap\{\bar{t}>\bar{f}(x)\}$ for some $\bar{f}\in H^{s-232\kappa_p}$.
	Furthermore from \cref{in:eq:sing_Cauchy} we also have the bound
	\begin{equation}
		\norm{f-\bar{f}}_{H^{s_0-232\kappa_p}}\lesssim\epsilon_2,
	\end{equation}
	together with the propagation of higher order norm.
    
	\emph{Step 4 (Global stability):}
    Using uniqueness of the solution, we may patch together the different $\bar{\phi}$ generated in Steps 1 to 3, to generate a solution $\bar{\phi}$ in a region $\bigcup_{p\in\underline{\Sigma}_0}\M^{-1}_p$.
	In the rest of the region, we can apply Cauchy stability to conclude that the solution remains bounded.
\end{proof}

We can apply the same scaling argument to propagate regularity all the way to the singularity:

\begin{proof}[Proof of \cref{in:cor:smoothness}]
    Since the smoothness is a local statement, pick a point $p\in \Sigma_0$ and assume without loss of generality that $f(p)=\partial f(p)=0$.
    Consider $\mathcal{M}_p^\delta=\{\abs{x}<3(\delta/10+t)\}\cap \{\tstar<\delta\}\cap\{t>\delta/4\}$ for $\delta$ sufficiently small to be fixed later.
    
    First, let us notice that $\phi$ is $H^s$ regular in $\mathcal{M}_p^\delta$ from propagation of regularity for \cref{in:eq:main}.
    That is, for any spacelike hypersurface $S\subset\mathcal{M}^\delta_p$ it holds that $(\phi,\partial_t\phi)\in H^{s+1}(S)\times H^{s}(S)$.
    
    As in \emph{Step 2} of the proof of \cref{gen:lemma:local}, we may study the rescaled solution $\phi_\delta$ and the corresponding error $\mathring{\phi}_\delta$ as in \cref{gen:eq:rescaled}.
    The same argument as there implies that the for $\delta$ sufficiently small, we may use \cref{gen:lem:Cauchy} to conclude that $f\in H^{s-202\kappa_p}$.
\end{proof}

Finally, we can use the more restrictive assumption of $f\in C^{1,\alpha}$ to obtain the same propagation of regularity:

\begin{proof}[Proof of \cref{in:cor:alpha}]
    The strategy of this proof will be show that, by comparing to a smoother ansatz, the weak asymptotic expression \eqref{eq:weak_asymptotics} together with the assumed smoothness away from the singularity is enough to meet the requirements of \cref{in:thm:main2_precise}, upon using the scaling and Poincar\'e symmetries of the equation. As we will see, it is enough that the a priori $C^{1, \alpha}$ regularity of $f$ and \eqref{eq:weak_asymptotics} is ``better than scaling''.
    
    \vspace{0.5em}
    \emph{Step 1 (Time coordinate $\zeta$):}
    In order to construct our smoother ansatz, we begin by smoothing out the boundary defining function of the singularity $\Sigma = \{ \tstar = 0 \} \cap \mathcal{M}$.
	Pick some non-zero positive $\psi\in C^\infty_c(\M)$ supported away from $\Sigma$ in $\tilde{\BB}=\{t\geq f(x), t^2+\abs{x}^2\leq 10\}$ and  consider the solution of the (elliptic) Dirichlet problem 
	\begin{equation}
        (\partial_t^2+\delta^{ij}\partial_{x^i}\partial_{x^j})\zeta=\psi,\qquad
		\zeta|_{\partial\tilde{\BB}}=0.
	\end{equation}
    From \cite[Theorem 8.33, Theorem 8.34]{gilbarg_elliptic_2001}, we know that a unique solution $\zeta$ exists and satisfies $\zeta\in C^{1,\alpha}(\tilde{\BB})$. We note the following consequences of elliptic theory:
    \begin{enumerate}[(i)]
        \item
            By interior regularity and the strong maximum principle, $\zeta \in C^{\infty}(\mathring{\tilde{\BB}})$ and $\zeta > 0$ in $\mathring{\tilde{\BB}}$.
        \item
            By Hopf's lemma, it follows that $\partial_t \zeta > 0$ on the past boundary $\partial \tilde{\BB} \cap \{ \tstar = 0 \}$. Further, using that $\{ \tstar = 0 \} = \{ t = f(x) \}$ is spacelike, we have that, in some neighborhood $\tilde{\mathcal{M}} = \mathcal{M} \cap \{ \tstar \leq \tstar_0 \} \cap \{ \zeta \leq \zeta_0 \}$ with $\tstar_0, \zeta_0$ sufficiently small, one has
            \[
                \eta^{-1}( \mbox{d} \zeta, \mbox{d} \zeta ) \leq - \frac{(\partial_t \zeta)^2}{2} < 0,
            \]
            so $\zeta$ acts as a time function in $\tilde{\mathcal{M}}$ which synchronizes the singularity at $\{ \zeta = 0 \}$.

        \item
            Assuming, moreover, that $\supp \psi \cap \tilde{\mathcal{M}} = \emptyset$, we derive higher order $L^{\infty}$ estimates using the following elliptic estimate: for some $p \in \tilde{\mathcal{M}}$ with $r = d(p, \Sigma)$,
            \[
                \| \partial^{k+1} \zeta \|_{L^{\infty}(\BB_{r/4}(p))} \lesssim r^{-k} \| \partial \zeta - \partial \zeta(p) \|_{L^{\infty}(\BB_{r/2}(p))} \lesssim r^{\alpha - k},
            \]
            where in the final step we used the $C^{1,\alpha}$ regularity of $\zeta$ in $\tilde{\BB}$. Using that $r \sim \zeta$ in $\tilde{\mathcal{M}}$, one has
	        \begin{equation}\label{gen:eq:boundary_def}
                \zeta,\partial\zeta\in L^\infty(\M), \quad \text{and } \zeta^{k-\alpha-1}\partial^{k}\zeta\in L^\infty (\M) \quad \forall k\geq2.
	        \end{equation}
    \end{enumerate}
	From propagation of regularity for the wave equation, it follows that the solution $\phi$ is $H^s$ regular up to $\{\zeta=\zeta_0/2\}$.
	
    \vspace{0.5em}
	\emph{Step 2 (Smooth ansatz):}   
    Let us define $\phi_0 \coloneqq c_p\zeta^{-\alpha_p}\abs{\eta^{-1}(\dd\zeta,\dd\zeta)}^{\frac{1}{p-1}}$. 
    Note that the definition of $\phi_0$ matches that of $\tilde{\phi}_0$ with $\tstar$ in place of $\zeta$, since $\eta^{-1}(\dd\tstar,\dd\tstar)=1-\abs{\partial f}^2$.
    Indeed, one easily checks that $\lim_{\tstar\to0}\phi_0/\tilde{\phi}_0=1$.
    We will now compute $\Box \phi_0$ in the region $\tilde{\mathcal{M}}$ as follows.
    \begin{align}
        \Box \phi_0 
        &= - \alpha_p c_p \nabla_{\mu} \left( \zeta^{-\alpha_p - 1} \nabla^{\mu} \zeta | \eta^{-1}( \mbox{d} \zeta, \mbox{d} \zeta )|^{\frac{1}{p-1}} 
        + \zeta^{-\alpha_p} |\eta^{-1}(\mbox{d}\zeta, \mbox{d}\zeta)|^{-\frac{p-2}{p-1}} \partial \zeta * \partial^2 \zeta \right) \nonumber \\[0.5em]
        &= - \alpha_p (\alpha_p + 1) c_p \zeta^{- \alpha_p - 2} |\eta^{-1} (\mbox{d} \zeta, \mbox{d} \zeta)|^{\frac{p}{p-1}} + \zeta^{-\alpha_p - 1} |\eta^{-1}(\mbox{d} \zeta, \mbox{d} \zeta)|^{- \frac{p-2}{p-1}} \partial \zeta * \partial \zeta * \partial^2 \zeta \nonumber \\[0.5em]
        &\hspace{2cm} + \zeta^{- \alpha_p} \left( |\eta^{-1}(\mbox{d} \zeta, \mbox{d} \zeta)|^{-\frac{p-2}{p-1}} \partial^3 \zeta * \partial \zeta + |\eta^{-1}(\mbox{d} \zeta, \mbox{d} \zeta)|^{- \frac{2p-3}{p-1}} \partial \zeta * \partial \zeta * \partial^2 \zeta * \partial^2 \zeta \right) \nonumber \\[0.5em]
        &= - \phi_0^p + O(\zeta^{-\alpha_p - 2 + \alpha}),
    \end{align}
    where in the final step we make use of \eqref{gen:eq:boundary_def} and that $|\eta^{-1}(\mbox{d} \zeta, \mbox{d} \zeta)| \sim 1$ in $\tilde{\mathcal{M}}$. In fact, one can take more derivatives to find that, for any multiindex $\beta$ (including both time and spatial derivatives), 
    \begin{equation} \label{gen:eq:forcing_estimate}
        \partial^{\beta} P[\phi_0] = O(\zeta^{- \alpha_p - 2 + \alpha - |\beta|}).
    \end{equation}

    Let us next write $\mathring{\phi} \coloneqq \phi-\phi_0$. To estimate this in $L^{\infty}$, we note first that since $\zeta = 0$ at $\{ t = f(x) \}$ and $\zeta$ is $C^{1, \alpha}$ in $\tilde{\mathcal{M}}$, one must have that 
    \[
        \partial_{x^i} \zeta = - \partial_{x^i} f \partial_t \zeta + O(\zeta^{\alpha}), \quad \zeta = (t - f(x)) \partial_t \zeta + O(\zeta^{1 + \alpha}),
    \]
    from which we get that $ \frac{\zeta}{\sqrt{-\eta^{-1}(\mbox{d}\zeta, \mbox{d}\zeta)}} = \frac{\tstar}{\sqrt{1 - |\partial f|^2}} + O(\zeta^{1 + \alpha})$.
    Putting this to the $- \alpha_p$th power, we have $\phi_0 - \tilde{\phi}_0 = O(\zeta^{-\alpha_p + \alpha})$, or equivalently
    \begin{equation}\label{gen:eq:phi_ring_sharp}
        \mathring{\phi}\in \zeta^{-\alpha_p+\alpha}L^\infty(\M)
    \end{equation}

    \vspace{0.5em}
    \emph{Step 3 (High regularity estimates for $\mathring{\phi}$):}
    Consider the following equation for $\mathring{\phi}$:
	\begin{equation}\label{gen:eq:phi_ring}
		\Box\mathring{\phi}=\mathcal{N}_{\phi_0}[\mathring{\phi}]+ \phi_0^{p-1}\mathring{\phi}-P[\phi_0].
	\end{equation}
	We claim that for all multi-indices $\abs{\beta}\leq s$, including both $t$ and $x$ derivatives,
	\begin{equation}\label{gen:eq:phi_ring_uniform}
        \zeta^{\alpha_p-1/2-\alpha/2+\abs{\beta}}\partial^\beta \mathring{\phi}\in L^2(\M).
	\end{equation}
	
    Let us recall the definition of the energy momentum tensor $\T_{\mu\nu}[\Phi]$ from \cref{eq:energymomentum}, and define the current $\J^\mu[\Phi]=\zeta^{q}(\eta^{-1})^{\mu\nu}\T_{\nu\sigma}[\Phi]T^\sigma$, where $T = \partial_{t}$ is the usual time translation vector field in Minkowski space. 
	As in \cref{back:lem:energy_est}, we obtain that 
	\[
        \mathrm{div} \J[\mathring{\phi}]=q\zeta^{q-1}\T_{\nu\sigma}[\mathring{\phi}] T^{\sigma}(\eta^{-1})^{\mu\nu}(\dd\zeta)_\mu+\zeta^{q}\partial_t\mathring{\phi} \left(\Box\mathring{\phi}\right).
	\]
    Using the bounds on $\zeta$ in $\{\zeta<\zeta_0\} \subset \tilde{\mathcal{M}}$, we get that the first term is negative coercive with a bound
	\[
        \partial_t\zeta \zeta^{q-1} \left( (\partial_t\mathring{\phi})^2+\delta^{ij}\partial_{x^i}\mathring{\phi}\partial_{x^j} \mathring{\phi}\right) \leq-10\zeta^{q-1}\T_{\nu\sigma}[\mathring{\phi}]T^{\sigma}(\eta^{-1})^{\mu\nu}(\dd\zeta)_\mu\leq100 \partial_t\zeta \zeta^{q-1} \left( (\partial_t\mathring{\phi})^2+\delta^{ij}\partial_{x^i}\mathring{\phi}\partial_{x^j} \mathring{\phi}\right).
	\]
	
	Let us set $q=2\alpha_p+2-\alpha$ and define $\Sigma_{\zeta_1}=\{\zeta=\zeta_1\}\cap\M$.
    We apply the divergence theorem in $\D_{\zeta_1,\zeta_2}=\tilde{\M}\cap\{\zeta\in(\zeta_1,\zeta_2)\}$ for $0<\zeta_1<\zeta_2\leq \zeta_0$ to obtain
	\begin{equation}
		\int_{\Sigma_{\zeta_2}}\J^\mu[\mathring{\phi}] \geq \int_{\Sigma_{\zeta_1}}\J^\mu[\mathring{\phi}]-\int_{\D_{\zeta_1,\zeta_2}}\mathrm{div}\J[\mathring{\phi}].
	\end{equation}
	Applying Cauchy-Schwarz, we obtain that 
	\begin{equation}
		\int_{\D_{\zeta_1,\zeta_2}} \zeta^q\partial_t\mathring{\phi}\Box\mathring{\phi}\lesssim 10^{-2}\int_{\D_{\zeta_1,\zeta_2}} \zeta^{q-1}(\partial_t\mathring{\phi})^2\abs{\partial_t\zeta} +100\int_{\D_{\zeta_1,\zeta_2}} \zeta^{q+1}\abs{\partial_t\zeta}^{-1}(\Box\mathring{\phi})^2.
	\end{equation}
    Using the a priori $\mathring{\phi}, \zeta^2 P[\phi_0] \in \tstar^{-\alpha_p+\alpha}L^\infty(\M)$ estimates from Step 2, and using that $|\mathcal{N}_{\phi_0} f| \lesssim \zeta^{-2 + \alpha_p} |f|^2$, we derive that \cref{gen:eq:phi_ring_uniform} holds for $\abs{\beta}=1$.
	
	Commuting \cref{gen:eq:phi_ring} with $\partial\in\{\partial_t,\partial_x\}$, we obtain
	\begin{equation}
		\Box\partial\mathring{\phi}=\phi_0^{p-1}\partial\mathring{\phi}+\mathring{\phi}\partial\phi_0^{p-1}+\partial\mathcal{N}_{\phi_0}[\mathring{\phi}]-\partial P[\phi_0].
	\end{equation}
	Using \cref{gen:eq:boundary_def} we have the uniform estimates $\zeta^{2+\abs{\beta}}\partial^{\beta}\phi_0^{p-1}\in L^\infty(\M)$ for all $\abs{\beta}$
	and in particular
	\begin{equation}
		\int_{\D_{0,\zeta_1}}\zeta^{q+1}\left(\mathring{\phi}\partial\phi_0^{p-1}\right)\lesssim\int_{\D_{0,\zeta_1}} \zeta^{q-3}\mathring{\phi}^2.
	\end{equation}
    Note that, from \eqref{gen:eq:forcing_estimate}, we also have $\zeta^{\alpha_p + 1 - \frac{\alpha}{2} +1}\partial P[\phi_0]\in L^2(\M)$.
	Using the same energy estimate as before with $q = 2 \alpha_p + 4 - \alpha$ yields \cref{gen:eq:phi_ring_uniform} for $k=2$.
    The general case follows similarly by induction, together with known estimates on $\partial^{\beta} \phi_0$ and $\partial^{\beta} P[\phi_0]$ from Steps 1 and 2.
	
    \vspace{0.5em}
	\emph{Step 4 (Conclusion):}
    Finally, we apply Cauchy stability, scaling and local existence steps as in the proof of \cref{gen:lemma:local}. Let $p$ be a point on the singularity, and, for $\delta$ small, consider the region $\M^{1/4}_p \subset \tilde{\M}$ as in \cref{gen:eq:bound2}.
    Let us define $\phi_\delta$ as the $\delta$-rescaling of $\phi$ as in \cref{def:rescale}, and split it as
	\begin{equation}
		\phi_\delta(\bar{t},\bar{x})=\delta^{\alpha_p}\phi_0(\delta \bar{t},\delta\bar{x})+\mathring{\phi}_\delta.
	\end{equation}
    The bounds \cref{gen:eq:phi_ring_sharp,gen:eq:phi_ring_uniform} together with interpolation imply, for any $k \geq 0$, the following estimate (recall that $\bar{t} \sim 1$ and $\zeta \sim \delta$ in $\M_p^{1/4}$):
	\begin{multline}
        \norm{\{1, \bar{t} \partial_{\bar{t}}, \partial_{\bar{x} }  \}^k\mathring{\phi}_{\delta}}_{L_{\bar{x},\bar{t}}^2(\M^{1/4}_p)}\lesssim \norm{\{1, \bar{t} \partial_{\bar{t}}, \partial_{\bar{x} }  \}^{k'+k}\mathring{\phi}_{\delta}}_{L_{\bar{x},\bar{t}}^2(\M^{1/4}_p)}^{\frac{k}{k'+k}}\norm{\mathring{\phi}_{\delta}}_{L_{\bar{x},\bar{t}}^2(\M^{1/4}_p)}^{\frac{k'}{k'+k}}
        \\
        \lesssim_{k,k',\zeta}  \delta^{\frac{k}{k'+k}(\alpha/2-\frac{n}{2})+\frac{k'}{k'+k}\alpha}\leq \delta^{\alpha/4},
	\end{multline}
    where we choose $k'$ such that $\frac{nk}{2(k'+k)}\leq\alpha/4$ in order to negate the $\frac{n}{2}$ loss in the first factor of the second line from the changed volume form.
    Using the assumption on $s$, we can in particular take $k=200\kappa_p+5+\frac{n}{2}$ to obtain $\delta^{\alpha/4}$ smallness.
    
	Similar estimates apply to the leading order part; from the estimates of Steps 1 and 2 we get the following variant of \cref{gen:eq:bound2}: for all $k\geq0$
	\begin{equation}
        \norm{\{1,\bar{t}\partial_{\bar{t}}, \partial_{\bar{x}}\}^{k}\Big(\delta^{\alpha_p} \phi_0(\delta \bar{t}, \delta \bar{x}) - c_p\bar{t}^{-\alpha_p}\Big)}_{L_{\bar{x},\bar{t}}^2(\M^{1/4}_p)}\lesssim \delta^{\alpha/4}.
	\end{equation}
	Via \cref{gen:lem:Cauchy} we first obtain the improvement $f\in H^{5+\frac{n}{2}}$ with $\delta^{\alpha/4}$ size control, and the $f\in H^{s-200\kappa_p}$ norm is propagated irrespective of smallness. 
    Here $s$ is the assumed regularity of $\phi$ away from the singularity, inherited from our assumption on the initial data.
\end{proof}

	\pagebreak
    \printbibliography

\end{document}